\documentclass[11pt, a4paper]{article}
\title{Circle homeomorphisms with square summable diamond shears}

\usepackage[T1]{fontenc}

\usepackage[latin9]{inputenc}

\usepackage{mathtools}
\usepackage{lmodern}
\usepackage{amsthm,amsmath,amsfonts,amssymb}
\usepackage{graphicx}
\usepackage{enumerate,enumitem}
\usepackage{authblk}
\usepackage{cite,url}
\usepackage[normalem]{ulem}
\usepackage{calrsfs}

\usepackage{multirow}
\usepackage{array}
\newcolumntype{P}[1]{>{\centering\arraybackslash}p{#1}}
\usepackage{longtable,booktabs}

\usepackage{hyperref}
\hypersetup{
    colorlinks=true, 
    linkcolor=blue, 
    urlcolor=black, 
    citecolor=black,
    linktoc=all 
}

\usepackage[activate={true,nocompatibility},final,tracking=true,kerning=true,spacing=true,factor=1100,stretch=20,shrink=20]{microtype}

\microtypecontext{spacing=nonfrench}

\addtolength{\textwidth}{3 cm}
\addtolength{\hoffset}{-1.5 cm}
\addtolength{\textheight}{3 cm}
\addtolength{\voffset}{-1.5cm}

\setlength{\parindent}{0em}
\setlength{\parskip}{0.3em}
\linespread{1.07}
\allowdisplaybreaks


\setlist[enumerate]{topsep = 1ex, leftmargin=1cm, itemsep= -2pt}

\let\OLDthebibliography\thebibliography
\renewcommand\thebibliography[1]{
  \OLDthebibliography{#1}
  \setlength{\parskip}{1pt}
  \setlength{\itemsep}{2pt}
}


\newtheorem{thm}{Theorem}[section]
\newtheorem{cor}[thm]{Corollary}

\newtheorem{lem}[thm]{Lemma}
\newtheorem{prop}[thm]{Proposition}

\theoremstyle{definition} 
\newtheorem{df}[thm]{Definition}
\newtheorem{ex}[thm]{Example}
\newtheorem{remark}[thm]{Remark}

\numberwithin{equation}{section}


\usepackage[font=normal]{caption}
\usepackage{floatrow}

\usepackage{mathtools}

\usepackage[dvipsnames]{xcolor}

\global\long\def\ii{\mathfrak{i}}


\newcommand{\abs}[1]{\left\lvert #1 \right \rvert}

\newcommand{\brac}[1]{\left \langle #1 \right \rangle}
\newcommand{\norm}[1]{\lVert #1 \rVert}

\newcommand{\mc}[1]{\mathcal{#1}}
\newcommand{\m}[1]{\mathbb{#1}}

\newcommand{\mf}[1]{\mathfrak{#1}}



\renewcommand\Re{\operatorname{Re}}
\renewcommand\Im{\operatorname{Im}}

\def\PSL{\operatorname{PSL}}
\def\PSU{\operatorname{PSU}}

\def\mob{\mathrm{M\ddot ob}}
\def\Diff{\operatorname{Diff}}
\def\QS{\operatorname{QS}}

\def\WP{\operatorname{WP}}

\def\a{\alpha}
\def\b{\beta}
\def\g{\gamma}
\def\G{\Gamma}

\def\t{\theta}

\def\i{\iota}

\def\l{\lambda}

\def\L{\Lambda}
\def\s{\sigma}

\def\o{\omega}
\def\O{\Omega}
\def\vare{\varepsilon}


\def\dd{\mathrm{d}}

\def\fan{\mathrm{fan}}


\newcommand{\ad}[1]{\overline{#1}}


\def\1{\mathbf{1}}


 \newcommand{\cayley}{\mf c}
 \newcommand{\Farey}{\mf F}
 \newcommand{\splus}{{\scriptstyle +}}
 \newcommand{\sminus}{{\scriptstyle -}}
  \newcommand{\spm}{{\scriptstyle \pm}}
 \newcommand{\dualtree}{\Farey^*}
 \newcommand{\tri}{\tau}
\newcommand{\gen}{\operatorname{gen}}
\newcommand{\child}{\operatorname{child}}
\newcommand{\Cr}{\operatorname{cr}}
\newcommand{\emb}{\Xi}

 \def \1{\mathbf{1}}

\def\Id{\operatorname{Id}}

\author{Dragomir {\v S}ari\'c\thanks{\protect\url{dragomir.saric@qc.cuny.edu} PhD Program in Mathematics, The Graduate Center, City University of New York, New York, NY, USA, and Queens College, City University of New York, Flushing, NY, USA
},\quad Yilin Wang\thanks{\protect\url{yilin@ihes.fr} Institut des Hautes \'Etudes Scientifiques, Bures-sur-Yvette, France
},\quad Catherine Wolfram\thanks{\protect\url{wolframc@mit.edu} Massachusetts Institute of Technology, Cambridge, MA, USA}}
\begin{document}

\maketitle
\begin{abstract}
We introduce and the study the space of homeomorphisms of the circle (up to M\"obius transformations) which are in $\ell^2$ with respect to modular coordinates called \textit{diamond shears} along the edges of the Farey tessellation. Diamond shears are related combinatorially to shear coordinates, and are also closely related to the $\log \Lambda$-lengths of decorated Teichm\"uller space introduced by Penner. We obtain sharp results comparing this new class to the Weil--Petersson class and H\"older classes of circle homeomorphisms. We also express the Weil--Petersson metric tensor and symplectic form in terms of infinitesimal shears and diamond shears.


\end{abstract}

\begin{figure}[ht]
\centering
\includegraphics[width=.6\textwidth]{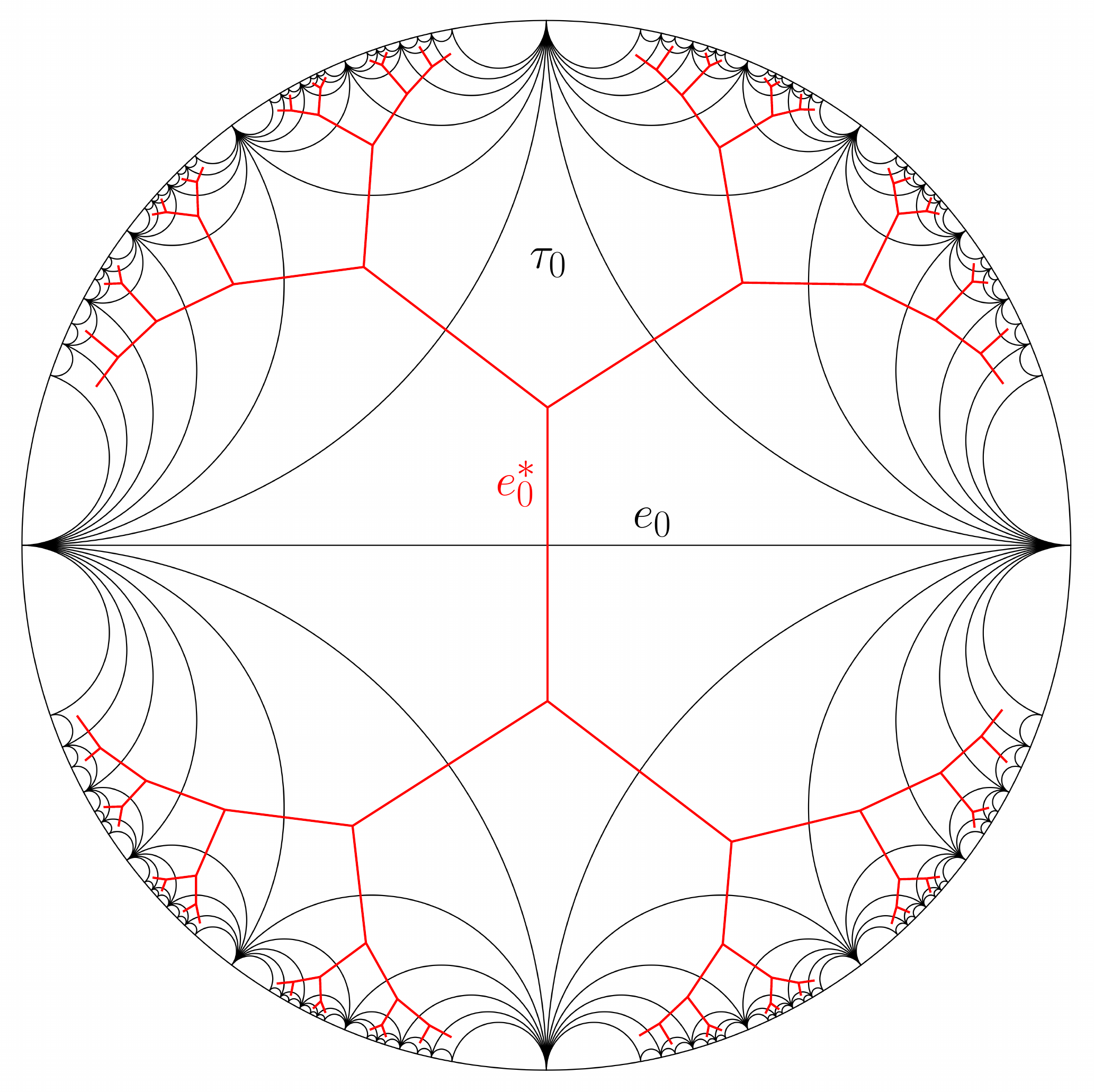}
\caption{\label{fig:Farey_D} Farey tessellation in $\m D$ (black) and the dual tree (red) up to generation 5.}
\end{figure}

\newpage

\tableofcontents


\section{Introduction}

The Farey tessellation $\Farey$ is an ideal triangulation of the disk $\m D$, characterized by the modular invariance property that it is preserved by the action of $\PSL(2,\m Z)$ on the disk. Its vertices $V = \m Q^2 \cap \m T$ are the rational points on the circle $\m T$, and we use $E$ to denote its edges. The Farey tessellation has various connections to number theory \cite{topologyofnumbers}, but our motivation comes from Teichm\"uller theory, where the universal Teichm\"uller space 
$$T(\m D) = \QS(\m T)/\mob(\m T)$$
can be identified with quasisymmetric circle homeomorphisms fixing three points $-1,\ii,1$.

Any element $h$ of the larger homogeneous space of orientation preserving circle homeomorphisms
\begin{align*}
    \text{Homeo}^+(S^1)/\mob(\m T) \simeq \{h\in \text{Homeo}^+(\m T)~:~ h\text{ fixes } -1, \ii, 1\} 
\end{align*}
can be uniquely encoded by a \textit{shear coordinate} on the edges of the Farey tessellation, namely a function $s:E\to \m R$. However, not all functions $s: E\to \m R$ encode circle homeomorphisms. Penner posed the question of classifying which functions $s:E\to \m R$ encode various smoothness classes of homeomorphisms. In \cite{Saric_circle,Saric_new}, the shear coordinates for homeomorphisms, symmetric homeomorphisms, and quasisymmetric homeomorphisms were characterized by the first author. 

In this paper we take an opposite perspective. We define two $\ell^2$ classes of shear functions, and then study the regularity properties these $\ell^2$ conditions imply for circle homeomorphisms 
and the relation to the existing Hilbert manifold structure on the universal Teichm\"uller space. For precise descriptions of the Farey tessellation, shears and shear coordinates, see the preliminaries in Section~\ref{sec:prelim} or e.g.\ \cite[Chapter 8]{Bonahon} by Bonahon. 

Na\"ively, the first class to consider is the set of square summable shear functions, i.e.\
\begin{align*}
    \mc S:= \{s:E\to \m R: \sum_{e\in E} s(e)^2<\infty\}.
\end{align*}
However, we find that not all $s\in \mc S$ even encode circle homeomorphisms. 
Conversely, we also find that there are quasisymmetric homeomorphisms which are not in $\mc S$. See Proposition~\ref{prop:S_not_homeo_qs} for simple examples to illustrate both of these facts. 
In summary,
\begin{align*}
    T(\m D) \simeq \text{QS}(\m T)/\mob(\m T) \not\subset \mc S \qquad \text{and} \qquad \mc S\not\subset \text{Homeo}(\m T)/\mob(\m T).
\end{align*}
These observations show that a basis of shear functions each supported on a single edge is ``too large'' to define an $\ell^2$ space of circle homeomorphisms.

Motivated by this, we investigate shear functions supported on finitely many edges. Finitely supported shear functions always induce homeomorphisms, which in particular are \textit{piecewise M\"obius} with pieces bounded by rational points in $V$ (see Lemma \ref{lem:finite_shear_mobius}).  This class of circle homeomorphisms has been studied in, e.g., \cite{MRW1,MRW2,MALIKOV1998282,Frenkel_Penner,Penner1993UniversalCI}.  We then show that a homeomorphism $h$ with finitely supported shear $s_h$ is piecewise M\"obius \textit{and $C^1$} with breakpoints in $V$ if and only if it belongs to a linear subspace of all shear functions spanned by \emph{diamond shears}. See Lemma~\ref{lem:finite_balance} and Proposition~\ref{prop:s_to_theta_finite}. Combinatorially, this condition is 
equivalent to requiring that the shears on all edges incident to the same vertex sum up to zero (which we call the finite balanced condition). 

To define \emph{diamond shears} more precisely in terms of shears, choose an edge $e\in E$, and let $e_1 = (a,b)$, $e_2=(b,c)$, $e_3 = (c,d)$, $e_4 = (d,a)$ in $E$ be the boundary edges in counterclockwise order of quadrilateral  $Q_e = (a,b,c,d)$  consisting of the two triangles from $\Farey$ containing $e=(a,c)\in E$. 
A unit of diamond shear supported at the edge $e$, i.e. $\vartheta_h(e) = 1$ and $\vartheta_h(e') = 0$ for all $e' \in E$ and $e' \neq e $, 
is equivalent to four nonzero shears where $s_h(e_1) = s_h(e_3) = 1$ and $s_h(e_2) = s_h(e_4) = -1$.
The name ``diamond'' comes from the picture that the support of one diamond shear corresponds to a quad/diamond of regular shears. 
See Section~\ref{sec:examples} for concrete examples of the correspondence between diamond shear coordinates and circle homeomorphisms.

When $s_h$ has \emph{infinite} support, we define the diamond shear coordinate $\vartheta_h$ combinatorially as an infinite sum denoted as $\Psi(s_h)$ whenever $s_h$ is in a certain subclass $\mc P$ (which can be characterized analytically in terms of differentiability of $h$). 
See Section~\ref{sec:combinatorial_diamond}.
It is often more convenient to define the diamond shear coordinate on the edges of the dual tree $\dualtree$. As the edges of $\Farey$ and $\dualtree$ are in one-to-one correspondence, this identification should not add any ambiguity.  See Figure~\ref{fig:Farey_D}. 

We show that diamond shears can be described analytically:

\begin{prop}[{See Proposition~\ref{prop:theta_analytic}}] \label{prop:intro_theta_analytic}
Assume that $s_h \in \mc P_0 \subset \mc P$ \textnormal(which implies that $h$ is differentiable at all $v\in V$\textnormal). The diamond shear coordinate $\vartheta_h$ of $h$ is given by
    \begin{align} 
        \vartheta_h (e
        ) &= \frac{1}{2} \log h'(a) h'(b) - \log \frac{h(a)-h(b)}{a-b}
    \end{align}
    for all $e = (a,b) \in E$. 
\end{prop}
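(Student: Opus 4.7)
The plan is to identify $\vartheta_h(e) = \Psi(s_h)(e)$ with the analytic quantity
\begin{equation*}
\Lambda_h(a,b) := \frac{1}{2}\log h'(a)h'(b) - \log\frac{h(a)-h(b)}{a-b}
\end{equation*}
for $e = (a,b) \in E$. The strategy proceeds in two main steps: first I would establish a key algebraic identity relating shears to values of $\Lambda_h$, then use it to handle the finite support case directly and extend to infinite support by a telescoping argument.

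For the key identity, fix $e = (a,c) \in E$ with quadrilateral $Q_e = (a,b,c,d)$ in counterclockwise order. Using the standard expression of $s_h(e)$ as a difference of logarithms of cross-ratios of $(a,b,c,d)$ and of $(h(a), h(b), h(c), h(d))$, a direct algebraic computation yields
\begin{equation*}
s_h(e) = \Lambda_h(a,b) - \Lambda_h(b,c) + \Lambda_h(c,d) - \Lambda_h(d,a),
\end{equation*}
with the crucial observation that the derivative terms $\frac{1}{2}\log h'(v)$ at each vertex $v \in \{a,b,c,d\}$ appear with total coefficient zero and cancel completely. Comparing with the combinatorial definition of diamond shears---where a unit of $\vartheta_h$ at $e$ produces shears $(+1,-1,+1,-1)$ on the boundary edges of $Q_e$---we see that the edge-function $e = (a,b) \mapsto \Lambda_h(a,b)$ satisfies exactly the same linear relation to $s_h$ as $\vartheta_h$ does.

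For the finite support case, uniqueness of the diamond shear coordinates (Lemma~\ref{lem:finite_balance} and Proposition~\ref{prop:s_to_theta_finite}) together with the key identity gives $\vartheta_h(e) = \Lambda_h(a,b)$ directly, since $\Lambda_h$ vanishes on edges where $h$ agrees with a single M\"obius transformation at both endpoints. For $s_h \in \mc P_0$ with infinite support, the diamond shear $\vartheta_h(e) = \Psi(s_h)(e)$ is defined as an infinite sum of shears indexed by the Farey tree. Substituting the key identity into each term produces a telescoping series whose interior $\Lambda_h$-contributions cancel in pairs along adjacent edges of the tree, formally leaving the single term $\Lambda_h(a,b)$.

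The main obstacle is controlling the tail of the telescoping sum: one must show that $\Lambda_h(x_n, y_n) \to 0$ along sequences of edges $(x_n, y_n)$ escaping to $\m T$. This is where the hypothesis $s_h \in \mc P_0$ is essential: it guarantees that $h$ is differentiable at every $v \in V$ and provides the summability needed to force both the derivative factor $\frac{1}{2}\log h'(x_n)h'(y_n)$ and the quotient $\log \frac{h(x_n)-h(y_n)}{x_n-y_n}$ to vanish jointly, so that the boundary contributions of the telescoping sum tend to zero and the equality is established.
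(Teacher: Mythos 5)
Your overall strategy --- first establish the Ptolemy-type identity expressing $s_h(e)$ as an alternating sum of $\Lambda_h$ over the four sides of $Q_e$, then invert $\Phi$ by telescoping --- is organized genuinely differently from the paper's proof, which instead computes each half-fan sum $p_{s,v}(e\splus)$ directly as the logarithm of a one-sided derivative of $h$ at $v$ in a half-plane chart sending $v$ and $h(v)$ to $\infty$ (the computation of Lemma~\ref{lem:P_derivative}), and then assembles the four half-fans using the balanced condition. Your key identity is correct up to sign: with the labeling of \eqref{eq:def_Phi} one gets $s_h=\Phi(\Lambda_h)$, i.e.\ $s_h(e)=-\Lambda_h(e_1)+\Lambda_h(e_2)-\Lambda_h(e_3)+\Lambda_h(e_4)$, the opposite of what you wrote (check it on Example~\ref{ex:standard_diamond}); this matters because the sign propagates through the telescoping. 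Your finite-support argument via Proposition~\ref{prop:s_to_theta_finite} and the vanishing of $\Lambda_h$ on edges interior to a M\"obius piece is fine.

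The genuine gap is in the tail of the telescoping. You claim that $s_h\in\mc P_0$ forces $\Lambda_h(x_n,y_n)\to 0$ along edges escaping to $\m T$; this is false. Along a fan $(e_m)=(v,w_m)_m$, in the chart sending $v$ and $h(v)$ to $\infty$ one has $\Lambda_h(v,w_m)=\tfrac12\log\varphi'(m)\varphi'(\infty)$ (Remark~\ref{rem:theta_expression_R}), and $\mc P_0$ gives no control on $\varphi'(m)$ as $m\to\infty$: these derivatives are governed by the fans at the vertices $w_m$, not by the fan at $v$, and nothing in the hypothesis forces them to converge to $1/\varphi'(\infty)$. (Indeed, granting the proposition, $\Lambda_h=\vartheta_h$ on $E$, and only membership in $\mc H$ --- not $\mc P_0$ --- forces the diamond shears to decay along a fan.) What actually vanishes is the three-term boundary combination that the telescoping produces, $\Lambda_h(w_m,w_{m+1})-\Lambda_h(v,w_m)-\Lambda_h(v,w_{m+1})$: there the uncontrolled factors $\tfrac12\log h'(w_m)$ and $\tfrac12\log h'(w_{m+1})$ cancel identically, and in the chart the remainder is $-\log\bigl(\varphi(m+1)-\varphi(m)\bigr)-\log\varphi'(\infty)$, which tends to $0$ by \eqref{eq:ell} and \eqref{eq:limit_varphi_l}. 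So your argument is repairable, but the repair is precisely the chart computation on which the paper's proof rests; the boundary terms cannot be dispatched from differentiability of $h$ at the fixed vertex $v$ alone, as your write-up suggests.
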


This proposition connects diamond shears to the \textit{$\log \Lambda$-lengths} defined by Penner 
on decorated Teichm\"uller space, which is a (trivial) bundle over $T(\m D)$, with fiber $\m R_{>0}^V$ over $h\in T(\m D)$ corresponding to choosing a horocycle at each $h(v) \in h (V)$. See \cite{Penner1993UniversalCI,Penner2002OnHF,MALIKOV1998282} or the book \cite{PennerBook}. Roughly speaking, the decoration allows one to truncate and define the ``renormalized hyperbolic length'' of an infinite geodesic $h(e) \in h(E)$. A homeomorphism $h$ that is differentiable on $V$ gives a canonical way to \emph{fix} as in \cite{MALIKOV1998282} a decoration on $h(V)$, and $\vartheta_h(e)$ is equal to $-1/2$ times the renormalized length of $e$. 
\begin{cor} [See {Lemma~\ref{lem:theta_logL}}] \label{cor:intro_theta_logL}
    If $s_h \in \mc P_0$, 
    then for any $e = (a,b) \in E$, 
    $$\vartheta_h (e) = -\log \Lambda_h (e) = -\frac{1}{2} \, \mathrm{length} (h(e))$$
    where $\mathrm{length} (h(e))$ is the signed hyperbolic length of the part of $h(e)$ between the horocycles centered at $h(a)$ and $h(b)$ chosen from the fixed decoration.
\end{cor}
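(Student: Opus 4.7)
The plan is to reduce everything to Proposition~\ref{prop:intro_theta_analytic} and the definition of the $\Lambda$-length, the content of the argument being the verification that the canonical decoration pushed forward by $h$ turns the analytic expression of Proposition~\ref{prop:intro_theta_analytic} into $-\log \Lambda_h(e)$.

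First I recall Penner's fundamental formula: for two ideal points $p,q \in \partial \m D$ decorated with horocycles $H_p, H_q$, the $\Lambda$-length is defined by $\Lambda(p,q) = \exp(\delta(p,q)/2)$, where $\delta(p,q)$ is the signed hyperbolic length of the arc of the geodesic $(p,q)$ between $H_p$ and $H_q$ (positive when $H_p \cap H_q = \emptyset$). This definition directly gives the second equality $-\log \Lambda_h(e) = -\tfrac{1}{2}\mathrm{length}(h(e))$. If one passes to the upper half-plane by a M\"obius transformation sending $p,q$ to finite points and writes $H_p, H_q$ as Euclidean circles of Euclidean diameters $d_p, d_q$, a short computation gives the explicit formula
\begin{equation*}
\Lambda(p,q)^2 = \frac{(p-q)^2}{d_p d_q}.
\end{equation*}

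Next I specify the canonical decoration following \cite{MALIKOV1998282}. On the Farey tessellation itself there is a standard $\PSL(2,\m Z)$-invariant decoration (in the upper half-plane: horocycle of Euclidean diameter $1/q^2$ at $p/q$, and $y=1$ at $\infty$) for which $\Lambda_{\Farey}(e) = 1$ for every Farey edge $e$. Transport these horocycles $\{H_v\}_{v \in V}$ to the disk by the Cayley transform. Since $h$ is differentiable at every $v \in V$ (by the assumption $s_h \in \mc P_0$), the prescription of Malikov--Frenkel--Penner assigns to $h(v)$ the horocycle $H_{h(v)}$ whose Euclidean diameter in any chart where $v$ and $h(v)$ are finite equals $h'(v) \cdot d_v$. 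This rule is well-defined because $d_v$ transforms by a factor of $|\phi'(v)|$ under a chart change $\phi$ and so does $h'(v)$.

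Now I apply the explicit formula for $\Lambda$ twice: the standard decoration gives $d_a d_b = (a-b)^2$, and the pushed-forward decoration gives
\begin{equation*}
\Lambda_h(e)^2 = \frac{(h(a)-h(b))^2}{(h'(a)d_a)(h'(b)d_b)} = \frac{(h(a)-h(b))^2}{h'(a)h'(b)(a-b)^2}.
\end{equation*}
Taking half a logarithm,
\begin{equation*}
\log \Lambda_h(e) = \log\frac{h(a)-h(b)}{a-b} - \frac{1}{2}\log h'(a)h'(b),
\end{equation*}
which is exactly the negative of the expression for $\vartheta_h(e)$ in Proposition~\ref{prop:intro_theta_analytic}. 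This proves the first equality.

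The only real obstacle is a bookkeeping one: checking that the Malikov--Frenkel--Penner pushforward rule is independent of the chart used to represent the horocycle (so the formula above is coherent on $\m D$), and that the standard decoration on $\Farey$ really does produce $\Lambda = 1$ along every edge after the Cayley transform. Both are routine but must be spelled out carefully to justify writing the $\Lambda$-length in terms of Euclidean diameters with $a,b$ replaced by their disk coordinates.
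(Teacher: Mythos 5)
Your proposal is correct and follows essentially the same route as the paper's proof of Lemma~\ref{lem:theta_logL}: both use the Ford-circle decoration with $\log\Lambda_{\Id}(e)=0$, the pushforward rule multiplying horocycle diameters by the derivative of the homeomorphism (Definition~\ref{df:section} and Lemma~\ref{lem:logL_diameters}), and then match the resulting expression against the analytic formula for $\vartheta_h(e)$ from Proposition~\ref{prop:theta_analytic}. The only cosmetic difference is that the paper carries out the computation in the half-plane via $\varphi=\cayley\circ h\circ\cayley^{-1}$ (treating the $\infty$ case separately), whereas you phrase it chart-independently; the covariance check you flag at the end is exactly the content of Lemma~\ref{lem:logL_diameters} and Remark~\ref{rem:theta_expression_R}.
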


Our main object of study in this paper is the set of shear functions with $\ell^2$ summable diamond shear coordinates:
\begin{align*}
    \mc H := \{s: E\to \m R : \sum_{e\in E} \vartheta(e)^2 < \infty\}.
\end{align*}
It is relatively straightforward to show that this corresponds to a class of circle homeomorphisms. Given this, we use the abuse of notation $h\in \mc H$ to mean $s_h\in \mc H$ throughout. 
\begin{prop}[See Corollary \ref{cor:H_in_QS}]
   If $s\in \mc H$, then $s$ induces a quasisymmetric circle homeomorphism. In other words, $\mc H\subset \QS(\m T)/\mob(\m T) \simeq T(\m D).$
\end{prop}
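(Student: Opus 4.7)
The strategy is to translate the $\ell^2$ condition on $\vartheta$ into the combinatorial characterization of quasisymmetric shear functions due to \v{S}ari\'c \cite{Saric_circle, Saric_new}, which controls the $\QS$ property by a uniform bound on partial sums of shears along consecutive arcs of the fan at every vertex $v \in V$.

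First, I would make the shears $s$ explicit pointwise. A unit of diamond shear $\vartheta(e) = 1$ produces shears $\pm 1$ on the four boundary edges of $Q_e$ and zero on all other edges (including the diagonal $e$ itself). Conversely, each $e' \in E$ is a boundary edge of exactly four quadrilaterals $Q_e$: namely those whose diagonal $e$ is one of the four edges, other than $e'$, of the two Farey triangles adjacent to $e'$. Hence
\[
s(e') \;=\; \sum_{e\,:\,e'\in\partial Q_e} \varepsilon(e,e')\,\vartheta(e), \qquad \varepsilon(e,e') \in \{\pm 1\},
\]
is a $4$-term sum, well-defined with $|s(e')| \leq 4\|\vartheta\|_{\ell^2}$.

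The key cancellation is that at each of the four vertices $w$ of $Q_e$, the two boundary edges of $Q_e$ incident to $w$ carry opposite unit shears, and no other edge incident to $w$ receives a nonzero shear from $\vartheta(e)$. Consequently, the contribution of $\vartheta(e)$ to $\sum_{e' \in F} s(e')$ for a fan arc $F$ at a vertex $v$ vanishes unless $v$ is a vertex of $Q_e$ and $F$ separates the two boundary edges of $Q_e$ at $v$. A direct combinatorial check using the local Farey structure at $v$ shows that at each of the two endpoints of $F$ there are only a bounded number of such ``straddling'' diamonds (splitting into the case where $v$ is an endpoint of the diagonal $e$ and the case where $v$ is an opposite vertex of $Q_e$). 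Each contributes a single $\pm \vartheta(e)$, so
\[
\left|\sum_{e' \in F} s(e')\right| \;\leq\; C\,\|\vartheta\|_{\ell^\infty} \;\leq\; C\,\|\vartheta\|_{\ell^2}
\]
for an absolute constant $C$, uniformly in $v$ and $F$. This verifies the $\QS$ condition.

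The \emph{main obstacle} is matching the bounded fan-sum estimate to the precise form of \v{S}ari\'c's characterization (which may be phrased in a specific normalization adapted to cross-ratios rather than bare fan sums). A complementary, safer route is an approximation argument: truncate $\vartheta$ to the first $n$ generations of $\dualtree$ to obtain a finitely supported $\vartheta_n$ which, by Lemma~\ref{lem:finite_balance} and Proposition~\ref{prop:s_to_theta_finite}, corresponds to a piecewise M\"obius $C^1$ (hence $\QS$) homeomorphism $h_n$; combining the fan-sum bound above with the characterization yields that the $\QS$ constants of $h_n$ are bounded uniformly in terms of $\|\vartheta\|_{\ell^2}$, so the uniform limit $h$ of $h_n$ is $\QS$ with shears $s$.
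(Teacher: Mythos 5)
Your proposal is correct and follows essentially the same route as the paper: Lemma~\ref{lem:H_in_P0} carries out exactly your cancellation argument (each partial fan sum telescopes to a signed combination of boundedly many diamond shears, so all fan sums are bounded by a constant times $\|\vartheta\|_{\ell^\infty}\le\|\vartheta\|_{\ell^2}$, giving the uniform bound \eqref{eq:shears_bound}), and Corollary~\ref{cor:H_in_QS} then invokes \v{S}ari\'c's criterion. The ``main obstacle'' you flag is not one: once $|\sum_{i=n}^m s(e_i)|\le M$ uniformly, every exponential appearing in $s(k,n;v)$ of Theorem~\ref{thm:qs_condition} lies in $[e^{-M},e^{M}]$, so the ratio of the two $(n+1)$-term sums lies in $[e^{-2M},e^{2M}]$.
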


Our first main result is to characterize the H\"older classes of circle homeomorphisms that are contained in $\mc H$. Define for $\a \in (0,1]$,
\begin{align}\label{eq:holder_def}
    \mc C^{1,\a} := \{h: \m T \to \m T \text{ homeomorphism} \colon \log h' \text{ is $\a$-H\"older}\}.
\end{align}
In particular, the welding homeomorphisms of $C^{1,\a}$ Jordan curves belong to $\mc C^{1,\a}$. 
\begin{thm}[See Theorem \ref{thm:C1alpha}] \label{thm:intro_holder}
    If $\alpha>1/2$, then $\mc C^{1,\alpha}\subset \mc H$. 
\end{thm}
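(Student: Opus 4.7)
The strategy is to combine the analytic formula for $\vartheta_h$ from Proposition~\ref{prop:intro_theta_analytic} with a direct combinatorial estimate on edge lengths of the Farey tessellation.

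Since $h \in \mc C^{1,\alpha}$ is $C^1$ on $\m T$, it is in particular differentiable at every rational vertex $v \in V$, so I expect the hypothesis $s_h \in \mc P_0$ of Proposition~\ref{prop:intro_theta_analytic} to follow readily from the definition of $\mc P_0$. Writing $\varphi := \log h'$ ($\alpha$-H\"older with constant $K$) and $\bar\varphi_e := \tfrac{1}{2}(\varphi(a) + \varphi(b))$ for an edge $e = (a,b)$, the identity $h(b) - h(a) = \int_a^b e^{\varphi(t)}\,dt$ rewrites the formula from Proposition~\ref{prop:intro_theta_analytic} as
\[
\vartheta_h(e) = -\log\!\left(\frac{1}{b-a}\int_a^b \exp\bigl(\varphi(t) - \bar\varphi_e\bigr)\,dt\right).
\]
The H\"older bound $|\varphi(t) - \bar\varphi_e| \leq K|b-a|^\alpha$ for $t \in [a,b]$ confines the integrand to $[e^{-K|b-a|^\alpha}, e^{K|b-a|^\alpha}]$, hence
\[
|\vartheta_h(e)| \leq K|b-a|^\alpha.
\]

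It therefore suffices to show $\sum_{e \in E} |b_e - a_e|^{2\alpha} < \infty$ for $\alpha > 1/2$, where $|b_e - a_e|$ is the chord length on $\partial \m D$. In the upper half-plane model, an edge between Farey-adjacent fractions $(p_L/q_L, p_R/q_R)$ has chord length on $\partial \m D$ (after the Cayley transform) comparable to $1/(q_L q_R)$. Enumerate edges by their mediant $p/q = (p_L+p_R)/(q_L+q_R)$: for each $q \geq 2$, the edges with mediant denominator $q$ have $(q_L, q_R)$ forming a coprime partition of $q$, so their contribution is bounded by the sum over all $q_L \in \{1, \ldots, q-1\}$. Since $q_L(q-q_L) \geq (q/2) \min(q_L, q-q_L)$,
\[
\sum_{q_L=1}^{q-1} \frac{1}{\bigl(q_L(q-q_L)\bigr)^{2\alpha}} \leq \frac{2^{2\alpha+1}\, \zeta(2\alpha)}{q^{2\alpha}},
\]
where $\zeta(2\alpha) < \infty$ precisely when $\alpha > 1/2$. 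Summing over $q$ again converges for $\alpha > 1/2$, and the edges involving $\infty$ or fractions outside $[0,1]$ are handled identically by $\PSL(2, \m Z)$-symmetry. Combined with the pointwise estimate, this yields $\sum_e \vartheta_h(e)^2 \leq K^2 \sum_e |b_e-a_e|^{2\alpha} < \infty$, so $h \in \mc H$.

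The main obstacle is the combinatorial edge-length estimate: edges at the same depth in the dual tree can have widely differing chord lengths, so a naive level-by-level count does not recover the sharp threshold $\alpha > 1/2$. The enumeration by Farey-pair denominators above makes the arithmetic structure explicit and identifies $\alpha = 1/2$ as the sharp threshold, coming from the extremal terms $q_L = 1$ where the contribution reduces to a $p$-series in $q$.
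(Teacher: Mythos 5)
Your proof is correct and follows essentially the same route as the paper: a pointwise bound $|\vartheta_h(e)|\lesssim \ell(a,b)^{\alpha}$ obtained from the analytic formula for the diamond shear, combined with the $\ell^{2\alpha}$-summability of the Farey edge lengths, which the paper isolates as Proposition~\ref{prop:farey-lengths} and proves via the same $1/(q_Lq_R)$ estimate (organized by a parent vertex and its children rather than by the mediant denominator, but yielding the identical threshold $2\alpha>1$). Two points you gloss over are handled explicitly in the paper: the identity $h(b)-h(a)=\int_a^b e^{\varphi}$ and the H\"older estimate must be applied after conjugating to the real line (the paper covers $\m T$ by two arcs, uses Remark~\ref{rem:theta_expression_R}, and replaces your integral average by the mean value theorem), and the inclusion $\mc C^{1}\subset \mc P_0$ needed to invoke Proposition~\ref{prop:theta_analytic} is not immediate from the definition but is the content of Lemma~\ref{lem:P_derivative}~\ref{it:C1_P0}.
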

This result is sharp as Theorem~\ref{thm:intro_wp_H} will show that $\mc C^{1,1/2}$ is not in $\mc H$.
The proof of this result relies on Proposition~\ref{prop:intro_theta_analytic} 
and the $\ell^{2\a}$ summability of the lengths 
of the shorter arcs in $\m T$ between $a$ and $b$ for each $(a,b)\in E$. The $\ell^2$ summability was studied and implied by results in \cite{Hall,Penner2002OnHF}, we improve it to $\ell^{2\a}$ summability. 
See Proposition~\ref{prop:farey-lengths}.

Another main result of our work is an explicit construction of a quasiconformal extension $f: \m D \to \m D$ of $h\in \mc H$ inspired by a construction in \cite{KahnMarkovic} by Kahn and Markovic. The construction is adapted to the cell decomposition of the Farey tessellation, and is one of the places where its discrete structure is essential. This construction crucially uses the \textit{generalized balanced condition} satisfied by shear functions that can be written in terms of diamond shears. While characterizations of shear functions for quasisymmetric homeomorphisms are known, analogous methods for constructing their quasiconformal extensions using the shear function are not known.  We further find that if $h\in \mc H$, then the Beltrami differential $\mu_f=\bar \partial f / \partial f$ of the extension $f$ is in $L^2(\m D, \dd_{\text{hyp}})$. 

This leads to a connection between $\mc H$ and the \textit{Weil--Petersson class} of circle homeomorphisms $\WP(\m T)$. The \textit{Weil--Petersson Teichm\"uller space} $\WP(\m T)/\mob(\m T) = : T_0(\m D)$ is a subspace of $T(\m D)$ defined as the completion of $\Diff(\m T)/\mob(\m T)$ under its unique homogeneous K\"ahler metric (the Weil--Petersson metric) \cite{TT06}. The space of Weil--Petersson homeomorphisms $\WP(\m T)$ is characterized analytically by Shen \cite{shen13}, see Definition~\ref{def:WP}, and also by, e.g.,  \cite{Cui,bishop-function-theoretic,W2,VW2}. In particular one definition of \textit{Weil--Petersson homeomorphisms} $\WP(\m T)$ is that they admit \textit{a} quasiconformal extension to the disk whose {Beltrami differential} $\mu_f=\bar \partial f / \partial f$ is in $L^2(\m D, \dd_{\text{hyp}})$ (see \cite{TT06} or Theorem~\ref{thm:WP_Beltrami_L2}). 
Cui \cite{Cui} showed that the Douady--Earle quasiconformal extension of a Weil--Petersson homeomorphism satisfies this property, and we remark that it is notable that our construction using shears has the desired property for all $h\in \mc H$.



Ultimately we prove the following relationships between $\mc H, \mc S$ and the Weil--Petersson class.

\begin{thm}\label{thm:intro_wp_H}
We have $\mc H \subset \WP(\m T)$. Additionally if $h\in \WP(\m T)$, then $s_h\in \mc S$. Both inclusions are strict. 
\end{thm}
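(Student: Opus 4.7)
The plan is to leverage the analytic characterization (Theorem~\ref{thm:WP_Beltrami_L2}) of the Weil--Petersson class: $h \in \WP(\m T)$ if and only if $h$ admits a quasiconformal extension $f : \m D \to \m D$ with Beltrami coefficient $\mu_f \in L^2(\m D, \dd_{\mathrm{hyp}})$. Both inclusions are established through this lens, the first using the explicit shear-adapted extension constructed earlier in the paper, the second using a cross-ratio distortion estimate applied to an arbitrary $L^2$ extension. Strictness is handled by exhibiting explicit counterexamples.

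For $\mc H \subset \WP(\m T)$, I would use the Kahn--Markovic-style quasiconformal extension $f$ of $h$ discussed in the introduction, which is built cell by cell on $\Farey$ so that the interpolation across an edge $e$ is governed by the diamond shear $\vartheta_h(e)$. The generalized balanced condition ensures that the local maps glue consistently. On each elementary cell $\mathrm{cell}(e)$ I would establish a bound of the form
\[
\int_{\mathrm{cell}(e)} |\mu_f|^2 \dd_{\mathrm{hyp}} \leq C\, \vartheta_h(e)^2,
\]
with $C$ independent of $e$ by the modular invariance of $\Farey$. Summing and using $s_h \in \mc H$ gives $\mu_f \in L^2(\m D, \dd_{\mathrm{hyp}})$, hence $h \in \WP(\m T)$.

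For the second inclusion, given $h \in \WP(\m T)$ I would pick any QC extension $f$ with $\mu_f \in L^2(\m D, \dd_{\mathrm{hyp}})$. For an edge $e = (a,c)$ with adjacent quadrilateral $Q_e = (a,b,c,d)$, the shear $s_h(e)$ equals the logarithm of the cross-ratio distortion $[h(a),h(b);h(c),h(d)]/[a,b;c,d]$. A standard quasiconformal cross-ratio distortion estimate yields $|s_h(e)|^2 \lesssim \int_{U_e} |\mu_f|^2 \dd_{\mathrm{hyp}}$, where $U_e$ is the hyperbolic convex hull of $Q_e$. Since the $U_e$ have bounded overlap multiplicity (modular invariance again), summing gives $\sum_e s_h(e)^2 \lesssim \|\mu_f\|_{L^2(\m D, \dd_{\mathrm{hyp}})}^2 < \infty$, so $s_h \in \mc S$.

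For strictness, the inclusion $\mc H \subsetneq \WP(\m T)$ follows from the sharpness of Theorem~\ref{thm:intro_holder}: I would construct $h \in \mc C^{1,1/2} \subset \WP(\m T)$ whose diamond shears fail $\ell^2$ summability, for instance by engineering $\log h'$ to have genuine $1/2$-H\"older behavior along a sequence of points so that Proposition~\ref{prop:intro_theta_analytic} forces $|\vartheta_h(e_n)| \asymp 2^{-n/2}$ on the $\sim 2^n$ edges of generation $n$ of $\dualtree$. For $\WP(\m T) \subsetneq \mc S$, I would simply cite Proposition~\ref{prop:S_not_homeo_qs}, which provides $s \in \mc S$ that does not encode any circle homeomorphism, \emph{a fortiori} not a Weil--Petersson one. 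The main obstacle will be the quantitative cell-by-cell Beltrami estimate in the first inclusion: sharp control over how the explicit extension depends on the diamond-shear parameter. Reducing to a universal estimate on one fundamental cell via the $\PSL(2,\m Z)$-symmetry of $\Farey$, and then using the generalized balanced condition to close the estimates across neighbouring cells, is the key technical hurdle.
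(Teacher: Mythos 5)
Your first inclusion follows the paper's route (Theorem~\ref{thm:H_Beltrami_bound}): the explicit Kahn--Markovic-style extension, an $L^2$ estimate on each cell, and summation. Two caveats: the paper's cells are the regions $C_v$ around \emph{vertices}, cut into strips indexed by $\fan(v)$, and the Beltrami coefficient on a strip is controlled not by a single diamond shear but by the fan tail sums $p_{s_h,v}(e\splus)$; it is Lemma~\ref{lem:H_in_P0} (which encodes the generalized balanced condition) that converts $\ell^2$-summability of $\vartheta_h$ into $\ell^2$-summability of these tail sums. One must also treat separately the finitely many ``bad'' strips where the tail sums are not small, using $|\mu|<1$ and the bounded hyperbolic area of a strip. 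These are refinements of your sketch rather than objections to it. Your strictness argument for the second inclusion (citing Proposition~\ref{prop:S_not_homeo_qs}) is valid and even a little more economical than the paper's, which instead exhibits a homeomorphism with a single nonzero shear.

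There are, however, two genuine gaps. First, the localized cross-ratio distortion estimate $|s_h(e)|^2\lesssim \int_{U_e}|\mu_f|^2\,\dd_{\mathrm{hyp}}$ is not a standard fact and is false as stated: the cross-ratio of the four vertices of $Q_e$ under a quasiconformal self-map of $\m D$ is a nonlocal functional of $\mu_f$, so one can take $\mu_f$ supported entirely outside the convex hull $U_e$ and still change $\Cr(h(Q_e))$, making the right-hand side zero while the left-hand side is positive. The correct substitute is precisely Wu's theorem (Theorem~\ref{thm:Wu_necessary}), a nontrivial cited result giving a \emph{summed} bound over pairwise disjoint quads; the paper then three-colors $E^*$ so that same-colored Farey quads are disjoint and converts $d(1,\Cr(h(Q_e)))$ into $|s_h(e)|$ for all but finitely many edges. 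Second, your strictness argument for $\mc H\subsetneq\WP(\m T)$ starts from the false inclusion $\mc C^{1,1/2}\subset\WP(\m T)$ — Lemma~\ref{lem:C_alpha_in_WP} states exactly that $\mc C^{1,1/2}\nsubseteq\WP(\m T)$ — and the proposed construction would still require verifying that the engineered map lies in $\WP(\m T)$, which is the hard part. The paper's argument runs in the opposite logical direction: it produces $h\in\WP(\m T)$ that is not even in $\mc P$ (e.g.\ $\varphi(x)=x\log|x|-x$ in the half-plane model, which lacks the convergent fan sums guaranteeing one-sided derivatives at rational points by Lemma~\ref{lem:P_derivative}), and concludes via $\mc H\subset\mc P_0\subset\mc P$ (Corollary~\ref{cor:H_inclusions_summary}); the sharpness of the H\"older theorem is then a \emph{consequence} of $\mc H\subset\WP(\m T)$, not an input to the strictness.
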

See Theorem \ref{thm:H_Beltrami_bound} for the first inclusion, and Section~\ref{sec:counterexample} for why it is strict. See Theorem \ref{thm:WP_in_S} for the last inclusion. For comparison, note that this result implies that Theorem~\ref{thm:intro_holder} is sharp, since $\mc C^{1,1/2} \nsubseteq \mc H$ as otherwise it would also be in $\WP(\m T)$ (which contradicts Lemma~\ref{lem:C_alpha_in_WP}). As smooth diffeomorphisms are dense in $\WP(\m T)$, so is $\mc H$.

In fact, our construction of the quasiconformal extension for functions in $\mc H$ can be adapted to show the following stronger result that convergence in $\mc H$ endowed with its $\ell^2$ topology implies convergence in the Weil--Petersson metric.
\begin{thm}[See {Corollary \ref{cor:H_convergence_cor}}]
    Suppose that $h,(h_n)_{n\geq 1}\in \mc H$ with diamond shear coordinates $\vartheta,\vartheta_n$ respectively. If 
\begin{align*}
    \lim_{n\to \infty} \sum_{e\in E} (\vartheta_n(e) - \vartheta(e))^2 = 0,
\end{align*}
then $h_n$ converges to $h$ in the Weil--Petersson metric. 
\end{thm}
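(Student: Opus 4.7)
The plan is to adapt the quasiconformal extension constructed in Theorem \ref{thm:H_Beltrami_bound} to compare two such extensions directly, and to show that the associated Beltrami coefficients depend continuously on the diamond shear data in an $\ell^2$-to-$L^2$ sense.

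Let $f, f_n \colon \m D \to \m D$ denote the explicit quasiconformal extensions of $h, h_n$ produced by the construction in Theorem \ref{thm:H_Beltrami_bound}. Since these extensions are assembled cell-by-cell on the pieces coming from the Farey tessellation, on each cell $T$ the Beltrami coefficient $\mu_f|_T$ depends only on the shears $\vartheta(e)$ for $e$ in a finite neighborhood $N(T) \subset E$ of uniformly bounded size. The key quantitative step is a cell-level Lipschitz estimate: for shears in a fixed bounded subset of $\ell^2$ (which is automatic once $\vartheta_n \to \vartheta$ in $\ell^2$),
\begin{equation*}
    \int_T |\mu_{f_n} - \mu_f|^2 \, \dd_{\mathrm{hyp}} \leq L \sum_{e \in N(T)} (\vartheta_n(e) - \vartheta(e))^2,
\end{equation*}
with $L$ uniform in $T$ and $n$. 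Summing over all cells and using that each edge lies in only boundedly many neighborhoods $N(T)$ then gives
\begin{equation*}
    \| \mu_{f_n} - \mu_f \|_{L^2(\m D, \dd_{\mathrm{hyp}})}^2 \leq C \sum_{e \in E} (\vartheta_n(e) - \vartheta(e))^2 \longrightarrow 0.
\end{equation*}

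To conclude, the construction also provides a uniform bound $\|\mu_{f_n}\|_\infty, \|\mu_f\|_\infty < k < 1$ inherited from the quasisymmetric estimate underlying Corollary \ref{cor:H_in_QS}. Standard Weil--Petersson Teichm\"uller theory---namely continuity of the Bers projection from the ball of $L^2 \cap L^\infty$ Beltrami differentials to $T_0(\m D)$ equipped with its Weil--Petersson topology, as recalled in Theorem \ref{thm:WP_Beltrami_L2}---then implies $h_n \to h$ in the Weil--Petersson metric.

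The main obstacle is verifying the cell-level Lipschitz estimate with a constant $L$ independent of $T$ and $n$. The proof of Theorem \ref{thm:H_Beltrami_bound} produces only an absolute bound of the form $\int_T |\mu_f|^2 \, \dd_{\mathrm{hyp}} \lesssim \sum_{e \in N(T)} \vartheta(e)^2$, so one must revisit that construction to check that the local quasiconformal models are differentiable in the shear parameters with uniformly bounded derivatives whenever those parameters lie in a common bounded ball. The generalized balanced condition is linear and hence preserved under taking differences $\vartheta_n - \vartheta$, which suggests that the Lipschitz estimate should be obtainable by essentially the same computation as the original $L^2$ estimate, provided one is careful about how each cell's model depends on the edges in $N(T)$.
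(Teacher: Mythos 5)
Your cell-level analysis is sound and is essentially the same computation the paper performs: both extensions $f$ and $f_n$ are built on the \emph{same} decomposition of $\m D$ into cells $C_v$ and strips $A_k$, on each strip the Beltrami coefficient is an explicit analytic function of the two fan sums $p_{s,v}(e_{k-1}\splus), p_{s,v}(e_k\splus)$, and by Lemma~\ref{lem:H_in_P0} these are finite linear combinations of nearby diamond shears; Corollary~\ref{cor:alpha_beta_props} then yields the uniform Lipschitz estimate you want, and summing with \eqref{eq:H_implies_P0} applied to $s_{h_n}-s_h$ gives $\norm{\mu_{f_n}-\mu_f}_2\to 0$. The genuine gap is the final step. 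Theorem~\ref{thm:WP_Beltrami_L2} is only the existence characterization and contains no continuity statement, and the continuity (indeed holomorphy) of the Bers projection established in \cite{TT06} is with respect to the \emph{combined} $L^\infty\cap L^2$ norm on the ball of Beltrami differentials, whereas you have produced only $L^2$ convergence with a uniform $L^\infty$ bound $k<1$. If instead you try to reduce to the origin-based estimate $\norm{B([\mu])}_{A_2}\le C\norm{\mu}_2$ that the paper quotes from \cite{TT06}, you must pass to the Beltrami coefficient of $f_n\circ f^{-1}$, and the change of variables by the quasiconformal map $f$ does not in general preserve the hyperbolic $L^2$ norm up to a constant: the hyperbolic Jacobian of a quasiconformal self-map of $\m D$ can be unbounded (e.g.\ radial stretches), so $\norm{\mu_{f_n}-\mu_f}_2\to 0$ does not automatically transfer to the composed coefficient. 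As written, "standard Weil--Petersson theory then implies convergence'' is not justified.

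This is precisely the point the paper's argument is engineered around: Theorem~\ref{thm:H_convergence} takes $f_{h_n}\circ f_h^{-1}$ as the extension of $h_n\circ h^{-1}$ and computes \emph{its} Beltrami coefficient explicitly on the cells of $h(\Farey)$, namely $\bigl(\alpha_{g}'-\alpha_h'+\ii(\beta_g'-\beta_h')\bigr)\big/\bigl(\alpha_g'+\alpha_h'+\ii(\beta_g'-\beta_h')\bigr)\circ\alpha_h^{-1}$ on each strip, so the $\ell^2$-to-$L^2$ bound is obtained directly for the composition; Lemma~\ref{lem:beltrami_convergence_WP} then only needs the estimate at the origin together with the topological group property of $\WP(\m T)$. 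Your route can be repaired in two ways: either follow the paper and work with the composed extension from the start, or note that the same pointwise strip estimate also gives $\norm{\mu_{f_n}-\mu_f}_\infty\le C\sup_{v,e}\abs{p_{s_{h_n},v}(e\splus)-p_{s_h,v}(e\splus)}\to 0$ (again by \eqref{eq:H_implies_P0} applied to $s_{h_n}-s_h$), after which convergence holds in the full $L^\infty\cap L^2$ norm and the holomorphy of the Bers projection from \cite{TT06} legitimately applies. Either fix requires an additional argument that your proposal currently omits.
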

We obtain immediately the following corollary.
\begin{cor}
    The class of continuously differentiable and piecewise M\"obius circle homeomorphisms \textnormal(with break points in $V$\textnormal) is dense in $\mc H$ and in $\WP(\m T)$.
\end{cor}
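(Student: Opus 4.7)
The plan is to combine two previously established ingredients: the finite case correspondence from Lemma~\ref{lem:finite_balance} and Proposition~\ref{prop:s_to_theta_finite} (which identifies finitely supported diamond shears with $C^1$ piecewise Möbius homeomorphisms having breakpoints in $V$), with the continuity statement of Corollary~\ref{cor:H_convergence_cor} (which upgrades $\ell^2$ convergence of diamond shears to convergence in the Weil--Petersson metric).

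Given $h \in \mc H$ with diamond shear coordinate $\vartheta_h \in \ell^2(E)$, I would enumerate $E = \{e_1, e_2, \ldots\}$ and set $\vartheta_n := \vartheta_h \cdot \mathbf{1}_{\{e_1, \ldots, e_n\}}$. Because one unit of diamond shear at an edge $e$ is by definition the shear function carrying values $+1, -1, +1, -1$ on the four boundary edges of $Q_e$, the shear function $s_n$ corresponding to $\vartheta_n$ is finitely supported; moreover, at each vertex $v \in V$, each individual basic diamond shear contributes $0$ to the sum over incident edges (since the two boundary edges of a given $Q_e$ incident to $v$ carry opposite signs), so $s_n$ satisfies the finite balanced condition. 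Lemma~\ref{lem:finite_balance} and Proposition~\ref{prop:s_to_theta_finite} then identify $s_n$ with a continuously differentiable, piecewise Möbius homeomorphism $h_n$ whose breakpoints lie in $V$. Since $\|\vartheta_n - \vartheta_h\|_{\ell^2(E)} \to 0$ by construction, $h_n$ converges to $h$ in the $\ell^2$ topology on diamond shears, proving density in $\mc H$.

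For density in $\WP(\m T)$, Corollary~\ref{cor:H_convergence_cor} immediately upgrades this to $h_n \to h$ in the Weil--Petersson metric. Together with the density of $\mc H$ in $\WP(\m T)$ recalled after Theorem~\ref{thm:intro_wp_H} (namely that $\mc H$ contains the smooth diffeomorphisms by Theorem~\ref{thm:intro_holder}, which are dense in $\WP(\m T)$), a standard diagonal argument produces for each $h \in \WP(\m T)$ an approximating sequence of $C^1$ piecewise Möbius homeomorphisms with breakpoints in $V$.

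The only step requiring genuine verification is the bookkeeping identification $\vartheta_n \leftrightarrow s_n$: we must ensure that the finite truncation of diamond shear coordinates corresponds to an actual finitely supported, finite balanced shear function, rather than merely an expression under the combinatorial operator $\Psi$ of Section~\ref{sec:combinatorial_diamond}. This reduces to checking linearity of the diamond-to-shear assignment on finite sums of basic diamond shears, which is transparent from the four-edge cross pattern. Once this is done, everything else is a routine application of the previously stated convergence theorem.
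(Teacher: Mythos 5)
Your proposal is correct and follows essentially the same route as the paper: identify the $C^1$ piecewise M\"obius class with finitely supported diamond shears via Lemma~\ref{lem:finite_balance} and Proposition~\ref{prop:s_to_theta_finite}, truncate $\vartheta_h$ in $\ell^2$, and invoke Corollary~\ref{cor:H_convergence_cor} together with the density of $\mc H$ in $\WP(\m T)$ (via smooth diffeomorphisms and Theorem~\ref{thm:C1alpha}). The verification that each basic diamond shear is balanced at every vertex, which you single out as the only point needing care, is exactly the content of Remark~\ref{rem:kernel} in the paper.
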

Indeed, this class is equal to the class of circle homeomorphisms with finitely supported diamond shear coordinates  (Lemma~\ref{lem:finite_balance} and Proposition~\ref{prop:s_to_theta_finite}) which is dense in $\mc H$ for the Weil--Petersson metric by the above theorem. 

\bigskip
 
Finally, we study infinitesimal shear and diamond shear coordinates on the tangent spaces of 
$\mc H$. Since $\mc H \subset \WP(\m T)$, we compute the Weil--Petersson metric in terms of diamond shears. 

\begin{thm}[See {Corollary~\ref{cor:h_is_in_H32}, Theorem~\ref{thm:WP_one_diamond} and Corollary~\ref{cor:full_metric}}]
Each $\ell^2$-summable infinitesimal diamond shear gives rise to a $H^{3/2}$ vector field on $\m T$. 
    Let  $u_1,u_2$ be the $H^{3/2}$ vector fields corresponding to the $\ell^2$-summable infinitesimal diamond shears $\dot \vartheta_1, \dot \vartheta_2 \in T_{h} \mc H \subset T_h\WP(\m T)$.
    Then 
    $$\brac{u_1, u_2}_{\WP} = \sum_{e_1 \in E} \sum_{e_2 \in E} \dot \vartheta_1(e_1) \dot \vartheta_2 (e_2) \, g (h(Q_{e_1}), h(e_1), h(Q_{e_2}), h(e_2)),$$
    where for $Q = (a_1,a_2,a_3,a_4)$, $e = (a_1,a_3)$,  $Q' = (b_1,b_2,b_3,b_4)$, $e' = (b_1,b_3)$,
    $$g (Q, e, Q', e') = \frac{2}{\pi} \Re \sum_{j,k=1}^4 \frac{(-1)^{j+k} a_j^2 \bar b_k^2 (a_{j+1}-a_{j-1})(\bar b_{k+1} -\bar b_{k-1})}{(a_{j+1}-a_{j})(a_{j}-a_{j-1})(\bar b_{k+1}-\bar b_{k})(\bar b_{k} - \bar b_{k-1})} \sigma(a_j,b_k),$$
    and for $a,b \in \m T$,
    $$
\sigma (a,b)=\sum_{p=0}^{\infty} \frac{(a\bar{b})^{p+1}}{(1+p)(2+p)(3+p)}.$$
\end{thm}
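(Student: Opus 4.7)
The plan is to identify the $H^{3/2}$ vector field $u$ on $\m T$ that generates an infinitesimal diamond shear deformation, then compute the Weil--Petersson pairing by reducing to the case of a single diamond shear via bilinearity of the WP inner product.

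First, I would treat a unit infinitesimal diamond shear $\dot\vartheta = \delta_e$ supported on a single edge $e = (a,c) \in E$. By the balanced condition for finitely supported diamond shears (Lemma~\ref{lem:finite_balance}), the corresponding infinitesimal variation of $h$ is piecewise M\"obius and $C^1$ across the sides of the quadrilateral $h(Q_e)$, and vanishes outside $h(Q_e)$. Differentiating the analytic expression of Proposition~\ref{prop:intro_theta_analytic} at the shear parameter yields an explicit formula for the boundary vector field $u_e$ on $\m T$: continuous, real-analytic on each arc of $\m T$ between consecutive vertices of $h(Q_e)$, supported on the boundary arcs of $h(Q_e)$, with a controlled jump of its second derivative at each vertex. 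A direct check on this expression puts $u_e$ in $H^{3/2}$, which establishes Corollary~\ref{cor:h_is_in_H32} for a single diamond; for a general $\ell^2$-summable family $\dot\vartheta$, the vector field $u = \sum_e \dot\vartheta(e)\, u_e$ then lies in $H^{3/2}$ by the $\ell^2$ summability of the diagonal pairings that drop out of the calculation below.

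Next, bilinearity of the Weil--Petersson inner product on $T_h \WP(\m T) \simeq H^{3/2}(\m T, \m R)/\mob$ reduces Corollary~\ref{cor:full_metric} to computing $\brac{u_{e_1}, u_{e_2}}_{\WP}$ for two unit diamond generators. I would use the standard Kirillov--Yuriev/Takhtajan--Teo realization of the WP metric as a Sobolev $H^{3/2}$ inner product on real vector fields on $\m T$ modulo M\"obius directions. The vector field $u_e$ admits a natural decomposition into four ``vertex pieces'' associated to the vertices $a_1,\dots,a_4$ of $h(Q_e)$, each carrying a coefficient of the form $\frac{a_j^2(a_{j+1}-a_{j-1})}{(a_{j+1}-a_j)(a_j-a_{j-1})}$ coming from matching the boundary values of a M\"obius piece across the two adjacent sides of the quadrilateral. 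Pairing both $u_{e_1}$ and $u_{e_2}$ term-by-term in this decomposition produces the double sum over $j,k \in \{1,2,3,4\}$ of the stated formula, with all remaining geometric information encoded in a single two-point kernel $\sigma(a_j, b_k)$.

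The main obstacle will be identifying the kernel $\sigma(a,b)$ explicitly as the stated series. Concretely, $\sigma$ should be the WP pairing of two simple ``vertex generator'' vector fields on $\m T$ with single poles at $a$ and $b$ respectively. Fourier-expanding such generators yields coefficients of the form $a^{-n-1}$ and $\bar b^{-n-1}$; combining these with the Sobolev weight $|n|(n^2-1)$ of the $H^{3/2}$ inner product on mean-zero vector fields and using the partial-fraction identity
\[
\frac{1}{(1+p)(2+p)(3+p)} = \frac{1}{2(1+p)} - \frac{1}{2+p} + \frac{1}{2(3+p)}
\]
yields precisely $\sigma(a,b) = \sum_{p \geq 0} (a\bar b)^{p+1}/[(1+p)(2+p)(3+p)]$. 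The signs $(-1)^{j+k}$, the prefactor $2/\pi$, and the real part then come from tracking vertex orientations around $h(Q_{e_1}), h(Q_{e_2})$ and the reality of the WP form. Convergence of the double sum in the stated formula is automatic from the $\ell^2$ hypothesis on the $\dot\vartheta_i$ together with the uniform bound $|\sigma(a,b)| \leq \sum_p 1/[(1+p)(2+p)(3+p)] < \infty$, which holds since $|a\bar b| \leq 1$.
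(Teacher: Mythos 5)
There is a genuine gap, and it sits exactly at the analytic heart of the statement: the passage from a single diamond to an infinite $\ell^2$-summable family. You justify $u=\sum_e\dot\vartheta(e)\,u_e\in H^{3/2}$ ``by the $\ell^2$ summability of the diagonal pairings,'' but the fields $u_e$ are not orthogonal for $\brac{\cdot}{\cdot}_{\WP}$ --- the nonvanishing off-diagonal entries $g(h(Q_{e_1}),h(e_1),h(Q_{e_2}),h(e_2))$ are the whole content of the theorem --- so finiteness of the diagonal terms controls nothing. What is actually needed is an operator bound $\norm{\sum_e\dot\vartheta(e)u_e}_{\WP}\le C(h)\norm{\dot\vartheta}_{\ell^2}$, i.e.\ $\ell^2$-boundedness of the Gram matrix. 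Your related claim that convergence of the double sum is ``automatic'' from the $\ell^2$ hypothesis plus a uniform bound on $\sigma$ is also false: a uniformly bounded kernel paired against two $\ell^2$ sequences need not converge (take $g\equiv 1$), and in any case the rational prefactors in $g$ are not uniformly bounded over pairs of Farey quads, since the gaps $a_{j+1}-a_j$ become arbitrarily small. The paper supplies precisely the missing bound in Lemma~\ref{lem:bounded_operator} (whence Corollary~\ref{cor:h_is_in_H32}), by a mechanism your proposal does not replace: the explicit quasiconformal extension adapted to the Farey cell decomposition, whose Beltrami coefficient satisfies an $L^2(\m D,\dd_{\mathrm{hyp}})$ bound by $C(h)\norm{\dot\vartheta}_{\ell^2}$ (Theorem~\ref{thm:H_convergence}), followed by a Cauchy--Schwarz comparison with the harmonic Beltrami representative. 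The double-sum formula is then proved for finitely supported $\dot\vartheta$ and extended by continuity of $\emb_h$; without the operator bound neither the membership in $H^{3/2}$ nor the convergence of the stated series is established.

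On the computation of $g$ for a single pair of diamonds, your Fourier route (the $2\pi\Re\sum_{n\ge2}(n^3-n)\tilde u_{1,n}\tilde u_{2,-n}$ realization of the metric) is genuinely different from the paper's, which works with the harmonic Beltrami differentials $\dot\mu_u(z)=-2y^2\phi(\bar z)$ coming from the infinitesimal Bers embedding of the four shears and evaluates an area integral over the disk, producing $\sigma(a,b)=\frac{1}{2\pi}\iint_{\m D}\frac{a}{1-a\bar z}\frac{\bar b}{1-\bar b z}(1-|z|^2)^2\,\dd A$ by a geometric-series expansion. Your route could in principle work, but as sketched it skips the one point that makes either computation legitimate: the individual ``vertex pieces'' $\frac{1}{z-\cayley(a_j)}$ are not admissible tangent directions (their quadratic differentials fail the Nehari bound and have divergent WP norm; in the disk model each term carries a $\bar\zeta^{-3}$ singularity), so term-by-term pairing is ill-defined until one invokes the cancellation $\sum_{j}(-1)^j\bigl(\tfrac{1}{a_{j+1}-a_j}+\tfrac{1}{a_j-a_{j-1}}\bigr)a_j^{p}=0$ for $p=0,1,2$ to regularize. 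A careful execution of your plan would have to surface and use this identity; as written, the derivation of $\sigma$ from the partial-fraction identity is a guess at the answer rather than a proof.
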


The expression of the metric tensor is relatively complicated. In contrast, the symplectic form has a very simple expression first noticed by Penner in \cite{Penner1993UniversalCI,Penner_WPvol}. Using the formula in \cite[Thm.\,5.5]{Penner1993UniversalCI} and the relationship between diamond shears and $\log \Lambda$-lengths that we describe in Section \ref{sec:diamond_logL}, we 
can rewrite the Weil--Petersson symplectic form in terms of a mixture of infinitesimal shears and diamond shears as follows.

\begin{thm}[See {Theorem~\ref{thm:symplectic_form}}]
Let $\omega$ denote the Weil--Petersson symplectic form on $\WP(\m T)$ and fix $h\in \mc H$. Suppose that $u_1,u_2$ are the $H^{3/2}$ vector fields corresponding to the $\ell^2$-summable infinitesimal diamond shears $\dot \vartheta_1, \dot \vartheta_2 \in T_{h} \mc H \subset T_h\WP(\m T)$ with  infinitesimal shear coordinates $\dot s_1, \dot s_2$ respectively. Then 
\begin{align*}
    \omega(u_1,u_2) = \sum_{e\in E} \dot \vartheta_1(e) \dot s_2(e) =  -\sum_{e\in E}  \dot s_1(e) \dot  \vartheta_2(e).
\end{align*}
\end{thm}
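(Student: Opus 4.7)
My plan is to derive the identity from Penner's expression of the Weil--Petersson symplectic form as a sum over ideal triangles of $\Farey$ in $\log\Lambda$-length coordinates, combined with the identification $\vartheta_h(e) = -\log\Lambda_h(e)$ from Corollary~\ref{cor:intro_theta_logL} and the infinitesimal Ptolemy-type relation between shears and $\log\Lambda$-lengths on the boundary of $Q_e$. Absolute summability, coming from the $\ell^2$ assumption on $\dot\vartheta_i$, will justify all rearrangements of the infinite sums.

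By Theorem~5.5 of \cite{Penner1993UniversalCI}, the Weil--Petersson 2-form on the decorated universal Teichm\"uller space admits the expression
\begin{align*}
\omega = -\sum_{T}\bigl(dL(e_a)\wedge dL(e_b) + dL(e_b)\wedge dL(e_c) + dL(e_c)\wedge dL(e_a)\bigr),
\end{align*}
where $T$ ranges over ideal triangles of $\Farey$, its boundary edges are $e_a,e_b,e_c$ in counterclockwise cyclic order, and $L(e) := \log\Lambda_h(e)$. Evaluated on $(u_1,u_2)$ and grouped by the first factor, each triangular term becomes
\begin{align*}
\psi_T(u_1,u_2) = \sum_{e\in T}\dot L_1(e)\bigl[\dot L_2(\mathrm{next}_T(e)) - \dot L_2(\mathrm{prev}_T(e))\bigr],
\end{align*}
where $\mathrm{next}_T(e)$ and $\mathrm{prev}_T(e)$ are the cyclic neighbors of $e$ in $T$.

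Next, I interchange summations and collect by edge. For fixed $e\in E$ with quadrilateral $Q_e=(a,b,c,d)$ and boundary edges $e_1,e_2,e_3,e_4$ in counterclockwise order (so $e=(a,c)$ is the diagonal and $\{e_1,e_3\}, \{e_2,e_4\}$ are the opposite pairs), the two triangles containing $e$ are $T_1 = (a,b,c)$ and $T_2 = (a,c,d)$, both counterclockwise. A direct inspection gives $\mathrm{next}_{T_1}(e) = e_1, \mathrm{prev}_{T_1}(e) = e_2$ and $\mathrm{next}_{T_2}(e) = e_3, \mathrm{prev}_{T_2}(e) = e_4$, so that
\begin{align*}
\sum_{T\ni e}\bigl[\dot L_2(\mathrm{next}_T(e)) - \dot L_2(\mathrm{prev}_T(e))\bigr] = \dot L_2(e_1) + \dot L_2(e_3) - \dot L_2(e_2) - \dot L_2(e_4) = \dot s_2(e),
\end{align*}
the last equality being the infinitesimal form of the Ptolemy-type identity $s(e)=\log\bigl(\Lambda(e_1)\Lambda(e_3)/\Lambda(e_2)\Lambda(e_4)\bigr)$. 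Substituting $\dot L_i = -\dot\vartheta_i$ yields $\omega(u_1,u_2) = \sum_{e\in E}\dot\vartheta_1(e)\,\dot s_2(e)$, and the second equality in the statement then follows from the antisymmetry of $\omega$. Absolute summability, justifying the rearrangements, is a consequence of Cauchy--Schwarz together with the observation that each edge belongs to exactly four sets $\partial Q_{e'}$: explicitly, $\sum_e|\dot\vartheta_1(e)\dot s_2(e)| \le 4\|\dot\vartheta_1\|_{\ell^2}\|\dot\vartheta_2\|_{\ell^2}$.

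The main obstacles I anticipate are (i) a careful match of the sign and orientation conventions in Penner's formula with those used for diamond shears in the paper, and (ii) invoking Penner's formula in the universal Weil--Petersson setting. For (ii), I would first verify the identity on the dense subspace of $\mc H$ of $h$ with finitely supported diamond shears, where both sides are finite sums and Penner's formula applies directly, and then extend by continuity: the right-hand side is jointly continuous in $(\dot\vartheta_1,\dot\vartheta_2)\in\ell^2\times\ell^2$ by the bound above, while $\omega$ is continuous on $\WP(\m T)$, and $\ell^2$-convergence of diamond shears implies Weil--Petersson convergence by Corollary~\ref{cor:H_convergence_cor}.
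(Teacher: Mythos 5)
Your algebraic core is correct and follows the same route as the paper: both arguments start from Penner's triangle-wise expression of the symplectic form in $\log\Lambda$-lengths (Theorem~\ref{thm:Penner_symplectic}), use the identification $\dot\vartheta=-\dd\log\Lambda$ (Corollary~\ref{cor:tangent_theta_logL}), and then regroup the triangle sum. You collect by edges and recognize the infinitesimal Ptolemy relation $\dot s(e)=\dot L(e_1)-\dot L(e_2)+\dot L(e_3)-\dot L(e_4)$, whereas the paper regroups by fans and expands $\sum_e\dot\vartheta_1(e)\dot s_2(e)$ via \eqref{eq:def_Phi}; these are the same rearrangement, your orientation bookkeeping is consistent with the paper's conventions, and the summability bound $\sum_e\abs{\dot\vartheta_1(e)\dot s_2(e)}\le 4\norm{\dot\vartheta_1}_{\ell^2}\norm{\dot\vartheta_2}_{\ell^2}$ is fine.

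The gap is in your step (ii), exactly where you anticipated trouble. Theorem~\ref{thm:Penner_symplectic} identifies $\widetilde{\omega}$ with the Weil--Petersson form only on $\Diff(\m T)/\mob(\m T)$, and your proposed dense subspace of base points $h$ with finitely supported diamond shears consists of piecewise M\"obius $C^{1,1}$ homeomorphisms, which are still not diffeomorphisms, so Penner's formula does not ``apply directly'' there. Moreover, extending the identity from a dense set of base points to all of $\mc H$ would require continuity in $h$ of $h\mapsto u_j=\emb_h(\dot\vartheta_j)$, which is not established; Corollary~\ref{cor:H_convergence_cor}, which you invoke, concerns convergence of the homeomorphisms themselves, not of the associated tangent vectors. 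The paper's fix is different and is the step you are missing: keep $h\in\mc H$ fixed, take $\dot\vartheta_1,\dot\vartheta_2$ finitely supported, observe that $u_1,u_2$ then depend only on the finitely many values $h(a_1),\dots,h(a_n)$ at the endpoints of the supporting edges, and replace $h$ by a genuine diffeomorphism $g$ agreeing with $h$ at those points, so that Penner's theorem applies at $g$. The subsequent passage to general $\ell^2$ tangent vectors (with $h$ fixed) is then justified by Lemma~\ref{lem:bounded_operator} together with the K\"ahler identity $\omega(u_1,u_2)=\langle u_1,J(u_2)\rangle_{\WP}$ with $J$ an isometry (Remark~\ref{rem:kahler}), which supplies the continuity of $\omega$ on $H^{3/2}\times H^{3/2}$ that your limiting argument needs.
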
 
We note the resemblance of this formula with the Weil--Petersson symplectic form 
on the finite dimensional Teichm\"uller spaces $T_{g,n}$ using the Fenchel-Nielson coordinates due to Wolpert~\cite{Wolpert_symplectic}:
$$\o = - \frac{1}{2} \sum_{\g \in \G} \dd l \wedge \dd \tau,$$
where $\G$ is a maximal multicurve on a Riemann surface of finite type.
Here, one may draw the analogy by interpreting $\dot s$ as the deformation by twisting along closed geodesics corresponding to $\dd \tau$, and $\dot \vartheta$ as the deformation by changing the length of geodesics corresponding to $ -\frac 12 \dd l$ by Corollary~\ref{cor:intro_theta_logL}. 

\bigskip 

\textbf{Outline of the paper.} In Section \ref{sec:prelim}, we recall definitions and basic results about the Farey tessellation, shears, and the classes of homeomorphisms of the circle that we consider. In Section~\ref{sec:diamond}, we relate the Weil--Petersson class to shears in the finite support case, and motivated by this define diamond shears coordinates combinatorially (on a class of shear functions called $\mc P$), analytically (in terms of $h'$ on $V$), and in terms of $\log \Lambda$-lengths. We also define the classes $\mc H, \mc S$. In Section~\ref{sec:Holder}, we prove that $\mc C^{1,\a} \subset \mc H$ (Theorem~\ref{thm:C1alpha}). In Section \ref{sec:relation_wp}, we prove the theorems relating $\WP(\m T)$, $\mc H$, and $\mc S$.  Section~\ref{sec:infinitesimal} is devoted to the infinitesimal theory of the Weil--Petersson metric and symplectic form. We define infinitesimal shears and diamond shears and compute the Weil--Petersson metric tensor (Theorem~\ref{thm:WP_one_diamond}, Corollary~\ref{cor:full_metric}) and symplectic form (Theorem~\ref{thm:symplectic_form}) in terms of shears and diamond shears on $\mc H$.

\subsection*{Acknowledgments}
The first and the second author are grateful to Chris Bishop for providing the motivation that led to this work. 
We thank Robert Penner for suggestions on the citations. 
D.S. is  partially supported by Simons grant 346391 and by a PSC CUNY grant. Y.W. is partially supported by NSF award DMS-1953945. C.W. was partially supported by NSF award DMS-1712862 and NSF award DMS-2153742. This work is also supported by the NSF Grant DMS-1928930 while Y.W. and C.W. participated in a program hosted
by the Mathematical Sciences Research Institute in Berkeley, California, during
the Spring 2022 semester. 

\section{Preliminaries}\label{sec:prelim}
\subsection{Farey tessellation} \label{sec:Farey}

For  $D = \m D$ or $\m H$, we say that a triangle $\tri\subset D$ is \textit{geodesic} if all its edges are geodesics for the hyperbolic metric on $D$. All triangles in this discussion will be geodesic. An \textit{ideal triangle} is a geodesic triangle with all its vertices on $\partial D$. An \textit{ideal triangulation} of $D$ is a (necessarily infinite) locally finite collection of non-overlapping ideal triangles that cover $D$. The data of an ideal triangulation is encoded in its edges and vertices. For $a\neq b  \in \partial D$, we write $(a,b)$ for the hyperbolic geodesic connecting $a,b$.

The \textit{Farey tessellation} is an ideal triangulation of $\mathbb{D}$ with many natural symmetries and properties. Since the Farey tessellation will be ubiquitous throughout this paper, we denote it just by $\Farey$, its set of edges by $E$, and its set of vertices by $V$. Unless otherwise specified, the edges in $E$ are unoriented and denoted $e=(a,b)$ where $a,b\in V$ are the endpoints of $e$.

Let $\tri_0$ be the ideal triangle with vertices $(1,\ii,-1)$. From $\tri_0$, all the other triangles in $\Farey$ are images of $\tri_0$ by reflections over edges which are hyperbolic (orientation reversing) isometries.
The dual tree to the Farey tessellation, which we denote $\dualtree$, will also play a central role in our discussions. Every triangle $\tri\in \Farey$ corresponds to a vertex $\tri^*$ in $\dualtree$, and every vertex in $\Farey$ corresponds to a face $v^*$ in $\m D \smallsetminus \dualtree$. Every edge $e\in E$ corresponds to the dual edge $e^*\in E^*$. We call the  edge $e_0^*$ dual to $(-1,1)$ the \textit{root edge} of $\dualtree$. 

The dual edges are sorted into \textit{generations} based on their graph distance to the $e_0^*$. We write $E_n^*\subset E^*$ for the set of dual edges within distance $n$ of $e_0^*$. From this we extend the definition of generations to the vertices, edges, and faces of $\Farey$ and $\dualtree$. We define $E_n\subset E$ to be the collection of edges dual to $E_n^*$, $V_n\subset V$ to be the vertices which are endpoints of edges in $E_n$, and $T_n^*$ to be the vertices of $\dualtree$ which are vertices of $E_n^*$. We then extend the definition of generation via duality to faces of $\Farey$ and $\dualtree$.
We say that a vertex, an edge or a face in $\Farey$ or in $\dualtree$ \emph{has generation $n$} if it belongs to the corresponding set of index $n$ but not $n-1$, and we write $\gen(\cdot)$ for the generation function.
It is easy to see:
\begin{lem}\label{lem:same-gen}
If an edge $e = (a,b) \neq e_0$, then $\gen (a) \neq \gen (b)$.
\end{lem}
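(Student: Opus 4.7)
The plan is to prove the lemma by induction on $n := \gen(e)$, strengthening the inductive hypothesis to: \emph{if $e = (a,b) \in E$ satisfies $\gen(e) = n \geq 1$, then exactly one endpoint of $e$ has generation $n$ and the other has generation strictly less than $n$.} This clearly implies $\gen(a) \neq \gen(b)$.

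For the base case $n = 1$, the four generation-$1$ edges are precisely the non-$e_0$ edges of the two triangles $\tri_0 = (1, \ii, -1)$ and $\tri_0' := (1, -\ii, -1)$ adjacent to $e_0$; each has one endpoint in $V_0 = \{-1, 1\}$ and the other in $\{\ii, -\ii\}$, so the strengthened claim holds.

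For the inductive step, suppose $n \geq 2$ and the claim is established for all smaller generations. Let $e$ have generation $n$ and let $\tri$ be the triangle of $\Farey$ adjacent to $e$ whose dual $\tri^*$ lies at graph distance $n-1$ from $S := \{\tri_0^*, (\tri_0')^*\}$ in $\dualtree$; the other triangle adjacent to $e$ is then at distance $n$. Traveling one step further along the geodesic from $\tri^*$ towards $S$ in $\dualtree$ identifies a unique ``incoming'' edge $e_p$ of $\tri$, which has generation $n-1$. Applying the inductive hypothesis to $e_p = (u, v)$, we may assume $\gen(u) = n-1$ and $\gen(v) < n-1$. Writing $w$ for the third vertex of $\tri$, the edge $e$ must equal $(u, w)$ or $(v, w)$ since $e \neq e_p$, so the inductive step reduces to the subclaim $\gen(w) = n$.

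The edge $e$ is incident to $w$ with $\gen(e) = n$, giving $\gen(w) \leq n$ immediately. For the reverse inequality I will check that every edge of $\Farey$ incident to $w$ has generation at least $n$. The two edges of $\tri$ incident to $w$ are its two ``outgoing'' edges at $\tri^*$, both of generation $n$. For any other triangle $\tri'$ of $\Farey$ incident to $w$, the vertices of $\dualtree$ corresponding to triangles containing $w$ lie along the bi-infinite boundary path of the face $w^* \subset \m D \setminus \dualtree$; this path visits $\tri^*$ and leaves through the two edges of $\dualtree$ dual to the two edges of $\tri$ containing $w$ (the two outgoing edges), and by the tree structure of $\dualtree$ cannot re-enter the subtree at $\tri^*$ containing $e_p^*$. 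Hence $(\tri')^*$ lies in one of these two outgoing subtrees, satisfies $d_{\dualtree}((\tri')^*, S) \geq n$, and every edge of $\tri'$ therefore has generation $\geq n$, as desired.

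The main obstacle is the tree-combinatorics step in the last paragraph, confirming that the entire fan of triangles around $w$ sits inside the two outgoing subtrees at $\tri^*$; the remaining pieces are direct bookkeeping from the definition of generation.
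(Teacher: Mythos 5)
Your argument is correct. The paper does not actually supply a proof of this lemma (it is dismissed with ``It is easy to see''), so there is nothing to compare against; your strengthened induction --- every generation-$n$ edge has one endpoint of generation exactly $n$ and one of generation $<n$ --- is a valid way to fill that gap. The only delicate point is the final one, that the whole fan of triangles around the new vertex $w$ lies in the two subtrees of $\dualtree$ hanging off $\tri^*$ through its outgoing edges, so that every edge incident to $w$ has generation at least $n$; your appeal to the fact that the boundary path of the face $w^*$ passes through $\tri^*$ exactly once, entering and leaving by the two outgoing dual edges, settles this correctly.
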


The Farey tessellation is nicely represented in  the upper half plane $\mathbb{H}$.
We choose the following Cayley map 
$\cayley$ which maps $\mathbb{D}$ conformally onto $\mathbb{H}$: 
\begin{align*} 
\cayley: z\mapsto -\ii \frac{z+1}{z-1}, \quad \text{which maps } -1 \mapsto 0, \, 1 \mapsto \infty, \, \ii \mapsto -1.
\end{align*}
Under this identification, $V$ is sent to $\mathbb{Q}\cup \{\infty\}$, therefore, $V = \m Q^2 \cap \m T$. 
In fact, the modular group $$\PSL(2,\m Z) = \left \{ A
= \begin{psmallmatrix} a & b \\ c & d
\end{psmallmatrix} \colon a,b,c,d \in \m Z, \, ad - bc = 1\right\}_{/A\sim - A}$$
acts on $\m H$ by fraction linear transformations
\begin{equation} \label{eq:mobius}
z \mapsto \frac {az +b}{cz+d} =: A(z).
\end{equation}
The image of the Farey tessellation under $\cayley$ contains the ideal triangle $\cayley(\tri_0)$ of vertices $\{-1,0,\infty\}$ and is preserved by the action of $\PSL(2,\mathbb Z)$ on $\mathbb{H}$ which is generated by maps $z \mapsto z+1$ and $z \mapsto - 1/z$. It is then not hard to see that $\PSL(2, \m Z)$ acts transitively on $\cayley(E)$ (faithfully on oriented edges) and $\cayley(V) = \mathbb{Q} \cup \{\infty\}$.

Each element in $\m Q \smallsetminus \{0\}$ will be written in the form $\frac pq$, where $q \in \m Z_{\ge 1}$, $p \in \m Z$, and $p,q$ are co-prime. We use the convention $0 = \frac 01$ and $\infty = \frac 10 = \frac{-1}{0}$.
We recall the following basic fact about the Farey tessellation in $\m H$.  For readers' convenience we also include the elementary proof of this classical result.

\begin{figure}[ht]
\centering
\includegraphics[scale=0.45]{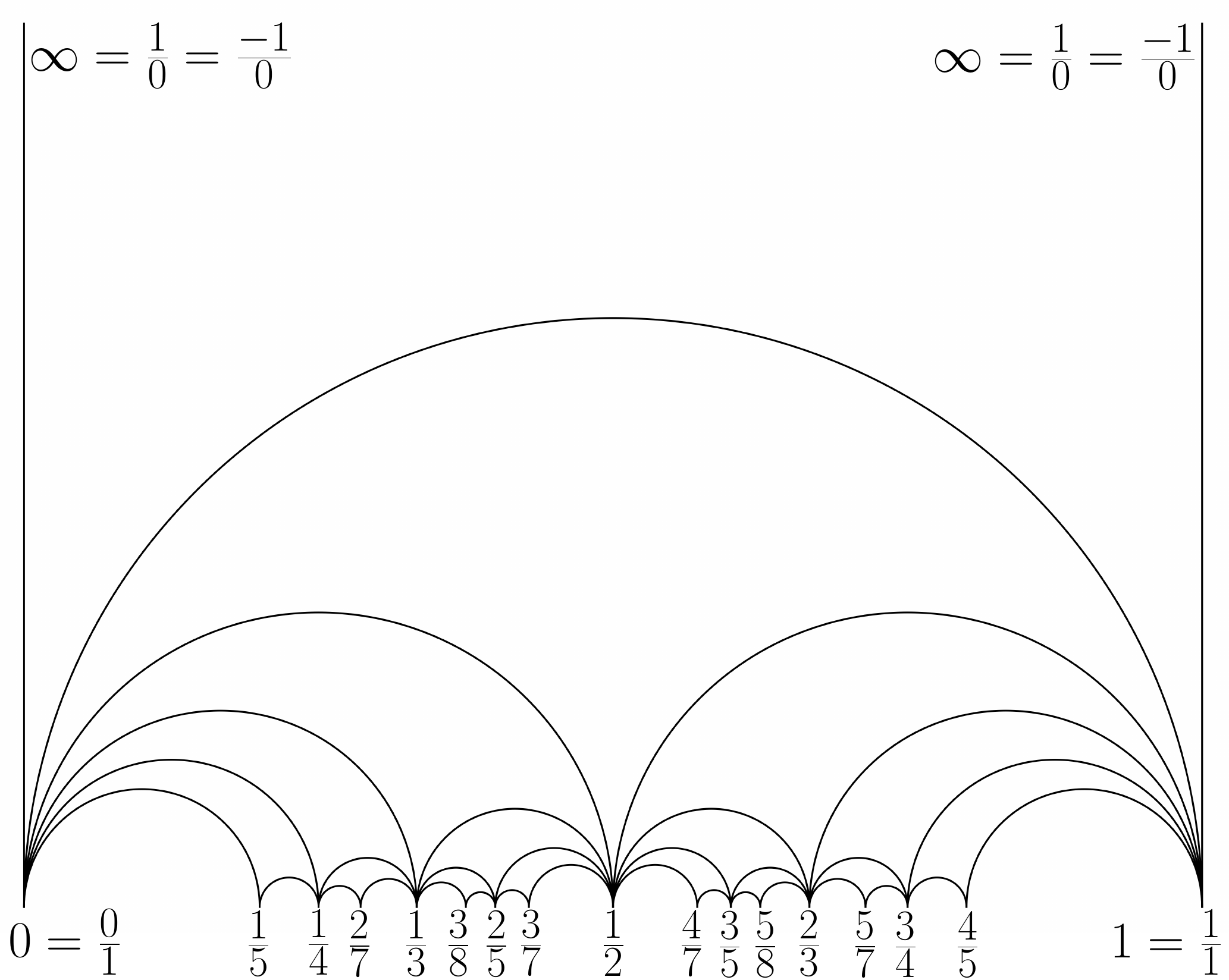}
\caption{\label{fig:Farey_H} Farey tessellation in $\m H$ between $0$ and $1$ up to generation 4 with vertices labeled.}
\end{figure}

We say that $c \in V$ is a \emph{child} of $(a,b)\in E$, if $c$ has one generation larger than $(a,b)$ and $(a,b,c)$ is a triangle in $\Farey$. Apart from $(-1, 1)$ which has two children $\{\ii, -\ii\}$, all other edges have only one child.

\begin{lem}\label{lem:farey-children}
An edge $(\frac pq,\frac rs) \in \cayley(E)$, if and only if $\abs{ps -rq} = 1$. Moreover, $p+r$ and $q+s$ are co-prime and the child of $(\frac pq,\frac rs)$ is $\frac{p+r}{q+s}$. Here, we choose the convention $\infty = \frac 10$ if $p \ge 0, q = 1$, and $\infty = \frac{-1}0$ if $p \le 0, q = 1$ \textnormal(for $p = 0$, we use both conventions\textnormal). 
\end{lem}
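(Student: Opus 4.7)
The plan is to exploit the transitive action of $\PSL(2, \m Z)$ on $\cayley(E)$ together with a matrix encoding of fractions. Identifying each $\frac{p}{q} \in \m Q \cup \{\infty\}$ with a coprime column $(p,q)^\top \in \m Z^2$ (using the stated sign conventions at $\infty$), the Möbius action in \eqref{eq:mobius} becomes left multiplication on such columns, and $ps - rq$ is exactly the determinant of the matrix $M = \begin{pmatrix} p & r \\ q & s \end{pmatrix}$ whose two columns encode the endpoints of the edge.

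For the ``only if'' direction of the equivalence, the root edge $(0,\infty)$ is encoded by $M_0 = \begin{pmatrix} 0 & 1 \\ 1 & 0 \end{pmatrix}$ with $|\det M_0| = 1$, and since $\SL(2,\m Z)$ preserves determinants, $|ps - rq| = 1$ for every image of $(0,\infty)$ under this group. Transitivity of $\PSL(2,\m Z)$ on $\cayley(E)$, noted just above the lemma, then gives the conclusion for all edges. Conversely, given $|ps - rq| = 1$, negating the entries of one column if necessary (which leaves the fraction unchanged) arranges $\det M = 1$, so $M \in \SL(2,\m Z)$ and the corresponding Möbius transformation sends $(0,\infty)$ to $(M(0), M(\infty)) = (\frac{r}{s}, \frac{p}{q}) \in \cayley(E)$. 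Coprimality of $p+r$ and $q+s$ is then immediate, since any common divisor $d$ divides $s(p+r) - r(q+s) = ps - rq = \pm 1$.

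For the child formula, the same matrix $M \in \SL(2,\m Z)$ carries the two children $\pm 1$ of the root edge to the third vertices of the two triangles adjacent to $(\frac{p}{q}, \frac{r}{s})$, and one computes $M(1) = \frac{p+r}{q+s}$ and $M(-1) = \frac{p-r}{q-s}$. The main obstacle is singling out which of these is the actual child, i.e.\ the vertex of strictly larger generation, since $M$ does not preserve the generation function. I would handle this by induction on $\gen(e)$ where $e = (\frac{p}{q}, \frac{r}{s})$: at the root the two vertices $\pm 1$ are recovered from the mediant formula via the two conventions $\infty = \pm \frac{1}{0}$, matching the two children of $(0,\infty)$. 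For the inductive step, an edge $e$ of generation $n \ge 1$ appears as a side of the triangle formed by its parent edge and the child of that parent (which by the inductive hypothesis is given by the mediant formula), so the next child of $e$ is forced to be the mediant vertex on the side of $e$ opposite the parent triangle. A direct check using the usual mediant inequality on $\m R \cup \{\infty\}$ confirms that this opposite vertex is $\frac{p+r}{q+s}$ rather than $\frac{p-r}{q-s}$, completing the induction.
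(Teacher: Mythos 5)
Your proof of the main equivalence is essentially the paper's: both encode the edge as a matrix $M=\begin{psmallmatrix} p & r\\ q & s\end{psmallmatrix}$, use the transitivity of $\PSL(2,\m Z)$ on $\cayley(E)$ stated just above the lemma, and read off $|ps-rq|=1$ from the determinant (the paper tracks the column-sign ambiguity via explicit factors $\a,\b\in\{\pm1\}$, which your $|\det|$ phrasing absorbs; your coprimality argument via $s(p+r)-r(q+s)=\pm1$ is a slightly more direct version of the paper's observation that $\begin{psmallmatrix} p+r & r\\ q+s & s\end{psmallmatrix}\in\PSL(2,\m Z)$). Where you genuinely diverge is in identifying \emph{which} of the two adjacent vertices $M(1)=\frac{p+r}{q+s}$ and $M(-1)=\frac{p-r}{q-s}$ is the child. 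The paper settles this by an ordering argument: it shows $\frac rs<\frac{p+r}{q+s}<\frac pq$, so the mediant is the adjacent vertex lying between the endpoints, hence on the far side from the root. You instead induct on generation, observing that for an edge of the form (parent endpoint, mediant of parent) the candidate $M(-1)$ collapses back to the \emph{other} endpoint of the parent edge, forcing the child to be the mediant. Both work; your induction has the advantage of not leaning on the unstated geometric fact that the child must lie in the interval cut off by the edge, at the cost of carrying the base case at the root (where the two conventions for $\infty$ produce the two children $\pm1$) and a case analysis when an endpoint is $\infty$, which the paper also handles separately.
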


\begin{proof}
Assume that $(\frac pq, \frac rs) \in \cayley(E)$. Since $\PSL(2,\m Z)$ acts transitively on $\cayley(E)$, there exists an element $A \in \PSL(2,\m Z)$, such that $A (\infty) = \frac pq $ and $A(0) = \frac rs$. Hence $A =  \begin{psmallmatrix} \a p & \b r \\ \a q & \b s\end{psmallmatrix}$ for some $\a, \b \in \m Z$. Since $\det A = 1 = (ps -rq) \a\b$ and $p, r, q, s \in \m Z$, we have $\a, \b  \in \{1, -1\}$ and $\abs{ps -rq} = 1$.
Conversely, if $\abs{ps- rq} = 1$, we let $A =  \begin{psmallmatrix} \a p & r \\ \a q & s\end{psmallmatrix}$ with $\a \in \{ 1, -1\}$ such that $A \in \PSL(2, \m Z)$. Then $(\frac pq,\frac rs)$ is the image of $(0,\infty)$ under the fractional linear transformation $A$. Therefore $(\frac pq,\frac rs) \in  \cayley(E)$.

 Now we compute the child of $(\frac pq, \frac rs)$. We treat first the case where $s$ or $q = 0$. By symmetry, we assume that $s = 0$ and $q \neq 0$.  Using our convention, this happens only if $p \ge 0$, $q = 1$, $r = 1$ (or resp. $p \le 0$, $q = 1$, $r = -1$). The child of $(\infty = \frac 10, p)$ is $p+1$ and child of $(\infty = \frac{-1}{0},p)$ is $p-1$ as claimed. 
 
 Now we consider the case where $s,q \neq 0$. By symmetry, we assume that $ps - rq =  1$. Note that 
$$ \frac pq = \frac{1}{sq} + \frac rs, $$
$s,q \ge 0$ implies that $\frac rs < \frac pq$.

The matrix $A' =  \begin{psmallmatrix} p + r & r \\ q + s & s\end{psmallmatrix} \in \PSL(2, \m Z)$. Therefore $p+r$ and $q+s$ are co-prime. The previous result shows that $\frac{p+r}{q+s}$ is adjacent to $\frac rs$. Consider similarly the matrix $A'' =  \begin{psmallmatrix} p  & p+ r \\ q &  q+ s\end{psmallmatrix}$, we obtain that $\frac{p+r}{q+s}$ is adjacent to $\frac pq$. Moreover, we have the inequalities
$$ \frac rs <\frac{p+r}{q+s}< \frac pq,$$
which shows that $\frac{p+r}{q+s}$ is the child of the edge $(\frac pq, \frac rs)$. 
\end{proof}

\subsection{Shear along an edge}\label{sec:shear}

Let $e$ be a hyperbolic geodesic in the disk connecting $a,c\in \m T$. A quad $Q$ around $e$ is an ideal quadrilateral in $\m D$ with 
vertices $a,b,c,d\in\m T$ in counterclockwise order, for some $b,d \in \m T$. 
Recall the \emph{cross ratio} of 
$a,b,c,d$ is 
\begin{align*}
    \Cr(a,b,c,d) = \frac{(b-a)(d-c)}{(c-b)(d-a)}.
\end{align*}

\begin{df}
The \emph{shear} of $Q = (a,b,c,d)$ along $e = (a,c)$ is $s(Q, {e}) := \log \Cr(a,b,c,d)$.
\end{df}
The shear of $Q$ along a diagonal $e$ does not depend on the orientation of $e$ since $\Cr(a,b,c,d) = \Cr(c,d,a,b)$. The cross ratio is also invariant under M\"obius transformations, and hence so is the shear of a quad around an edge. We can use the Cayley transform $\cayley$ to easily compute the shear for quads around $e_0 = (-1,1)$. 
\begin{ex}\label{ex: e0quad}
Consider a quad around the edge ${e}_0 = (-1,1)$ of the form $Q = \{1,\ii,-1,x_s\}$. Under the Cayley transform $\cayley: (1,\ii,-1,x_s) \mapsto (\infty, -1,0,\cayley(x_s))$. Since the cross ratio is preserved by M\"obius transformations, we get that
\begin{align*}
    s(Q,{e}_0) = \log \Cr(\infty, -1,0,\cayley(x_s)) = \log \cayley(x_s).
\end{align*}
\end{ex}

While $s(Q,e)$ does not depend on the orientation of $e$, orienting $e$ is useful for the following geometric interpretation of the shear (which also explains the name). The quad $Q$ can be thought of as two triangles glued along $e$. Choosing an orientation of ${e}$, we call the triangle on the left of ${e}$ (when $e$ is pointing up) with respect to the orientation $\tri_L$ and the other $\tri_R$. Geometrically, the shear of $Q$ along ${e}$ measures how $\tri_L,\tri_R$ are glued together along ${e}$ to construct $Q$. 

\begin{lem} \label{lem:single_shear_hyperbolic}
Let $\vec{e}=(c,a)$ be oriented from $c$ to $a$ so that $b\in \tri_L$ and $d\in \tri_R$. For any point $x\in \m T$, define $m_x(e)$ to be the intersection between $e$ and the hyperbolic geodesic through $x$ perpendicular to $e$. Then 
\begin{align*}
    s(Q,{e}) = \pm d_{\text{hyp},\m D}(m_b({e}), m_d(e))
\end{align*}
The sign is positive if $m_b$ is before $m_d$ along $\vec{e} = (c,a)$ and negative otherwise. See Figure~\ref{fig:single_shear}.
\end{lem}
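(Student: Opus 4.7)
The plan is to exploit the M\"obius invariance of both sides and reduce to a canonical configuration where everything can be computed explicitly. Since the cross ratio is invariant under $\mob(\m T)$ (hence so is $s(Q,e)$), and since M\"obius transformations are hyperbolic isometries (so they preserve both hyperbolic distance and the perpendicular feet $m_b(e), m_d(e)$), I may pre-compose with a M\"obius map sending the disk to $\m H$ and the oriented edge $\vec e = (c,a)$ to the positive imaginary axis oriented from $0$ upward to $\infty$. With the convention that $b\in \tri_L$ and $d\in \tri_R$ with respect to $\vec e$, this places $b$ on the negative real axis and $d$ on the positive real axis.

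First I would compute the left-hand side. Writing $a=\infty$, $c=0$, $b<0$, $d>0$, a direct limit computation of the cross ratio gives
\begin{equation*}
    \Cr(\infty,b,0,d) \;=\; \lim_{a\to \infty}\frac{(b-a)(d-0)}{(0-b)(d-a)} \;=\; \frac{d}{|b|},
\end{equation*}
so $s(Q,e)=\log(d/|b|)$.

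Next I would identify the perpendicular feet. In $\m H$, the hyperbolic geodesic with one endpoint $b<0$ that is perpendicular to the imaginary axis is the Euclidean semicircle centered at $0$ of radius $|b|$ (its other endpoint is the reflection $-b$ of $b$ across $e$), and it meets $e$ at the point $|b|\,\ii$. Thus $m_b(e)=|b|\,\ii$ and, symmetrically, $m_d(e)=d\,\ii$. Using the standard formula $d_{\text{hyp},\m H}(y\,\ii,y'\,\ii)=|\log(y/y')|$ for two points on the imaginary axis, I obtain
\begin{equation*}
    d_{\text{hyp},\m H}\bigl(m_b(e),m_d(e)\bigr) \;=\; \bigl|\log(d/|b|)\bigr| \;=\; |s(Q,e)|.
\end{equation*}

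Finally I would verify the sign. Along $\vec e=(c,a)$, i.e.\ moving upward along the imaginary axis in $\m H$, the point $m_b(e)=|b|\ii$ precedes $m_d(e)=d\ii$ precisely when $|b|<d$, which is equivalent to $\log(d/|b|)>0$, i.e.\ $s(Q,e)>0$; the reverse inequality gives $s(Q,e)<0$. This matches the sign convention stated in the lemma. The whole argument is elementary once the normalization is in place, so the only delicate point is a careful bookkeeping of the orientation and the left/right convention when transporting the configuration to the normalized one; no serious analytic obstacle arises.
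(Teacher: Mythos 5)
Your proof is correct and takes essentially the same approach as the paper: normalize by a M\"obius transformation so that the edge becomes the imaginary axis in $\m H$, compute the cross ratio and the feet of the perpendiculars explicitly, and read off the sign from the order of the feet along the oriented axis. The only (cosmetic) difference is that the paper first sends $a,b,c$ to $1,\ii,-1$ in the disk and then applies the Cayley transform, whereas you send $c,a$ to $0,\infty$ directly and keep $b,d$ as free parameters on the real line.
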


\begin{proof}
    Let $\vec{e} = (c,a)$ and $\vec{e}_0 =(-1,1)$ as oriented edges. Let $A$ be a M\"obius transformation that sends 
    $a,b,c$ to $1,\ii,-1$ respectively. Under this map, $\vec{e}$ is sent to $\vec{e}_0$, the quad $Q = (a,b,c,d)$ around $\vec{e}$ is sent to the quad $Q' = (1,\ii,-1,A(d) =:x_s)$ around $\vec{e}_0$, and the intersection points $m_b(e),m_d(e)$ are sent to the intersection points $m_{\ii}(e_0),m_{x_s}(e_0)$ respectively. Since shears are preserved under M\"obius transformations and using the calculation in Example \ref{ex: e0quad}, \begin{align*}
        s(Q,{e}) = s(Q',{e}_0) = \log \cayley(x_s).
    \end{align*}
    Since hyperbolic lengths are also preserved by M\"obius transformations, 
    \begin{align*}
        d_{\text{hyp},\m D}(m_{b}(e),m_{d}(e)) &= d_{\text{hyp},\m D}(m_{\ii}(e_0), m_{x_s}(e_0)).
    \end{align*}
    We can use the Cayley transform again to compute this distance. Under $\cayley$, $Q' = \{1,\ii,-1,x_s\}\mapsto \{\infty, -1,0,\cayley(x_s)\}$, $m_{\ii}(e_0) \mapsto \ii$, and $m_{x_s}(e_0) \mapsto \cayley(x_s)\ii$. Hence 
    \begin{align*}
        d_{\text{hyp},\m D}(m_{\ii}(e_0), m_{x_s}(e_0)) = d_{\text{hyp},\m H}(\ii, \cayley(x_s)\ii) = \bigg\lvert \int_1^{\cayley(x_s)} \frac{1}{y}\,\dd y \bigg\rvert = |\log \cayley(x_s)|.
    \end{align*}
    As for the sign, under the composition $\cayley\circ A$, $\vec{e}$ is sent as an oriented edge to $(0,\infty)$. Hence $m_b(e)$ is before $m_d(e)$ along $\vec{e}$ if and only if $1<\cayley(x_s)$.
\end{proof}

\begin{figure}[ht]
    \centering 
 \includegraphics[scale=.75]{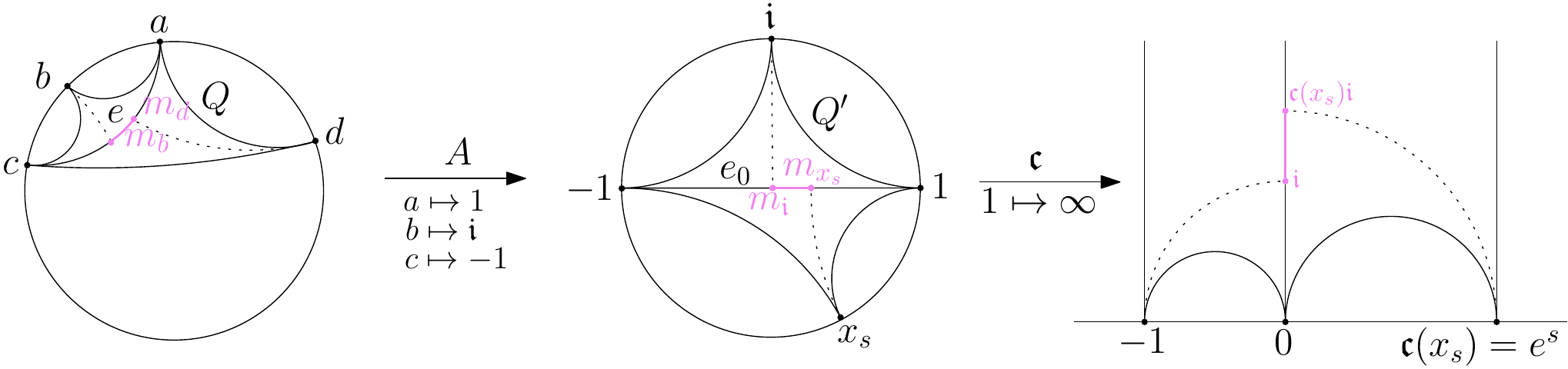}    
 \caption{\label{fig:single_shear} The quad $Q = \{a,b,c,d\}$ around $e=(c,a)$ with intersection points drawn, sent to $Q' = \{1,\ii,-1,x_s\}$ around $e_0 = (-1,1)$ by $A$, and then the image under the Cayley transform $\cayley$.}
\end{figure}

The pair of triangles around an edge $e$ in a tessellation forms a quad around $e$. We define the \textit{Farey quad}, denoted $Q_e$, to be the quad around $e$ in $\Farey$. The Farey tessellation is characterized by 
\begin{align*}
    s(Q_e,e) = 0, \qquad \text{for all } e\in E.
\end{align*}
This follows from the construction of the Farey tessellation via reflection. 

\subsection{Shear coordinates and 
classes of circle homeomorphisms}\label{sec:shear_coordinate_wp}

We now introduce shear coordinates for orientation-preserving homeomorphisms of the circle $\m T$. 
As one would often identify two homeomorphisms up to post-compositions by M\"obius transformations, namely by the group $\mob(\m T) \simeq \PSU(1,1) \simeq \PSL(2,\m R)$, e.g., 
in the context of Teichm\"uller theory, we assume throughout the paper that all circle homeomorphisms fix $\{-1, \ii, 1\}$ unless otherwise specified. 

If $h \colon \m T \to \m T$ is a circle homeomorphism, 
$h$ induces a map $V \to V_h = h (V)$ by sending $v \mapsto h(v) \in \m T$. For each edge $e \in E$ with end points $v_1$ and $v_2$, we write $h(e)$ for the hyperbolic geodesic connecting $h(v_1)$ and $h(v_2)$.
The image of $E$ under $h$ forms a new tessellation $h(\Farey)$ of the unit disk where the ideal triangle $\tri_0$ is fixed. 
 We define similarly $h(Q_e)$ to be the quad around $h(e)$ in $h(\Farey)$.

\begin{df}
The \emph{shear coordinate} of $h$ is the map $s_h \colon E \to \m R$ such that $s_h(e)$ is the shear of the edge $h(e)$ in the quad $h (Q_e)$. Namely, $s_h(e) = s(h(Q_e), h(e))$.
\end{df}
Notice that $s_h$ is determined by $h|_V$. However, not all functions $s: E \to \m R$ arise as a shear coordinate for some circle homeomorphism. In fact, from $s$ we recover a map $h : V \to \m T$ which is strictly increasing and fixes $1, \ii, -1$ such that $s_h = s$. However, $h (V)$ does not have to be dense in $\m T$ and cannot always be extended continuously to a homeomorphism.
In \cite{Saric_circle} the first author characterized the class of shear functions which arise from a circle homeomorphism, as well as those from quasisymmetric and symmetric homeomorphisms. See also \cite{Saric_new}. 
In the present article, we are particularly interested in the following classes of circle homeomorphisms.
\begin{df}[See \cite{TT06,shen13}]\label{def:WP}
A circle homeomorphism $h$ is called \emph{Weil--Petersson} if $h$ is absolutely continuous (with respect to arclength measure) and $\log h'$ belongs to the Sobolev space $H^{1/2}$.
In other words, 
\begin{equation}\label{eq:H1/2_WP}
    \iint_{\m T \times \m T} \abs{\frac{\log h'(\zeta) - \log h'(\xi)}{\zeta - \xi}}^2 \,\dd \zeta \dd \xi < \infty.
\end{equation}
\end{df}
We write $\WP(\m T)$ for the class of all Weil--Petersson homeomorphisms. The Weil--Petersson class has been studied extensively since the 80s because of its rich geometric structure and links to string theory \cite{bowick1987holomorphic,Witten,NagVerjovsky,pekonen}, Teichm\"uller theory \cite{Cui,Guo,TT06,shen13,Shen-Tang,Shen-Tang-Wu}, computer vision \cite{sharon20062d}, periodic KdV equations \cite{STZ_KdV}, and more recently, the discovery of links to SLE \cite{W2,W3,VW1,VW2}, hyperbolic geometry \cite{bishop-wp}, Coulomb gases \cite{johansson2021strong,WiegmannZabrodin_2022}, etc. See, e.g., \cite{bishop-wp} by Bishop for a survey as well as a number of new characterizations of the Weil--Petersson class.

Every quasisymmetric circle homeomorphism admits a quasiconformal extension $\m D \to \m D$, see, e.g, \cite{lehto2012univalent}. For a quasisymmetric homeomorphism to be Weil--Petersson, it has to satisfy the following equivalent $L^2$ condition.

\begin{thm}[See \cite{TT06,shen13}]\label{thm:WP_Beltrami_L2}
A circle homeomorphism $h$ is Weil--Petersson if and only if there is a quasiconformal extension $f \colon \m D\to \m D$ of $h$, such that the Beltrami coefficient $\mu = \ad \partial f / \partial f$ satisfies
$$\norm{\mu}^2_2 = \int_{\m D} \frac{4|\mu (z)|^2}{(1-\abs {z}^2)^2} \dd A(z) < \infty,$$
where $\dd A$ is the Euclidean area measure.
\end{thm}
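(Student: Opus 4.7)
Since this statement is a biconditional, I would prove the two directions separately, each through a different object built from $h$ or $f$.

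For the forward direction ($h \in \WP(\m T) \Rightarrow$ existence of an $L^2_{\text{hyp}}$ extension), the plan is to exhibit a specific extension and estimate its Beltrami coefficient. I would take $f = E(h)$ to be the Douady--Earle conformally natural extension, given by an implicit integral formula in $h$. Differentiating this integral yields a closed-form expression for $\mu_f(z) = \bar\partial f(z)/\partial f(z)$ as a ratio of boundary averages of $h$. The goal is to prove
\[
\int_{\m D} \frac{4|\mu_f(z)|^2}{(1-|z|^2)^2}\,\dd A(z) \leq C\, \norm{\log h'}_{H^{1/2}(\m T)}^2.
\]
The technical core is a pointwise bound showing $|\mu_f(z)|$ is controlled by a localized difference quotient of $\log h'$ at scale $1-|z|$ around the radial projection of $z$ to $\m T$; squaring and integrating against $\dd A/(1-|z|^2)^2$ matches the double integral~\eqref{eq:H1/2_WP} after change of variables from pairs $(z, \text{local difference})$ on $\m D$ to pairs $(\zeta, \xi) \in \m T \times \m T$.

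For the reverse direction, suppose $f\colon \m D \to \m D$ is a quasiconformal extension with $\mu_f \in L^2(\m D, \dd_{\text{hyp}})$. The plan is to factor $f$ via the Beltrami equation to route the argument through a Schwarzian. Let $g \colon \hat{\m C} \to \hat{\m C}$ be the normalized quasiconformal solution to $\bar\partial g = \mu_f \cdot \partial g$ on $\m D$, extended by $0$ outside, with $g$ conformal on $\m D^*$; then $\phi := f \circ (g|_{\m D})^{-1}$ is conformal on $g(\m D)$ and on the boundary $h = \phi \circ g|_{\m T}$. Under the Bers embedding, the condition $\mu_f \in L^2(\m D, \dd_{\text{hyp}})$ is equivalent to the Schwarzian $S(g|_{\m D^*})$ lying in the weighted Bergman space $A^2(\m D^*)$ of quadratic differentials. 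Writing $\log h' = \log \phi' + \log (g|_{\m T})'$ and applying standard boundary trace theorems from $A^2$ to $H^{1/2}$ then yields $\log h' \in H^{1/2}(\m T)$, i.e.\ $h \in \WP(\m T)$.

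The main obstacle in both directions is the \emph{quantitative} passage between a hyperbolic $L^2$ norm of a Beltrami coefficient (or of a weighted Schwarzian) defined on $\m D$ and the $H^{1/2}$ seminorm of $\log h'$ defined on $\m T$. This requires delicate singular integral estimates and a careful analysis of how the decay of dilatation near $\partial \m D$ encodes boundary regularity of the quasiconformal extension. The pointwise bound on the Douady--Earle Beltrami coefficient in terms of local oscillation of $\log h'$ is the technically most demanding step; it is exactly the content of Cui's computation and its refinements by Takhtajan--Teo and Shen. An alternative route, which avoids Douady--Earle, would use the Beurling--Ahlfors extension on $\m H$ (conjugated to the disk) but still requires the same type of quantitative fractional-Sobolev estimate.
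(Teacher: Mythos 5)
The paper does not give a proof of this statement: it is imported verbatim from the literature, which is why it is labeled ``See \cite{TT06,shen13}''. Your outline is a faithful reconstruction of exactly the arguments in those references (the Douady--Earle/Beurling--Ahlfors extension estimate of Cui and Shen for the forward direction, and the Bers embedding, $A^2$-integrable Schwarzian, and conformal-welding decomposition of $\log h'$ for the converse), so it takes essentially the same route as the proof the paper relies on.
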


For $\a \in (0,1]$, we let $\mc C^{1,\a}$ denote the class of circle homeomorphisms $h$ such that $\log h'$ is $\a$-H\"older continuous. Or equivalently, in terms of the H\"older classes $C^{1,\a}$,
\begin{equation}
    \mc C^{1,\a} = \{ h \in C^{1,\a} : h \text{ is a circle homeomorphism and } \inf_{\m T} |h'| > 0\}.
\end{equation}
In particular, it follows from the Kellogg theorem \cite[Thm.\,II.4.3]{GM} that the welding homeomorphisms of $C^{1,\a}$ Jordan curves belong to $\mc C^{1,\a}$. It is easy to see from \eqref{eq:H1/2_WP} the following lemma.
\begin{lem}\label{lem:C_alpha_in_WP}
We have $\mc C^{1,\a} \subset \WP(\m T)$ for all $\a > 1/2$ and $\mc C^{1,1/2} \nsubseteq \WP(\m T)$.
\end{lem}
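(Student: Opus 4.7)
Since membership in $\WP(\m T)$ requires both absolute continuity of $h$ and $\log h' \in H^{1/2}(\m T)$, and since any $h \in \mc C^{1,\a}$ is $C^1$ with $\inf h' > 0$ (hence automatically absolutely continuous), both parts of the claim reduce to analyzing the $H^{1/2}$ double integral in \eqref{eq:H1/2_WP}.

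For the inclusion $\mc C^{1,\a} \subset \WP(\m T)$ when $\a > 1/2$, I would apply the H\"older bound $|\log h'(\zeta) - \log h'(\xi)| \le C |\zeta - \xi|^\a$ directly to the integrand of \eqref{eq:H1/2_WP}, obtaining
$$
\iint_{\m T \times \m T} \left|\frac{\log h'(\zeta) - \log h'(\xi)}{\zeta - \xi}\right|^2 d\zeta \, d\xi \;\le\; C^2 \iint_{\m T \times \m T} |\zeta - \xi|^{2\a - 2} \, d\zeta \, d\xi.
$$
The right-hand side is finite because on the compact circle the singularity $|\zeta - \xi|^{2\a - 2}$ is integrable along the diagonal precisely when $2\a - 2 > -1$, i.e.\ when $\a > 1/2$. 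No further ingredient is needed.

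For the strict non-inclusion $\mc C^{1,1/2} \nsubseteq \WP(\m T)$, I would construct an explicit counterexample by choosing $\log h'$ to be a Weierstrass-type lacunary series at the critical exponent. Set
$$
f(\theta) = \sum_{k=1}^\infty 2^{-k/2} \cos(2^k \theta)
$$
on $\m T = \m R / 2\pi \m Z$. The series converges absolutely and uniformly, so $f$ is bounded and continuous. A classical dyadic-splitting estimate --- bounding the low modes $\{k : 2^k \lesssim 1/|\theta - \phi|\}$ by the mean value theorem and the tail by the triangle inequality, then optimizing the cutoff at $2^N \sim 1/|\theta - \phi|$ --- yields $f \in C^{0,1/2}(\m T)$. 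On the other hand, the Fourier characterization of the $H^{1/2}$ seminorm gives
$$
\|f\|_{H^{1/2}}^2 \;\asymp\; \sum_n |n| \, |\hat f(n)|^2 \;\asymp\; \sum_k 2^k \cdot 2^{-k} \;=\; \infty.
$$
I would then define $h(\theta) := \lambda \int_0^\theta e^{f(s)} \, ds$ with $\lambda > 0$ chosen so that $h$ is a circle diffeomorphism, and post-compose by a M\"obius transformation so that $h$ fixes $-1, \ii, 1$. Such a post-composition adds only a real-analytic summand to $\log h'$, which preserves both membership in $C^{0,1/2}$ and the divergence of the $H^{1/2}$ seminorm. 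Hence $h \in \mc C^{1,1/2} \smallsetminus \WP(\m T)$.

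The only step requiring care is the sharp-exponent regularity $f \in C^{0,1/2}$; the rest is essentially a one-line calculation. This regularity is classical folklore, and can either be cited from any standard reference on the Weierstrass function or reproduced in a few lines via the dyadic-splitting estimate sketched above.
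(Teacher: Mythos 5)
Your proposal is correct and follows exactly the route the paper intends: the paper states this lemma without proof (``It is easy to see from \eqref{eq:H1/2_WP}''), and your two halves --- the direct substitution of the H\"older bound into the double integral, with integrability of $|\zeta-\xi|^{2\a-2}$ precisely for $\a>1/2$, and a lacunary series at the critical exponent whose Fourier coefficients make $\sum_n |n||\hat f(n)|^2$ diverge while $f\in C^{0,1/2}$ --- are the standard way to fill it in. One tiny imprecision: the summand contributed by the M\"obius post-composition is $\log|A'(h(\cdot))|$, which is only as regular as $h$ itself (so $C^{1,1/2}$ rather than real-analytic), but that is still in $C^{0,1/2}\cap H^{1/2}$, so your conclusion is unaffected.
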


\section{Diamond shear}\label{sec:diamond}
\subsection{Circle homeomorphisms with finite shear}

We will introduce a new shear coordinate system (diamond shears), essential to describe the class $\mc H$ of circle homeomorphisms at the center of this work (see Definition~\ref{df:H_class}). To motivate the definition of diamond shears, let us first consider circle homeomorphisms with finitely many nonzero shears (i.e. $s_h$ has finite support).

We write 
$$\PSU(1,1) = \left\{\begin{psmallmatrix} \a & \b \\ \ad \b & \ad \a
\end{psmallmatrix} \colon \a, \b \in \m C, \, |\a|^2 - |\b|^2 = 1\right\}_{/A\sim - A}$$
for the group of M\"obius transformations preserving $\m T$. 
This group is conjugate to $\PSL(2,\m R)$ via the Cayley transform $\cayley$.
For two distinct points $z, w \in \m T$, we denote by $I(z,w) \subset \m T$ the closed circular arc going counterclockwise from $z$ to $w$. 

\begin{lem}\label{lem:finite_shear_mobius}
A circle homeomorphism $h$ has finitely many nonzero shears if and only if $h$ is piecewise M\"obius with rational breakpoints. Namely, there exist $k \ge 2$ distinct points $v_1, \ldots, v_k \in V$ in counterclockwise order such that $h|_{I(v_i, v_{i+1})} \in \PSU(1,1)$, where $v_{k+1} = v_1$. 
\end{lem}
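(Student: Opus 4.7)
The plan is to introduce a single analytic input and use it in both directions. Given $h$, for each Farey triangle $\tri$ let $M_\tri\in\PSU(1,1)$ denote the unique M\"obius transformation sending the three vertices of $\tri$ to their $h$-images. Since any Farey quadrilateral $Q_e=(a,b,c,d)$ satisfies $\Cr(a,b,c,d)=1$ by the definition of the Farey tessellation, the identity $s_h(e)=\log\Cr(h(a),h(b),h(c),h(d))$ together with three-point rigidity of M\"obius maps gives the key equivalence: $s_h(e)=0$ if and only if $M_\tri=M_{\tri'}$, where $\tri,\tri'$ are the two Farey triangles sharing $e$.

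For the direction $(\Leftarrow)$, suppose $s_h$ has finite support $E_0\subset E$. Removing the dual edges $\{e^*:e\in E_0\}$ from $\dualtree$ produces $|E_0|+1$ connected subtrees $C_1,\ldots,C_N$. The equivalence above together with induction along paths within each $C_j$ shows that $\tri\mapsto M_\tri$ is constant on $C_j$, equal to some $M_{C_j}\in\PSU(1,1)$. Hence $h$ coincides with $M_{C_j}$ on the intersection of $\m T$ with the closure of $\bigcup_{\tri^*\in C_j}\tri$, which is a union of arcs whose endpoints are endpoints of Farey edges in $E_0$. Since these endpoints lie in $V$, this exhibits $h$ as piecewise M\"obius with rational breakpoints.

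For the direction $(\Rightarrow)$, suppose $h$ is piecewise M\"obius with breakpoints $v_1,\ldots,v_k\in V$ and $h|_{I(v_i,v_{i+1})}=M_i$. Call $\tri$ \emph{pure of type $i$} if its three vertices lie in $I(v_i,v_{i+1})$, and \emph{mixed} otherwise; for a pure triangle of type $i$ one has $M_\tri=M_i$ by three-point rigidity. If both Farey triangles adjacent to $e$ are pure of the same type, then $s_h(e)=0$ by the key equivalence. If both are pure of different types $i\neq j$, then the endpoints of $e$ must both lie in the intersection $I(v_i,v_{i+1})\cap I(v_j,v_{j+1})\subset\{v_1,\ldots,v_k\}$, so $e$ is one of at most $\binom{k}{2}$ Farey edges connecting two breakpoints. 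Therefore the support of $s_h$ is contained in the union of this finite set with the set of Farey edges incident to some mixed triangle, and it suffices to prove that the number of mixed triangles is finite.

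To bound the mixed triangles, consider the ideal polygon $P\subset\m D$ with vertices $v_1,\ldots,v_k$ and boundary geodesics $g_i=(v_i,v_{i+1})$. Each component of $\m D\setminus P$ is a hyperbolic half-plane (lune) bounded by some $g_i$ and the arc $I(v_i,v_{i+1})$. Convexity of half-planes implies that any Farey triangle contained in such a lune has its three vertices in $I(v_i,v_{i+1})$ and is therefore pure of type $i$. A mixed Farey triangle must consequently either lie entirely inside $P$ (forcing its three vertices to be among the breakpoints, contributing at most $\binom{k}{3}$ such triangles) or be crossed by some geodesic $g_i$. The main obstacle is the latter count: it reduces to the classical fact that the hyperbolic geodesic joining any two points of $V$ crosses only finitely many edges of $\Farey$, which can be proven by induction using the continued fraction / Stern--Brocot reduction of the endpoints (compare Lemma~\ref{lem:farey-children}). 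Combining these bounds yields finitely many mixed triangles and hence finitely many edges in the support of $s_h$.
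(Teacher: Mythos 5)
Your proof is correct, but it takes a genuinely different route from the paper's on both directions. For ``finite support $\Rightarrow$ piecewise M\"obius'' the paper does not pass through the dual tree at all: it partitions $\m T$ into arcs $I(v_i,v_{i+1})$ with $(v_i,v_{i+1})\in E$ on which all interior shears vanish, and develops $h$ along each arc to match a single M\"obius map; your version --- constancy of the triangle maps $M_\tri$ on each component of $\dualtree$ minus the $|E_0|$ deleted dual edges --- is cleaner conceptually, and your ``key equivalence'' $s_h(e)=0\iff M_\tri=M_{\tri'}$ (which follows from $\Cr(Q_e)=1$, injectivity of $d\mapsto\Cr(a,b,c,d)$, and three-point rigidity) is a nice reusable lemma. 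The one step you elide there is the passage from ``$h$ agrees with $M_{C_j}$ on the vertices of the triangles in $C_j$'' to agreement on the closed arcs of the ideal boundary of $\bigcup_{\tri^*\in C_j}\tri$: this needs that those vertices are dense in each such arc (true, because Farey triangles of high generation have small diameter) together with continuity of $h$ --- the same density-plus-continuity move the paper makes explicitly. For the converse direction the paper first subdivides so that each $(v_i,v_{i+1})$ is itself a Farey edge, after which no edge of $\Farey$ crosses any $g_i$ and the count is immediate; you instead keep the breakpoints as given and pay for it by having to bound the mixed triangles, which reduces to the classical finiteness of the cutting sequence of a geodesic between two rationals. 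That fact is true (e.g.\ an edge $(\frac ab,\frac cd)$ whose subtended interval strictly contains $\frac pq$ satisfies $b+d\le q$, so only finitely many Farey edges separate two rationals), and citing it with a continued-fraction hint is on par with the paper's own unproved assertion that one may subdivide to make $(v_i,v_{i+1})\in E$ --- indeed both assertions rest on essentially the same fact about $\Farey$. Net effect: your argument is a little longer but more symmetric between the two directions and isolates the combinatorial content (the key equivalence and the finiteness of crossings) more explicitly.
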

\begin{proof}
Let $h$ be a piecewise M\"obius homeomorphism with break points $v_1, \ldots, v_k \in V$. By possibly subdividing further, we assume that $(v_i, v_{i+1}) \in E$.
We write $\mc E_i = \{(a,b) \in E \colon a, b\in I(v_i, v_{i+1})\}$.
If $(a,b) \in \mc E_i$ and $(a,b) \neq (v_i, v_{i+1})$, then the four vertices of the Farey quad $Q_{(a,b)}$  are all in $I(v_i, v_{i+1})$. Since $h|_{I(v_i, v_{i+1})}$ is M\"obius, which preserves the cross-ratio, $s_h(a,b) = 0$. 
We obtain that $s_h$ has finite support since $E \smallsetminus \bigcup_{i = 1}^k \mc E_i$ is finite. 

Conversely, if $s_h$ has finite support, then $\m T$ can be partitioned to $\bigcup_{i = 1}^k I(v_i, v_{i +1})$ such that $s_h|_{\mc E_i} \equiv 0$ for all $i = 1, \cdots, k$. By possibly further subdividing the intervals we may assume that $(v_i, v_{i+1}) \in E$ (and therefore in $\mc E_i$). Let $x_i \in I(v_i, v_{i+1})$ be the unique vertex which is adjacent to $v_i$ and $v_{i+1}$ and $\tilde h_i$ the unique M\"obius map in $\PSU(1,1)$ which maps respectively $v_i, x_i, v_{i+1}$ to $h(v_i), h(x_i), h(v_{i+1})$. 
The image of $V \cap  I(v_i, v_{i+1})$ by $h$ is determined by $s_h|_{\mc E_i}$ and the image of one triangle. 
Therefore $h = \tilde h_i$ on $V \cap  I(v_i, v_{i+1})$. As $\tilde h_i$ is continuous and $V$ is dense in $\m T$, we obtain that $h = \tilde h_i$ on $I(v_i, v_{i+1})$.
\end{proof}
\begin{remark}
Notice that in the proof of the converse direction, we showed that the finitely supported shear coordinate defines a map $h: V \to \m R$ which extends continuously to a piecewise M\"obius homeomorphism (and we do not need to assume that $h$ is a homeomorphism to start). 
\end{remark}

To state the next result, we organize the edges of $\Farey$ into \textit{fans}. We define 
\begin{align*}
    \fan(v) = \{e\in E: e\text{ is incident to } v\}.
\end{align*}
We will index the edges in $\fan(v)$ as $(e_n)_{n \in \m Z}$ in a way that $n$ increases in the counterclockwise manner and $e_0$ is an arbitrary choice of edge in $\fan (v)$.  The order is chosen such that after mapping $\m D$ conformally onto $\m H$ and $v$ is sent to $\infty$, the image of $(e_n)_{n\in \m Z}$ are equally spaced vertical lines with index increasing from left to right.

We write $v\splus$ (resp. $v\sminus$) for a point on $\m T$ approaching $v$ infinitesimally counterclockwise (resp. clockwise), which corresponds to $x \to \infty$ (resp. $x \to -\infty$) in the upper half-plane model. For instance, $f(1\splus)$ means the limit of $f(z)$ as $z \in \m T$ approaches $1$ from below, if it exists.

\begin{df}\label{df:finite_balance}
We say that $s : E \to \m R$ satisfies the \emph{finite balanced condition} if for all $v \in V$,  $\{n \in \m Z \colon s(e_n) \neq 0\}$ is finite and $\sum_{n \in \m Z} s(e_n) = 0$, where $\fan(v) = (e_n)_{n \in \m Z}$. 
\end{df}
\begin{lem}\label{lem:finite_balance}
If a circle homeomorphism $h$ has finitely many nonzero shears, the following are equivalent: 
\begin{enumerate}[label=\roman*),itemsep=-3pt]
    \item $h$ is Weil--Petersson; \label{it:finite_wp}
    \item $h$ is $\mc C^{1,1}$ with rational breakpoints; \label{it:finite_C11}
    \item $s = s_h$  satisfies the finite balanced condition. \label{it:finite_balanced}
\end{enumerate}
\end{lem}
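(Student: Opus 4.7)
The plan is to prove the cycle $(ii) \Rightarrow (i) \Rightarrow (iii) \Rightarrow (ii)$. The implication $(ii) \Rightarrow (i)$ is immediate from Lemma~\ref{lem:C_alpha_in_WP} since $\mc C^{1,1} \subset \mc C^{1,\a}$ for any $\a \in (1/2, 1]$. The other two implications both reduce to a single key geometric claim: for each vertex $v \in V$, the balanced condition at $v$ is equivalent to $h$ being $\mc C^1$ at $v$.

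To establish the key claim, I would fix $v \in V$ and pass to the upper half-plane model via Möbius maps $M, N : \m H \to \m D$ with $M(\infty) = v$ and $N(\infty) = h(v)$, normalized so that the vertices of $\fan(v) \setminus \{v\}$ are sent to $\m Z \subset \m R$ with the paper's orientation convention. Writing $x_n := N^{-1}(h(n))$, the Farey quad around $e_n$ has vertices $(\infty, n-1, n, n+1)$ in counterclockwise order, and the standard cross-ratio formula on $\m R$ (using $\Cr(\infty, b, c, d) = (d-c)/(c-b)$) gives
\begin{equation*}
    s_h(e_n) \,=\, \log \frac{x_{n+1} - x_n}{x_n - x_{n-1}}.
\end{equation*}
The partial sums telescope to $\log\frac{x_{N+1} - x_N}{x_{-N} - x_{-N-1}}$, and since $s_h$ has finite support, the gaps stabilize: $x_{n+1} - x_n = r^+$ for $n$ large positive and $= r^-$ for $n$ large negative, so the full sum equals $\log(r^+/r^-)$. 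The balanced condition at $v$ is therefore precisely $r^+ = r^-$. In the local coordinate $u = 1/x$ near $\infty$, the piecewise-affine map $\tilde h = N^{-1} \circ h \circ M$ takes the form $u \mapsto u/r^\pm + O(u^2)$ on either side of $0$, matching to first order iff $r^+ = r^-$; since $M, N$ and the $u$-chart are conformal, this is equivalent to $h \in \mc C^1$ at $v$. At non-breakpoint vertices the condition holds automatically, since $h$ agrees with a single Möbius map $M_\star$ in a neighborhood of $v$ containing all $w_n$ for $|n|$ large, making $\tilde h = N^{-1}M_\star M$ a single affine map near $\infty$.

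Given the key claim, $(iii) \Rightarrow (ii)$ follows readily: balance at every vertex yields $\mc C^1$ matching at each of the finitely many breakpoints, which combined with the smoothness of the Möbius pieces upgrades to $\mc C^{1,1}$ (the derivative $h'$ is smooth on each closed arc between breakpoints and continuous across them). For $(i) \Rightarrow (iii)$, if $h \in \WP(\m T)$ then $\log h'$ lies in $H^{1/2}(\m T)$, and a standard computation shows that $H^{1/2}$ functions on $\m T$ cannot have jump discontinuities (a jump of size $J \neq 0$ produces an $|t|^{-1}$ singularity in the integrand of \eqref{eq:H1/2_WP}, yielding a logarithmically divergent contribution). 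Since $\log h'$ is smooth away from the finitely many breakpoints, the $H^{1/2}$ condition forces continuity there, i.e., $h$ is $\mc C^1$ at every $v$, so the key claim delivers the balanced condition.

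The main technical obstacle is the key claim itself, specifically the identification of the asymptotic slopes $r^\pm$ from the telescoping sum with the one-sided derivatives of $h$ at $v \in \m T$. One must work carefully with the conformal coordinate $u = 1/x$ near $\infty$ and verify that the conformal distortions introduced by $M$ and $N$ enter symmetrically on both sides of $v$, so that $r^+ = r^-$ is genuinely the $\mc C^1$ matching condition for $h$ itself rather than for some auxiliary coordinate expression.
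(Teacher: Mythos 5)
Your proposal is correct and follows essentially the same route as the paper: the paper likewise reduces everything to the telescoping identity $\prod_n e^{s(e_n)} = \ell/\ell'$ along a fan conjugated to $\infty$, identifies the stabilized gaps $\ell, \ell'$ with the reciprocals of the one-sided derivatives of $h$ at $v$, and uses Lemma~\ref{lem:C_alpha_in_WP} together with the fact that $H^{1/2}$ excludes jump discontinuities of $\log h'$. The only cosmetic differences are that you organize the implications as a cycle rather than two pairwise equivalences, and you read off the one-sided slopes from the exact affine form of the conjugated map near $\infty$ (using the piecewise-M\"obius structure from Lemma~\ref{lem:finite_shear_mobius}) rather than via the paper's asymptotic expansion of the M\"obius derivatives.
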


\begin{proof}
Since $h$ has finitely many nonzero shears, Lemma~\ref{lem:finite_shear_mobius} shows that $h$ is piecewise M\"obius with rational breakpoints that we denote by $v_1, \ldots, v_k \in V$. One and only one of the following is true: 
\begin{itemize}[itemsep=-2pt]
    \item For all $i = 1, \ldots, k$, $h'(v_i \splus) = h'(v_i \sminus)$. In this case, $h \in \mc C^{1,1}$. Lemma~\ref{lem:C_alpha_in_WP} shows that $h \in \WP(\m T)$.
    \item There exists $i$ such that  $h'(v_i \splus) \neq h'(v_i \sminus)$. In this case, $\log h'(v_i \splus) \neq \log h'(v_i \sminus)$. We see from \eqref{eq:H1/2_WP} that $h \notin \WP(\m T)$.
\end{itemize}
This proves the equivalence between \ref{it:finite_wp} and \ref{it:finite_C11}.

Now we show that $h'(v_i \splus) = h'(v_i \sminus)$ is equivalent to $\sum_{n \in \m Z} s(e_n) = 0$, where  $(e_n)_{n \in \m Z} = \fan(v_i)$. 
We define $\varphi : \m R \to \m R$ to be the homeomorphism $\varphi = \cayley_1 \circ h \circ \cayley_2^{-1}$, where $\cayley_1, \cayley_2$ are two M\"obius transformations $\m D \to \m H$ such that $\cayley_1 (h(v_i)) = \infty$, $\cayley_2 (v_i) = \infty$, $\cayley_2 (e_0) = (0,\infty)$, and $\cayley_2 (e_1) = (1,\infty)$. Given this, $\varphi$ fixes $\infty$, and $\cayley_2 (e_n) = (n,\infty)$ for all $n \in \m Z$. 

From Lemma~\ref{lem:single_shear_hyperbolic}, we know that 
\begin{equation}\label{eq:ratio_shear_sum}
\frac{\varphi(n+1) - \varphi(n)}{\varphi(n) - \varphi(n-1)} = \exp(s(e_n)).
\end{equation}
Since $s$ has finite support, there exists $n_0 \ge 0$ such that $s(e_n) = 0$ if $|n| \ge n_0$. Therefore, there exists $\ell, \ell' > 0 $ such that  for all $n,m \ge n_0$, $\varphi (m +1) - \varphi(m) = \ell$, $\varphi (-n) - \varphi(-n-1) = \ell'$, and 
$$\frac{\varphi(m+1) - \varphi(m)}{\varphi (-n) - \varphi (-n -1)} = \frac{\ell}{\ell'} =\exp(\sum_{n \in \m Z} s(e_n)).$$
We have
\begin{align*}
    \varphi(m+1) - \varphi (m)& = \int_m^{m+1} \abs{\varphi'(x)} \,\dd x \\
    &= \int_m^{m+1} \abs{\cayley_1'(h\circ \cayley_2^{-1}(x))} \abs{h'(\cayley_2^{-1}(x))} \abs{(\cayley_2^{-1})'(x)} \, \dd x.
\end{align*}
Since $\cayley_1, \cayley_2$ are M\"obius transformations $\m D \to \m H$ sending $v_i, h(v_i)$ to $\infty$ respectively, there exist $\a, \b >0 $ such that 
$$\abs{\cayley_1' (z)} = \frac{\a}{|z - h(v_i)|^2} + O\Big (\frac{1}{|z-h(v_i)|}\Big), \quad \abs{\cayley_2' (z)} = \frac{\b}{|z - v_i|^2} + O \Big(\frac{1}{|z-v_i|}\Big).$$
On the other hand, since $h$ is piecewise M\"obius, we have
$$|h' (z)| = |h'(v_i{\scriptstyle \pm})| + O(|z - v_i|),$$
depending on which side of $v_i$ does $z$ approaches from.
Therefore, for $x \in [n_0, \infty)$,
\begin{align*}
  & \abs{\cayley_1'(h\circ \cayley_2^{-1}(x))} \abs{h'(\cayley_2^{-1}(x))} \abs{(\cayley_2^{-1})'(x)} \\
    &=\left(  \a \abs{h \circ \cayley_2^{-1} (x) - h(v_i)}^{-2} + O\left(\abs{{h \circ \cayley_2^{-1} (x) - h(v_i)}}^{-1} \right)\right) (|h'(v_i \splus)| + O(|\cayley_2^{-1} (x) - v_i|)) \\
    & \qquad \Big(\frac{1}{\b} \abs{\cayley_2^{-1}(x) - v_i}^2 + O(|\cayley_2^{-1}(x) - v_i|^3)\Big)\\
    & = \frac{\a}{\b}\frac{1}{ |h'(v_i \splus)|} +O(|\cayley_2^{-1}(x) - v_i|).
\end{align*}
We obtain
\begin{align*}
   \ell =  \lim_{m \to \infty} \varphi(m+1) - \varphi (m)& = \frac{\a}{\b}\frac{1}{ |h'(v_i \splus)|}.
    \end{align*}
    Similarly,
    \begin{align*}
   \ell' =  \lim_{n \to \infty} \varphi(-n) - \varphi (-n-1)& = \frac{\a}{\b}\frac{1}{ |h'(v_i \sminus)|}.
    \end{align*}
    Equation \eqref{eq:ratio_shear_sum} then shows that $h'(v_i \splus) =h'(v_i \sminus)$ if and only if $\sum_{n \in \m Z} s(e_n) = 0$ which concludes the proof.
\end{proof}

We now introduce the diamond shear coordinates  which are well-adapted to describe finite shears with the finite balanced condition. 
We say that $e, e' \in E$ are \emph{adjacent} if $e$ and $e'$ share a vertex $v$ and are consecutive in $\fan(v)$ (their indices in $\fan (v)$ differ by exactly $1$). We say that $e^* \in E^*$ and $e' \in E$ are \emph{adjacent} if the dual edge $e$ of $e^*$ is adjacent to $e'$.
Note that $e^*$ is adjacent to exactly $4$ edges in $E$, that we denote by $(e_1, e_2, e_3, e_4)$, in counterclockwise order, such that $e_1$ is the edge on the left of $\vec e$ and has the same head as $\vec e$. We do not distinguish between $(e_1,e_2,e_3,e_4)$ and $(e_3,e_4,e_1, e_2)$ which results from inverting the orientation of $e$. 

A \emph{diamond shear function} is a map $\vartheta : E^* \to \m R$. The space of diamond shear functions has a basis 
$\{\vartheta_{e^*}(\cdot)\}_{e^* \in E^*}$, where $\vartheta_{e^*} (\cdot)$ equals $1$ at $e^*$ and $0$ elsewhere.
For each $e^* \in E^*$, the corresponding shear function
of $\vartheta_{e^*}$ has $s(e_1) = s(e_3) = 1$ and $s(e_2) = s(e_4) = -1$. See Figure~\ref{fig:singlediamond}.
More generally, the diamond shear functions translate to the shear functions via the map $\Phi : \m R^{E^*} \to \m R^E$:
\begin{equation}\label{eq:def_Phi}
s(e) = \Phi (\vartheta) (e) = -\vartheta (e_1^*) + \vartheta (e_2^*) - \vartheta (e_3^*) + \vartheta (e_4^*), \quad \forall e \in E.
\end{equation}
Here, $(e_i^*)_{i = 1, \ldots, 4}$ are the dual edges of $(e_i)_{i = 1, \ldots, 4}$ ordered as described above.

\begin{figure}[ht]
    \centering
 \includegraphics[scale=.7]{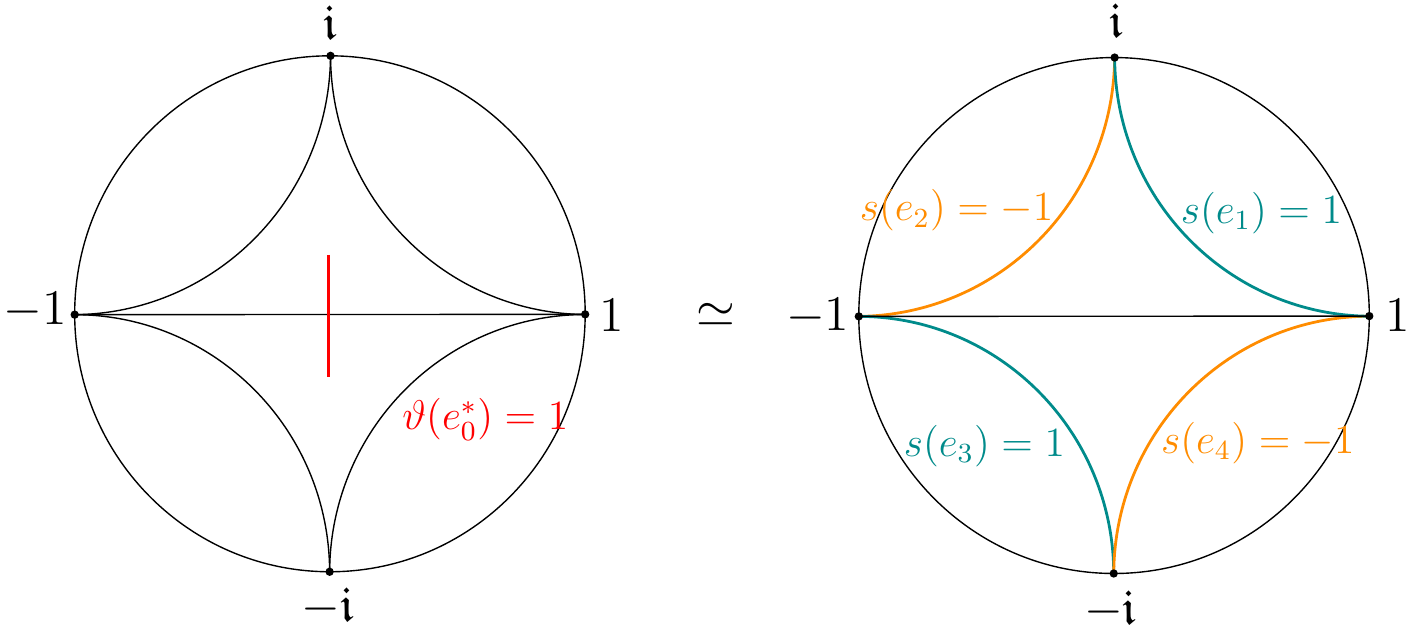}    
 \caption{ \label{fig:singlediamond} A single diamond shear along the edge $(-1,1)$ (left) is equivalent to four shears with alternating sign (right).}
\end{figure}

\begin{remark} \label{rem:kernel}
\begin{itemize}[itemsep=-2pt]
    \item We note that $\Phi$ is linear and if $\vartheta \equiv 1$, then $\Phi (\vartheta) \equiv 0$. In other words, constant diamond shear coordinates belong to the kernel of $\Phi$. 
    \item If $\vartheta$ has finite support, then $\Phi(\vartheta)$ has finite support and satisfies the finite balanced condition. The converse follows from the next proposition.
\end{itemize}
\end{remark}

\begin{prop}\label{prop:s_to_theta_finite}
    Assume that $s : E \to \m R$ has finite support and satisfies the finite balanced condition. There exists a unique $\vartheta : E^* \to \m R$ with finite support such that $\Phi(\vartheta) = s$.
\end{prop}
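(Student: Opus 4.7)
The plan is to prove uniqueness and existence by induction on the maximum generation appearing in the support, using the tree structure of $\dualtree$ rooted at $e_0^*$. I will rely on two structural facts. First, for a non-root dual edge $e^*$ of generation $n \geq 1$, the four dual edges adjacent to $e^*$ have generations $n-1$ (its unique \emph{parent}), $n$ (its \emph{sibling} at the parent end), and $n+1, n+1$ (its two \emph{children} at the child end); the root $e_0^*$ has four adjacent dual edges of generation $1$, grouped into two child pairs, one at each endpoint in $\dualtree$. Second, the alternating signs $(-1, +1, -1, +1)$ in \eqref{eq:def_Phi} go cyclically around the boundary of $Q_e$, so adjacent boundary edges of $Q_e$ carry opposite signs under $\Phi(\vartheta_{e^*})$.

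For uniqueness, I would suppose $\vartheta \neq 0$ has finite support with $\Phi(\vartheta) = 0$. Let $M$ be the maximum generation of any edge on which $\vartheta$ is nonzero, choose such a dual edge $e^*$, and pick a dual edge $e_i^*$ of generation $M+1$ adjacent to $e^*$ (which exists at the child end of $e^*$; if $M = 0$, any of the four adjacent edges works). Among the four dual edges adjacent to $e_i^*$, only $e^*$ itself has generation $\leq M$; all others are of generation $M+1$ or $M+2$ and are hence outside the support of $\vartheta$. The identity $\Phi(\vartheta)(e_i) = 0$ therefore collapses to $\pm \vartheta(e^*) = 0$, contradicting the choice of $e^*$.

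For existence, I would induct on $N$, the maximum generation of an edge on which $s$ is nonzero. In the base case $N \leq 0$, the finite balanced condition at the vertex $1 \in V$ (incident to $e_0$) forces $s(e_0) = 0$, so $s \equiv 0$ and $\vartheta = 0$ suffices. For the inductive step, the crucial observation is that generation-$N$ edges come in sibling pairs: for each generation-$(N-1)$ dual edge $f^*$, its pair of generation-$N$ children in the dual tree corresponds to two primal edges $e, e'$ sharing a common opposite vertex $b$ of generation exactly $N$ in the child triangle of $Q_f$, and the fan at $b$ contains no edge of generation smaller than $N$. The balanced condition at $b$, together with the assumption that $s$ has no support in generation larger than $N$, forces $s(e) + s(e') = 0$. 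Because the signs of $\Phi(\vartheta_{f^*})$ on the two adjacent boundary edges $e, e'$ of $Q_f$ are opposite, a single value $\vartheta'(f^*) = \pm s(e)$ simultaneously matches $s$ on both. Carrying this out for every relevant parent $f^*$ yields a finitely supported $\vartheta'$ such that $s - \Phi(\vartheta')$ vanishes on generation $N$, has support in generation $\leq N-1$, and remains balanced (since the image of $\Phi$ is always balanced, by Remark \ref{rem:kernel}). Applying the inductive hypothesis and adding the resulting $\vartheta''$ to $\vartheta'$ produces the desired $\vartheta$.

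One bookkeeping subtlety occurs at $N = 1$, where $e_0^*$ is the common parent for two sibling pairs of generation-$1$ children; the consistency of the two resulting candidate values for $\vartheta'(e_0^*)$ will follow from the balanced conditions at the generation-$0$ primal vertices $\pm 1$, which in fact also imply that $s(e_0) = 0$. The hard part will be the sign bookkeeping around $Q_f$ that verifies that the balanced condition at the generation-$N$ vertex $b$ is precisely the compatibility condition letting a single value $\vartheta'(f^*)$ cancel the shear at both siblings simultaneously; once this is checked, the induction proceeds mechanically.
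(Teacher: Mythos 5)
Your proof is correct and is essentially the paper's argument: in both, the key step is that the balanced condition at the deepest child vertex $b$ forces the sibling relation $s(a,b)=-s(b,c)$, which is exactly what a single diamond placed at the parent dual edge cancels (the alternating signs around $Q_f$), and uniqueness likewise follows by evaluating $\Phi$ on an edge just beyond the extreme of the support. The only difference is bookkeeping: you induct on the maximal generation, clearing a whole generation of sibling pairs at once, whereas the paper inducts on the convex hull $\mf a_s$ of the support in $\dualtree$ and strips one leaf at a time.
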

\begin{proof}
Every $e \in E$ belongs to two triangles which are dual to two vertices in $\dualtree$. Let $\tri^* (e)$ be the dual vertex that has the lower generation, if $e \neq e_0$. We define $\mf a_s$ to be the union of $e_0^*$ and the geodesic path from $\tri^* (e)$ to $e_0^*$ in $\dualtree$ for all $e \neq e_0$ such that $s(e) \neq 0$. (We call $\mf a_s$ the \emph{convex hull} of $\{\tri^* (e) \colon s(e) \neq 0\} \cup e_0^*$.) Since $s$ has finite support, $\mf a_s$ is a finite tree. 

We prove the existence of $\vartheta$ by induction on $\mf a_s$. 
If $\mf a_s$ contains only $e_0^*$, then $\{ e \colon s(e)\neq 0\} \subset \{(-1,1), (\ii, 1), (\ii, -1), (-\ii, 1), (-\ii, -1)\}$. The finite balanced condition shows that the only possibility is
$$s(\ii, 1) = -s(\ii, -1)= s(-\ii, -1)= -s(-\ii, 1) = \a$$
for some $\a \in \m R$. For convenience, we write $s(a,b)$ for $s((a,b))$.
Therefore, $s = \Phi (\a \vartheta_{e_0^*})$.

Now assume that $\mf a_s$ is a general finite tree containing $e_0^*$ and $e^*$ is a leaf of $\mf a_s$. Assume that $e^*$ has generation $n$. The dual edge $e \in E$ has two vertices $\{a,c\}$, their child $b \in V$ has generation $n+1$. We assume that $a,b,c$ are in the counterclockwise order.
Fan$(b)$ contains at most two edges, $(a,b)$ and $(b,c)$, on which $s$ is nonzero. 
From the finite balanced condition, there is $\a =\a (e^*)  \in \m R$ such that 
$$\a(e^*) = s(a,b) = - s(b,c).$$
Therefore $s' := s - \Phi (\a(e^*) \vartheta_{e^*})$ is a shear coordinate with finite support, and $\mf a_{s'} = \mf a_s \smallsetminus e^*$. By the assumption of induction, let $\vartheta'$ be a finite support diamond shear coordinate such that $\Phi (\vartheta') = s'$. 
The linearity of $\Phi$ shows that $\Phi (\vartheta' + \a(e^*) \vartheta_{e^*})  = s$.

Now we show the uniqueness. Assume that $\vartheta$ and $\vartheta'$ have finite support and $\Phi (\vartheta) = \Phi (\vartheta')$. Then $\Phi (\vartheta - \vartheta') \equiv 0$. Let $\mf a$ be the convex hull of $\{e^* \in E^* \colon \vartheta (e^*) \text{ or } \vartheta'(e^*) \neq 0\}.$ The above argument shows that for any leaf $e^*$ of $\mf a$, $\vartheta(e^*) - \vartheta'(e^*) = 0$. By induction, we have $\vartheta = \vartheta'$ which concludes the proof.
\end{proof}

\begin{df} \label{df:finite_theta_coor}
If a circle homeomorphism $h$ satisfies the conditions in Lemma~\ref{lem:finite_balance}, the \emph{diamond shear coordinate} of $h$ is the unique finitely supported diamond shear function $\vartheta_h$ such that $\Phi(\vartheta_h) = s_h$.
\end{df}

\subsection{Examples and developing algorithm}\label{sec:examples}

In this section we provide a few explicit examples to show concretely how circle homeomorphisms are related to shear and diamond shear coordinates.  Recall that for $a \neq b \in \m T$, $I(a,b)\subset \m T$ denotes the circular arc going counterclockwise from $a$ to $b$.

\begin{ex}[Single shear] \label{ex:single_shear}
For $t\in \m R$, let $h_t$ be the normalized circle homeomorphism with shear coordinate $s_t (e_0) = t$, and $s_t (e) = 0$ for all $e \in E$, $ e\neq e_0 = (-1,1)$. Then 
$$\cayley \circ h_t \circ \cayley^{-1} (x)= \begin{cases}
x & \qquad \forall x \le 0\\
e^{t} x &  \qquad \forall x \ge 0.
\end{cases}$$ 
In other words, 
$$ h_t(z)= \begin{cases}
z & \qquad \forall z \in I(1,-1)\\
\dfrac{\a_t z + \b_t}{\overline{ \b_t} z + \overline {\a_t}} &  \qquad \forall z \in I(-1,1)
\end{cases}$$ 
with $\a_t = \cosh (t/2)$ and $\b_t =  \sinh(t/2)$. In particular $(h_t)_{t\in \m R}$ forms a one-dimensional subgroup of the group of piecewise $\PSU(1,1)$ circle homeomorphisms. 

For $e = (a,b)$, not necessarily an edge of $\Farey$, there exists $A \in \PSU(1,1)$ such that $A (a) = -1$, $A(b) = 1$. Then 
$h_{e,t} := A^{-1} \circ h_t \circ A$ is also a one-dimensional subgroup of (non-normalized) circle homeomorphisms (and independent of the choice of $A$) fixing the circular arc $I(b,a)$. Explicitly,
$$h_{e,t} =\begin{cases}
z & \qquad \forall z \in I (b, a) = A^{-1} I(1,-1)\\
A_t (z) &  \qquad \forall z \in I (a, b) = A^{-1} I(-1,1)
\end{cases}$$ 
where $A_t = A^{-1} \begin{psmallmatrix} \a_t & \b_t \\ \overline {\b_t} & \overline{\a_t}
\end{psmallmatrix} A$. We note that $h_{e,t}$ is not $C^1$ if $t \neq 0$ and we find 
$$h_{e,t}'(a\splus) = 1, \quad h_{e,t}'(a\sminus) = e^t, \qquad h_{e,t}'(b\splus) = e^{-t}, \quad h_{e,t}'(b\sminus) = 1,$$
where $ a \splus$ means approaching $a$ counterclockwisely, and $a \sminus$ clockwisely.
\end{ex}

\begin{ex}[Standard single diamond shear]\label{ex:standard_diamond}
For $t\in \m R$, let $H_t$ be a 
normalized circle homeomorphism satisfying the conditions in Lemma~\ref{lem:finite_balance} (we can hence talk about its diamond shear coordinates $\vartheta_t$). In particular, suppose it has diamond shear coordinates such that $\vartheta_t (e_0^*) = t$, and $\vartheta_t (e^*) = 0$ for all $e^* \in E^*$, $ e^*\neq e_0^*$. Then
the corresponding shear coordinate $S_t = \Phi(\vartheta_t)$ of $H_t$ is given by 
$$S_t (1, \ii) = t, \quad S_t (\ii,-1) = - t, \quad S_t (-1, -\ii) = t, \quad S_t (-\ii,1) = - t.$$
It is easy to see that $H_t$ fixes $1, \ii, -1, -\ii$.  We obtain the following explicit expression of $H_t (z)$ (by symmetry it suffices to compute $H_t$ on $I(1, \ii)$):
\begin{equation}\label{eq:single_diamond_explicit}
\begin{cases}
h_{(1, \ii), t} (z) = \dfrac{\a_{1,t} z + \b_{1,t}}{\overline{ \b_{1,t}} z + \overline {\a_{1,t}}}& \text{ with } \begin{cases}
\a_{1,t} = \cosh (\frac t2) - \ii \sinh (\frac t2) \\ \b_{1,t} = ( \ii-1) \sinh (\frac t2),
\end{cases} \hfill \forall z \in I(1, \ii);\\

h_{(\ii,-1), -t} (z) = \dfrac{\a_{2,t} z + \b_{2,t}}{\overline{ \b_{2,t}} z + \overline {\a_{2,t}}}& \text{ with } \begin{cases}
\a_{2,t} = \overline{\a_{1,t}} \\ \b_{2,t}  = -\overline{\b_{1,t}} ,
\end{cases}  \hfill  \forall  z \in I(\ii,-1);\\

h_{(-1, -\ii), t} (z) = \dfrac{\a_{3,t} z + \b_{3,t}}{\overline{ \b_{3,t}} z + \overline {\a_{3,t}}}& \text{ with } \begin{cases}
\a_{3,t}=\a_{1,t}  \\ \b_{3,t} = -\b_{1,t},
\end{cases}  \hfill \forall  z \in I(-1, -\ii);\\

h_{(-\ii,1), -t} (z) = \dfrac{\a_{4,t} z + \b_{4,t}}{\overline{ \b_{4,t}} z + \overline {\a_{4,t}}}& \text{ with } \begin{cases}
\a_{4,t} = \overline{\a_{1,t}} \\ \b_{4,t}  = \overline{\b_{1,t}},
\end{cases}  \hfill  \forall z \in I(-\ii,1).
\end{cases}
\end{equation}

We observe 
$$H_t'(1) = H_t'(-1) = e^t, \qquad H_t'(\ii) = H_t'(-\ii) = e^{-t}$$
and that $(H_t)_{t\in \m R}$ is a one-dimensional subgroup of the group of  $C^{1,1}$ and piecewise $\PSU(1,1)$ circle homeomorphisms.
\end{ex}

The circle homeomorphism satisfying the conditions in Lemma~\ref{lem:finite_balance} whose diamond shear is supported on a single dual edge $e^* \in E^*$ can be obtained by  $A^{-1} \circ H_t \circ A$ for some $A \in \PSU(1,1)$ up to normalization.  We can also define the homeomorphism associated to a diamond shear on a \emph{non-standard} quad.
 
 \begin{df}[Single diamond shear on non-standard quad]
  Let $Q$ be a quad with vertices $a, b, c, d \in \m T$ in counterclockwise order. (We do not require $Q$ is a quad in $\Farey$, in particular, $\Cr (a,b,c,d)$ might not be zero.) We define 
 $H_{Q, (a,c), t} \in C^{1,1}$ to be the (non-normalized) circle homeomorphism which fixes the vertices of $Q$, that is piecewise $\PSU(1,1)$ with break points at the vertices, and 
\begin{equation}\label{eq:non-standard}
H_{Q, (a,c), t}'(a) = e^{t}.
\end{equation}
 \end{df}
 \begin{remark}
  Since for any M\"obius transform $A$ and $x\neq y$, $A'(x)A'(y) = \frac{(A(x) - A(y))^2}{(x-y)^2}$, the $C^{1,1}$ condition and \eqref{eq:non-standard} uniquely determine $H_{Q, (a,c), t}$ on $\m T$ and give us
  $$H_{Q, (a,c), t}'(c) = e^t, \qquad H_{Q, (a,c), t}'(b) = H_{Q, (a,c), t}'(d) = e^{-t}.$$
From this we obtain 
\begin{align*}
H_{Q, (a,c), t} |_{I(a,b)} &= h_{(a,b),t}|_{I(a,b)}, \qquad H_{Q, (a,c), t} |_{I(b,c)} = h_{(b,c),-t}|_{I(b,c)},\\
H_{Q, (a,c), t} |_{I(c,d)} &= h_{(c,d),t}|_{I(c,d)}, \qquad H_{Q, (a,c), t} |_{I(d,a)} = h_{(d,a),-t}|_{I(d,a)}.
\end{align*}
This relation similar to \eqref{eq:single_diamond_explicit} justifies the name of \emph{homeomorphism associated to non-standard diamond shear} and also shows that $(H_{Q, (a,c), t})_{t \in \m R}$ is a one-dimensional subgroup of $C^{1,1}$ and piecewise $\PSU(1,1)$ circle homeomorphisms.  Note that this definition coincides with the one for the single diamond shear supported on an edge (as diagonal of a Farey quad) as in Example~\ref{ex:standard_diamond}.
 \end{remark}

 Non-standard diamond shears are useful in the following developing algorithm for finding the associated circle homeomorphism given the diamond shear coordinate inductively when it has finite support.
 
 \begin{prop}\label{prop:add}
 Let $h$ be a circle homeomorphism satisfying the conditions in Lemma~\ref{lem:finite_balance} and $\vartheta$ its \textnormal(finitely supported\textnormal) diamond shear coordinates. Let $t \in \m R$ and $e \in E$. The homeomorphism with diamond shear coordinate $\vartheta + t \vartheta_{e^*}$ is 
 $H_{h(Q_e), h(e), t} \circ h$ after normalizing to fix $-1, \ii, 1$.
 \end{prop}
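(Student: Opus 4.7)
The plan is to verify the claim at the level of shear coordinates, since Proposition \ref{prop:s_to_theta_finite} will then recover the diamond shear coordinate by uniqueness. Write $H = H_{h(Q_e), h(e), t}$ and set $h_1 := H \circ h$. Since shear and diamond shear coordinates are invariant under post-composition by $\PSU(1,1)$, renormalization to fix $-1, \ii, 1$ does not change what needs to be proven. The map $h_1$ has finitely supported shear (adding one diamond adds finitely many shears), so it suffices to establish $s_{h_1} = s_h + t \Phi(\vartheta_{e^*})$, i.e., that the shear changes by $+t, -t, +t, -t$ respectively on the boundary edges $e_1, e_2, e_3, e_4$ of $Q_e$ (in counterclockwise order) and is unchanged on every other edge.

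For an edge $e' \in E$, $s_{h_1}(e') - s_h(e')$ is determined by how $H$ deforms the cross-ratio of the four vertices of $h(Q_{e'})$. Since $H$ is piecewise $\PSU(1,1)$ with break points $\{h(a), h(b), h(c), h(d)\}$ and acts as a single M\"obius transformation on each closed arc between consecutive break points, the cross-ratio is preserved whenever all four vertices of $h(Q_{e'})$ lie in one such closed arc (or are fixed break points). When $e' = e$, the vertices of $Q_e$ are $a, b, c, d$, which map to the fixed points of $H$. When $e' \notin \{e, e_1, e_2, e_3, e_4\}$, the edge $e'$ lies in one of the four lunes bounded by some $e_i$ and the arc of $\m T$ under $e_i$, since edges of the Farey tessellation cannot cross one another; the two Farey triangles adjacent to $e'$ lie in the same lune, so all four vertices of $Q_{e'}$ lie in the closure of the corresponding arc, which maps under $h$ into a single closed arc between break points of $H$. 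Hence the shear is preserved on every edge outside $\{e_1, e_2, e_3, e_4\}$.

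For each boundary edge, e.g.\ $e_1 = (a,b)$, the Farey quad has vertices $(a, b_{ab}, b, c)$ where $b_{ab}$ is the vertex of $\Farey$ in the lune under $e_1$. Under $h_1$ the images $h(a), h(b), h(c)$ are fixed, while $h(b_{ab}) \in I(h(a), h(b))$ is moved by $H|_{I(h(a), h(b))} = h_{(h(a), h(b)), t}|_{I(h(a), h(b))}$. Choose a M\"obius $\Psi \colon \m D \to \m H$ with $\Psi(h(a)) = 0$, $\Psi(h(b)) = \infty$, and $\Psi(I(h(a), h(b)))$ equal to the positive real axis; then $y_0 := \Psi(h(b_{ab})) > 0$ and $\gamma := \Psi(h(c)) < 0$. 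By Example \ref{ex:single_shear} and conjugation, $\Psi \circ H \circ \Psi^{-1}$ restricts on the positive reals to $x \mapsto e^{\epsilon_1 t}x$ for a sign $\epsilon_1 \in \{\pm 1\}$ determined by orientation, and a direct computation using $\Cr(0, y, \infty, \gamma) = -y/\gamma$ yields
\[
s_{h_1}(e_1) - s_h(e_1) \;=\; \log \frac{-e^{\epsilon_1 t} y_0/\gamma}{-y_0/\gamma} \;=\; \epsilon_1 t.
\]
The analogous calculation on $e_2, e_3, e_4$ produces changes $\epsilon_2 t, \epsilon_3 t, \epsilon_4 t$, with the signs $\epsilon_i$ coming from the alternating pattern $t, -t, t, -t$ in the definition of $H$ on the four arcs of $\partial h(Q_e) \cap \m T$. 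The main obstacle is the careful bookkeeping needed to confirm that these orientation signs combine with the alternating parameters of $H$ to produce exactly the pattern $(+t, -t, +t, -t)$ matching $\Phi(\vartheta_{e^*})$; the identity $h = \text{id}$ with $e = e_0$ and comparison with Example \ref{ex:standard_diamond} provides a test case from which all four signs are determined by symmetry.
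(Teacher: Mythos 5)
Your proposal is correct and follows essentially the same strategy as the paper: reduce to the shear-coordinate identity $s_{h_t}=s_h+t\,\Phi(\vartheta_{e^*})$ and verify it edge by edge, with the change on the four boundary edges of $Q_e$ computed via M\"obius invariance of the cross-ratio (the paper phrases the same computation through the signed-distance interpretation of Lemma~\ref{lem:single_shear_hyperbolic}, which by that lemma is equivalent to your normalized cross-ratio calculation). You are in fact more explicit than the paper about the unaffected edges and the uniqueness step via Proposition~\ref{prop:s_to_theta_finite}, and your deferral of the sign pattern to the standard test case of Example~\ref{ex:standard_diamond} is legitimate since the whole configuration is $\PSU(1,1)$-equivariant.
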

 \begin{proof}
 Let $s = \Phi (\vartheta)$ denote the shear coordinate of $h$. 
 Let $h_t : = H_{h(Q_e), h(e), t} \circ h$ and $s_t$ its shear coordinate.
 We write the Farey quad $Q_e = (a,b,c,d)$ such that $e = (a,c)$. Let $e_1 = (a,b)$, $e_2 = (b,c)$, $e_3 = (c,d)$, and $e_4 = (d,a)$ be the adjacent edges in $\Farey$.
 We need to show that 
 \begin{equation}\label{eq:shear_add}
 S_t (e_j) : = s (h_t (Q_{e_j}), h_t (e_j))  = s (e_j) + (-1)^{j-1} t, \qquad j = 1, \cdots, 4. 
 \end{equation}
 We see it from the geometric interpretation of the shear (Lemma~\ref{lem:single_shear_hyperbolic}). 
In fact, $s (e_1) =  s (h (Q_{e_1}), h (e_1))$ is the signed distance between the geodesics normal to $h (e_1)$ and starting from the third vertex of the two ideal triangles that we call $\tau_L$, $\tau_R$, where $\tau_L \cup \tau_R = h(Q_{e_1})$ and $\tau_R \subset h(Q_e)$. 
Since  $ H_{h(Q_e), h(e), t}$ fixes $h(Q_e)$, it also fixes $\tau_R$. On the arc $I_1 \subset \m T$ which has the same vertices as $h(e_1) = (h(a), h(b))$ and contains the vertices of $\tau_L$,
$ H_{h(Q_e), h(e), t}$ shears further the normal starting from any point of $I_1$ by hyperbolic distance $t$ in the direction from $h(a)$ to $h(b)$. We obtain \eqref{eq:shear_add} for $j = 1$. See Figure~\ref{fig:add}.
The same argument works for other $j$. 
 \end{proof}

\begin{figure}[ht]
    \centering
 \includegraphics[width =\textwidth]{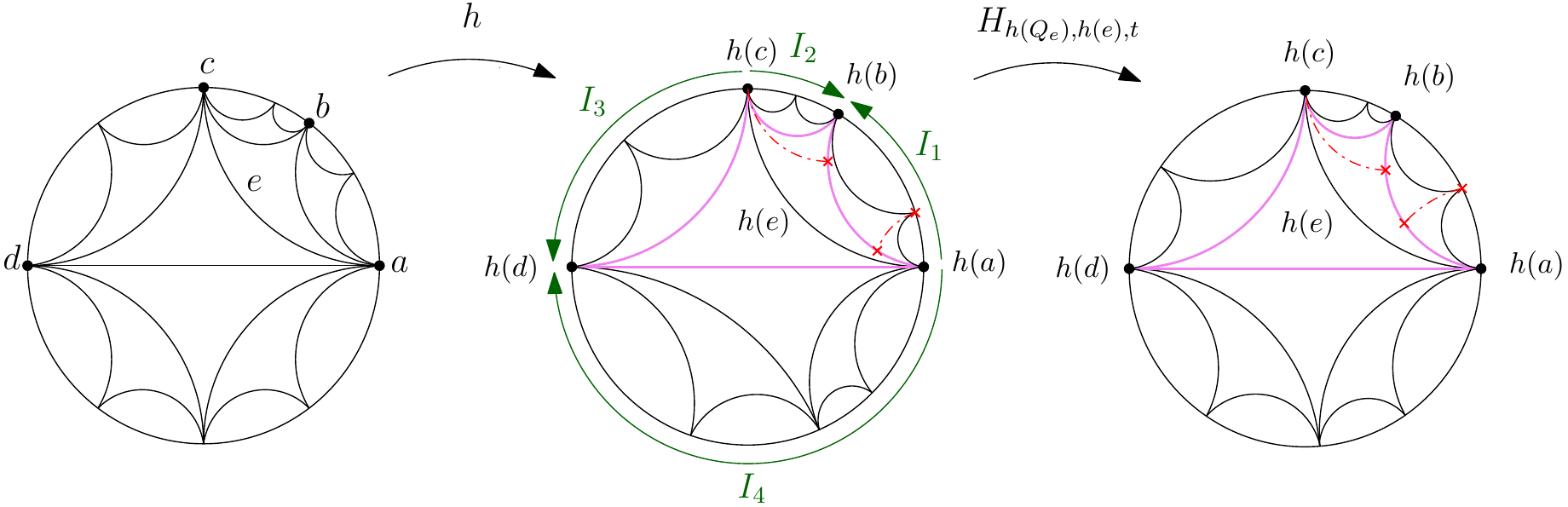}    
 \caption{ \label{fig:add} Illustration of the maps in the proof of Proposition~\ref{prop:add}. Left: Farey tessellation $\Farey$ with an edge $e \in E$ and $Q_e$ with vertices $a,b,c,d$ marked. Middle: $h(\Farey)$ with the edges of $h(Q_e)$ in pink, the green arrows indicate the direction in which of the piecewise M\"obius circle homeomorphism $H_{h(Q_e), h(e), t}$ moves the points on each arcs, when $t >0$. Red dashed lines indicate the normals to $h(e_1) = (h(a), h(b))$. Right: the tessellation $h_t (\Farey)$.}
\end{figure}

\subsection{Combinatorial definition of diamond shear} \label{sec:combinatorial_diamond}

The goal of this section is to extend the definition of diamond shear coordinates to a more general class of circle homeomorphisms. 
Definition~\ref{df:finite_theta_coor} suggests that $\vartheta$ should be defined as the image of $s$ by the inverse of $\Phi$ defined in \eqref{eq:def_Phi}. However, the map $\Phi$ does not map onto $\m R^{E}$, nor is it injective by Remark~\ref{rem:kernel}. 
Therefore, we will restrict to the following family of shear coordinates to define a right-inverse map of $\Phi$:
\begin{align*}
    \mc P = \{s \in \m R^E \colon \forall\, v\in V, \,\fan(v) = (e_n)_{n \in \m Z}, \,  \lim_{n\to \infty} \sum_{k=-n}^{-1} s(e_k) \in \mathbb{R} \text{ and } \lim_{n\to \infty} \sum_{k=0}^{n} s(e_k) \in \mathbb{R}\}.
\end{align*}
Similar to Definition~\ref{df:finite_balance}, we say that $s \in \mc P$ satisfies the (generalized) \emph{balanced condition}, if 
\begin{equation}\label{eq:P0}
s\in \mc P_0 =  \{ s \in \mc P \colon \forall\, v\in V, \,\fan(v) = (e_n)_{n \in \m Z}, \,   \sum_{k=-\infty}^{\infty} s(e_k) = 0\}.    
\end{equation}
Note that if $s$ satisfies the finite balanced condition, then $s \in \mc P_0$.

\begin{df}\label{df:Psi}
We define for $s\in \mc P$, $v \in V$, and $e \in \fan (v)$,
$$p_{s,v} (e\splus) = \sum_{e' >_v e} s(e'), \quad p_{s,v} (e\sminus) = \sum_{e' <_v e} s(e')$$
where $e'>_v e$ means that $e'\in \fan (v)$ and has strictly larger index than $e$, and similarly, $e' <_v e$ for strictly smaller index than $e$.
Define $\Psi : \mc P \to \m R^{E^*}$ by
\begin{equation}\label{eq:def_Psi}
\Psi (s) (e^*) = \frac{1}{4} \Big (p_{s,a} (e\sminus) -p_{s,a} (e\splus) + p_{s,b} (e\sminus)  - p_{s,b}(e\splus) \Big)
\end{equation}
where $e = (a,b)$ is dual to $e^*$. See Figure~\ref{fig:Psi} for an illustration of $\Psi(s)(e_0^*)$.
\end{df}

\begin{figure}[ht]
    \centering
 \includegraphics[scale=.45]{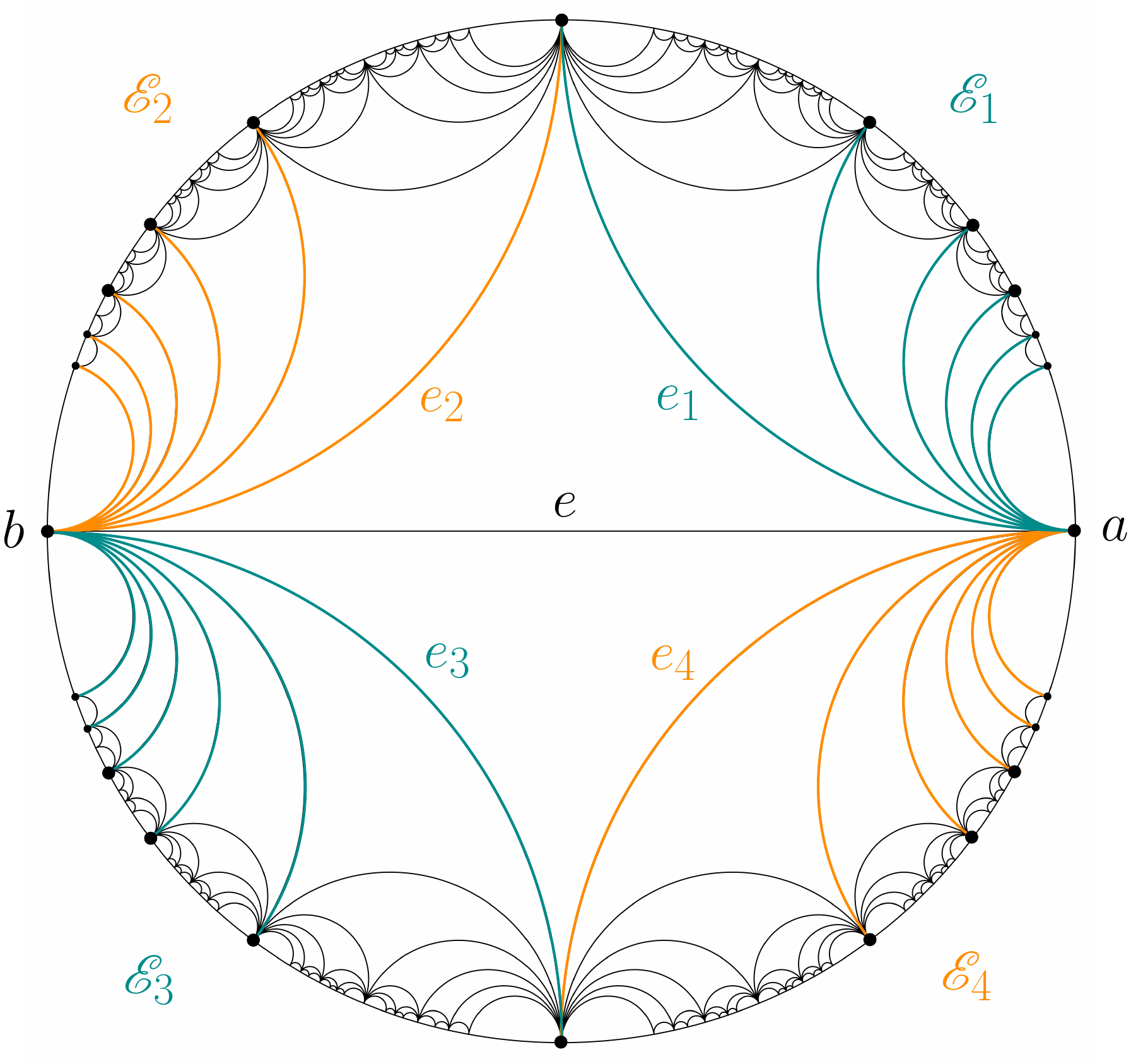}    
 \caption{ \label{fig:Psi} The shears on blue edges are counted as positive and the shears on orange edges are counted as negative in $\Psi(s)(e^*)$.}
\end{figure}

\begin{prop} \label{prop:right-inverse}
The function $\Psi$ is a right-inverse of $\Phi$, namely,
$\Phi \circ \Psi = \Id_{\mc P}$. 
\end{prop}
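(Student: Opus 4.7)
The plan is to verify $\Phi(\Psi(s))(e) = s(e)$ for each $e = (a, b) \in E$ by a direct calculation that telescopes partial sums of shears. Let $p, q$ be the other two vertices of the Farey quad $Q_e$, chosen so that $p$ is on the left and $q$ on the right of $\vec e$ (oriented from $a$ to $b$). Write $f_v^-$ and $f_v^+$ for the neighbors of $e$ in $\fan(v)$ with smaller and larger index respectively, for $v \in \{a, b\}$. Comparing the counterclockwise labeling of the sides of $Q_e$ with the fan orderings at $a$ and $b$ (easily checked by sending the relevant vertex to $\infty$ in $\m H$ via an orientation-preserving M\"obius transformation), one identifies $f_a^- = (a,q)$, $f_a^+ = (a,p)$, $f_b^- = (b,p)$, $f_b^+ = (b,q)$, and recasts~\eqref{eq:def_Phi} as
\[
\Phi(\vartheta)(e) = \bigl[\vartheta((f_a^+)^*) - \vartheta((f_a^-)^*)\bigr] + \bigl[\vartheta((f_b^+)^*) - \vartheta((f_b^-)^*)\bigr].
\]

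Substituting $\vartheta = \Psi(s)$ using~\eqref{eq:def_Psi}, the resulting sixteen partial-sum terms group naturally by the vertex at which they are taken, yielding contributions from $\fan(a)$, $\fan(b)$, $\fan(p)$, $\fan(q)$. For the fan at $a$, writing $e = e_0^{(a)}$, $f_a^{\pm} = e_{\pm 1}^{(a)}$ and using the one-step telescoping identities $p_{s,a}(e_{k+1}^{(a)}\sminus) - p_{s,a}(e_k^{(a)}\sminus) = s(e_k^{(a)})$ and its $\splus$ analogue, a short calculation gives a $\fan(a)$ contribution to $4\Phi(\Psi(s))(e)$ equal to $2 s(e) + s(f_a^+) + s(f_a^-)$, and symmetrically $2 s(e) + s(f_b^+) + s(f_b^-)$ at $b$. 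At $p$, only $f_a^+ = (a,p)$ and $f_b^- = (b,p)$ enter; the key point is that these edges are \emph{consecutive} in $\fan(p)$ because $(a, b, p)$ is a triangle of $\Farey$, and $(a,p)$ has the smaller fan index (seen by sending $p \to \infty$). The telescoping then yields a $\fan(p)$ contribution of $-s(f_a^+) - s(f_b^-)$, and the mirror argument gives a $\fan(q)$ contribution of $-s(f_a^-) - s(f_b^+)$.

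Adding all four contributions, the single-shear terms cancel pairwise to leave $4\Phi(\Psi(s))(e) = 4 s(e)$, which completes the proof. The main obstacle is sign bookkeeping: aligning the counterclockwise quad labeling with the fan neighbors at $a, b$, and keeping the fan orientations at $p, q$ straight relative to those at $a, b$. Once these conventions are pinned down, the cancellation is forced by the two telescoping identities above, and no condition beyond the convergence built into $\mc P$ is required.
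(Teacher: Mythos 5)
Your proof is correct. I verified the convention check that everything hinges on: with $\vec e = (a\to b)$, $p$ on the left and $q$ on the right, the paper's labeling gives $e_1 = (b,p) = f_b^-$, $e_2 = (p,a) = f_a^+$, $e_3 = (a,q) = f_a^-$, $e_4 = (q,b) = f_b^+$, so \eqref{eq:def_Phi} is indeed $\bigl[\vartheta(f_a^+)-\vartheta(f_a^-)\bigr] + \bigl[\vartheta(f_b^+)-\vartheta(f_b^-)\bigr]$; and your four fan contributions ($2s(e)+s(f_a^+)+s(f_a^-)$ at $a$, $2s(e)+s(f_b^+)+s(f_b^-)$ at $b$, $-s(f_a^+)-s(f_b^-)$ at $p$, $-s(f_a^-)-s(f_b^+)$ at $q$) do sum to $4s(e)$.

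The route is genuinely different from the paper's. The paper first argues that $\Phi\circ\Psi$ commutes with infinite linear combinations (by interchanging the finite sixteen-term sum with the limits defining the $p_{s,v}$), reduces to the delta function $s_e$ supported on a single edge, computes $\Psi(s_e)$ explicitly as $\pm 1/4$ on the four half-fans around $Q_e$, and then checks $\Phi(\Psi(s_e))(e') = s_e(e')$ by a case analysis over $e'$. In effect the paper computes the matrix of $\Phi\circ\Psi$ column by column, while you compute it row by row: you fix $e$ and evaluate the functional $s\mapsto \Phi(\Psi(s))(e)$ directly. Your version buys a self-contained argument for arbitrary $s\in\mc P$ --- the only analytic input is that differences of the convergent partial sums $p_{s,v}(\cdot\spm)$ telescope to finite sums, which is exactly what membership in $\mc P$ provides --- so the reduction-to-basis step and its interchange-of-limits justification disappear. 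The paper's version buys the explicit description of $\Psi(s_e)$ as $\pm 1/4$ on four half-fans, which is a useful picture in its own right (Figure~\ref{fig:Psi}). The underlying cancellation is the same in both; your main exposure is the sign bookkeeping, which, as checked above, you got right.
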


\begin{proof}

The maps $\Phi$ and $\Psi$ are both linear. Combining Equations \eqref{eq:def_Phi} and \eqref{eq:def_Psi}, $\Phi(\Psi(s))(e)$ is a sum of $p_{s,a}(e\spm)$ for sixteen different choices of $a,e,\pm$. Since $s\in \mc P$, the limits $p_{s,a}(e\spm)$ are well-defined for all $a\in V,e\in \fan(a)$, and hence we can switch the finite sum with sixteen terms with the limits defining $p_{s,a}(e\spm)$. Therefore $\Phi\circ \Psi$ is linear on $\mc P$ even for infinite linear combinations, so it is enough to show that $\Phi\circ \Psi(s_{e}) = s_{e}$ where $s_{e}$ takes value $1$ on the edge $e = (a,b) \in E$ and $0$ elsewhere. 

Let $e_1,e_2,e_3,e_4$ be the edges around $Q_e$ in counterclockwise order starting from $a$.
We denote the four half-fans around $Q_e$ in counterclockwise order by $\mc E_1 = \{e'\in \fan(a) : e'\leq_a e_1\}$, $\mc E_2 = \{e'\in \fan(b) : e' \geq_b e_2\}$, $\mc E_3 = \{e'\in \fan(b) : e' \leq_b e_3\}$, $\mc E_4 = \{e'\in \fan(a) : e'\geq_a e_4\}$. See Figure~\ref{fig:Psi}. To simplify notation, we identify the dual edges $(e')^*$ with the corresponding edge $e'$. By Equation \eqref{eq:def_Psi}
\begin{align*}
    \Psi(s_{e})(e') = \begin{cases} -1/4 &\quad e'\in \mc E_1\cup \mc E_3\\
    +1/4 &\quad e'\in \mc E_2\cup \mc E_4\\
    0 &\quad \text{ otherwise.}
    \end{cases} 
\end{align*}
To compute $\Phi(\Psi(s_{e}))(e')$ we look at the the values of $\Psi(s_e)$ on the edges $e_1',e_2',e_3',e_4'$ around $Q_{e'}$ and use Equation \eqref{eq:def_Phi}. 

If $e' = e$,
\begin{align*}
    \Phi(\Psi)(s_{e})(e') = -\bigg(-\frac{1}{4}\bigg)+\frac{1}{4} -\bigg(-\frac{1}{4}\bigg) + \frac{1}{4} = 1.
\end{align*}
For $e'\neq e$, we check that $\Phi(\Psi(s_{e}))(e') = 0$. 

If $e' \neq e_i$ for $i=1,2,3,4$, the edges around $Q_{e'}$ either all have diamond shear $\Psi (s_e) (\cdot)= 0$, or there are two consecutive edges with the same nonzero diamond shear followed by two edges with zero diamond shear. In both cases, $\Phi(\Psi(s_e))(e') = 0$. 

For $e'=e_i$ for $i=1,2,3,4$, one can check that two non-consecutive edges around $Q_{e'}$ have nonzero diamond shear coordinates of opposite sign, and the other two edges have diamond shear coordinate $0$. 
\end{proof}

\begin{df}\label{df:H_class}
We define the \emph{diamond shear coordinate} $\vartheta_h : = \Psi(s_h)$ of a circle homeomorphism $h$ if the shear coordinate $s_h \in \mc P$.
We let 
$$\mc S = \{s \in \m R^E \colon \sum_{e\in E} s(e)^2<\infty\} \text{ and } \mc H = \{s \in \mc P \colon \sum_{e^*\in E^*} \vartheta(e^*)^2<\infty
\text{ where } \vartheta = \Psi (s)\}.$$ 
We say that a circle homeomorphism $h$ has \emph{a square summable diamond shear coordinate} if $\vartheta_h \in \mc H$. We endow $\mc S$ and $\mc H$ with the topology of $\ell^2$ convergence in $s$ and $\vartheta$ respectively. 
\end{df}

In the finite support case, it follows from Proposition~\ref{prop:right-inverse} that $\Psi (s_h)$ is the diamond shear coordinate defined in Definition~\ref{df:finite_theta_coor}.
Here and in the rest of the paper we identify $E$ with $E^*$ to simplify the notation.

\begin{lem}\label{lem:H_in_P0}
Assume that $s \in \mc H$ and $\vartheta = \Psi(s)$. Then we have $s \in \mc S$ and 
\begin{equation}\label{eq:H_implies_P0}
\sum_{v \in V} \sum_{e \in \fan (v)} p_{s,v}(e \splus)^2 = 2 \sum_{e\in E} \vartheta(e)^2 + \sum_{e \sim e'} (\vartheta (e) - \vartheta(e'))^2< \infty,
\end{equation}
where  $e \sim e'$ means that $e$ and $e'$ are adjacent in the same fan. In particular, $s \in \mc P_0$.
\end{lem}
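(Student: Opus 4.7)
The plan is to compute $p_{s,v}(e\splus)$ in closed form in terms of the diamond shears $\vartheta = \Psi(s)$ and then verify the identity \eqref{eq:H_implies_P0} by squaring, summing, and combinatorial counting. Fix $v \in V$ with $\fan(v) = (e_n)_{n \in \m Z}$, write $e_n = (v, w_n)$, and note that consecutive $e_n, e_{n+1}$ bound a Farey triangle whose third edge, opposite $v$, is $b_{n,n+1} := (w_n, w_{n+1}) \in E$. Set $a_n := \vartheta(e_n)$ and $c_n := \vartheta(b_{n,n+1})$. Unpacking \eqref{eq:def_Phi} for the four edges adjacent to $e_n^*$ listed counterclockwise around $Q_{e_n}$ yields
\[
s(e_n) \,=\, (a_{n+1} - a_{n-1}) + (c_{n-1} - c_n).
\]

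Since $\vartheta \in \ell^2(E^*)$ implies $a_n, c_n \to 0$ as $|n| \to \infty$, both halves of the tail $\sum_{k > n} s(e_k)$ telescope separately and converge, giving
\[
p_{s,v}(e_n\splus) \,=\, c_n - a_n - a_{n+1},
\]
and symmetrically $p_{s,v}(e_n\sminus) = a_n + a_{n-1} - c_{n-1}$. Both tails being finite shows $s \in \mc P$, and the direct identity $p_{s,v}(e_n\splus) + s(e_n) + p_{s,v}(e_n\sminus) = 0$ gives $s \in \mc P_0$.

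For the main identity, expand and sum:
\[
\sum_{v,n} p_{s,v}(e_n\splus)^2 \,=\, \sum_{v,n}(a_n^2 + a_{n+1}^2 + c_n^2) + 2\sum_{v,n} a_n a_{n+1} - 2\sum_{v,n} c_n(a_n + a_{n+1}).
\]
Each edge $e \in E$ appears twice as an $a$-term (once for each endpoint in the corresponding fan) and twice as a $c$-term (once for each of the two triangles containing $e$, via the opposite vertex), so the diagonal sum equals $6\sum_{e} \vartheta(e)^2$. Setting $A := \sum_{\{e,e'\}\text{ adj}} \vartheta(e)\vartheta(e')$, the cross term $\sum_{v,n} a_n a_{n+1}$ equals $A$ since each unordered adjacent pair corresponds to a unique consecutive pair in one fan. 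For the last cross term, group by triangles: a triangle $\tau$ with edges $f_1, f_2, f_3$ contributes, at each vertex $p_i \in \tau$, the quantity $\vartheta(f_i)(\vartheta(f_j) + \vartheta(f_k))$ with $f_i$ opposite $p_i$, which sums over $v \in \tau$ to $2(\vartheta(f_1)\vartheta(f_2) + \vartheta(f_1)\vartheta(f_3) + \vartheta(f_2)\vartheta(f_3))$; since each adjacent pair lies in a unique triangle, $\sum_{v,n} c_n(a_n + a_{n+1}) = 2A$. Combining gives $\sum_{v,n} p_{s,v}(e_n\splus)^2 = 6\sum_e \vartheta(e)^2 - 2A$. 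The right-hand side of \eqref{eq:H_implies_P0} evaluates to the same, since each edge has exactly four adjacent neighbors, so $\sum_{e \sim e'}(\vartheta(e) - \vartheta(e'))^2 = 4\sum_e \vartheta(e)^2 - 2A$. Finiteness follows from $\vartheta \in \ell^2(E^*)$ and $|A| \le 2\sum_e \vartheta(e)^2$ by Cauchy--Schwarz; and the bound $s(e)^2 \le 4\sum_{e' \sim e}\vartheta(e')^2$ gives $\sum_e s(e)^2 \le 16\sum_e \vartheta(e)^2 < \infty$, so $s \in \mc S$.

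The main obstacles are, first, pinning down the signs in the formula for $s(e_n)$, which requires carefully matching the counterclockwise labeling convention of the four edges adjacent to $e_n^*$ around $Q_{e_n}$ with the indexing of $\fan(v)$; and second, the combinatorial bookkeeping: individual cross sums like $\sum_{v,n} c_n a_n$ depend on the arbitrary choice of $e_0$ in each fan, whereas the orientation-invariant combinations arising in the expansion (notably $\sum_{v,n} c_n(a_n + a_{n+1})$) admit a clean per-triangle evaluation.
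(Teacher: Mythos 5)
Your proof is correct and follows essentially the same route as the paper: telescope the fan tails to express $p_{s,v}(e_n\splus)$ as a three-term combination of the diamond shears on a single triangle, then sum the squares by double counting over (triangle, vertex) pairs. The only differences are an overall sign in your formula for $s(e_n)$ relative to the one displayed in the paper's proof (immaterial once squared) and that you expand and count the cross terms directly where the paper packages the same bookkeeping into the identity $(a+b-c)^2+(b+c-a)^2+(c+a-b)^2=a^2+b^2+c^2+(a-b)^2+(a-c)^2+(b-c)^2$.
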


\begin{proof}
The Cauchy-Schwarz inequality, $s = \Phi (\vartheta)$, and the assumption of $\vartheta$ is square summable show that $s \in \mc S$.

Now we fix $v\in V$ and let $\fan (v) = (e_n)_{n\in \m Z}$. For $e_k \in \fan (v)$, we compute $p_{s,v} (e_k\splus) = \sum_{n = k+1}^\infty s(e_n)$.
For this, we write the edge of $\Farey$ connecting the endpoints of $e_n$ and $e_{n+1}$ other than $v$ as $e'_n$. 
Since $s = \Phi (\vartheta)$, \eqref{eq:def_Phi} shows that  for $m >k$,
\begin{align*}
\sum_{n = k+1}^m s(e_n) &=\sum_{n = k+1}^m  \vartheta (e_{n-1}) - \vartheta (e_{n-1}') +  \vartheta (e_{n}') - \vartheta(e_{n+1}) \\
&= \vartheta(e_k) + \vartheta(e_{k+1}) -\vartheta(e_k') + \vartheta (e_m') - \vartheta (e_m) - \vartheta(e_{m+1}).
\end{align*}
Since $\vartheta$ is square summable, we have  $\vartheta(e_m)$ and $\vartheta(e_m')$ converging to $0$ as $m \to \infty$. Hence,
$$p_{s,v}(e_k \splus) =  \lim_{m \to \infty} \sum_{n = k+1}^m s(e_n) =   \vartheta(e_k) + \vartheta(e_{k+1}) - \vartheta(e_k').$$
When we sum $p_{s,v}(e \splus)^2$ over all $v \in V$ and $e  \in \fan (v)$, the triplet $(\vartheta(e_k), \vartheta(e_{k+1}), \vartheta(e_k'))$ in the above identity appears three times but with different signs, once in each fan at the vertices of the triangle formed by $e_k, e_{k+1}$ and $e_k'$. 
Using the identity 
$$(a+b-c)^2 + (b+c-a)^2 + (c+a-b)^2 = a^2 +b^2 +c^2 + (a-b)^2 + (a-c)^2 + (b-c)^2,$$
we obtain
$$\sum_{v \in V} \sum_{e \in \fan (v)} p_{s,v}(e \splus)^2 = 
2 \sum_{e\in E} \vartheta(e)^2 + \sum_{e \sim e'} (\vartheta (e) - \vartheta(e'))^2 < \infty$$
as claimed. Here we note that the constant in front of the first sum is $2$ since every edge appears in two triangles, while the constant in front of the second is $1$ because every pair of adjacent edges appears in only one triangle. 
Finally, \eqref{eq:H_implies_P0} implies that $p_{s,v} (e_k \splus) \to 0$ as $k \to -\infty$. This shows $s \in \mc P_0$.
\end{proof}

Summarizing the above results, we obtain the following inclusions.  
\begin{cor}\label{cor:H_inclusions_summary}
We have
$\mc H \subset \mc P_0 \cap \mc S \subset  \mc P$.    
\end{cor}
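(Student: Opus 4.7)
The plan is to observe that this corollary is essentially a repackaging of the preceding Lemma~\ref{lem:H_in_P0}, so there is almost no work beyond unpacking the definitions.

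First, the inclusion $\mc P_0 \cap \mc S \subset \mc P$ is immediate from the definition of $\mc P_0$ in \eqref{eq:P0}, which is stated as a subset of $\mc P$. So no argument is needed for this half.

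For the main inclusion $\mc H \subset \mc P_0 \cap \mc S$, I would simply apply Lemma~\ref{lem:H_in_P0} directly. That lemma assumes $s \in \mc H$ (equivalently, $\vartheta = \Psi(s)$ is $\ell^2$) and concludes both (i) $s \in \mc S$, via Cauchy--Schwarz applied to $s = \Phi(\vartheta)$ using the local finiteness of the support of $\Phi$'s basis elements, and (ii) $s \in \mc P_0$, via the key estimate
\[
\sum_{v \in V} \sum_{e \in \fan(v)} p_{s,v}(e\splus)^2 = 2\sum_{e\in E} \vartheta(e)^2 + \sum_{e \sim e'} (\vartheta(e) - \vartheta(e'))^2 < \infty,
\]
which forces $p_{s,v}(e_k \splus) \to 0$ as $k \to -\infty$ in every fan, i.e., the generalized balanced condition. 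Combining (i) and (ii) gives $\mc H \subset \mc P_0 \cap \mc S$.

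There is no real obstacle here; the corollary is a summary statement intended to record the inclusions proved in Lemma~\ref{lem:H_in_P0} together with the trivial set-theoretic inclusion $\mc P_0 \subset \mc P$. The entire proof can be written in one or two sentences citing the lemma.
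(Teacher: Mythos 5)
Your proposal is correct and matches the paper exactly: the corollary is stated as a summary of Lemma~\ref{lem:H_in_P0}, which gives $s \in \mc S$ and $s \in \mc P_0$ for any $s \in \mc H$, and the inclusion $\mc P_0 \subset \mc P$ holds by definition \eqref{eq:P0}. No further argument is needed.
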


Shear functions in $\mc H$ also satisfy another boundedness condition. 

\begin{lem}
    If $s\in \mc H$, then there exists a 
    constant $M=M(s)\geq 0$ such that for all $v\in V$ and all $n,m\in \m Z$, $n \le m$, 
    \begin{equation}\label{eq:shears_bound}
        \abs{\sum_{i=n}^m s(e_i)} \leq M, 
    \end{equation}
    where $\fan(v) = (e_i)_{i\in \m Z}$. 
\end{lem}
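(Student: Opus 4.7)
The plan is to use directly the key identity derived in the proof of Lemma~\ref{lem:H_in_P0}, together with the fact that $s \in \mc P_0$ so that the partial sums telescope nicely with $p_{s,v}(e\splus)$.

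First, since $s \in \mc P_0$ by Corollary~\ref{cor:H_inclusions_summary}, for any $v \in V$ with $\fan(v) = (e_i)_{i \in \m Z}$, I can write
$$\sum_{i=n}^{m} s(e_i) = p_{s,v}(e_{n-1}\splus) - p_{s,v}(e_m \splus).$$
Hence a uniform bound on $\abs{\sum_{i=n}^m s(e_i)}$ will follow from a uniform bound on $\abs{p_{s,v}(e\splus)}$ over all $v \in V$ and $e \in \fan(v)$.

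Next, I recall the explicit formula obtained in the proof of Lemma~\ref{lem:H_in_P0}:
$$p_{s,v}(e_k \splus) = \vartheta(e_k) + \vartheta(e_{k+1}) - \vartheta(e_k'),$$
where $e_k'$ denotes the edge of $\Farey$ joining the endpoints of $e_k$ and $e_{k+1}$ other than $v$. Since $\vartheta = \Psi(s) \in \ell^2(E)$, every individual value $\vartheta(e)$ is bounded by $\norm{\vartheta}_{\ell^2}$. Thus the triangle inequality gives
$$\abs{p_{s,v}(e_k \splus)} \le \abs{\vartheta(e_k)} + \abs{\vartheta(e_{k+1})} + \abs{\vartheta(e_k')} \le 3\, \norm{\vartheta}_{\ell^2}$$
uniformly in $v \in V$ and $k \in \m Z$.

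Combining these two observations yields
$$\abs{\sum_{i=n}^{m} s(e_i)} \le 6\, \norm{\vartheta}_{\ell^2},$$
so one can take $M(s) = 6\, \norm{\vartheta}_{\ell^2}$. There is no real obstacle here; the entire argument is a short consequence of the identity already established in Lemma~\ref{lem:H_in_P0} together with the balanced condition $\mc H \subset \mc P_0$.
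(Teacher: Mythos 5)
Your proof is correct and follows essentially the same route as the paper: both write $\sum_{i=n}^m s(e_i)$ as a difference of two values $p_{s,v}(\cdot\splus)$ and then invoke Lemma~\ref{lem:H_in_P0} to bound these uniformly. The only (cosmetic) difference is that you extract the uniform bound from the explicit identity $p_{s,v}(e_k\splus) = \vartheta(e_k)+\vartheta(e_{k+1})-\vartheta(e_k')$ inside that lemma's proof, yielding the concrete constant $6\norm{\vartheta}_{\ell^2}$, whereas the paper simply uses the square-summability of the family $\{p_{s,v}(e\splus)\}$ stated in \eqref{eq:H_implies_P0}.
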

\begin{proof}
By Lemma \ref{lem:H_in_P0}, $s\in \mc H$ implies that $\{p_{s,v}(e\splus): v\in V, e\in \fan(v)\}$ is square summable, so there is a constant $C>0$ such that $|p_{s,v}(e\splus)|<C$ for all $v\in V, e\in \fan(v)$. Choose any $v\in V$, and let $\fan(v) = (e_i)_{i\in \m Z}$. For any $n,m$, 
\begin{align*}
    \abs{\sum_{i=n}^m s(e_i)} = \abs{ p_{s,v}(e_m\splus) - p_{s,v}(e_n\splus)} \leq  2C.
\end{align*}
Therefore \eqref{eq:shears_bound} holds with $M = 2C$. 
\end{proof}

\begin{remark} The class of shear functions satisfying Equation \eqref{eq:shears_bound} does not include, nor is contained in $\mc P_0$ or $\mc P$.  
\begin{itemize}[itemsep=-2pt]
    \item \eqref{eq:shears_bound} does not imply that $s\in \mc P$. For example, the map that has shears alternating $\pm 1$ along a fan satisfies \eqref{eq:shears_bound}, but is not in $\mc P$. 
    \item $s\in \mc P_0$ does not imply \eqref{eq:shears_bound} either. The condition $s\in \mc P_0$ implies that for each $v\in V$ there exists a constant $M_v$ such that the sums in $\fan(v)$ are bounded by $M_v$, but these $M_v$ constants may not be the same and the collection $\{M_v : v\in V\}$ may be unbounded for $s\in \mc P_0$.
\end{itemize}
\end{remark}

The condition \eqref{eq:shears_bound} helps us show that any $s\in \mc H$ induces a quasisymmetric homeomorphism $h$ of the circle. 
Shears for quasisymmetric homeomorphisms have been characterized. 
\begin{thm}[See \cite{Saric_circle},\cite{Saric_new}]\label{thm:qs_condition}
A shear function $s: E \to \m R$ is induced by a quasisymmetric map if and only if there exists $C\geq 1$ such that for all $v\in V$ with $\fan(v) = (e_i)_{i\in \m Z} $ and for all $k\in \m Z$, $n\in \m N$, 
\begin{align*}
    \frac{1}{C}\leq s(k,n;v) \leq C.
\end{align*}
Here $s(k,n;v)$ is 
\begin{align*}
    s(k,n;v) = \frac{e^{s(e_k)} + e^{s(e_k) + s(e_{k+1})} + \cdots + e^{s(e_k) + \cdots + s(e_{k+n})}}{1 + e^{-s(e_{k-1})} + \cdots + e^{-s(e_{k-1}) - \cdots - s(e_{k-n})}}. 
\end{align*}
\end{thm}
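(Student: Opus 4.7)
The plan is to interpret $s(k,n;v)$ geometrically as a Beurling--Ahlfors quasisymmetric ratio after a suitable normalization. The goal is to reduce the condition on $s(k,n;v)$ to the quasisymmetric condition for a boundary map $\m R \to \m R$ sending the integers to real numbers whose successive ratios are encoded by the shears in $\fan(v)$.

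First, fix $v \in V$ and choose M\"obius transformations $A_1, A_2 \colon \m D \to \m H$ with $A_1 (v) = A_2 (h(v)) = \infty$ and $A_1(w_0) = 0$, $A_1(w_1) = 1$, where $w_i \in V$ denotes the endpoint of $e_i$ other than $v$. By Lemma~\ref{lem:farey-children} and the modular invariance of $\Farey$, we have $A_1(w_i) = i$ for all $i \in \m Z$, and the geodesics $A_1(e_i)$ are the vertical half-lines $\{\Re(z)=i,\,\Im(z)>0\}$. Let $\tilde h = A_2 \circ h \circ A_1^{-1}$ and set $x_i = \tilde h(i)$, further normalizing $A_2$ so that $x_0 = 0$ and $x_1 = 1$. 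From Lemma~\ref{lem:single_shear_hyperbolic} (equivalently \eqref{eq:ratio_shear_sum}) we have $(x_{i+1} - x_i)/(x_i - x_{i-1}) = e^{s(e_i)}$. A telescoping computation analogous to the one in the proof of Lemma~\ref{lem:finite_balance} then yields
\begin{equation*}
    s(k,n;v) \;=\; \frac{x_{k+n+1} - x_k}{x_k - x_{k-n-1}} \;=\; \frac{\tilde h(k+n+1) - \tilde h(k)}{\tilde h(k) - \tilde h(k-n-1)},
\end{equation*}
which is precisely the Beurling--Ahlfors quasisymmetric ratio of $\tilde h$ at the point $k$ with scale $n+1$.

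For the forward direction, quasisymmetry of $h$ on $\m T$ (in the M\"obius-invariant cross-ratio sense) is equivalent, after a M\"obius normalization, to the Beurling--Ahlfors condition for $\tilde h$ with a constant depending only on the quasisymmetry constant of $h$. Since $A_1$ and $A_2$ are M\"obius, this yields a uniform bound $1/C \le s(k,n;v) \le C$ with $C$ independent of $v \in V$, $k \in \m Z$ and $n \in \m N$.

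For the backward direction, assume that $s(k,n;v)$ is uniformly bounded by $C$. One first checks that the bound forces the map $V \to \m T$ defined inductively by $s$ (using the ratios in each fan) to extend continuously to a homeomorphism of $\m T$, so $h$ exists as an actual circle homeomorphism. To establish quasisymmetry of $h$, it suffices to bound the cross-ratio distortion on a dense set of 4-tuples. The main obstacle is to upgrade the integer-spaced ratio bounds in every fan to a bound on arbitrary cross-ratios. The idea is that any 4-tuple in $V$ can be realized, via a suitable choice of $v \in V$ and M\"obius normalization sending $v \mapsto \infty$, as a sub-configuration of integer-spaced points in $\fan(v)$ (after possibly further subdividing the configuration and iterating): the transitive action of $\PSL(2,\m Z)$ on oriented edges and the self-similar structure of $\Farey$ ensure enough flexibility. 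The uniform bound on $s(k,n;v)$ across all $v$ then transfers to a uniform bound on the cross-ratio distortion of $h$ on a dense set; by density of $V$ in $\m T$ and continuity of $h$, this gives quasisymmetry of $h$ on $\m T$.
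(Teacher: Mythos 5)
This theorem is quoted from \cite{Saric_circle,Saric_new}; the paper contains no proof of it, so I can only assess your argument on its own terms.

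Your identity $s(k,n;v) = \bigl(\tilde h(k+n+1)-\tilde h(k)\bigr)/\bigl(\tilde h(k)-\tilde h(k-n-1)\bigr)$ is correct: writing $d_i = x_{i+1}-x_i$ and using $d_i/d_{i-1}=e^{s(e_i)}$, the numerator telescopes to $d_{k-1}\sum_{j=0}^n e^{s(e_k)+\cdots+s(e_{k+j})}$ and the denominator to $d_{k-1}\bigl(1+\sum_{j=1}^{n}e^{-s(e_{k-1})-\cdots-s(e_{k-j})}\bigr)$, so $s(k,n;v)$ is exactly the Beurling--Ahlfors ratio of $\tilde h$ at the integer $k$ at scale $n+1$. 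This, together with the fact that a quasisymmetric circle homeomorphism conjugated by M\"obius maps sending $v,h(v)\mapsto\infty$ is Beurling--Ahlfors quasisymmetric on $\m R$ with a constant depending only on the quasisymmetry constant of $h$, gives the necessity direction cleanly and with the required uniformity in $v$.

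The sufficiency direction, however, has a genuine gap. Your key claim --- that any $4$-tuple in $V$ (or any symmetric triple on $\m T$) can be realized, after a M\"obius normalization and ``possibly further subdividing and iterating,'' as a sub-configuration of integer-spaced points in a single $\fan(v)$ --- is false as stated: four generic rational points do not share a common neighbor in $\Farey$, and the fan condition only sees ratios of the very special form $\bigl(\tilde h(k+m)-\tilde h(k)\bigr)/\bigl(\tilde h(k)-\tilde h(k-m)\bigr)$ at integers in a single fan. Upgrading these countably many constraints, one fan at a time, to a uniform bound on the quasisymmetric distortion of arbitrary symmetric triples is precisely the hard content of the theorem; in \cite{Saric_circle} it requires analyzing how a geodesic joining the two outer points of a triple cuts through the Farey tessellation as a concatenation of fan segments and propagating the bound along that cutting sequence (equivalently, building a quasiconformal extension adapted to the tessellation). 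The phrase ``the transitive action of $\PSL(2,\m Z)$ and the self-similar structure ensure enough flexibility'' does not substitute for this argument. A second, smaller gap: you assert without proof that the uniform bound on $s(k,n;v)$ forces the map $V\to\m T$ determined by $s$ to extend to a homeomorphism; this also needs an argument (it is Theorem A of \cite{Saric_circle}, with a different-looking condition, and one must check the quasisymmetric fan condition implies it).
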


We obtain from this theorem the following corollary which is also considered in the paper of Parlier and the first author \cite{ParlierSaric}.
\begin{cor}\label{cor:H_in_QS}
    If $s: E\to \m R$ satisfies \eqref{eq:shears_bound}, then $s$ induces a quasisymmetric homeomorphism $h:\m T \to \m T$. In particular, if $s_h\in \mc H$, then $h\in \QS(\m T)$. 
\end{cor}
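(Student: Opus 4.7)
The plan is to check the hypothesis of Theorem~\ref{thm:qs_condition} directly from the partial sum bound \eqref{eq:shears_bound}. Fix $v \in V$ with $\fan(v) = (e_i)_{i \in \m Z}$ and integers $k \in \m Z$, $n \in \m N$. Both the numerator and the denominator of $s(k,n;v)$ are sums of $n+1$ terms, each of which is an exponential of a consecutive partial sum of shears in $\fan(v)$ (with the convention that the first term of the denominator is $e^{0} = 1$, corresponding to the empty partial sum). More precisely, the numerator terms are $\exp\!\bigl(\sum_{i = k}^{k+j} s(e_i)\bigr)$ for $j = 0, \ldots, n$, and the denominator terms are $\exp\!\bigl(-\sum_{i = k-j}^{k-1} s(e_i)\bigr)$ for $j = 0, \ldots, n$.

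By \eqref{eq:shears_bound}, every such partial sum has absolute value at most $M$, so each individual term of both sums lies in the interval $[e^{-M}, e^{M}]$. Consequently
\[
(n+1)\,e^{-M} \;\le\; \text{numerator},\ \text{denominator} \;\le\; (n+1)\,e^{M},
\]
which yields
\[
e^{-2M} \;\le\; s(k,n;v) \;\le\; e^{2M}.
\]
Since this bound is uniform in $v$, $k$, $n$, Theorem~\ref{thm:qs_condition} applied with $C = e^{2M}$ shows that $s$ is induced by a quasisymmetric homeomorphism $h \colon \m T \to \m T$.

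For the second statement, if $s_h \in \mc H$, then the preceding lemma (giving \eqref{eq:shears_bound} from square summability of $\vartheta$) provides the constant $M$ required above, so the first part applies to conclude $h \in \QS(\m T)$. There is no real obstacle here beyond assembling the two inputs; the only thing worth emphasizing is that numerator and denominator have the same number of terms, which is what allows the $(n+1)$ factors to cancel and produce uniform two-sided bounds independent of $n$.
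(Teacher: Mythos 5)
Your proof is correct and follows exactly the route the paper intends: the paper states the corollary as an immediate consequence of Theorem~\ref{thm:qs_condition} without writing out the verification, and your argument supplies precisely that verification — each of the $n+1$ terms in the numerator and denominator of $s(k,n;v)$ is the exponential of a consecutive partial sum along $\fan(v)$, hence lies in $[e^{-M},e^{M}]$ by \eqref{eq:shears_bound}, giving $e^{-2M}\le s(k,n;v)\le e^{2M}$ uniformly. The observation that the two sums have the same number of terms, so the $(n+1)$ factors cancel, is exactly the right point to emphasize.
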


\begin{remark}
    Given the result above, in later sections we often abuse notation and write $h\in \mc H$ to mean that the homeomorphism $h$ has shear coordinates $s_h\in \mc H$. Despite the fact that not all shear functions in $\mc P,\mc P_0$ induce homeomorphisms, we also sometimes write $h\in \mc P$ or $h\in \mc P_0$ to mean that $h$ has shear function $s_h\in \mc P$ or $s_h \in \mc P_0$ respectively.
\end{remark}

\subsection{Analytic definition of diamond shear}
\label{sec:analytic_diamond}

In the section we show that the diamond shear coordinate of a circle homeomorphism can be described directly using derivatives of $h$. This description of diamond shears also leads to a relationship with coordinates called \textit{$\log \Lambda$-lengths} for decorated Teichm\"uller space studied in \cite{Penner1993UniversalCI,Penner2002OnHF,PennerBook}, see Section~\ref{sec:diamond_logL}. 
The following lemma is reminiscent of Lemma~\ref{lem:finite_balance} for finite support shears. 

\begin{lem}\label{lem:P_derivative}
\begin{enumerate}[label=\roman*)]
    \item  If $h \in \mc P$, then $h$ admits left and right derivatives at all rational points,
i.e., $\forall v \in V$, $h'(v\splus)$ and $h'(v\sminus)$ exist.   \label{it:P_derivative_exist}
\item \label{it:P0_differentiable} If $h \in \mc P_0$, then $h$ is differentiable at all rational points, i.e., $\forall v \in V$,  $h'(v\splus)=h'(v\sminus)$.
\item \label{it:C1_P0}  Conversely, if $h \in \mc C^1$, i.e., $h$ is continuously differentiable and $h' \neq 0$ everywhere,  then $h \in \mc P_0$.
\end{enumerate}

\end{lem}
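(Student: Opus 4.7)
The plan is to reduce all three claims to an asymptotic analysis of $h$ after conjugating to the upper half-plane. Fix $v \in V$, order $\fan(v) = (e_n)_{n \in \m Z}$, and choose Möbius maps $\cayley_1, \cayley_2 \colon \m D \to \m H$ with $\cayley_2(v) = \cayley_1(h(v)) = \infty$, $\cayley_2(e_0) = (0,\infty)$, and $\cayley_2(e_1) = (1,\infty)$, exactly as in the proof of Lemma~\ref{lem:finite_balance}. Then $\cayley_2(e_n) = (n,\infty)$ for every $n \in \m Z$ and $\varphi := \cayley_1 \circ h \circ \cayley_2^{-1}$ is a monotone homeomorphism of $\m R$ fixing $\infty$. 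By \eqref{eq:ratio_shear_sum} and telescoping, for every $n \ge 1$,
\begin{equation*}
\sum_{k=1}^{n} s(e_k) = \log \frac{\varphi(n+1) - \varphi(n)}{\varphi(1) - \varphi(0)}, \qquad \sum_{k=-n}^{0} s(e_k) = \log \frac{\varphi(1) - \varphi(0)}{\varphi(-n) - \varphi(-n-1)}.
\end{equation*}

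The key equivalence, valid for any circle homeomorphism $h$, is that both one-sided derivatives $h'(v\spm)$ exist as positive finite numbers if and only if $\varphi(x)/x$ admits positive finite limits as $x \to \pm\infty$; in that case the two sets of quantities are inversely proportional through the constant $\b_1/\b_2$, where $\cayley_i^{-1}$ has the Laurent expansion $v_i + \b_i/(\cdot) + O(1/(\cdot)^2)$ at $\infty$ (with $v_2 = v$, $v_1 = h(v)$), and the $\pm$ matching depends on the orientations of the $\cayley_i$. This follows by substituting the Laurent expansions of $\cayley_1, \cayley_2$ into $\varphi(x)/x$, giving $\varphi(x)/x = (\b_1/\b_2) \cdot (z-v)/(h(z) - h(v)) \cdot (1 + o(1))$ with $z = \cayley_2^{-1}(x) \to v$ as $x \to \pm\infty$.

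For part \ref{it:P_derivative_exist}, $s_h \in \mc P$ makes both displayed sums converge, so $d_+ := \lim_{n \to +\infty}(\varphi(n+1) - \varphi(n))$ and $d_- := \lim_{n \to +\infty}(\varphi(-n) - \varphi(-n-1))$ exist in $(0,\infty)$. Writing $\varphi(n) - \varphi(0) = \sum_{k=0}^{n-1}(\varphi(k+1) - \varphi(k))$ and invoking Cesàro gives $\varphi(n)/n \to d_+$ as $n \to +\infty$; monotonicity of $\varphi$ then sandwiches $\varphi(x)/x$ for $x \in [n, n+1]$ between $\varphi(n)/(n+1)$ and $\varphi(n+1)/n$, both tending to $d_+$, so $\varphi(x)/x \to d_+$ as $x \to +\infty$, and symmetrically $\to d_-$ as $x \to -\infty$. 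The equivalence above then gives existence of $h'(v\spm)$. For part \ref{it:P0_differentiable}, the extra balanced condition $\sum_{k \in \m Z} s(e_k) = 0$ reads $\log(d_+/d_-) = 0$, i.e.\ $d_+ = d_-$, and the equivalence gives $h'(v+) = h'(v-)$.

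For part \ref{it:C1_P0}, suppose $h \in \mc C^1$. The chain rule yields $\varphi'(x) = \cayley_1'(h(\cayley_2^{-1}(x))) \, h'(\cayley_2^{-1}(x)) \, (\cayley_2^{-1})'(x)$; plugging in the Möbius asymptotics of $\cayley_1, \cayley_2$ at $v, h(v)$ together with $h'(v+) = h'(v-) = h'(v)$ shows that $\varphi'(x)$ has a common positive limit $L = \b_1/(\b_2 h'(v))$ at $\pm\infty$ (the explicit computation is identical in form to the one in the proof of Lemma~\ref{lem:finite_balance}, just without the piecewise-Möbius assumption). Integrating on unit intervals gives $\varphi(n+1) - \varphi(n) \to L$ at both ends, so both partial shear sums converge ($s_h \in \mc P$) with equal limits (balanced), hence $s_h \in \mc P_0$. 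The only delicate step is the bootstrap in part \ref{it:P_derivative_exist}: the shear condition controls $\varphi$ only at integer points, and the monotonicity sandwich is what promotes this to the pointwise asymptotics of $\varphi(x)/x$ needed for the one-sided derivative.
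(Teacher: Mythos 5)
Your proof is correct and takes essentially the same route as the paper: the paper also conjugates $v\mapsto\infty$, extracts $\ell=\lim(\varphi(n+1)-\varphi(n))$ and $\ell'$ from the convergent shear sums, applies Ces\`aro summation plus a monotonicity sandwich to get the one-sided derivatives (phrased there for $\tilde\varphi=\iota\circ\varphi\circ\iota$ at $0$ rather than for $\varphi(x)/x$ at $\infty$, which is the same statement), and for part iii) shows $\varphi(n+1)-\varphi(n)$ converges to a common limit at both ends. The only cosmetic difference is that for iii) the paper uses the mean value theorem on $\tilde\varphi$ near $0$ where you integrate $\varphi'$ using the M\"obius asymptotics as in Lemma~\ref{lem:finite_balance}; both are valid.
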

\begin{proof}
Assume that $s = s_h \in \mc P$. We fix $v \in V = \m Q^2 \cap \mathbb T$ and $\fan (v) = (e_k)_{k \in \m Z}$.
As in Lemma~\ref{lem:finite_balance}, we define $\varphi : \m R \to \m R$ to be the homeomorphism $\varphi = \cayley_1 \circ h \circ \cayley_2^{-1}$, where $\cayley_1, \cayley_2$ are two M\"obius transformations $\m D \to \m H$ such that $\cayley_1 (h(v)) = \infty$, $\cayley_2 (v) = \infty$, $\cayley_2 (e_0) = (0,\infty)$, and $\cayley_2 (e_1) = (1,\infty)$. We have $\varphi$ fixes $\infty$, and $\cayley_2 (e_n) = (n,\infty)$ for all $n \in \m Z$. 
Since the limits as $n \to \infty$ of $\sum_{k = -n}^{-1} s(e_k)$ and $\sum_{k = 0}^n s(e_k)$ exist by definition of $\mc P$, 
\begin{equation}\label{eq:ell}
\ell : = \lim_{n \to \infty} \varphi(n+1) - \varphi(n) = (\varphi(0) - \varphi(-1))\exp\Big(\sum_{k = 0}^\infty s(e_k)\Big) \in (0,\infty)
\end{equation}
and 
\begin{equation}\label{eq:ell'}
    \ell' := \lim_{n \to \infty}\varphi(-n) - \varphi(-n-1) = (\varphi(0) - \varphi(-1))\exp\Big(-\sum_{k = -\infty}^{-1} s(e_k)\Big)\in (0,\infty) 
\end{equation}
also exist.  In particular, $\varphi(n) = n \ell + o(n)$ and $\varphi(-n) = - n \ell' + o(n)$ as $n \to \infty$ by Ces\`aro summation.

To show the left and right derivatives of $h$ at $v$ exist, it suffices to show that $\tilde \varphi = \iota \circ \varphi \circ \iota$ has left and right derivatives at $0$, where $\iota (x)= -1/x$. Note that $\tilde \varphi$ fixes $0$ and is an increasing function.
We have
$$\frac{\tilde \varphi (-n^{-1})}{-n^{-1}} = \frac{n}{\varphi(n)} \xrightarrow[]{n\to \infty} \ell^{-1}.$$
From the monotonicity of $\tilde \varphi$, we have
$$ \frac{\tilde \varphi(-(n+1)^{-1})}{-n^{-1}} \le \frac{\tilde \varphi (x)}{x} \le \frac{\tilde \varphi(-n^{-1})}{-(n+1)^{-1}},\qquad \forall x \in [-n^{-1}, -(n+1)^{-1}].$$
Hence, $\tilde \varphi$ admits the left derivative $\ell^{-1}$ at $0$. Similarly, we can show that $\tilde \varphi$ admits the right derivative $(\ell')^{-1}$. This concludes the proof of \ref{it:P_derivative_exist}.

Now we assume that $h \in \mc P_0$. The equations \eqref{eq:ell}, \eqref{eq:ell'} show that $\ell = \ell'$. Hence, the left and right derivatives of $h$ coincide, which shows \ref{it:P0_differentiable}.
 
For \ref{it:C1_P0}, if $h \in \mc C^1$, then $\varphi$ is continuously differentiable with $\varphi'(0) \neq 0$. We have
\begin{align*}
    \varphi (n+1) - \varphi (n) &= \frac{1}{\tilde \varphi (-n^{-1})} - \frac{1}{\tilde \varphi (-(n+1)^{-1})}  = \frac{ \tilde \varphi (-(n+1)^{-1})- \tilde \varphi (-n^{-1}) }{\tilde \varphi (-(n+1)^{-1})\tilde \varphi (-n^{-1})}\\
    & = \frac{n^{-1} - (n+1)^{-1}}{(n+1)^{-1}n^{-1}}\frac{\tilde \varphi'(x)}{\tilde \varphi'(y) \tilde \varphi'(z)}  =  \frac{\tilde \varphi'(x)}{\tilde \varphi'(y) \tilde \varphi'(z)} 
\end{align*}
for some $x \in [-n^{-1}, -(n+1)^{-1}]$, $y \in [-(n+1)^{-1},0]$, $z\in [-n^{-1},0]$. 
Therefore, 
\begin{equation}\label{eq:limit_varphi_l}
    \varphi (n+1) - \varphi (n) \xrightarrow[]{n \to \infty} \tilde \varphi'(0)^{-1},
\end{equation} which shows that $\sum_{k = 0}^n s(e_k)$ converges. Similarly, $\sum_{k = -n}^{-1} s(e_k)$ converges as well. We conclude with  \eqref{eq:ell}, \eqref{eq:ell'} that $h \in \mc P_0$.
\end{proof}

\begin{remark}
The converse statement \ref{it:C1_P0} is slightly weaker and we do not have an equivalent description for circle homeomorphisms whose shear coordinate satisfies the generalized balanced  condition $\mc P_0$. The naive converse of \ref{it:P0_differentiable} is not true. This lemma is trickier than Lemma~\ref{lem:finite_balance} as the set of vertices is dense in $\m T$ and we do not have the a priori smoothness of piecewise M\"obius maps. 
\end{remark}

\begin{prop}\label{prop:theta_analytic}
    If a circle homeomorphism $h \in \mc P_0$, then $\vartheta_h$
    is given by
    \begin{align} \label{eq:theta_derivative}
        \vartheta_h (e
        ) &= \frac{1}{2} \log h'(a) h'(b) - \log \frac{h(a)-h(b)}{a-b}
    \end{align}
    for all $e = (a,b) \in E$. 
\end{prop}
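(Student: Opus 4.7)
The plan is to conjugate by suitable M\"obius transformations so that the edge $e=(a,b)$ becomes $(0,\infty)$ in the upper half-plane, telescope both tail sums $p_{s,a}(e\spm)$ and $p_{s,b}(e\spm)$, observe a cancellation of the image terms, and identify what remains with the right-hand side of \eqref{eq:theta_derivative} by unwinding the normalization.

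First I choose $\cayley_2\colon\m D\to\m H$ lying in $\PSL(2,\m Z)\cdot\cayley$ (hence preserving the Farey tessellation) with $\cayley_2(a)=\infty$ and $\cayley_2(b)=0$---possible because $\PSL(2,\m Z)$ acts transitively on oriented Farey edges---together with a M\"obius $\cayley_1\colon\m D\to\m H$ sending $h(a)\mapsto\infty$ and $h(b)\mapsto 0$. Setting $\tilde h=\cayley_1\circ h\circ\cayley_2^{-1}$, the shear coordinates of $\tilde h$ with respect to the standard Farey in $\m H$ match $s_h$ under $\cayley_2$, and $\fan(a)=(e_k)_{k\in\m Z}$ maps to $\{(k,\infty)\}_k$ with $e_0$ corresponding to $e$. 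A direct cross-ratio computation gives $s(e_k)=\log\tfrac{\tilde h(k+1)-\tilde h(k)}{\tilde h(k)-\tilde h(k-1)}$ (the telescoping identity also appearing in the proof of Lemma~\ref{lem:P_derivative}), whence
\begin{align*}
p_{s,a}(e\sminus)-p_{s,a}(e\splus)=\log\frac{(-\tilde h(-1))\,\tilde h(1)}{\ell_a^2},
\end{align*}
where $\ell_a:=\lim_{n\to\infty}(\tilde h(n+1)-\tilde h(n))=\lim_{n\to-\infty}(\tilde h(n)-\tilde h(n-1))$, the two one-sided limits coinciding by the $\mc P_0$ assumption. For the fan at $b$ I conjugate by the Farey-preserving involution $J(z)=-1/z$, which swaps $0\leftrightarrow\infty$ and preserves the cyclic order on $\partial\m H$; writing $\tilde g=J\circ\tilde h\circ J$, the analogous telescoping applied to $\tilde g$ produces the same expression in terms of $\tilde g(\pm 1)=-1/\tilde h(\mp 1)$ and $\ell_b:=\lim_n(\tilde g(n+1)-\tilde g(n))$.

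Summing the two contributions and exploiting the miraculous cancellation $(-\tilde h(-1))\tilde h(1)\cdot(-\tilde g(-1))\tilde g(1)=1$, all the $\tilde h(\pm 1)$ factors disappear and one obtains $\vartheta_h(e)=-\tfrac12\log(\ell_a\ell_b)=\tfrac12\log(\tilde h'(0)\,\tilde g'(0))$, where the second equality rests on the analytic identifications $\ell_b=1/\tilde h'(0)$ and $\ell_a=1/\tilde g'(0)$ deduced from the differentiability of $\tilde h$ and $\tilde g$ at $0$ (Lemma~\ref{lem:P_derivative}\ref{it:P0_differentiable}) via a short computation in the chart $\iota(z)=-1/z$ near $\infty$. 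To unwind the normalization, I write $\cayley_j(z)=\alpha_j(z-b_j)/[\gamma_j(z-a_j)]$ with $(a_1,b_1)=(h(a),h(b))$ and $(a_2,b_2)=(a,b)$. The chain rule then gives $\tilde h'(0)=\tfrac{\alpha_1\gamma_2}{\gamma_1\alpha_2}\,h'(b)\,\tfrac{b-a}{h(b)-h(a)}$, and since $J\circ\cayley_j$ is a M\"obius map of the same form but with zero and pole exchanged, the analogous formula for $\tilde g'(0)$ is obtained from this one by swapping $a\leftrightarrow b$ (and inverting the M\"obius constant factor). Multiplying, every parameter $\alpha_j,\gamma_j$ cancels, leaving
\begin{align*}
\tilde h'(0)\,\tilde g'(0)=\frac{h'(a)\,h'(b)}{\bigl((h(a)-h(b))/(a-b)\bigr)^2},
\end{align*}
whose half-logarithm is exactly the right-hand side of \eqref{eq:theta_derivative}.

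The main obstacle is the bridge between the combinatorial limits $\ell_a,\ell_b$, defined by increments of $\tilde h$ at integer arguments, and the analytic derivatives $\tilde h'(0),\tilde g'(0)$: this identification requires combining differentiability of $\tilde h$ at the rational point $0$ with its monotonicity, in the same spirit as the proof of Lemma~\ref{lem:P_derivative}. After that, the M\"obius bookkeeping becomes routine, and the clean cancellation of the parameters $\alpha_j,\gamma_j$ confirms \emph{a posteriori} that \eqref{eq:theta_derivative} is intrinsic, independent of the normalizations $\cayley_1,\cayley_2$.
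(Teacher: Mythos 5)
Your proof is correct and follows essentially the same route as the paper's: telescope the shear sums along the two fans via \eqref{eq:ratio_shear_sum}, identify the increment limits $\ell_a,\ell_b$ with $1/\tilde g'(0)$ and $1/\tilde h'(0)$ using Lemma~\ref{lem:P_derivative}, and unwind the M\"obius normalization via the identity $A'(x)A'(y)=(A(x)-A(y))^2/(x-y)^2$. The only organizational difference is that the paper first normalizes $h$ to fix the endpoints of $e$ (so the $\tilde h(\pm 1)$-type terms never appear) and uses the balanced condition to reduce $\Psi(s)(e)$ to $-\tfrac12\bigl(p_{s,a}(e\splus)+p_{s,b}(e\splus)+s(e)\bigr)$ before computing, whereas you carry a general normalization throughout and let those extra terms cancel at the end.
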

\begin{proof}
We assume first that $e = (-1,1)$ and recall that $h$ fixes $\pm 1, \ii$. The Cayley map $\mf c$ sends $1 \mapsto \infty$, $-1 \mapsto 0$, $\ii \mapsto -1$. We index edges $(e_n)_{n \in \m Z}$ in $\fan (1)$ such that $\mf c (e_n)$ is the geodesic $(n,\infty)$ in $\m H$. In this way, $e = e_0$. We write also $(e'_n)_{n\in \m Z} = \fan(-1)$, such that $e'_0 = e$.
Let $\varphi = \mf c \circ h \circ \mf c^{-1}$, and $\tilde \varphi = \iota \circ \varphi \circ \iota$, where $\iota (x) = -1/x$. We note that $\varphi$ fixes $-1, 0, \infty$ and $\tilde \varphi$ fixes $0, 1, \infty$.

Let $s = s_h$ and $\vartheta = \vartheta_h = \Psi (s_h)$.
Since $s \in \mc P_0$, we have from \eqref{eq:def_Psi} that 
$$\vartheta_h (e_0) = - \frac{1}{2} \Big(p_{s,1} (e \splus) + p_{s,-1} (e \splus) +s(e)\Big).$$

It follows from the proof of Lemma~\ref{lem:P_derivative}, \eqref{eq:ell}, and \eqref{eq:limit_varphi_l} that
\begin{equation}\label{eq:sum_shear_plus_limit}
   p_{s,1}(e\splus) + s(e) = \sum_{k = 0}^{\infty} s(e_k) = - \log \tilde \varphi'(0).
\end{equation}
Similarly, applying the same proof to $\fan (-1)$ with the homeomorphism $\tilde \varphi$, and $\varphi = \iota \circ \tilde \varphi \circ \iota$, we obtain 
$$\varphi'(0)^{-1} = \lim_{n\to \infty} \tilde \varphi(n+1) - \tilde \varphi(n) = (\tilde \varphi(1) - \tilde \varphi(0)) \exp \Big(\sum_{k = 1}^\infty s(e'_k)\Big) = \exp \Big(\sum_{k = 1}^\infty s(e'_k)\Big).$$
Hence
$$p_{s,-1}(e\splus) = \sum_{k = 1}^\infty s(e'_k) = -\log \varphi'(0).$$
On the other hand,
$$\varphi'(0) = \mf c'(-1) h'(-1) (\mf c^{-1})'(0) = h'(-1), \qquad  \tilde \varphi'(0) = (\iota \circ \mf c)'(1) h'(1) (\iota \circ \mf c)^{-1}{}'(0) = h'(1).$$
We obtain \eqref{eq:theta_derivative} since in this case $\frac{h(1) - h(-1)}{1 - (-1)} = 1$.

For a general edge $e = (a,b)$, $h$ might not fix $a, b$. We choose $\g, \delta \in \PSU(1,1)$, such that $\g$ maps $\mf F$ to $\mf F$, sending $-1 \mapsto a$ and $1 \mapsto b$ (see Section~\ref{sec:Farey}); and $\delta$ is such that the homeomorphism $\tilde h = \delta \circ h \circ \g$ fixes $\pm 1, \ii$. In particular, $\delta$ maps $h(a) \mapsto -1$ and $h(b) \mapsto 1$.
The homeomorphism $\tilde h$ has the shear coordinate $\tilde s = s \circ \g$ and therefore the diamond shear coordinate
$\tilde \vartheta = \vartheta \circ \g$.
Applying the previous result, we have
$$\vartheta (e) = \tilde \vartheta ((-1,1)) = \frac{1}{2}\log \tilde h'(-1) \tilde h'(1).$$
We use the fact that for any M\"obius transformation $A$, as long as it is well defined, $A'(a) A'(b) = \frac{(A(a) - A(b))^2}{(a-b)^2}$.
We obtain
$$\tilde h'(-1) \tilde h'(1) = [\delta'(h(a)) \delta'(h(b))] h'(a)  h'(b)[\g'(-1) \g'(1)]  =  h'(a)  h'(b) \frac{(a-b)^2}{(h(a) - h(b))^2}$$
which concludes the proof.
\end{proof}

\begin{remark}\label{rem:theta_expression_R}
We can see directly that the right-hand side of \eqref{eq:theta_derivative} is real-valued. In fact, for $e = (a,b)$, consider two M\"obius transformations $\cayley_1, \cayley_2$  sending $\m D$ onto $\m H$, such that 
\begin{itemize}[itemsep=-2pt]
    \item If $x = \cayley_1 (a) \in \m R$, $y = \cayley_1 (b) \in \m R$, and  $\cayley_2 (h(a)),  \cayley_2 (h(b)) \in \m R$, then $\varphi := \cayley_2 \circ h \circ \cayley_1^{-1}$ satisfies
\begin{equation*}
\vartheta_h (e) = \frac{1}{2} \log h'(a) h'(b) - \log \frac{h(a)-h(b)}{a-b} = \frac{1}{2} \log \varphi'(x) \varphi'(y) - \log \frac{\varphi(x)-\varphi(y)}{x-y} \in \m R;
\end{equation*}
\item If $\cayley_1 (a) = \cayley_2 (h(a)) = \infty$, and  $y = \cayley_1 (b) \in \m R$, then $\varphi := \cayley_2 \circ h \circ \cayley_1^{-1}$ satisfies
\begin{equation*}
\vartheta_h (e) = \frac{1}{2} \log {\varphi'(y)}{\varphi'(\infty)},
\end{equation*}
where $\varphi'(\infty):= \tilde \varphi'(0)$ for $\tilde \varphi = \iota \circ \varphi \circ \iota$.
\end{itemize}

\end{remark}

\subsection{Diamond shear in terms of $\log \Lambda$-length}\label{sec:diamond_logL}
In this section we show a simple relation  (Lemma~\ref{lem:theta_logL}) between diamond shear coordinates of a circle homeomorphism and the $\log \Lambda$-lengths, which are coordinates on the \textit{decorated Teichm\"uller space}, denoted $\widetilde{T(\m D)}$ introduced by Penner. See \cite{Penner1993UniversalCI,Penner2002OnHF,PennerBook}. This relation will play an essential role in Section~\ref{sec:symplectic}.

We view the Teichm\"uller space $T(\m D)$ as a space of \textit{tessellations} by identifying 
$$h\in T(\m D) \quad \iff \quad h(\Farey).$$
A point in $\widetilde{T(\m D)}$ is a tessellation $h(\Farey)$ plus a ``decoration'', namely a choice of horocycle at each vertex $h(v)\in h(V)$. The \emph{$\log \Lambda$-length} along an edge $(a,b)$ decorated with horocycles $\rho_a,\rho_b$ at $a,b\in \m T$ is
$$\log \Lambda(\rho_a,\rho_b) := \delta/2$$
where $\delta$ is the signed hyperbolic distance between $\rho_a\cap e$ and $\rho_b \cap e$ with the convention that if $\rho_a\cap \rho_b = \emptyset$, then $\delta$ is positive. Since hyperbolic distances are invariant under M\"obius transformations, so are $\log \Lambda$-lengths. Moreover, the $\log \Lambda$-length can be computed in terms of the Euclidean diameters of the 
horocycles in $\m H$.
\begin{lem}
[See {\cite[Chap.\,1,\,Sec.\,1.4,\,Cor.\,4.6]{PennerBook}}] \label{lem:logL_diameters}
Let $x= \cayley(a), y=\cayley(b)$, $c_x = \cayley (\rho_a)$, and $\rho_y = \cayley (\rho_b)$. If $x,y\neq \infty$,
\begin{align*}
    \log \Lambda(\rho_a,\rho_b) = \log |x-y| -\frac{1}{2}\log d_x d_y 
\end{align*}
where $d_x,d_y$ are the Euclidean diameters of $\rho_x,\rho_y$ respectively. If $x=\infty$, then 
\begin{align*}
    \log \Lambda(\rho_a,\rho_b) = \frac{1}{2}\log H - \frac{1}{2} \log d_y
\end{align*}
where $\rho_\infty = \{z \in \m H \colon \Im (z) = H\}$. \textnormal(Recall that horocycles at $\infty$ are horizontal lines, and we call $H$ the Euclidean height of $\rho_\infty$.\textnormal)
Further, for any M\"obius transformation $A = \begin{psmallmatrix} a & b \\ c & d \end{psmallmatrix} \in \PSL(2,\m R)$, if $A(x) \in \m R$ then $A(\rho_x)$ has diameter $d_x/(cx+d)^2 = A'(x)\, d_x$. If $A(x) = \infty$, then $A(\rho_x)$ has height $1/( c^2 d_x)$.
\end{lem}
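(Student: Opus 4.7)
The plan is to prove the Möbius transformation rule for horocycle diameters first, and then use it to reduce the $\log \Lambda$-length formula to a trivial canonical computation along a vertical geodesic in $\m H$.

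For the transformation rule, I would begin with the analytic description of a horocycle at a point $x \in \m R$ with Euclidean diameter $d_x$, namely
$$\rho_x = \{z \in \m H : |z-x|^2 = d_x \, \Im(z)\},$$
which follows by expanding $|z - (x + \ii d_x/2)|^2 = (d_x/2)^2$. For $A = \begin{psmallmatrix} a & b \\ c & d \end{psmallmatrix} \in \PSL(2,\m R)$ with $cx+d \neq 0$, I would then invoke the two standard identities
$$\Im A(z) = \frac{\Im z}{|cz+d|^2}, \qquad |A(z)-A(x)|^2 = \frac{|z-x|^2}{|cz+d|^2 (cx+d)^2}.$$
Combining these with the characterization above immediately yields $|A(z)-A(x)|^2 = \tfrac{d_x}{(cx+d)^2}\, \Im A(z)$ for every $z \in \rho_x$, which is precisely the equation of a horocycle at $A(x)$ with diameter $d_x/(cx+d)^2 = A'(x)\, d_x$ (using $A'(x)=1/(cx+d)^2$). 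When $cx+d=0$ (so $A(x)=\infty$), the same identities collapse to $\Im A(z) = 1/(c^2 d_x)$ for $z \in \rho_x$, so $A(\rho_x)$ is the horizontal line at height $1/(c^2 d_x)$.

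To prove the length formula, I would reduce to the case where one endpoint is at $\infty$. Given $x, y \in \m R$ with $x \neq y$, apply $A(z) = -1/(z-x)$, which sends $x \mapsto \infty$ and $y \mapsto -1/(y-x)$, mapping the geodesic $(x,y)$ onto the vertical line $\{\Re w = -1/(y-x)\}$. By the transformation rule just established, $A(\rho_x)$ is the horizontal line at height $1/d_x$ and $A(\rho_y)$ is a horocycle at $-1/(y-x)$ of diameter $d_y/(y-x)^2$. The two intersection points with the vertical geodesic therefore sit at heights $1/d_x$ and $d_y/(y-x)^2$, so the signed hyperbolic distance $\delta$ equals $\log\bigl((y-x)^2/(d_x d_y)\bigr)$, and dividing by $2$ recovers $\log|x-y| - \tfrac{1}{2}\log(d_x d_y)$. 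The case $x=\infty$ requires no reduction: the geodesic is already vertical at $\Re z = y$, the intersections are at heights $H$ and $d_y$, and the formula follows directly.

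The only delicate point is the sign: the formula should be positive exactly when $\rho_x \cap \rho_y = \emptyset$. In the canonical vertical-line picture, disjointness is equivalent to the horizontal line lying strictly above the top of the circular horocycle, i.e.\ $(y-x)^2 > d_x d_y$, which matches the sign of the logarithm on the nose. The main obstacle, if any, is purely notational bookkeeping; all of the geometric content is packaged into the identity $|z-x|^2 = d_x \Im z$ characterizing horocycles and the transformation behavior of $\Im(\cdot)$ and differences under $\PSL(2,\m R)$.
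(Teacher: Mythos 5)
Your proof is correct. Note that the paper does not actually prove this lemma --- it is quoted from Penner's book without proof --- so there is no internal argument to compare against; your derivation is the standard one and is complete. The key ingredients all check out: the characterization $|z-x|^2 = d_x\,\Im z$ of the horocycle of diameter $d_x$ at $x$, the identities $\Im A(z) = \Im z/|cz+d|^2$ and $A(z)-A(x) = (z-x)/\bigl((cz+d)(cx+d)\bigr)$ for $ad-bc=1$, and the degenerate case $cx+d=0$ giving the horizontal line at height $1/(c^2 d_x)$. The reduction of the length formula to the vertical geodesic via $A(z)=-1/(z-x)$ implicitly uses that M\"obius maps preserve hyperbolic distance, horocycles, and disjointness of horocycles, so the signed quantity $\delta$ is invariant under the reduction --- this is immediate but worth stating. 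Your verification of the sign convention (disjointness iff $(y-x)^2 > d_x d_y$ iff the logarithm is positive) closes the one place where such arguments usually get sloppy.
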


The Farey tessellation admits a very special decoration by a collection of horocycles called the \textit{Ford circles}. In the Farey tessellation $\cayley (\Farey)$ of $\m H$, the Ford circle $\rho_{p/q}$ is the horocycle centered at $p/q$ with Euclidean diameter $1/q^2$. The Ford circle at infinity is the line $\{\Im (z)=1\}$. See Figure \ref{fig:fordcircles}. This collection of horocycles has the property that: 
\begin{itemize}[itemsep=-2pt]
    \item $\rho_{p/q}$ is tangent to $\rho_{r/s}$ if $(p/q,r/s)\in \cayley(E)$, 
    \item $\rho_{p/q}$ is disjoint from $\rho_{r/s}$ if $(p/q,r/s)\not\in \cayley(E)$.
\end{itemize}

\begin{figure}[ht]
    \centering
    \includegraphics[scale=0.7]{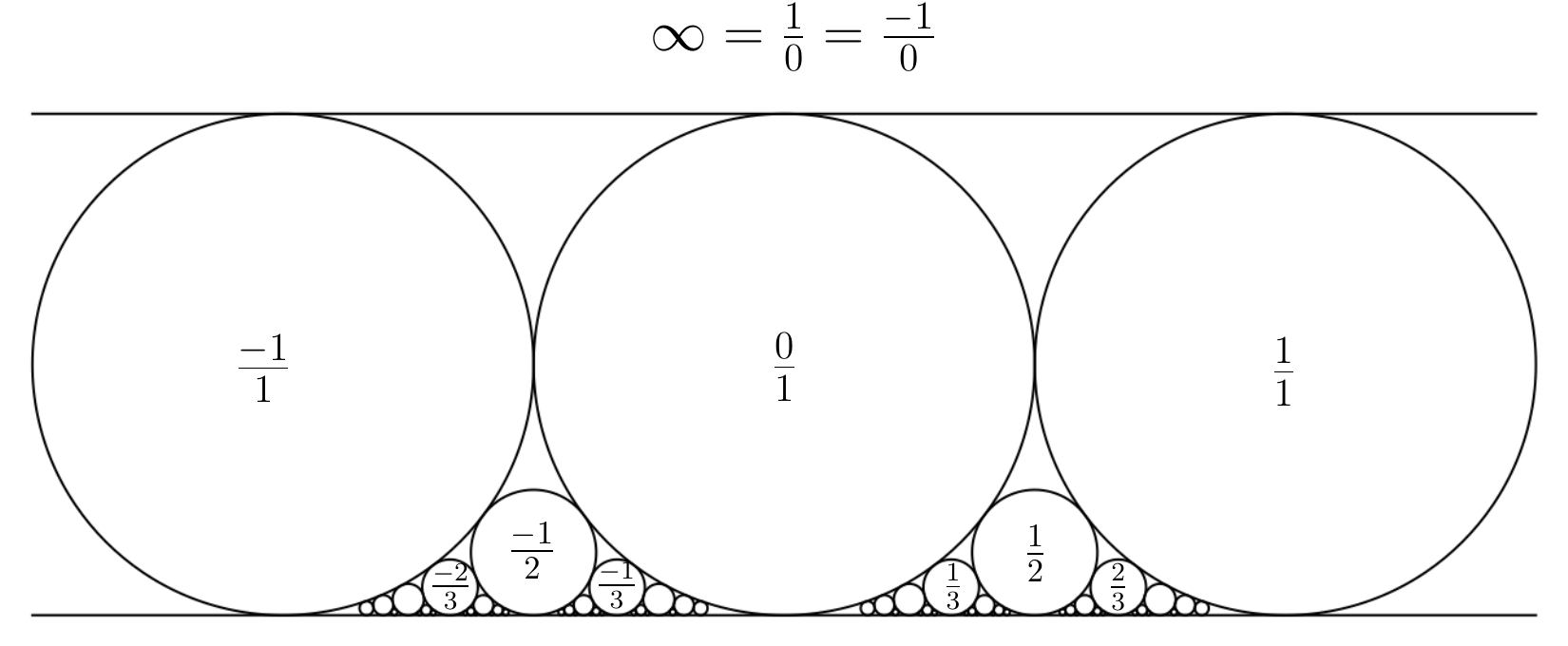}
    \caption{A few generations of Ford circles between $-1$ and $+1$ 
    labeled by their center points in $\cayley(\Farey)$.}
    \label{fig:fordcircles}
\end{figure}

The Ford circles in the disk are the pullback of the Ford circles in the upper half plane by $\cayley$. For all $e=(a,b)\in E$, the Farey tessellation decorated by the Ford circles has $\log \Lambda(\rho_{a},\rho_{b}) = 0$. Starting from the Ford circle decoration of $\Farey$ for the identity map, one can define a section $\sigma$ from homeomorphisms $h\in \mc P_0$ to $\widetilde{T(\m D)}$ motivated by Lemma~\ref{lem:logL_diameters}. 
\begin{df}[See {\cite[p.\,119-120]{PennerBook}}] \label{df:section}
If $h\in \mc P_0$, we define a section $\sigma$ to $\widetilde{T(\m D)}$ as follows:
\begin{itemize}[itemsep=-2pt]
    \item $\sigma(\Id_{\m T})$ assigns the Ford circle as the horocycle at each $v\in V$. In $\m H$, this means the horocycle at $p/q\in \m R$ has diameter $q^{-2}$ and the horocycle at $\infty$ has height $1$. 
    \item For any other $h\in \mc H$, let 
    $\varphi=\cayley \circ h\circ \cayley^{-1}:\m R\to \m R.$ 
  At each point $\varphi(p/q)\in \m R$, $\sigma(h)$ assigns the horocycle with diameter $|\varphi'(p/q)| q^{-2}$. At $\varphi(\infty)=\infty$, $\sigma$ assigns the horocycle of height $|\varphi'(\infty)|^{-1}$ at $\infty$. Here recall that $\varphi'(\infty) = \tilde\varphi'(0)$ for $\tilde \varphi(x) = -1/\varphi(-1/x)$. 
\end{itemize}
\end{df}
\begin{remark}
The section $\sigma$ was first introduced in \cite{MALIKOV1998282} for diffeomorphisms and is defined for any homeomorphism $h: \m T \to \m T$ fixing $\pm 1,\ii$ such that $h$ is differentiable at all rational points of the circle. By Lemma~\ref{lem:P_derivative}~\ref{it:P0_differentiable}, $\sigma$ is well-defined  
if $h\in \mc P_0$. It is also not hard to see that the decoration does not depend on the choice of conformal map $\m D \to \m H$, we choose $\cayley$ for simplicity. 
\end{remark}
When $e= (a,b)\in E$ and $h\in \mc P_0$, we define the notation
\begin{align*}
    \log \Lambda_{h}(e) := \log \Lambda(\rho_a,\rho_b)
\end{align*}
where $\rho_a,\rho_b$ are the horocycles at $h(a),h(b)$ chosen by the section $\sigma$. Using this, we can describe the relationship between diamond shear coordinates and $\log \Lambda$-lengths. 

\begin{lem}\label{lem:theta_logL}
If $h\in \mc P_0$, then for any $e=(a,b)\in E$, 
\begin{align*}
    \vartheta_h(e) = -\log \Lambda_{h}(e).
\end{align*}
\end{lem}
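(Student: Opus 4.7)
The plan is to reduce the identity to a routine computation in the upper half plane, using that both sides are M\"obius invariant. Let $x = \cayley(a)$, $y = \cayley(b)$, and $\varphi = \cayley \circ h \circ \cayley^{-1}$. Since the $\log \Lambda$-length is defined via a hyperbolic distance it is M\"obius invariant, so we may compute $\log \Lambda_h(e)$ using the horocycles in $\m H$ at $\varphi(x), \varphi(y)$ produced from $\sigma(h)$; the analytic formula of Proposition~\ref{prop:theta_analytic} transports similarly to the expressions recorded in Remark~\ref{rem:theta_expression_R}.

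Consider first the generic case where neither $a$ nor $b$ equals $1$, so $x, y \in \m R$ and, because $h$ fixes $1$, also $\varphi(x), \varphi(y) \in \m R$. By Definition~\ref{df:section}, the horocycles of $\sigma(h)$ at $\varphi(x), \varphi(y)$ have Euclidean diameters $\varphi'(x) d_x$ and $\varphi'(y) d_y$, where $d_x, d_y$ are the diameters of the Ford circles at $x, y$. Lemma~\ref{lem:logL_diameters} yields
\begin{equation*}
\log \Lambda_h(e) \;=\; \log |\varphi(x) - \varphi(y)| \;-\; \tfrac{1}{2}\log\bigl(\varphi'(x)\varphi'(y)\, d_x d_y\bigr).
\end{equation*}
Applying the same formula to $h = \Id$ (for which $\sigma$ is the Ford-circle decoration and $\log \Lambda_{\Id}(e) = 0$) gives the auxiliary relation $d_x d_y = (x - y)^2$. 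Substituting yields
\begin{equation*}
\log \Lambda_h(e) \;=\; \log \frac{\varphi(x) - \varphi(y)}{x - y} \;-\; \tfrac{1}{2}\log \varphi'(x)\varphi'(y),
\end{equation*}
which is exactly the negative of the expression for $\vartheta_h(e)$ in the first bullet of Remark~\ref{rem:theta_expression_R}.

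For the remaining case $a = 1$ (so $x = \infty$ and $\varphi(x) = \infty$ as $h$ fixes $1$), Lemma~\ref{lem:farey-children} forces the other endpoint $y$ to be an integer, so $d_y = 1$ and the Ford circle at $\infty$ has height $H = 1$. The $\sigma(h)$-horocycles at $\infty$ and at $\varphi(y)$ have height $1/\varphi'(\infty)$ and diameter $\varphi'(y)$ respectively, and the second formula of Lemma~\ref{lem:logL_diameters} gives $\log \Lambda_h(e) = -\tfrac{1}{2}\log \varphi'(\infty)\varphi'(y)$, matching $-\vartheta_h(e)$ via the second bullet of Remark~\ref{rem:theta_expression_R}.

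There is no substantial obstacle: once one observes that the section $\sigma$ is engineered precisely so that horocycle diameters at $v \in V$ are scaled by the local derivative $\varphi'(\cayley(v))$, the computation is mere bookkeeping. The single conceptual point is that the Ford-circle identity $d_x d_y = (x-y)^2$ (equivalently, the vanishing of $\log \Lambda_{\Id}$ on edges of $\Farey$) is what cancels the Euclidean-distance factors and leaves exactly the analytic expression for $\vartheta_h$.
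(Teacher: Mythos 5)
Your proof is correct and follows essentially the same route as the paper's: reduce to the upper half plane, use Lemma~\ref{lem:logL_diameters} with the $\sigma$-decoration, cancel the Ford-circle factors via $\log\Lambda_{\Id}(e)=0$, and match the result with Remark~\ref{rem:theta_expression_R}, treating the $x=\infty$ case separately. No substantive differences.
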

\begin{proof}
Let $\varphi:=\cayley \circ h \circ \cayley^{-1}$ which is a homeomorphism of $\m R$ fixing $\infty$. Choose $(a,b) \in E$ and let $\cayley(a) = x, \cayley(b) = y$. If $x, y \neq \infty$, let $d_x,d_y$ denote the diameters of the Ford circles at $x,y$. We have
\begin{align*}
    \log \Lambda_{\text{Id}}(e) = \log |x-y| - \frac{1}{2} \log {d_x d_y} = 0,
\end{align*}
which follows from direct computation or the fact that the Ford circles at $x$ and $y$ are tangent.
Using Definition~\ref{df:section} and Lemma \ref{lem:logL_diameters},
\begin{align*}
    \log \Lambda_h(e) &=\log \Lambda_h(e) - \log \Lambda_{\text{Id}}(e) \\&= \log |\varphi(x)-\varphi(y)| - \frac{1}{2}\log {|\varphi'(x)||\varphi'(y)|d_x d_y} - \log |x-y| + \frac{1}{2}\log {d_x d_y}\\
    &= -\frac{1}{2}\log |\varphi'(x)\varphi'(y)| + \log \frac{|\varphi(x)-\varphi(y)|}{|x-y|}.
\end{align*}
If $x = \infty = \varphi(x)$, then the horocycle at $\varphi(x)$ has height $H = \varphi'(\infty)^{-1}$,  $y \in \m Z$ and hence $d_y = 1$. We have
\begin{align*}
    \log \Lambda_h(e) &= \frac{1}{2}\log |\varphi'(\infty)|^{-1}- \frac{1}{2}\log {|\varphi'(y)| d_y} = - \frac{1}{2} \log {\varphi'(y)}{\varphi'(\infty)}.
\end{align*}
In both cases, we have $
    \vartheta_h(e) = - \log \Lambda_h(e)$
by Remark \ref{rem:theta_expression_R}. 
\end{proof}

\section{Relation to H\"older classes}\label{sec:Holder}

Here we relate the class $\mc H$ of homeomorphisms with square-summable diamond shears to the H\"older class $\mc C^{1,\alpha}$ defined in \eqref{eq:holder_def}. 
\begin{thm}\label{thm:C1alpha}
We have $\mc C^{1,\alpha}\subset \mc H$ if and only if $\alpha > 1/2$.
\end{thm}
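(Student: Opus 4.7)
The plan is to prove the two directions separately. For the forward implication ($\a > 1/2 \Rightarrow \mc C^{1,\a}\subset \mc H$), I would combine the analytic formula from Proposition~\ref{prop:theta_analytic} with the $\ell^{2\a}$ summability of arc lengths stated in Proposition~\ref{prop:farey-lengths}. For the sharpness ($\a \le 1/2 \Rightarrow \mc C^{1,\a} \not\subset \mc H$), I would use the later inclusion $\mc H \subset \WP(\m T)$ from Theorem~\ref{thm:intro_wp_H} together with Lemma~\ref{lem:C_alpha_in_WP}.

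For the forward direction, fix $h \in \mc C^{1,\a}$. Since $h$ is continuously differentiable with non-vanishing derivative, Lemma~\ref{lem:P_derivative}~\ref{it:C1_P0} gives $s_h \in \mc P_0$, so the diamond shears are defined and Proposition~\ref{prop:theta_analytic} applies. The central step is the pointwise estimate
\[
|\vartheta_h(e)| \le C\,L(e)^{\a} \quad \text{for every } e = (a,b) \in E,
\]
where $L(e)$ denotes the length of the shorter arc of $\m T$ joining $a$ and $b$, and $C$ depends only on the Hölder norm of $\log h'$ and $\inf_{\m T}|h'|$. Once this is available, the finitely many edges with $L(e)$ bounded below contribute a bounded amount, while the rest contribute at most $C^2 \sum_e L(e)^{2\a}$, which is finite by Proposition~\ref{prop:farey-lengths} precisely when $\a > 1/2$; hence $\vartheta_h \in \ell^2(E)$ and $h \in \mc H$.

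To establish the pointwise bound, I would reparameterize by arc length: writing $h(e^{i\theta}) = e^{i\tilde h(\theta)}$ with $\log \tilde h'$ being $\a$-Hölder, a direct computation turns the formula of Proposition~\ref{prop:theta_analytic} into the manifestly real-valued expression
\[
\vartheta_h(e) = \tfrac{1}{2}\log \tilde h'(\theta_a)\tilde h'(\theta_b) - \log\frac{\sin((\tilde h(\theta_a)-\tilde h(\theta_b))/2)}{\sin((\theta_a-\theta_b)/2)},
\]
which differs from the analogous formula for a map on $\m R$ only by an $O(L(e)^2)$ correction from the Taylor expansion of $\sin$. With $m=(\theta_a+\theta_b)/2$, the Hölder bound on $\log \tilde h'$ immediately gives $\tfrac{1}{2}\log \tilde h'(\theta_a)\tilde h'(\theta_b) = \log \tilde h'(m) + O(L(e)^{\a})$, and integrating $\tilde h'(\cdot) - \tilde h'(m)$ together with the same Hölder estimate yields $\tilde h(\theta_b)-\tilde h(\theta_a) = L(e)\tilde h'(m) + O(L(e)^{1+\a})$, so $\log\frac{\tilde h(\theta_b)-\tilde h(\theta_a)}{\theta_b-\theta_a} = \log \tilde h'(m) + O(L(e)^{\a})$ as well. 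Subtracting gives the claimed bound. I expect the main obstacle will be keeping the constants uniform across the whole tessellation, in particular for edges whose endpoints cluster near the normalization points $\pm 1, \ii$, where a naive Möbius pullback to $\m H$ would degenerate; the intrinsic arc length viewpoint should circumvent this.

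For the reverse direction, note that $\mc C^{1,\a} \supset \mc C^{1,1/2}$ whenever $\a \le 1/2$. By Lemma~\ref{lem:C_alpha_in_WP} there exists $h \in \mc C^{1,1/2} \smallsetminus \WP(\m T)$, and since $\mc H \subset \WP(\m T)$ by Theorem~\ref{thm:intro_wp_H}, this $h$ lies in $\mc C^{1,\a} \smallsetminus \mc H$, completing the proof.
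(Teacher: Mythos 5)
Your proposal is correct and follows essentially the same route as the paper: both directions reduce to the pointwise bound $|\vartheta_h(e)|\le C\,\ell(a,b)^{\a}$ combined with the $\ell^{2\a}$-summability of Farey lengths (Proposition~\ref{prop:farey-lengths}), and the sharpness argument via $\mc H\subset \WP(\m T)$ and Lemma~\ref{lem:C_alpha_in_WP} is exactly the one the paper uses. The only (harmless) deviation is technical: the paper obtains the pointwise bound by pulling back to $\m H$ over a two-interval cover of $\m T$ and applying the mean value theorem to $\log\varphi_I'$, whereas you work with the intrinsic arclength lift $\tilde h$ and a Taylor expansion; your version of the estimate checks out, including the uniformity of constants and the $O(L(e)^2)$ sine correction.
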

For comparison, recall Lemma \ref{lem:C_alpha_in_WP}, which says analogously that $\mc C^{1,\alpha}\subset \WP(\m T)$ if and only if $\alpha >1/2$. The ``only if'' direction of Theorem \ref{thm:C1alpha} will follow from the fact that $\mc H \subset \WP(\m T)$ and the latter does not contain $\mc C^{1,1/2}$, see Theorem \ref{thm:H_Beltrami_bound}. In this section, we show that $\mc C^{1,\alpha} \subset \mc H$ for $\alpha > 1/2$.

For $(a,b) \in E$, let $\ell(a,b)$ denote the arclength of the arc in $\m T$ from $a$ to $b$ containing the child of $a,b$ (which is the shorter of the two arcs between $a$ and $b$, we call it a \emph{Farey segment}). We call these lengths the \textit{Farey lengths}. 

\begin{prop}\label{prop:farey-lengths}
The Farey lengths are $\ell^r$-summable if and only if $r>1$, e.g.
\begin{align*}
    \sum_{(a,b)\in E} \ell(a,b)^r <\infty
\end{align*}
if and only if $r>1$.
\end{prop}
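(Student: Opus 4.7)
The plan is to handle the two directions separately: divergence for $r \leq 1$ via a combinatorial partition argument, and convergence for $r > 1$ via transfer to the upper half plane combined with a number-theoretic count of edges by a natural integer invariant.

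For $r \leq 1$, the key observation is that if $e = (a,b) \in E$ has child $c$, then by construction $c$ lies on the shorter arc $I(a,b)$, so $\ell(a,c) + \ell(c,b) = \ell(a,b)$. Iterating this identity shows that for every $n \geq 1$ the collection $\{I(e) : \gen(e) = n\}$ partitions $\m T$, hence $\sum_{\gen(e) = n} \ell(e) = 2\pi$. Since $\ell(e) \leq \pi$, the pointwise inequality $\ell(e)^r \geq \pi^{r-1}\,\ell(e)$ holds when $r \leq 1$, giving $\sum_{\gen(e)=n} \ell(e)^r \geq 2\pi^r$ for every $n \geq 1$, and summing over $n$ yields divergence.

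For $r > 1$, I would transfer to $\m H$ via $\cayley$. For an edge $(a,b) \in E$ with $\cayley(a) = p/q$, $\cayley(b) = r/s$ (coprime representatives with $|ps-qr|=1$), a direct computation gives the chord length
\[
|a-b| = \frac{2}{\sqrt{(p^2+q^2)(r^2+s^2)}},
\]
and the Lagrange identity $(p^2+q^2)(r^2+s^2) = (pr+qs)^2 + (ps-qr)^2$ rewrites the denominator as $\sqrt{m^2+1}$ where $m := pr+qs \in \m Z$. Hence $\ell(a,b) \asymp 1/\sqrt{m^2+1}$ uniformly in the edge, so $\sum_{e} \ell(e)^r \asymp \sum_{k \geq 0} N(k)/(k^2+1)^{r/2}$ where $N(k) := \#\{e \in E : |m(e)| = k\}$.

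It remains to estimate $N(k)$. Each edge corresponds, up to the finite stabilizer of $e_0$ in $\PSL(2,\m Z)$, to a matrix $A = \begin{psmallmatrix} p & r \\ q & s \end{psmallmatrix} \in \SL(2,\m Z)$ with $A^T A = \begin{psmallmatrix} P & \pm m \\ \pm m & Q \end{psmallmatrix}$ and $PQ - m^2 = 1$. For fixed $|m| = k$, $P = p^2+q^2$ must divide $k^2+1$, and for each divisor $P$ only finitely many coprime $(p,q)$ with $p^2+q^2 = P$ arise (which then essentially determine $(r,s)$ up to the stabilizer). Combined with the classical divisor bound $d(n) = O(n^\epsilon)$ for every $\epsilon > 0$, this yields $N(k) = O(k^\epsilon)$, so $\sum_k N(k)/(k^2+1)^{r/2} = O(\sum_k k^{-(r-\epsilon)})$, which converges whenever $r > 1 + \epsilon$; letting $\epsilon \to 0$ gives convergence for every $r > 1$. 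The main obstacle is this divisor-theoretic bound on $N(k)$; the remaining steps are the Cayley-transform computation, the Lagrange identity, and standard series estimates.
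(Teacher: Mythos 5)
Your proof is correct. The divergence direction for $r\le 1$ is essentially the paper's own one-line observation (the Farey segments of each generation tile $\m T$, so each generation contributes $2\pi$ to the $\ell^1$ sum). The convergence direction, however, takes a genuinely different route. The paper organizes the sum by the lower-generation endpoint $p/q$ of each edge: the children of $p/q$ are $\frac{k_i+np}{m_i+nq}$, the corresponding gaps are at most $(q^2 n)^{-1}$, and summing over $n$ and then over $p/q$ (using Euler's bound $\phi(q)\le q$) gives $2\zeta(r)\sum_q q^{1-2r}$, finite exactly for $r>1$ --- an entirely elementary computation. You instead derive the exact identity $|a-b| = 2\big((p^2+q^2)(r^2+s^2)\big)^{-1/2} = 2(m^2+1)^{-1/2}$ with $m = pr+qs$ (via the Lagrange identity and $|ps-qr|=1$), reduce to bounding $N(k)=\#\{e: |m(e)|=k\}$, and control $N(k)$ by noting that $p^2+q^2$ must divide $k^2+1$ and invoking the divisor bound $d(n)=O(n^\epsilon)$; this count does close up, since given $(p,q)$ and the values of $ps-qr$ and $pr+qs$ the pair $(r,s)$ solves a linear system with determinant $-(p^2+q^2)\neq 0$, so $N(k) = O\big(d(k^2+1)^2\big) = O_\epsilon(k^\epsilon)$ and $\sum_k N(k)k^{-r}<\infty$ for every $r>1$ after choosing $\epsilon < r-1$. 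Your route buys a clean closed form for the Farey lengths, $\ell(e)\asymp ((p^2+q^2)(r^2+s^2))^{-1/2}$, which is the Ford-circle geometry in disguise, at the cost of importing the divisor bound --- a standard but less elementary number-theoretic input than the paper's use of $\phi(q)\le q$. Both arguments correctly locate the threshold at $r=1$.
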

\begin{proof}
Since $\sum_{(a,b) \in E_n\smallsetminus E_{n-1}} \ell(a,b) =2\pi$ for all $n$, the sum diverges for $r \leq 1$. 

Now we show the sum converges when $r>1$. We sort the sum over edges $(a,b)$ by the endpoint of the earlier generation. Note that by Lemma \ref{lem:same-gen} there are no edge between vertices of the same generation except $e_0$. Therefore,
\begin{align*}
\sum_{(a,b)\in E} \ell(a,b)^{r} = \pi^r + \sum_{a\in V} \sum_{b \in \child(a)} \ell(a,b)^r.
\end{align*}
The $\pi^r$ term corresponds to $e_0 = (-1,1)$. We refer to the set of vertices 
$$\child(a) = \{b \in V \colon (a,b)\in E, \, \gen(b) > \gen(a)\}$$ as the \textit{children of $a$}. 

Let $\Gamma_1,...,\Gamma_4$ denote the closed quarter circles with vertices $1,\ii,-1,-\ii$. All the Farey segments (other than the one corresponding to $e_0=(-1,1)$) are contained in one of these closed quarter circles, so it suffices to show that the lengths in $\Gamma:=\Gamma_3$, the arc from $-1$ to $-\ii$, are $\ell^r$-summable. 
The inverse Cayley transform $\cayley^{-1}$ sends $[0,1]$ onto $\Gamma$ and is Lipschitz on $[0,1]$. The image $\cayley (V\cap \Gamma)$ consists of the rational points between $0$ and $1$. If $\lambda$ is the Lipschitz constant of $\cayley^{-1}|_{[0,1]}$, then
\begin{align*}
\sum_{a\in V\cap \Gamma} \sum_{\child(a)} \ell(a,b)^r \leq \lambda^r \sum_{\frac{p}{q} \in \cayley (V\cap \Gamma)} \sum_{\frac{p'}{q'}\in \child(\frac{p}{q})} \bigg\lvert \frac{p}{q} - \frac{p'}{q'}\bigg\rvert^{r}. 
\end{align*}
If $k_1/m_1 < p/q < k_2/m_2$ are the parents of $p/q$, the children of $p/q$ are of the form
\begin{align*}
	\frac{p'}{q'} = \frac{k_i + n p}{m_i + n q} \qquad i = 1,2 \quad \text{and} \quad n\in \mathbb{N}_{\ge 1}
\end{align*}
by Lemma~\ref{lem:farey-children}.
Hence the distances we must bound are of the form
\begin{align*}
\bigg\lvert \frac{p}{q} - \frac{np+k_i}{nq+m_i}\bigg\rvert^{r} = \bigg\lvert \frac{pm_i - q k_i}{q(nq+m_i)} \bigg\rvert^{r}. 
\end{align*}
Lemma~\ref{lem:farey-children} also shows that $|pm_i-qk_i|^r = 1$, hence 
\begin{align*}
\abs{\frac{pm_i-qk_i}{q(nq+m_i)}}^r \leq \frac{1}{q^{2r} n^r}.
\end{align*}
Therefore 
\begin{align*}
 \sum_{\frac{p}{q} \in \cayley (V\cap \Gamma)} \sum_{\frac{p'}{q'}\in \child(\frac{p}{q})} \bigg\lvert \frac{p}{q} - \frac{p'}{q'}\bigg\rvert^{r}  &= \sum_{\frac{p}{q} \in \cayley (V\cap \Gamma)} \sum_{\substack{n\geq 1\\i=1,2}} \bigg\lvert \frac{p}{q} - \frac{np+k_i}{nq+m_i}\bigg\rvert^{r} 
\leq \sum_{\frac{p}{q} \in \cayley (V\cap \Gamma)}  \sum_{n\geq 1} \frac{2}{q^{2r}n^r}\\
&= \sum_{\frac{p}{q} \in \cayley (V\cap \Gamma)} 2 q^{-2r} \zeta(r).
\end{align*}
There are $\phi(q)$ rational points in $[0,1]$ with denominator $q$, where $\phi$ is Euler's $\phi$ function. Since $\phi(q) \leq q$, 
\begin{align*}
\sum_{\frac{p}{q} \in \cayley (V\cap \Gamma)} 2 q^{-2r} \zeta(r) = 2\zeta(r)\sum_{q\geq 1} \phi(q) q^{-2r} \leq 2\zeta(r) \sum_{q\geq 1} q^{1-2r} 
\end{align*}
which is finite exactly if $r>1$. 
\end{proof}

Theorem \ref{thm:C1alpha} follows straightforwardly from Proposition~\ref{prop:farey-lengths} and the analytic description of diamond shear coordinates. 

\begin{proof}[Proof of Theorem \ref{thm:C1alpha}] By Lemma \ref{lem:P_derivative} \ref{it:C1_P0}, $h\in \mc C^{1,\alpha}$ implies $s_h\in \mc P_0$. For any closed interval $I \subset \m T$ which is a proper subset of $\m T$, we can find 
M\"obius transformations $\cayley_1,\cayley_2:\m D \to \m H$ such that $\cayley_1(I),\cayley_2(h(I))$ are bounded intervals of $\m R$. 

Define $\varphi_I := \cayley_2\circ h \circ \cayley_1^{-1}:\m R \to \m R$. By Remark \ref{rem:theta_expression_R}, for all $(a,b)\in E$ such that $a,b\in I$, 
\begin{align*}
		\vartheta_h(e) = \frac{1}{2} \log \varphi_I'(x) \varphi_I'(y) - \log \frac{\varphi_I(x)-\varphi_I(y)}{x-y}
	\end{align*}
where $ x= \cayley_1(a), y = \cayley_1(b)$.

By the mean value theorem applied to $\varphi_I$, there exists $z\in (x,y)$ such that $\varphi_I'(z) = \frac{\varphi_I(x)-\varphi_I(y)}{x-y}$. Thus
\begin{align*}
	\vartheta_h(e) = \frac{1}{2}(\log \varphi_I'(x) - \log \varphi_I'(z)) + \frac{1}{2}(\log \varphi_I'(y) - \log \varphi_I'(z)). 
\end{align*}
Further, $c:=\cayley_1^{-1}(z)$ is contained in the interval $I(a,b)\subset \m T$. 
	
Since $\log h'$ is $\alpha$-H\"older and since $\cayley_2(h(I))$ and $\cayley_1(I)$ are bounded intervals, $\log \varphi_I'$ is $\alpha$-H\"older. Thus there is a constant $C>0$ such that for all $s, t\in \cayley_1(I)$, 
	\begin{align*}
		|\log \varphi_I'(s) - \log \varphi_I'(t)| \leq C |s-t|^\alpha.
	\end{align*}
Given also that $\cayley_1:I\to \cayley_1(I)$ is Lipschitz, it follows that 
	\begin{equation}\label{eq:farey-length-bound}
		|\vartheta(e)| \leq \frac{C}{2}\bigg(|x-z|^{\alpha} + |z-y|^{\alpha}\bigg) \leq 
		\frac{K}{2} \bigg(\ell(a,c)^{\alpha} + \ell(b,c)^{\alpha}\bigg)
		\leq K \ell(a,b)^{\alpha}
	\end{equation}
	for a constant $K = K(\cayley_1,I)$ and all $e=(a,b) \in E$ with $a,b\in I$. 
	
	We cover $\m T$ by the intervals $I_1 = \{e^{\ii \vartheta} : \vartheta \in [0,\pi]\}$ and $I_2 = \{e^{\ii \vartheta}: \vartheta \in [\pi,2\pi]\}$. 
	For every $(a,b)\in E$, there exists $j$ such that $a,b\in I_j$. Summing the bounds given in \eqref{eq:farey-length-bound} using an appropriate interval for each $e\in E$, 
	\begin{align*}
		\sum_{e\in E} \vartheta(e)^2 \leq K^2 \sum_{(a,b) = e \in E} \ell(a,b)^{2\alpha},
	\end{align*}
	where $K^2 = \max(K_1^2,K_2^2)$ and $K_j$ is the constant in \eqref{eq:farey-length-bound} for $I_j$. By Proposition~\ref{prop:farey-lengths}, this bound is finite if and only if $\alpha > 1/2$.	
\end{proof}

\section{Relation to Weil--Petersson homeomorphisms}\label{sec:relation_wp}

We now describe relationships between two classes of homeomorphisms defined in terms of shears, namely $\mc H$ and $\{h:s_h\in\mc S\}$, and the Weil--Petersson class $\WP(\m T)$. To summarize, the main results of this section are that $\mc H \subset \WP(\m T) \subset \{h: s_h\in\mc S\}$, and the inclusions are strict.

\subsection{Cell decomposition of $\m D$ or $\m H$ along $\dualtree$}

We say that an embedding of the dual tree of an ideal tessellation of $D = \m D$ or $\m H$ is \textit{centered} if the vertices of the tree are at the centers of the triangles in the tessellation and \textit{geodesic} if the edges of the tree are geodesics for the hyperbolic metric on $D$. Given a tessellation, there is a well-defined centered geodesic embedding of its dual tree. Further, since M\"obius transformations preserve angles and hyperbolic distances, if $\mc T = h (\Farey)$ is a tessellation embedded in 
$D$ with dual tree $\mc T^*$ embedded as a centered geodesic tree, then for any M\"obius transformation $A: D\to D'$, $A(\mc T^*)$ is a centered geodesic embedding of the dual tree of $A(\mc T)$ in $D'$. 
The complementary region $D \smallsetminus \mc T$ is a union of disjoint simply connected regions with piecewise-geodesic boundary, each of which contains exactly one vertex of $V(\mc T)$. We call these regions \textit{cells}.

We embed the dual tree $\dualtree$ as a centered geodesic tree in $\m D$. Applying the Cayley map $\cayley: \m D \to \m H$, the centered geodesic embedding of $\dualtree$ in $\m D$ is sent to a centered geodesic tree $\cayley(\dualtree)$ in $\cayley(\Farey)$ in $\m H$. 
We denote the cells of $\Farey$ or $\cayley(\Farey)$ simply by $C_v$ or $C_{\cayley(v)}$ for $v\in V$.  The group $\PSL(2,\m Z)$ acts transitively on the set of cells. Further, for any $v\in V$, there is $A\in \PSL(2,\m Z)$ such that $A\circ \cayley$ sends $C_v$ to $C_\infty$. See the pink area of the left-hand side of Figure~\ref{fig:infinity-cell} for an illustration of $C_\infty$.

\begin{figure}[ht]
\centering
    \includegraphics[width=.9\textwidth]{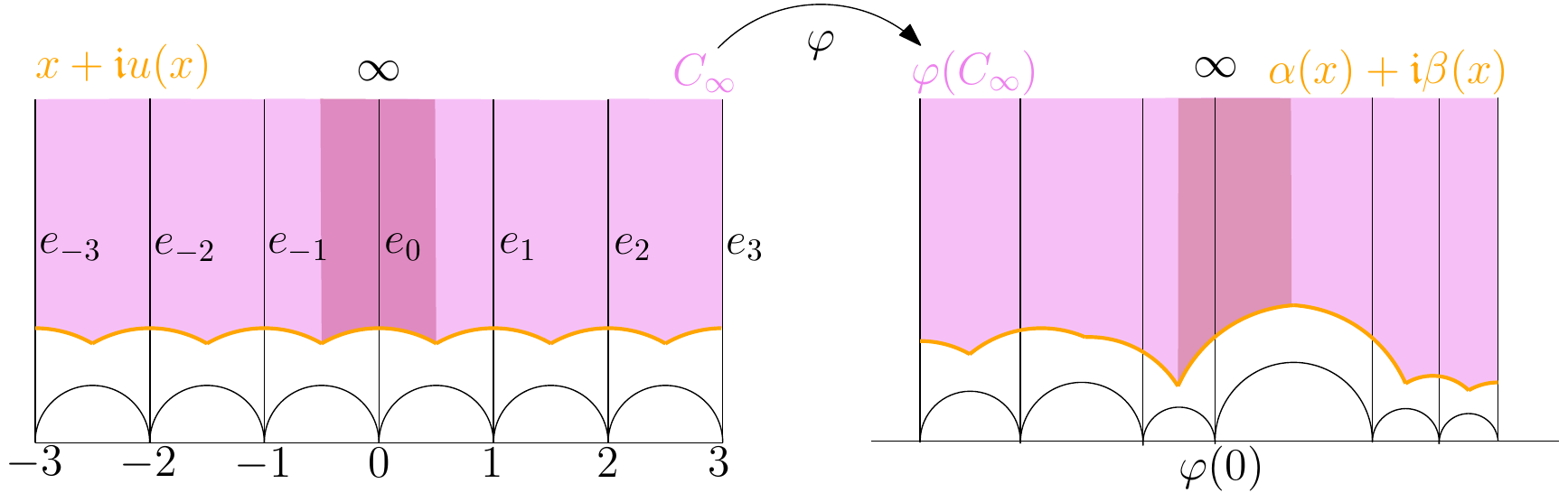}
    \caption{The cell at $\infty$ in the Farey tessellation between $-3$ and $3$ with boundary given by $x + \ii u(x)$, and an example of its image under a map $\varphi$ with boundary given by $\alpha(x) + \ii \beta(x)$. The strip $A_0$ and its image are illustrated in dark pink. }
    \label{fig:infinity-cell}
\end{figure}

We now describe the cell $C_\infty$ more explicitly. We denote $e_0 = (0,\infty)$, and let $\fan(\infty) = (e_n = (n,\infty))_{n\in \m Z}$. Let $a_n = (n,n+1)\in \cayley(E)$ and let $\tau_n$ 
be the ideal triangle bounded by $\{e_n,a_n, e_{n+1}\}$. The center of $\tau_n$ is 
\begin{align*}
    c_n 
    = 
    \frac{1}{2} + n + \ii \frac{\sqrt{3}}{2}.
\end{align*}
The geodesic arc connecting $c_n,c_{n+1}$ is an arc of the circle centered at $n+1$ of radius $1$. We define $u(x)$ to be the function whose graph is these arcs. Explicitly, on the interval $[n-1/2,n+1/2]$, $u(x) = \sqrt{1-(x-n)^2}$. In terms of $u(x)$, the cell at $\infty$ is 
\begin{align*}
    C_\infty = \{x+ \ii y : y \geq u(x)\}. 
\end{align*}
Note that $u(x)$ is continuous everywhere and differentiable everywhere except the half-integers. We further split $C_\infty$ into infinitely many \textit{strips} $A_n$, broken at the half-integers
\begin{align*}
    A_n = \bigg\{x + \ii y : y\geq u(x), \, n -\frac{1}{2} \leq x \leq n + \frac{1}{2}\bigg\}.
\end{align*}

Homeomorphisms $h\in \m T \to \m T$ act naturally on geodesic centered dual trees. Since $h$ determines the tessellation $h(\Farey)$, it determines the images of the centers of the triangles and thus determines the centered geodesic tree $h(\dualtree)$. We write $h (C_v)$ for the connected component of $\m D\smallsetminus h (\dualtree)$ containing $v$. 
We define the corresponding {\em hyperbolic stretching map} $F_h$, which is a homeomorphism from $\dualtree$ to $h(\dualtree)$ defined as follows:
\begin{itemize}[itemsep=-2pt]
    \item $F_h$ maps the center of a triangle $\tau$ of the Farey tessellation to the center of $h(\tau)$;
    \item $F_h$ linearly stretches the hyperbolic length along each geodesic edge.
\end{itemize}

Similarly, for $\varphi:\m R \to \m R$ fixing $\infty$, let $\varphi(C_\infty)$ denote the image of the cell at $\infty$ under $\varphi$ and define similarly the hyperbolic stretching map $F_\varphi : \cayley (\dualtree) \to \varphi (\cayley (\dualtree))$. We will give an explicit expression for $F_\varphi$ on $\partial C_\infty$. For this, we define functions $\alpha(x),\beta(x)$ such that 
\begin{align*}
    F_\varphi(x + \ii u(x)) = \alpha(x) + \ii \beta(x),
\end{align*}
so that the image cell is
\begin{align*}
    \varphi(C_\infty) = \{ \alpha(x) + \ii y : y \geq \beta(x)\}
\end{align*}
and the image strips are
\begin{align*}
    \varphi(A_n) = \bigg\{ \alpha(x) + \ii y : y \geq \beta(x) ,\, n - \frac{1}{2} \leq x \leq n + \frac{1}{2}\bigg\}.
\end{align*}
The maps $\alpha,\beta$ are continuous for all $x\in \m R$ and differentiable everywhere except the half-integers. 
Restricted to $[n -1/2, n+1/2]$, $\alpha(x) + \ii \beta(x)$ is a parametrization of the hyperbolic geodesic connecting $F_\varphi(c_n)$ to $F_\varphi(c_{n+1})$. 
In particular, $F_\varphi(c_n),F_\varphi(c_{n+1})$ are the centers of the triangles $\{\varphi(n-1),\varphi(n),\infty\}$, $\{\varphi(n),\varphi(n+1),\infty\}$ respectively, meaning that 
\begin{align*}
    F_{\varphi}(c_n) &= \varphi(n) - \frac{\rho}{2} + \ii \frac{\rho\sqrt{3} }{2}, \qquad \rho = \varphi(n)-\varphi(n-1), \\
    F_{\varphi}(c_{n+1}) &= \varphi(n) + \frac{\lambda}{2} + \ii \frac{\lambda\sqrt{3} }{2}, \qquad \lambda = \varphi(n+1) - \varphi(n).
\end{align*}
Using this, we now explicitly compute the functions $\alpha,\beta$ on a single strip. Translating, it suffices to compute the following:
\begin{lem}\label{lem:alpha_beta}
For $\rho, \l > 0$, let  $\gamma_{\rho,\lambda}$ denote the geodesic connecting $-\frac{\rho}{2} + \ii \frac{\rho\sqrt{3}}{2}$ to $\frac{\lambda}{2} + \ii \frac{\lambda \sqrt{3}}{2}$.  
We have $\gamma_{\rho,\lambda}$ is an arc of the circle centered at $\lambda - \rho$ with Euclidean radius $ r = \sqrt{\lambda^2 - \lambda \rho + \rho^2}$ and hyperbolic length $\ell = \log \frac{\rho + \lambda +r}{\rho + \lambda - r}$. 
    Let $F_{\rho,\lambda}:\gamma_{1,1}\to \gamma_{\rho,\lambda}$ be the hyperbolic stretching.
    If $\gamma_{1,1}$ is parametrized by $\gamma_{1,1}(x) = x + \ii u(x)$, then $\gamma_{\rho,\lambda}(x) = F_{\rho,\lambda}(\gamma_{1,1}(x)) = \alpha_{\rho,\lambda}(x) + \ii \beta_{\rho,\lambda}(x)$ where 
    \begin{align*}
        \alpha_{\rho,\lambda}(x) &= \frac{(r+\lambda -\rho) e^{\ell} \bigg(\frac{1+x}{1-x}\bigg)^{\ell/\log 3} - (r-\lambda + \rho) K(\rho,\lambda)^2}{e^{\ell} \bigg(\frac{1+x}{1-x}\bigg)^{\ell/\log 3} + K(\rho,\lambda)^2}, \\ 
        \beta_{\rho,\lambda}(x) &= \frac{2 r e^{\ell/2} \bigg(\frac{1+x}{1-x}\bigg)^{\ell/2\log 3} K(\rho,\lambda)}{e^{\ell} \bigg(\frac{1+x}{1-x}\bigg)^{\ell/\log 3} + K(\rho,\lambda)^2},
    \end{align*}
    and $K(\rho,\lambda) = \frac{2r + 2\lambda - \rho}{\sqrt{3}\rho}$. 
\end{lem}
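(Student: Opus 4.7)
The plan is to establish each part by direct hyperbolic-geometric computation.

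For the first part, $\gamma_{\rho,\lambda}$ is the Euclidean semicircle perpendicular to $\m R$ through $z_1 = -\rho/2 + \ii\rho\sqrt{3}/2$ and $z_2 = \lambda/2 + \ii\lambda\sqrt{3}/2$. Imposing $|z_j - c|^2 = r^2$ for $j=1,2$ reduces to a linear equation giving $c = \lambda - \rho$ and then $r^2 = \lambda^2 - \lambda\rho + \rho^2$. For the hyperbolic length, I would use the standard formula $\cosh d(z_1,z_2) = 1 + |z_1-z_2|^2/(2\,\Im z_1\, \Im z_2)$, which after a short expansion becomes $(2\rho^2 + \rho\lambda + 2\lambda^2)/(3\rho\lambda)$, and then match this against $\cosh(\log\tfrac{\rho+\lambda+r}{\rho+\lambda-r}) = ((\rho+\lambda)^2 + r^2)/((\rho+\lambda)^2 - r^2)$ using the identities $(\rho+\lambda)^2 - r^2 = 3\rho\lambda$ and $(\rho+\lambda)^2 + r^2 = 2\rho^2 + \rho\lambda + 2\lambda^2$.

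For the parametrization, I would use the fact that $z \mapsto (z - (c-r))/((c+r) - z)$ is a M\"obius transformation in $\PSL(2,\m R)$ sending $c + re^{\ii\theta}$ to $\ii\cot(\theta/2)$, so the hyperbolic arclength between angles $\theta_a$ and $\theta_b$ equals $\log(\tan(\theta_a/2)/\tan(\theta_b/2))$. Half-angle formulas applied at $z_1,z_2$ give $\tan(\theta_1/2) = (2r+2\lambda-\rho)/(\sqrt{3}\rho) = K(\rho,\lambda)$ and $\tan(\theta_2/2) = (2r-2\rho+\lambda)/(\sqrt{3}\lambda)$; cross-multiplying confirms $\tan(\theta_1/2)/\tan(\theta_2/2) = e^\ell$, which is consistent with the length computed above. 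The same analysis applied to $\gamma_{1,1}$ (a unit semicircle) parametrized by $x = \cos\phi$ yields $\tan(\phi/2) = \sqrt{(1-x)/(1+x)}$ and hyperbolic arclength $\tau(x) = \tfrac12\log(3(1+x)/(1-x))$ measured from the endpoint at $x=-1/2$.

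Because the hyperbolic stretching $F_{\rho,\lambda}$ scales arclength by the factor $\ell/\log 3$, the angle $\theta$ on $\gamma_{\rho,\lambda}$ that corresponds to $x$ satisfies $\tan(\theta/2) = K\,e^{-\tau(x)\ell/\log 3}$, hence $\tan^2(\theta/2) = K^2/T(x)$ where $T(x) := e^\ell\bigl((1+x)/(1-x)\bigr)^{\ell/\log 3}$ (using the elementary identity $3^{\ell/\log 3} = e^\ell$). Substituting into $\cos\theta = (1-\tan^2(\theta/2))/(1+\tan^2(\theta/2))$ and $\sin\theta = 2\tan(\theta/2)/(1+\tan^2(\theta/2))$, writing $F_{\rho,\lambda}(\gamma_{1,1}(x)) = c + re^{\ii\theta}$ with $c = \lambda - \rho$, and multiplying numerator and denominator by $T(x)$ produces precisely the claimed formulas for $\alpha_{\rho,\lambda}(x)$ and $\beta_{\rho,\lambda}(x)$. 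The only mildly delicate algebraic step is the cross-multiplication verifying $\tan(\theta_1/2)/\tan(\theta_2/2) = (\rho+\lambda+r)/(\rho+\lambda-r)$, where both sides reduce (using $r^2 = \lambda^2 - \lambda\rho + \rho^2$) to $3\rho\lambda(r+\lambda-\rho)$. The remaining calculations are routine bookkeeping, with the endpoint checks $T(-1/2) = 1$ (giving $\tan(\theta/2) = K$, i.e.\ $\theta = \theta_1$) and $T(1/2) = e^{2\ell}$ (giving $\theta = \theta_2$) confirming correct orientation.
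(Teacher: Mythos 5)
Your proposal is correct and follows essentially the same route as the paper: both conjugate the geodesic to the positive imaginary axis, where the hyperbolic stretching becomes the power map $\ii t \mapsto \ii t^{\ell/\log 3}$, and then pull back. The only difference is bookkeeping — you parametrize points as $c+re^{\ii\theta}$ and track $\tan(\theta/2)$ via half-angle identities (plus an independent $\cosh$-distance check of $\ell$), whereas the paper writes the conjugating M\"obius map $B_{\rho,\lambda}$ explicitly and composes $B_{\rho,\lambda}^{-1}\circ C_{3,e^{\ell}}\circ B_{1,1}$; all your intermediate identities (e.g.\ $\tan(\theta_1/2)=K$ and the cross-multiplication reducing both sides to $3\rho\lambda(r+\lambda-\rho)$) check out.
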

\begin{proof}

Direct computations show that  $\gamma_{\rho,\lambda}$ is an arc of the circle centered at $\lambda-\rho$ of Euclidean radius $\sqrt{\lambda^2-\lambda\rho +\rho^2}$. On the imaginary axis, the hyperbolic stretching map $C_{a_1,a_2} : \ii \m  R_+\to \ii \m R_+$ that sends $(\ii, a_1 \ii)$ to $(\ii, a_2 \ii)$ is given by 
\begin{align*}
    C_{a_1,a_2}(z) = \ii\, (z/\ii)^{\log a_2/ \log a_1}.
\end{align*}
To compute $F_{\rho,\lambda}$, we conjugate by M\"obius transformations that send $\gamma_{1,1}$ and $\gamma_{\rho,\lambda}$ to segments of the form $(\ii,a_1 \ii)$ and $(\ii, a_2 \ii)$ respectively, where $\log a_1 = \text{length}(\gamma_{1,1})$ and $\log a_2 = \text{length}(\gamma_{\rho,\lambda})$.

Let $B_{\rho,\lambda}:\m H\to \m H$ be the M\"obius transformation such that
$B_{\rho,\lambda}(\gamma_{\rho,\lambda}) \subset \ii \m R_+$ (with negative endpoint of the geodesic containing $\gamma_{\rho,\lambda}$ sent to $0$ and positive endpoint sent to $\infty$) and $B_{\rho,\lambda}(-\frac{\rho}{2}+\ii \frac{\rho \sqrt{3}}{2}) = \ii$. To compute the length of $\gamma_{1,1}$, we note that
\begin{align*}
    B_{1,1}(z) = \sqrt{3}\, \frac{1+z}{1-z}, \qquad B_{1,1}(\frac{1}{2}+\ii \frac{\sqrt{3}}{2}) = 3 \ii.
\end{align*}
Thus $\text{length}(\gamma_{1,1}) = \log 3$. Given this and denoting $\ell = \text{length}(\gamma_{\rho,\lambda})$,
\begin{align*}
    F_{\rho,\lambda}(x + \ii u(x)) = B_{\rho,\lambda}^{-1}\circ C_{3, e^\ell}\circ B_{1,1}(x+\ii u(x)).
\end{align*}
To find $\ell$, we compute 
\begin{align*}
    B_{\rho,\lambda}(z) = K(\rho,\lambda) \frac{r -\lambda + \rho + z}{r + \lambda - \rho - z}
\end{align*}
where $K(\rho,\lambda)$ is the multiplicative constant so that $B_{\rho,\lambda}$ sends $-\frac{\rho}{2}+ \frac{\rho \sqrt{3}}{2} \ii$ to $\ii$. For $x + \ii y\in \gamma_{\rho,\lambda}$, we get that
\begin{align*}
    \frac{B_{\rho,\lambda}(x + \ii y)}{K(\rho,\lambda)}  = \frac{2 r y \ii}{(r+\lambda-\rho -x)^2 + y^2}.
\end{align*}
Thus 
\begin{align*}
    K(\rho,\lambda) = \frac{(r+\lambda-\rho +\rho/2)^2 + (\sqrt{3}\rho/2)^2}{2 r \sqrt{3}\rho /2} = \frac{2r +2\lambda - \rho}{\sqrt{3}\rho}.
\end{align*}
Similarly,
\begin{align*}
    B_{\rho,\lambda}(\frac{\lambda}{2} + \frac{\sqrt{3} \lambda}{2} \ii) = K(\rho,\lambda) \frac{\sqrt{3}\lambda}{2r+\lambda - 2\rho} = \frac{\lambda(2r + 2\lambda - \rho)}{\rho(2r + \lambda - 2\rho)}.
\end{align*}
From this and simplification, we find
\begin{align*}
    \ell = \log \frac{\lambda(2r + 2\lambda - \rho)}{\rho(2r + \lambda - 2\rho)} = \log \frac{\rho + \lambda +r}{\rho + \lambda - r}.
\end{align*}
Putting these together we compute $F_{\rho,\lambda}(x+\ii u(x))$. First
\begin{align*}
    B_{1,1}(x+ \ii u(x)) = \sqrt{3} \bigg(\frac{1+x}{1-x}\bigg)^{1/2} \ii
\end{align*}
so then 
\begin{align*}
    C_{3,e^\ell}\circ B_{1,1}(x+ \ii u(x)) = \ii  \sqrt{3}^{\ell/\log 3} \bigg(\frac{1+x}{1-x}\bigg)^{\ell/2 \log 3} =  \ii e^{\ell/2}  \bigg(\frac{1+x}{1-x}\bigg)^{\ell/2 \log 3}. 
\end{align*}
Note that 
\begin{align*}
    B_{\rho,\lambda}^{-1}(z) = \frac{(r+\lambda -\rho)z - K(\rho,\lambda)(r - \lambda +\rho)}{z + K(\rho,\lambda)}
\end{align*}
so finally
\begin{align*}
    F_{\rho,\lambda}(x+ \ii u(x)) &= \frac{(r+\lambda -\rho)\ii  e^{\ell/2}  \bigg(\frac{1+x}{1-x}\bigg)^{\ell/2 \log 3} - K(\rho,\lambda)(r - \lambda +\rho)}{\ii e^{\ell/2}  \bigg(\frac{1+x}{1-x}\bigg)^{\ell/2 \log 3} + K(\rho,\lambda)}.
\end{align*}
Taking real and imaginary parts completes the proof. 
\end{proof}

The following observation follows directly from the explicit formulas of $\a$ and $\b$.

\begin{cor}\label{cor:alpha_beta_props} 
The functions $(s, t, x) \mapsto \alpha_{e^s,e^t}(x)$, $\alpha'_{e^s,e^t}(x)$, $\b_{e^s,e^t}(x)$, and $\b'_{e^s,e^t}(x)$ are real analytic and bounded on $[-M, M] \times [-M, M] \times [-1/2,1/2]$
for all $M >0$. Moreover, $\alpha_{\rho,\lambda}'(x) >0$ for all $\rho,\lambda > 0$ and all $x\in [-1/2,1/2]$.
\end{cor}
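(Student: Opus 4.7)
The result is a direct consequence of the explicit formulas derived in Lemma~\ref{lem:alpha_beta}. The plan is to substitute $\rho = e^s$, $\lambda = e^t$ and verify that every piece appearing in the formulas for $\alpha_{\rho,\lambda}$ and $\beta_{\rho,\lambda}$ is real analytic in $(s,t)$ and in $x$ on an open neighborhood of the compact box $[-M,M]^2 \times [-1/2,1/2]$, that the denominators stay bounded away from zero, and that the positivity of $\alpha'$ drops out from a single algebraic cancellation.

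\textbf{Analyticity of the ingredients.} First I would check that the three auxiliary quantities $r, \ell, K$ in Lemma~\ref{lem:alpha_beta} are real analytic and positive in $(\rho,\lambda) \in (0,\infty)^2$. For $r = \sqrt{\lambda^2 - \lambda\rho + \rho^2}$ this follows from the identity $\lambda^2 - \lambda\rho + \rho^2 = (\lambda - \rho/2)^2 + 3\rho^2/4$, which is strictly positive, so $r > 0$. For $\ell = \log\frac{\rho+\lambda+r}{\rho+\lambda-r}$, both numerator and denominator are positive since $(\rho+\lambda)^2 - r^2 = 3\rho\lambda > 0$, so $\ell$ is real analytic and $\ell > 0$. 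For $K = (2r + 2\lambda - \rho)/(\sqrt 3 \rho)$, the same bound gives $r \ge \sqrt 3\,\rho/2$, hence $2r + 2\lambda - \rho \ge \sqrt 3\, \rho + 2\lambda - \rho > 0$ and $K > 0$. Substituting $\rho = e^s, \lambda = e^t$ makes all three real analytic functions of $(s,t)$, uniformly bounded above and bounded away from zero on $[-M,M]^2$.

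\textbf{Analyticity and boundedness of $\alpha, \beta, \alpha', \beta'$.} Since $(1+x)/(1-x) \in [1/3,3]$ for $x \in [-1/2,1/2]$, the function $f(x) := ((1+x)/(1-x))^{\ell/\log 3}$ is real analytic jointly in $(x,\ell)$, with $f$ and $f'$ bounded on the compact set. The denominator $e^\ell f(x) + K^2 \ge K^2$ is bounded below by a positive constant on $[-M,M]^2 \times [-1/2,1/2]$. Therefore $\alpha_{e^s,e^t}(x)$ and $\beta_{e^s,e^t}(x)$, being quotients of real analytic functions with non-vanishing denominators, are real analytic on an open neighborhood of the compact box and bounded there by continuity. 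The same applies to $\alpha'_{e^s,e^t}(x)$ and $\beta'_{e^s,e^t}(x)$, which are again quotients of real analytic functions with the same non-vanishing denominators (squared).

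\textbf{Positivity of $\alpha'$.} The main (and only non-routine) step is showing $\alpha'_{\rho,\lambda}(x) > 0$. Writing $A = e^\ell$, $B = K^2$, $P = r+\lambda-\rho$, $Q = r-\lambda+\rho$, the formula reads $\alpha = (PAf - QB)/(Af+B)$. A direct differentiation yields
\begin{equation*}
\alpha'(x) = \frac{A f'(x)\,[P(Af+B) - (PAf - QB)]}{(Af+B)^2} = \frac{AB(P+Q)\, f'(x)}{(Af+B)^2},
\end{equation*}
and the cancellation $P + Q = 2r$ gives
\begin{equation*}
\alpha'_{\rho,\lambda}(x) = \frac{2r\, e^\ell\, K^2\, f'(x)}{(e^\ell f(x) + K^2)^2}.
\end{equation*}
Since $\ell > 0$ we have $f'(x) = f(x)\cdot \frac{2\ell/\log 3}{1-x^2} > 0$ for $x \in (-1,1)$, and since $r, e^\ell, K^2 > 0$, we conclude $\alpha'_{\rho,\lambda}(x) > 0$. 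The main (and really only) thing to watch for is this algebraic simplification; everything else is a routine verification that the explicit formulas behave well on the compact set.
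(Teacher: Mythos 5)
Your proposal is correct and takes the same route the paper intends: the paper states that the corollary ``follows directly from the explicit formulas of $\a$ and $\b$'' and gives no further argument, and your write-up is exactly that direct verification, correctly carried out — including the one genuinely non-obvious step, the cancellation $(r+\lambda-\rho)+(r-\lambda+\rho)=2r$ that yields $\alpha'_{\rho,\lambda}(x) = 2r e^{\ell} K^2 f'(x)/(e^{\ell}f(x)+K^2)^2 > 0$.
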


\subsection{Proof of $\mc H\subset \WP(\m T)$}\label{sec:H_subset_WP}

In this section, we fix a circle homeomorphism $h\in \mc H$. We explicitly construct a homeomorphism $f: \m D \to \m D$ which extends $h$ using the cell structure described above. The extension coincides with the hyperbolic stretching map along the centered geodesic tree. We show that $f$ is quasiconformal (Theorem \ref{thm:extension_qc})
and then show that the Beltrami coefficient $\mu$ of $f$ is $L^2$-integrable on the disk with respect to the hyperbolic metric (Theorem \ref{thm:H_Beltrami_bound}) which shows that $h$ is Weil--Petersson.  
The construction is an adaption of a construction of Kahn--Markovic from \cite{KahnMarkovic} to the infinite tessellation setting.

\bigskip
\textbf{Construction of an extension $f:\m D\to \m D$ of $h:\m T \to \m T$.}

Recall that we embed the Farey dual tree $\dualtree$ as a centered geodesic tree. 
We first define $f|_{\dualtree}$ to be the hyperbolic stretching map from $\dualtree \to h(\dualtree)$, where $h(\dualtree)$ is the centered geodesic tree associated to the tessellation induced by $h$. 
Since $f$ sends $\dualtree$ to the centered geodesic tree in $h(\Farey)$, for any $v\in V$, $f(v) = h(v)$. Since $V$ is dense in $\m T$ and $h$ is continuous, if $f$ is also continuous then $f$ extends $h$. 

We now define $f$ on $\m D\smallsetminus \dualtree$ cell-by-cell. Choose $v\in V$ and let $\fan(v) = (e_k)_{k\in \m Z}$ centered on an arbitrary but fixed middle edge $e_0$. By Lemma~\ref{lem:H_in_P0}, $h\in \mc P_0$, so 
$$ M := \sum_{i=0}^\infty s_h(e_i) = -\sum_{i=-1}^{-\infty} s_h(e_i) < \infty.$$ 
If we choose M\"obius transformations $\cayley_1,\cayley_2:\m D\to \m H$ as in Lemma \ref{lem:P_derivative} so that $\cayley_1 (v) = \infty$, $\cayley_2 (h(v)) = \infty$, $\cayley_1(e_0) = \cayley_2 (e_0) = (0,\infty)$, and $\cayley_1(e_{-1}) = \cayley_2(e_{-1}) = (-1,\infty)$, then as in \eqref{eq:ell}, \eqref{eq:ell'}, for all $n\geq 0$, 
$\cayley_2\circ h\circ \cayley_1^{-1}$ satisfies 
\begin{align*}
    \lim_{n\to \infty} \cayley_2\circ h\circ \cayley_1^{-1}(n+1) - \cayley_2\circ h\circ \cayley_1^{-1}(n) =
    \lim_{n \to \infty} \cayley_2\circ h\circ \cayley_1^{-1}(-n) - \cayley_2\circ h\circ \cayley_1^{-1}(-n-1) = e^{M}.
\end{align*}
We construct an extension $\psi$ on $C_\infty$ for 
$$\varphi := e^{-M} \cayley_2\circ h\circ \cayley_1^{-1} : \m R \to \m R.$$ 
This way, $\varphi(0) = 0$ and $\varphi(\infty) = \infty$, and $\varphi$ is asymptotic to the identity near $\infty$. 
The restriction of $\psi$ to $\partial C_\infty\subset \cayley_1 (\dualtree)$ is given by the hyperbolic stretching, already studied in the previous section, and we denoted the map by
\begin{align*}
    \psi(x + \ii u(x) ) = \alpha(x) + \ii \beta(x).
\end{align*}

We extend $\psi$ to  $C_\infty$ by  
\begin{align*}
    \psi(x + \ii y) := \alpha(x) + \ii (\beta(x) - u(x) + y), \qquad \forall x\in \m R, \quad y \geq u(x).
\end{align*}
With this definition, $\psi$ sends the strip $A_n$ onto $\varphi(A_n)$ and is a homeomorphism $C_\infty\to \varphi(C_{\infty})$. 
Conjugating back to $\m D$, we obtain a continuous extension $\cayley_2^{-1} \circ (e^M \psi)\circ \cayley_1 $ of $f|_{\dualtree}$ to $C_v \cup \dualtree$. This construction applied to all $v\in V$ gives a continuous map $f: \m D \to \m D$ extending $h$. 

\begin{thm}\label{thm:extension_qc}
If $h\in\mc H$, then the extension $f:\m D \to \m D$ constructed above is quasiconformal. 
\end{thm}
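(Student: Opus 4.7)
The plan is to show that the Beltrami coefficient $\mu_f = \bar\partial f / \partial f$ is essentially bounded above by some $k < 1$. The map $f$ is smooth on the complement of $h(\dualtree)$, a set of area zero, and is continuous on $\m D$ by construction, so it is ACL; such a uniform bound on $|\mu_f|$ is then enough to conclude quasiconformality. Because $|\mu|$ is invariant under pre- and post-composition by M\"obius transformations, and the construction of $f$ on each cell $C_v$ is a M\"obius conjugate of a model map on $C_\infty$, it suffices to bound the Beltrami coefficient of $\psi := e^{-M}\cayley_2 \circ h \circ \cayley_1^{-1}$ on $C_\infty$ uniformly over the choice of $v \in V$.

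On each strip $A_n \subset C_\infty$, Lemma~\ref{lem:alpha_beta} and the construction give
$$\psi(x + \ii y) = \alpha_{\rho, \lambda}(x) + \ii\,\bigl(\beta_{\rho, \lambda}(x) - u(x) + y\bigr),$$
with $\rho = \varphi(n) - \varphi(n-1)$, $\lambda = \varphi(n+1) - \varphi(n)$, and $\varphi = \psi|_{\m R}$. A direct computation of $\partial\psi$ and $\bar\partial\psi$ yields
$$1 - |\mu_\psi|^2 = \frac{4\,\alpha'_{\rho, \lambda}(x)}{\bigl(\alpha'_{\rho, \lambda}(x) + 1\bigr)^2 + \bigl(\beta'_{\rho, \lambda}(x) - u'(x)\bigr)^2}.$$
So a uniform positive lower bound on $\alpha'_{\rho,\lambda}$, combined with uniform upper bounds on $\alpha'_{\rho,\lambda}$, $\beta'_{\rho,\lambda}$ (and the trivial bound $|u'| \le 1/\sqrt 3$ on $[-\tfrac12, \tfrac12]$), would give $|\mu_\psi| \le k < 1$.

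The heart of the argument is therefore to control $(\rho, \lambda)$ uniformly across all cells and strips. This is where the hypothesis $h \in \mc H$ enters through \eqref{eq:shears_bound}: there is a single constant $M(s) \ge 0$ with $\bigl|\sum_{i=n}^m s_h(e_i)\bigr| \le M(s)$ for every vertex $v$ and every $n \le m$, and in particular $|M| \le M(s)$ for the normalizing constant $M = \sum_{i=0}^{\infty} s_h(e_i)$ used at the vertex $v$. Iterating the shear recursion \eqref{eq:ratio_shear_sum} expresses $\varphi(m+1) - \varphi(m) = e^{-M}\exp\bigl(\sum_{i \in I_m} s_h(e_i)\bigr)$ for an appropriate finite interval $I_m \subset \m Z$ whose partial sum is bounded in absolute value by $M(s)$. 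Consequently
$$ e^{-2 M(s)} \le \rho,\, \lambda \le e^{2 M(s)}$$
holds for every strip of every cell simultaneously.

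Plugging this into Corollary~\ref{cor:alpha_beta_props}, the functions $\alpha'_{\rho, \lambda}(x)$ and $\beta'_{\rho, \lambda}(x)$ are continuous and bounded on the compact set $[e^{-2M(s)}, e^{2M(s)}]^2 \times [-\tfrac12, \tfrac12]$, with $\alpha'_{\rho, \lambda}(x) > 0$ everywhere there. Compactness then supplies the required uniform bounds, and hence $|\mu_f| \le k < 1$ almost everywhere on $\m D$, so $f$ is quasiconformal. The main obstacle is ensuring the estimates are uniform across the infinitely many cells: the weaker pointwise condition $s_h \in \mc P_0$ would only give cell-dependent bounds, possibly growing without bound as $v$ varies; it is precisely the uniform partial-sum bound \eqref{eq:shears_bound} available for $h \in \mc H$ that pins $(\rho, \lambda)$ into a single compact subset of $(0,\infty)^2$.
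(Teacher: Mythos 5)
Your proof follows the paper's argument essentially verbatim: the same reduction to the model map $\psi$ on $C_\infty$, the same formula $1-|\mu_\psi|^2 = 4\alpha'/\bigl((\alpha'+1)^2+(\beta'-u')^2\bigr)$, and the same mechanism for uniformity, namely that $\rho=\exp(-p_{s_h,v}(e_{n-1}\splus))$ and $\lambda=\exp(-p_{s_h,v}(e_n\splus))$ are confined to a fixed compact subset of $(0,\infty)^2$ by the uniform partial-sum bound coming from Lemma~\ref{lem:H_in_P0} (your route through \eqref{eq:shears_bound} is the same bound), after which Corollary~\ref{cor:alpha_beta_props} and compactness give $\|\mu\|_\infty\le k<1$ independently of the cell.

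The one step that is not sound as stated is your opening claim that ``$f$ is smooth off a set of area zero and continuous, so it is ACL.'' That implication is false in general: a homeomorphism can be smooth off a closed null set and still fail to be absolutely continuous on lines (Cantor-function-type behavior concentrated on the exceptional set). What saves you here is the specific structure of the exceptional set: $\dualtree$ is a locally finite union of analytic arcs, and the paper concludes by invoking quasiconformal removability of points and $C^1$ Jordan curves (so that a homeomorphism of $\m D$ which is $K$-quasiconformal on $\m D\smallsetminus\dualtree$ is $K$-quasiconformal on $\m D$). Alternatively, one could argue that almost every line meets $\dualtree$ in a finite set and that the partial derivatives of $f$ are locally bounded, so $f$ is locally Lipschitz along almost every line; but some such argument must replace the bare assertion. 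With that repair your proof is complete and coincides with the paper's.
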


\begin{proof}
Choose $v\in V$ and consider the map $\psi = e^{-M} \cayley_2 \circ f\circ \cayley_1^{-1}$. On $C_\infty = \cayley_1(C_v)$,  
\begin{align*}
    \psi(x + \ii y) := \alpha(x) + \ii (\beta(x) - u(x) + y), \qquad y \geq u(x).
\end{align*}
Since $\psi$ differs from $f$ by M\"obius transformations, $\psi$ being $K$-quasiconformal on $C_\infty$ is equivalent to $f$ being $K$-quasiconformal on $C_v$ (See e.g. \cite[Sec.\,1.2.8]{nag_1988}). Since $u,\alpha,\beta$ are differentiable almost everywhere, so is $\psi$. A direct computation shows that the Beltrami coefficient of $\psi$ on $C_\infty$ is given by
\begin{align*}
\mu(x+\ii y) = \frac{\psi_{\bar z}}{\psi_z}(x+\ii y) = \frac{\alpha'(x)-1 + \ii(\beta'(x) - u'(x))}{\alpha'(x)+1 + \ii(\beta'(x) - u'(x))}.
\end{align*}
Another direct computation shows that 
\begin{equation}\label{eq:mu}
    |\mu(x+\ii y)|^2 = 1 - \frac{4\alpha'(x)}{(\alpha'(x) + 1)^2 + (\beta'(x) - u'(x))^2}.
\end{equation}

It is clear that the ratio in \eqref{eq:mu} takes value in $(0,1]$, we now show it to be uniformly bounded away from $0$. For $x \in [n -1/2, n+1/2]$, the infimum of the ratio is a continuous function of $\rho = \varphi (n) - \varphi (n-1)$ and $\lambda = \varphi(n+1) - \varphi(n)$ by Lemma~\ref{lem:alpha_beta}. 
Moreover,
$$\varphi (n+1) - \varphi (n) = \exp(-M) \exp (\sum_{i = 0}^n s_h(e_i)) = \exp (\sum_{i = -\infty}^n s_h(e_i)) = \exp (- p_{s_h,v} (e_{n}\splus)).$$
Here we use the convention that if $n \le -1$ then
$\sum_{i = 0}^n s_h(e_i) = -\sum_{i = n}^{-1} s_h(e_i)$.
Since $p_{s_h,v} (e_{n}\splus)$ is uniformly bounded for all $v \in V$, $n \in \m Z$ by Lemma~\ref{lem:H_in_P0}, from the continuity, we obtain that there exists $k < 1$ independent of the cell chosen, such that 
$\norm{\mu}_{\infty, C_\infty} \le k.$
This shows that $f$ is $K$-quasiconformal in $\m D \smallsetminus \dualtree$, where $K = (1+k)/(1-k)$.  Points and $C^1$ Jordan curves are quasiconformally removable, see \cite[Thm.\,3.1.3]{FletcherMarkovic}, thus $\dualtree$ is quasiconformally removable. Since $f$ is a homeomorphism of $\m D$, we obtain that $f$ is a $K$-quasiconformal extension of $h$ to $\m D$. 
\end{proof}

To show that $\mathcal{H}\subset \WP(\m T)$, we need the following lemma. 

\begin{lem}\label{lem:mu_shear_bound}
Let $\psi_{s,t}(x + \ii y) = \alpha_{e^s,e^t}(x) + \ii (\beta_{e^s,e^t}(x) - u(x) + y)$ for $x\in [-1/2,1/2]$ and $y\geq u(x)$, and let $\mu_{s,t}$ be the Beltrami coefficient of $\psi_{s,t}$. For any $\vare>0$ small, there is a constant $C=C(\vare)>0$ such that if $|s|,|t|< \vare$, 
\begin{align*}
    |\mu_{s,t}(x+\ii y)| \leq C(|s| + |t|)
\end{align*}
for all $x\in [-1/2,1/2]$ and all $y\geq u(x)$.
\end{lem}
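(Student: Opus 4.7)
The key observation to make is that when $s = t = 0$, the map $\psi_{0,0}$ is the identity on the strip $\{x + iy : x\in[-1/2,1/2], y \geq u(x)\}$, so $\mu_{0,0} \equiv 0$. Indeed, a direct computation from Lemma~\ref{lem:alpha_beta} with $\rho = \lambda = 1$ gives $r = 1$, $\ell = \log 3$, $K(1,1) = \sqrt{3}$, and after simplification $\alpha_{1,1}(x) = x$ and $\beta_{1,1}(x) = \sqrt{1-x^2} = u(x)$.

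From this starting point, I would invoke the explicit expression for the Beltrami coefficient derived in the proof of Theorem~\ref{thm:extension_qc}, namely
$$\mu_{s,t}(x+iy) = \frac{(\alpha'_{e^s,e^t}(x)-1) + i(\beta'_{e^s,e^t}(x) - u'(x))}{(\alpha'_{e^s,e^t}(x)+1) + i(\beta'_{e^s,e^t}(x) - u'(x))}.$$
By Corollary~\ref{cor:alpha_beta_props}, the functions $(s,t,x)\mapsto \alpha'_{e^s,e^t}(x)$ and $(s,t,x)\mapsto \beta'_{e^s,e^t}(x)$ are real analytic and bounded on $[-M,M]^2 \times [-1/2, 1/2]$ for any $M > 0$. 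Choosing $M = 2\vare$, this forces their $s$- and $t$-partial derivatives to be uniformly bounded on the smaller compact set $[-\vare,\vare]^2 \times [-1/2,1/2]$ (either by the Cauchy estimates for real analytic functions, or by compactness of the domain).

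Combining this with the vanishing $\alpha'_{1,1}(x) - 1 = 0$ and $\beta'_{1,1}(x) - u'(x) = 0$ noted above, the mean value theorem then yields constants $C_1, C_2 > 0$ such that
$|\alpha'_{e^s,e^t}(x) - 1| \leq C_1(|s|+|t|)$ and $|\beta'_{e^s,e^t}(x) - u'(x)| \leq C_2(|s|+|t|)$ uniformly in $x \in [-1/2,1/2]$ whenever $|s|,|t| \le \vare$. Since $\alpha'_{e^s,e^t}(x) > 0$ by Corollary~\ref{cor:alpha_beta_props} and is continuous with value $1$ at $(0,0,x)$, one may shrink $\vare$ if necessary so that $\alpha'_{e^s,e^t}(x) \geq 1/2$; then the modulus of the denominator is bounded below by $3/2$, yielding the desired estimate $|\mu_{s,t}(x+iy)| \leq C(|s|+|t|)$ with $C = \tfrac{2}{3}(C_1+C_2)$. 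There is no serious obstacle here once the identity $\psi_{0,0} = \Id$ is observed, since the bound is essentially a first-order Taylor expansion of a real analytic function vanishing at the origin.
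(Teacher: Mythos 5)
Your proposal is correct and follows essentially the same route as the paper: identify $\mu_{0,0}\equiv 0$ from the explicit formulas of Lemma~\ref{lem:alpha_beta}, lower-bound the modulus of the denominator using $\alpha'_{e^s,e^t}>0$, and obtain the linear bound on the numerator by first-order expansion of the real-analytic functions $\alpha'$ and $\beta'$ in $(s,t)$ around the origin, with uniformity in $x$ coming from compactness of $[-1/2,1/2]$. The only cosmetic difference is that the paper bounds the denominator below by $1$ directly (since $\alpha'+1\ge 1$) rather than shrinking $\vare$ to get $\alpha'\ge 1/2$.
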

\begin{proof}
Recall that 
\begin{align}\label{eq:mu_alpha_beta}
    \mu_{s,t}(x+\ii y) = \frac{\alpha_{e^s,e^t}'(x) - 1 + \ii(\beta_{e^s,e^t}'(x) - u'(x))}{\alpha_{e^s,e^t}'(x) + 1 + \ii (\beta_{e^s,e^t}'(x) - u'(x))}.
\end{align}
Using the explicit formulas for $\alpha,\beta$ in Lemma \ref{lem:alpha_beta}, notice that $\mu_{0,0}(x+\ii y) \equiv 0$. Since the modulus of the denominator of $\mu$ is bounded below by $1$,
\begin{align*}
    |\mu_{s,t}(x+\ii y)| \leq |\alpha_{e^s,e^t}'(x) - 1| + |\beta_{e^s,e^t(x)}' - u'(x)|. 
\end{align*}
By Corollary \ref{cor:alpha_beta_props}, the right hand side of \eqref{eq:mu_alpha_beta} is analytic in $s,t,x$ on the appropriate interval. Further, when $(s,t) = (0,0)$, the right hand side is $0$. Fixing $x\in [-1/2,1/2]$ and expanding around $(s,t) = (0,0)$, we find that for $|s|,|t|<\vare$,
\begin{align*}
    |\mu_{s,t}(x + \ii y)| \leq C(\vare, x) (|s| + |t|).
\end{align*}
By Corollary \ref{cor:alpha_beta_props}, $C(\vare, x)$ can be chosen to be a continuous function of $x\in [-1/2,1/2]$ and hence achieves a maximum value $C=C(\vare)$, which completes the proof.
\end{proof}

\begin{thm}\label{thm:H_Beltrami_bound}
If $h\in \mc H$, then the extension $f$ constructed above has Beltrami coefficient $\mu$ such that
\begin{align*}
    \int_{\m D} \frac{|\mu(z)|^2}{(1-|z|^2)^2} \, \dd A(z) \lesssim \sum_{v\in V} \sum_{e\in \fan(v)} p_{s_h,v}^2(e\splus) < \infty.
\end{align*}
In particular, $\mc H\subset \WP(\m T)$.
\end{thm}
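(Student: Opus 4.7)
The strategy is to decompose the integral cell-by-cell and then strip-by-strip, transporting everything to the standard cell $C_\infty \subset \m H$ where the Beltrami coefficient is explicit. Since the dual tree $\dualtree$ has zero area, $\m D \smallsetminus \dualtree = \bigsqcup_{v \in V} C_v$, and the hyperbolic area element together with $|\mu|$ are invariant under conformal changes of coordinates, I can write
\[
\int_{\m D} \frac{|\mu(z)|^2}{(1-|z|^2)^2}\,\dd A(z) \lesssim \sum_{v \in V} \int_{C_\infty} \frac{|\mu_v(w)|^2}{(\Im w)^2}\,\dd A(w),
\]
where for each $v \in V$ I pick the Möbius maps $\cayley_1,\cayley_2$ from Section~\ref{sec:H_subset_WP} with $\cayley_1(v) = \cayley_2(h(v)) = \infty$, and $\mu_v$ is the Beltrami coefficient of the map $\psi$ defined there (equivalently of $\cayley_2 \circ f \circ \cayley_1^{-1}$, since the normalizing factor $e^M$ is real).

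Next I split $C_\infty = \bigsqcup_{n \in \m Z} A_n$ into the strips defined in Section~\ref{sec:H_subset_WP}. Under the horizontal translation $w \mapsto w - n$ (a hyperbolic isometry that preserves the cell decomposition), the restriction $\psi|_{A_n}$ is conjugate to the map $\psi_{s_n^v, t_n^v}$ of Lemma~\ref{lem:mu_shear_bound}, with parameters $\rho_n^v = e^{s_n^v} = \varphi(n)-\varphi(n-1)$ and $\lambda_n^v = e^{t_n^v} = \varphi(n+1)-\varphi(n)$, where $\varphi = e^{-M}\cayley_2 \circ h \circ \cayley_1^{-1}$. From the identity $\varphi(n+1) - \varphi(n) = \exp(-p_{s_h,v}(e_n\splus))$ established in the proof of Theorem~\ref{thm:extension_qc}, we obtain $s_n^v = -p_{s_h,v}(e_{n-1}\splus)$ and $t_n^v = -p_{s_h,v}(e_n\splus)$. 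By Lemma~\ref{lem:H_in_P0}, the collection $\{p_{s_h,v}(e\splus)\}_{v,e}$ is uniformly bounded (since it is square summable), so all parameters $(s_n^v, t_n^v)$ lie in a fixed compact set.

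On this compact set the function $(s,t) \mapsto |\mu_{s,t}|^2$ is real-analytic (Corollary~\ref{cor:alpha_beta_props}) and vanishes at $(0,0)$, which upgrades the bound of Lemma~\ref{lem:mu_shear_bound} to the global estimate $|\mu_{s,t}(w)|^2 \le C(s^2 + t^2)$ for all $w \in A_0$ and all $(s,t)$ in the compact set. Since the hyperbolic area of each $A_n$ equals $\int_{-1/2}^{1/2} \dd x/u(x) < \infty$ and is independent of $n$ by translation invariance, I obtain
\[
\int_{A_n} \frac{|\mu_v|^2}{(\Im w)^2}\,\dd A(w) \lesssim p_{s_h,v}(e_{n-1}\splus)^2 + p_{s_h,v}(e_n\splus)^2.
\]
Summing over $n \in \m Z$ and $v \in V$ and absorbing the factor $2$, the right-hand side is controlled by $\sum_{v \in V}\sum_{e \in \fan(v)} p_{s_h,v}(e\splus)^2$, which is finite by Lemma~\ref{lem:H_in_P0}. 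Combined with Theorem~\ref{thm:WP_Beltrami_L2}, this yields $h \in \WP(\m T)$, giving the inclusion $\mc H \subset \WP(\m T)$. The main technical point to verify carefully is the passage from the local Taylor bound of Lemma~\ref{lem:mu_shear_bound} (valid only for small $|s|,|t|$) to the global quadratic bound on the compact parameter region; this hinges on the real-analyticity of $\mu_{s,t}$ and the vanishing at the origin, so it reduces to a routine compactness argument.
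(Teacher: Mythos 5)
Your proposal is correct and follows essentially the same route as the paper's proof: the identical cell-by-cell and strip-by-strip decomposition, the same identification of the strip parameters with $-p_{s_h,v}(e_{n-1}\splus)$ and $-p_{s_h,v}(e_n\splus)$, and the same two key inputs, Lemma~\ref{lem:H_in_P0} and Lemma~\ref{lem:mu_shear_bound}. The only (immaterial) difference is in handling the finitely many strips whose parameters fall outside the small-$\vare$ range of Lemma~\ref{lem:mu_shear_bound}: the paper bounds their contribution separately using $|\mu|<1$ and hyperbolic area at most $\pi$ per strip, while you absorb them into a single uniform quadratic bound on the compact parameter set via the compactness argument you flag, which is indeed routine since $|\mu_{s,t}|^2\le 1$ while $s^2+t^2$ is bounded below away from the origin.
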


\begin{proof}
Choose $v\in V$ and again consider $\psi = e^{-M} \cayley_2\circ f \circ \cayley_1^{-1}$. For $x + \ii y \in C_\infty = \cayley_1(C_v)$, recall that \begin{align*}
    \mu(x+\ii y) = \frac{\alpha'(x) - 1 + \ii(\beta'(x) - u'(x))}{\alpha'(x) + 1 + \ii (\beta'(x) - u'(x))}.
\end{align*}

Further, for $x\in [n-1/2,n+1/2]$, $\alpha(x) = \alpha_{\lambda_{n-1},\lambda_n}(x-n)$ and $\beta(x) =\beta_{\lambda_{n-1},\lambda_n}(x-n)$, where 
$$\lambda_n = \varphi(n+1) - \varphi(n) = \exp(-p_{s_h,v}(e_n\splus)).$$
Fix a small threshold $\vare>0$. By Lemma~\ref{lem:H_in_P0}, there are only finitely many $(v,e)$ such that $|p_{s_h,v}(e\splus)|>\vare$. We say that a strip $A_n$ is bad if $|p_{s_h,v}(e\splus)| >\vare$ for $e = e_n$ or $e = e_{n-1}$. Let $N(\vare)<\infty$ be the number of bad strips across all cells. Since each strip $A_n$ is contained in an ideal triangle, $\text{Area}_{\text{hyp}}(A_n)\leq \pi$. Since $|\mu(x+\ii y)|<1$, 
\begin{align*}
    \int_{\text{bad strips}} \frac{|\mu(z)|^2}{(1-|z|^2)^2} \, \dd A(z) < \pi N(\vare). 
\end{align*}
On the other hand, if $|p_{s_h,v}(e_n\splus)|, |p_{s_h,v}(e_{n-1}\splus)| <\vare$, then by Lemma \ref{lem:mu_shear_bound}, 
\begin{align*}
    |\mu(x+\ii y)| \leq C(\vare)(|p_{s_h,v}(e_n\splus)| + |p_{s_h,v}(e_{n-1}\splus)|)
\end{align*}
for all $x+\ii y\in A_n$. Therefore
\begin{align*}
    \int_{C_v\smallsetminus \text{bad strips}} \frac{|\mu(z)|^2}{(1-|z|^2)^2} \, \dd A(z) \leq 4\pi C(\vare)^2 \sum_{e\in \fan(v)} p_{s_h,v}(e\splus)^2.
\end{align*}
Summing over $v\in V$, adding back the bad strips, and applying Lemma \ref{lem:H_in_P0}, we get that
\begin{align*}
    \int_{\m D} \frac{|\mu(z)|^2}{(1-|z|^2)^2} \, \dd A(z) \leq N(\vare) \pi + 4 \pi C(\vare)^2 \sum_{v\in V}\sum_{e\in \fan(v)} p_{s_h(e\splus)}^2 < \infty. 
\end{align*}
By Theorem \ref{thm:WP_Beltrami_L2}, $h\in \WP(\m T)$.
\end{proof}

\subsection{Counterexample: an element of $\WP(\m T)$ which is not in $\mc H$}\label{sec:counterexample}

We saw in the last section that $\mc H \subset \WP(\m T)$. It is straightforward to see that the reverse inclusion does not hold. 
\begin{prop}
    $\WP(\m T) \not\subset \mc P$. As a consequence, $\WP(\m T) \not\subset \mc H$.
\end{prop}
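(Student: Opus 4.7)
My plan is to invoke Lemma~\ref{lem:P_derivative}~\ref{it:P_derivative_exist} to reduce the claim $\WP(\m T)\not\subset\mc P$ to a regularity failure of a single Weil--Petersson homeomorphism at a single rational vertex. Indeed, that lemma together with its proof shows that $s_h\in\mc P$ forces $h$ to admit finite, strictly positive one-sided derivatives $h'(v\splus),h'(v\sminus)\in(0,\infty)$ at every $v\in V$: in the notation of the proof, $h'(v\splus)$ is obtained from the limits $\ell,\ell'\in(0,\infty)$ of the increments $\varphi(n+1)-\varphi(n)$ at $\pm\infty$. Hence it is enough to exhibit $h\in\WP(\m T)$ for which $h'(v\splus)=0$ (or $+\infty$) at some $v\in V$; by $\mob$-equivariance I may take $v=1$.

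Such an $h$ can be built directly from Definition~\ref{def:WP}. The goal is to choose $\log h'\in H^{1/2}(\m T)$ with $\log h'(e^{i\theta})\to-\infty$ as $\theta\to 0$. This is possible precisely because $H^{1/2}(\m T)$ embeds into $\BMO(\m T)$ but not into $L^\infty(\m T)$. Concretely I would set
\[
\log h'(e^{i\theta})=c_0-\phi(\theta),\qquad \phi(\theta)=\eta(\theta)\,\rho(|\theta|),
\]
with $\eta$ a smooth cutoff supported in a small neighborhood of $0$ and $\rho(s)=(\log\log(1/s))^\b$ for a small $\b>0$. The constant $c_0$ is chosen so that $\int_\m T e^{\log h'}\,\dd\theta=2\pi$; integrating $e^{\log h'}$ then gives an increasing homeomorphism of $\m T$, which we post-compose with a M\"obius transformation to fix $-1,\ii,1$. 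For this $h$, using that $e^{-\phi(t)}\to 0$ as $t\to 0$,
\[
\frac{h(e^{i\theta})-h(1)}{e^{i\theta}-1}\ \asymp\ \frac{1}{\theta}\int_0^\theta e^{c_0-\phi(t)}\,\dd t\ \xrightarrow[\theta\to 0^+]{}\ 0
\]
by dominated convergence, so $h'(1\splus)=0\notin(0,\infty)$ and thus $h\notin\mc P$, while $h\in\WP(\m T)$ by construction.

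The main obstacle will be verifying that the prescribed unbounded $\phi$ actually lies in $H^{1/2}(\m T)$. I would do this by estimating the Gagliardo--Slobodeckij seminorm of $\phi$ directly: the substitution $t=\log\log(1/s)$ flattens the singular behavior near $0$, and the double-logarithmic growth of $\rho$ with small exponent $\b<1/2$ is gentle enough that
$\iint_{\m T\times\m T}|\phi(x)-\phi(y)|^2|x-y|^{-2}\,\dd x\,\dd y<\infty$. Alternatively, one may bypass the direct Fourier/Gagliardo estimate and invoke standard examples of unbounded $H^{1/2}$ functions from the literature (e.g.\ the discussion of WP singularities in \cite{bishop-wp}); the remaining steps (normalization, confirming homeomorphism, confirming degeneration of the one-sided derivative) are elementary.
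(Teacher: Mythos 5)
Your proposal is correct and follows essentially the same route as the paper: reduce via Lemma~\ref{lem:P_derivative}~\ref{it:P_derivative_exist} to producing a Weil--Petersson homeomorphism whose one-sided derivative degenerates at a rational vertex, and build one from an unbounded $H^{1/2}$ function of iterated-logarithm type (the paper's explicit example uses $\varphi(x)=x\log|x|-x$ in the half-plane, so $\log\varphi'=\log\log|x|$, verified via the Douglas formula rather than the Gagliardo seminorm). The only cosmetic omission is the final step $\mc H\subset\mc P$ (Corollary~\ref{cor:H_inclusions_summary}), which yields $\WP(\m T)\not\subset\mc H$.
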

\begin{proof}
By Lemma \ref{lem:P_derivative}\,\ref{it:P_derivative_exist}, if $h\in \mc P$ then $h$ has left and right derivatives at all rational points. A Weil--Petersson homeomorphism may not have left or right derivatives since functions in the Sobolev space $H^{1/2}$ may not have left or right limits everywhere, so $\WP(\m T) \not\subset \mc P$ from Definition~\ref{def:WP}. By Corollary \ref{cor:H_inclusions_summary}, $\WP(\m T) \not\subset \mc H$.
\end{proof}

We illustrate an explicit example of a homeomorphism $\varphi:\m R \to \m R$ which is Weil--Petersson but not in $\mc P$. 

\begin{ex}
Consider $\varphi:\m R \to \m R$ such that $\varphi(x) = x \log |x| - x$ for $|x|> 2$, and is smooth on $\m R$. 
On one hand, $\varphi(x)$ grows faster than linear functions as $x \to \infty$ or $x \to -\infty$, and hence $\varphi\not\in \mc P$.

To show that $\varphi$ is Weil--Petersson, we show\footnote{This characterization of the Weil--Petersson class is obtained in \cite[Thm.\,2.2]{Shen-Tang}.} that $u(x) := \log \varphi'(x)$ is in $H^{1/2}(\m R)$  by showing that it is the trace of a map $f:\m H\to \m H$ which has finite Dirichlet energy (by the classical Douglas formula, see, e.g. \cite[Eq.(2.2), (2.3)]{VW1}).
We compute 
\begin{align*}
    u(x) = \log \log |x|
\end{align*}
for $x$ outside $(-2,2)$. Hence $u$ is the trace of a smooth function $f$ on $\overline {\m H}$ which takes the values $f(z) = \log \log |z|$ on $\overline{\m H}\smallsetminus \m D (0,2)$. The gradient of $f$ satisfies $|\nabla f(z)| = \frac{1}{r \log r}$ if $|z| = r > 2$. Thus,
\begin{align*}
    \int_{\m H \smallsetminus \m D(0,2)} |\nabla f(z)|^2 \, \dd A(z) 
    &= \int_{0}^{\pi} \int_{2}^{\infty} \frac{r}{r^2 (\log r)^2} \, \dd r \dd \theta = \pi \bigg[-\frac{1}{\log r}\bigg]_{r = 2}^{\infty} = \frac{\pi}{\log 2} < \infty,
\end{align*}
and $\varphi$ is Weil--Petersson. 

We can also see explicitly that $\varphi\not \in \mc P$ by computing its shears $s_\varphi(e_n)$ for $e_n = (n,\infty)\in \fan(\infty)$. For $n\geq 0$, 
\begin{align*}
    \varphi(n+1) - \varphi(n) 
    &= (n+1)\log (n+1) - n\log n - 1 \\
    &= (n+1)[\log n + \log ( 1 + 1/n)] - \log n - 1 \\
    &= \log n + (n+1) \bigg(\frac{1}{n} - \frac{1}{2n^2} + O(\frac{1}{n^3})\bigg) - 1\\
    &= \log n + \frac{1}{2n} + O(\frac{1}{n^2}).
\end{align*}
Analogously, 
\begin{align*}
    \varphi(n) - \varphi(n-1) 
    &=\log n - \frac{1}{2n} + O(\frac{1}{n^2}).
\end{align*}
Therefore 
\begin{align*}
    s_\varphi(e_n) = \log \frac{\varphi(n+1) - \varphi(n)}{\varphi(n) - \varphi(n-1)} = \log\bigg( 1 + \frac{1}{n \log n} + O(\frac{1}{n^2})\bigg) = \frac{1}{n\log n} + O(\frac{1}{n^2}).
\end{align*}
In particular, $s_\varphi(e_n)$ is not summable. Note however that $s_\varphi(e_n)$ is square-summable (as it must be by Theorem \ref{thm:WP_in_S}).

\end{ex}

\subsection{Convergence in $\mc H$ implies convergence in $\WP(\m T)$}
In this section we show that convergence in $\mc H$ is stronger than convergence in the Weil--Petersson metric (Corollary~\ref{cor:H_convergence_cor}).
\begin{thm}\label{thm:H_convergence}
Suppose that $g,h\in \mc H$, and let $q = g\circ h^{-1}$. Then there exists $C (\vare, h) > 0$ such that $q$ has a quasiconformal extension $f_q$ with Beltrami coefficient $\mu$ satisfying
\begin{align*}
    \int_{\m D} \frac{|\mu(z)|^2}{(1-|z|^2)^2} \, \dd A(z) \le C(\vare, h) \sum_{e\in E} (\vartheta_g(e) -\vartheta_h(e))^2,
\end{align*}
for any $g \in \mc H$ such that $\sum_{e\in E} (\vartheta_g(e) -\vartheta_h(e))^2 \le \vare$.
\end{thm}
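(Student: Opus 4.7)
The plan is to set $f_q := f_g \circ f_h^{-1}$, the composition of the explicit cell-by-cell quasiconformal extensions of $g$ and $h$ from Theorem~\ref{thm:H_Beltrami_bound}. This is automatically a quasiconformal self-map of $\m D$ extending $q = g \circ h^{-1}$, so the task reduces to bounding its Beltrami coefficient $\mu := \mu_{f_q}$ in $L^2(\m D, \dd_{\text{hyp}})$. By the chain rule, at $w = f_h(z)$ one has
$$\mu(w) = \frac{\mu_g(z) - \mu_h(z)}{1 - \overline{\mu_h(z)}\mu_g(z)} \cdot \frac{\overline{\partial_z f_h}}{\partial_z f_h},$$
and the uniform bound $\|\mu_g\|_\infty, \|\mu_h\|_\infty \le k<1$ from Theorem~\ref{thm:extension_qc} (which stays uniformly away from $1$ for $g$ close to $h$) gives $|\mu(w)| \le (1-k^2)^{-1}|\mu_g(z) - \mu_h(z)|$. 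Changing variables $w = f_h(z)$ and decomposing $\m D = \bigsqcup_v h(C_v)$, the task reduces to showing
$$\sum_{v \in V} \int_{C_v} |\mu_g(z) - \mu_h(z)|^2 \frac{J_{f_h}(z)}{(1-|f_h(z)|^2)^2} \,\dd A(z) \lesssim \sum_{e\in E}(\vartheta_g(e) - \vartheta_h(e))^2.$$

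On each cell I would reuse the Cayley conjugations $A_v, B_v^g, B_v^h$ from Theorem~\ref{thm:H_Beltrami_bound} to transport $C_v$ to $C_\infty \subset \m H$. Because $f_g$ and $f_h$ share the same domain-side M\"obius $A_v$, their Beltrami phases cancel in the difference, giving $|\mu_g - \mu_h|(z) = |\mu^g - \mu^h|(A_v(z))$ where $\mu^g, \mu^h$ are the Beltramis of $\psi^g, \psi^h$. Using that $A_v$ and $B_v^h$ are hyperbolic isometries together with the explicit forms $J_{\psi^h}(x+\ii y) = \alpha^h{}'(x)$ and $\Im \psi^h(x+\ii y) = \beta^h(x) - u(x) + y$ from Lemma~\ref{lem:alpha_beta}, the inner integral in $y$ collapses and the per-cell integral reduces, on each strip $A_n$, to $\frac{1}{4}\int_{n-1/2}^{n+1/2}|\mu^g - \mu^h|^2\, \alpha^h{}'(x)/\beta^h(x)\, \dd x$. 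By Corollary~\ref{cor:alpha_beta_props}, the factor $\alpha^h{}'/\beta^h$ is uniformly bounded in the bounded-shear regime that applies here, since the hypothesis $\sum(\vartheta_g - \vartheta_h)^2 \le \vare$ together with Lemma~\ref{lem:H_in_P0} forces $|p_{s_g,v}(e\splus)|, |p_{s_h,v}(e\splus)| \le M(\vare, h)$ uniformly.

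The crucial analytic input is a Lipschitz-in-shears estimate $|\mu^g(x) - \mu^h(x)| \le C(\vare, h)\bigl(|p_{s_g - s_h, v}(e_n\splus)| + |p_{s_g - s_h, v}(e_{n-1}\splus)|\bigr)$ on each strip $A_n$. This refines Lemma~\ref{lem:mu_shear_bound}: instead of linearizing at $(s,t) = (0,0)$, I would expand $\mu^g - \mu^h$ around the base point $(s_h, t_h)$, using the real analyticity of $(s,t,x) \mapsto \mu_{s,t}(x)$ from Corollary~\ref{cor:alpha_beta_props} so that the partial derivatives $\partial_s \mu_{s,t}, \partial_t \mu_{s,t}$ are uniformly bounded on $[-M, M]^2 \times [-1/2, 1/2]$, and then applying the mean value theorem. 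Summing the per-strip estimates over $n$ and $v$ gives an overall bound of the form $C(\vare, h)^2 \sum_v \sum_{e \in \fan(v)} p_{s_g - s_h, v}(e\splus)^2$, and applying Lemma~\ref{lem:H_in_P0} to the diamond shear $\vartheta_g - \vartheta_h$ controls this by an absolute multiple of $\sum_e (\vartheta_g(e) - \vartheta_h(e))^2$. The main obstacle I expect is precisely this Lipschitz estimate around an arbitrary bounded base point rather than at the identity, but the closed-form expressions of Lemma~\ref{lem:alpha_beta} combined with compactness of the parameter region make it manageable.
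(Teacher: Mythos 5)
Your proposal is correct and follows essentially the same route as the paper: the extension is the same composition $f_g\circ f_h^{-1}$, the analysis is cell-by-cell and strip-by-strip, the key Lipschitz-in-shears estimate comes from expanding $\alpha',\beta'$ around the base point $(p_{s_h,v}(e_{n-1}\splus),p_{s_h,v}(e_n\splus))$ using the analyticity and compactness in Corollary~\ref{cor:alpha_beta_props}, and the final step is Lemma~\ref{lem:H_in_P0} applied to $s_g-s_h$ together with Cauchy--Schwarz. The only (harmless) deviations are that you compute the Beltrami coefficient via the general composition formula and change variables back to $C_\infty$, rather than reading it off the explicit form of $\psi_g\circ\psi_h^{-1}$ on the image cell as the paper does, and that your uniform derivative bound on the whole box $[-M,M]^2\times[-1/2,1/2]$ lets you skip the paper's separate treatment of the finitely many ``bad'' strips.
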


\begin{proof}
Let $f_g, f_h$ be the quasiconformal 
extensions of $g,h$ respectively constructed in the section above. We will show that the extension $f_q:= f_g\circ f_h^{-1}$ of $q$ has Beltrami coefficient satisfying the bound above cell by cell.

Choose $v\in V$, and let $\fan(v) = (e_n)_{n\in \m Z}$. For $g$, choose Cayley maps $\cayley_1,\cayley_2$ as in Section \ref{sec:H_subset_WP} so that $\cayley_1(v) = \infty$, $\cayley_2(g(v)) = \infty$, and $\cayley_1(e_0) = \cayley_2(e_0) = (0,\infty)$, $\cayley_1(e_{-1}) = \cayley_2(e_{-1}) = (-1,\infty)$. Define $\psi_g = e^{-M_g} \cayley_2 \circ f_g\circ \cayley_1^{-1}$, where $M_g = \sum_{n=0}^\infty s_g(e_n)$, and let $\alpha_g, \beta_g$ be such that $\psi_g(x+\ii u(x)) = \alpha_g(x) + \ii \beta_g(x)$. Analogously choose Cayley maps $\cayley_3,\cayley_4$ to define $\psi_h,\alpha_h, \beta_h$.
Since $\psi_g,\psi_h$ both fix $\infty$, $\psi_q:=\psi_g\circ \psi_h^{-1}$ fixes $\infty$. In particular, $\psi_q$ maps $C_\infty(h(\Farey))$ onto $C_\infty(g(\Farey))$.

The boundary curve of $C_\infty(h(\Farey))$ is given by
\begin{align*}
    x + \ii u_h(x), \qquad u_h(x) = \beta_h\circ \alpha_h^{-1}(x).
\end{align*}
We define $\alpha_q,\beta_q$ so that 
\begin{align*}
    \psi_q(x + \ii u_h(x)) = \alpha_q(x) + \ii \beta_q(x).
\end{align*}
Since $\psi_q = \psi_g\circ \psi_h^{-1}$, 
\begin{align*}
    \alpha_q(x) = \alpha_g\circ \alpha_h^{-1}(x)\\
    \beta_q(x) = \beta_g\circ \alpha_h^{-1}(x).
\end{align*}
Hence the Beltrami coefficient $\mu$ of $\psi_q$ on $C_{\infty}(h(\Farey))$ is 
\begin{align*}
    \mu(x + \ii y) = \frac{\alpha_g'-\alpha_h'+\ii (\beta_g'-\beta_h')}{\alpha_g'+\alpha_h'+\ii(\beta_g'-\beta_h')} \circ \alpha_h^{-1}(x).
\end{align*}
For $w = \alpha_h^{-1}(x) \in [n-1/2,n+1/2]$, $\alpha_h'(w)$ is a continuous function of $p_{s_h,v}(e_n\splus)$ and $p_{s_h,v}(e_{n-1}\splus)$. By Lemma \ref{lem:H_in_P0}, $p_{s_h,v}(e_n\splus)$ is uniformly bounded for all $v\in V,n\in \m Z$. Further, by Corollary \ref{cor:alpha_beta_props}, $\alpha_h'(w) > 0$. Combining this with the fact that $\alpha_h'$ is continuous, there exists a constant $K(h)>0$ independent of cell such that $\alpha_h'(w) \geq  K(h)$. Therefore 
\begin{align*}
    |\mu(x+ \ii y)| \leq \frac{|\alpha_g' - \alpha_h'| + |\beta_g' - \beta_h'|}{K(h)}\circ \alpha_h^{-1}(x).
\end{align*}
Fix a threshold $\vare>0$. 
From the assumption and Lemma \ref{lem:H_in_P0}, there are only finitely many $(v,e)$ such that $|p_{s_h,v}(e\splus) - p_{s_g,v}(e\splus)|>\vare$. Let $N(\vare)$ be the number of strips $A_n$ across all cells where $|p_{s_g,v}(e_n\splus)-p_{s_h,v}(e_n\splus)|$ or $|p_{s_g,v}(e_{n-1}\splus)-p_{s_h,v}(e_{n-1}\splus)|$ is larger than $\vare$. Any other strip $A_n$ we call a good strip. 

By Corollary \ref{cor:alpha_beta_props}, $\alpha',\beta'$ are analytic functions of the $(s,t,w)$. Expanding around any $(s_0,t_0)$, there is a constant $C$ depending on $\vare$ and $(s_0,t_0)$ such that if $|s-s_0|,|t-t_0|<\vare$, then
\begin{align*}
    |\alpha_{e^s,e^t}'(w) -\alpha_{e^{s_0},e^{t_0}}'(w)| \leq  C( |s-s_0| + |t-t_0|) \\
    |\beta_{e^s,e^t}'(w) -\beta_{e^{s_0},e^{t_0}}'(w)|\leq  C( |s-s_0| + |t-t_0|)
\end{align*}
for all $w\in [-1/2,1/2]$. For each good strip $A_n$, we use this expansion for $(s_0,t_0) = (p_{s_h,v}(e_{n-1}\splus),p_{s_h,v}(e_{n}\splus))$. By Lemma \ref{lem:H_in_P0} applied to $h$ these are uniformly bounded for all $v \in V, n \in \m Z$, so we can take the constant $C$ to depend only on $\vare$ and $h$ 
to find
\begin{align*}
    |\alpha_g'(w)-\alpha_h'(w)| \leq C (|p_{s_g,v}(e_{n-1}\splus)-p_{s_h,v}(e_{n-1}\splus)| +  |p_{s_g,v}(e_{n}\splus)-p_{s_h,v}(e_{n}\splus)|)\\
    |\beta_g'(w)-\beta_h'(w)| \leq C ( |p_{s_g,v}(e_{n-1}\splus)-p_{s_h,v}(e_{n-1}\splus)| +  |p_{s_g,v}(e_{n}\splus)-p_{s_h,v}(e_{n}\splus)|).
\end{align*}
Therefore there is another constant $C(\vare,h)$ such that for all $x+\ii y\in A_n$,
\begin{align*}
    |\mu(x+\ii y)| \leq C(\vare, h) \bigg( \abs{p_{s_g,v}(e_{n-1}\splus) - p_{s_h,v}(e_{n-1}\splus)} + \abs{p_{s_g,v}(e_{n}\splus) - p_{s_h,v}(e_{n}\splus)}\bigg).
\end{align*}
Every strip $h(A_n)$ is a geodesic triangle, so it is contained in an ideal triangle and its hyperbolic area is bounded by $\pi$. Summing over $v\in V, n\in \m Z$, adding back the bad strips, and integrating, we find
\begin{align*}
    \int_{\m D} \frac{|\mu(z)|^2}{(1-|z|^2)^2} \, \dd A(z) \leq \pi N(\vare)  + 4\pi C(\vare, h)  \sum_{v\in V} \sum_{e\in \fan(v)} (p_{s_g,v}(e\splus) - p_{s_h,v}(e\splus))^2.
\end{align*}
Note that 
$$N (\vare) \le \frac{1}{\vare^2} \sum_{v\in V} \sum_{e\in \fan(v)} (p_{s_g,v}(e\splus) - p_{s_h,v}(e\splus))^2.$$
Applying Lemma~\ref{lem:H_in_P0} with $s = s_g - s_h$, we obtain using Cauchy-Schwarz
\begin{align*}
     &\sum_{v\in V} \sum_{e\in \fan(v)} (p_{s_g,v}(e\splus) - p_{s_h,v}(e\splus))^2 \\
     & \le  2 \sum_{e\in E} (\vartheta_g(e) - \vartheta_h(e))^2 +  \sum_{e\sim e'} 2 (\vartheta_g(e) - \vartheta_h(e))^2 + 2(\vartheta_g(e') - \vartheta_h(e'))^2 \\
    & = 6 \sum_{e\in E} (\vartheta_g(e) - \vartheta_h(e))^2,
\end{align*}
which completes the proof.
\end{proof}

\begin{lem}\label{lem:beltrami_convergence_WP}
Suppose that $h,(h_n)_{n\geq 1}$ are Weil--Petersson homeomorphisms fixing $\pm 1, \ii$, and let $\mu_n$ be the Beltrami coefficient of a quasiconformal extension of $h_n\circ h^{-1}$ in $\m D$. If
\begin{align*}
    \lim_{n\to \infty} \int_{\m D} \frac{|\mu_n(z)|^2}{(1-|z|^2)^2}\, \dd A(z)  = 0,
\end{align*}
then $h_n$ converges to $h$ in the Weil--Petersson metric. 
\end{lem}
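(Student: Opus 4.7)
The Weil--Petersson metric on $T_0(\m D)$ is built from the $L^2(\m D,\mathrm{hyp})$-inner product on Beltrami differentials, so the strategy is to convert the $L^2$-smallness of $\mu_n$ (a Beltrami coefficient connecting $h$ to $h_n$ via $h_n\circ h^{-1}$) into $L^2$-smallness of a Beltrami coefficient of an extension of $h_n$ relative to a fixed extension of $h$, at which point a standard continuity statement from WP-Teichm\"uller theory will close the argument. Since $h \in \mathrm{WP}(\m T)$, by Theorem~\ref{thm:WP_Beltrami_L2} I fix once and for all a quasiconformal extension $F: \m D \to \m D$ of $h$ whose Beltrami coefficient $\nu$ lies in $L^2(\m D,\mathrm{hyp})$. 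Then $F_n := f_n \circ F$ is a quasiconformal extension of $h_n$, where $f_n$ is the given extension of $h_n \circ h^{-1}$ with Beltrami coefficient $\mu_n$. The chain rule for Beltrami coefficients gives
$$ \nu_n := \mu_{F_n} \;=\; \frac{\nu + (\mu_n\circ F)\,\theta}{1 + \bar\nu\,(\mu_n\circ F)\,\theta}, \qquad \theta := \overline{F_z}/F_z,$$
from which the pointwise bound $|\nu_n - \nu|(z) \le \frac{1-|\nu(z)|^2}{1-|\nu(z)||\mu_n(F(z))|}|\mu_n(F(z))|$ follows, with implicit constant depending only on $\|\nu\|_\infty < 1$.

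Next I translate the hypothesis on $\mu_n$ into an $L^2$-bound on $\nu_n - \nu$. Since $F$ is a fixed quasiconformal self-homeomorphism of $\m D$, it distorts hyperbolic area by a factor bounded in terms of its dilatation alone, and the substitution $w = F(z)$ yields
$$ \int_\m D \frac{|\mu_n(F(z))|^2}{(1-|z|^2)^2}\,\dd A(z) \;\le\; C(F) \int_\m D \frac{|\mu_n(w)|^2}{(1-|w|^2)^2}\,\dd A(w),$$
which by hypothesis tends to $0$. Combined with the pointwise bound from Step 1, this gives $\|\nu_n - \nu\|_{L^2(\m D,\mathrm{hyp})} \to 0$.

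Finally, I invoke the standard continuity statement from the Hilbert manifold structure of $T_0(\m D)$ developed in \cite{TT06,shen13}: the projection sending a Beltrami coefficient $\mu$ in the unit ball of $L^\infty(\m D) \cap L^2(\m D,\mathrm{hyp})$ to its Teichm\"uller class $[f^\mu|_{\m T}] \in T_0(\m D)$ is continuous for the $L^2(\mathrm{hyp})$-topology on the source and the WP-topology on the target. Applied to the sequence $\nu_n \to \nu$, this gives $[h_n] \to [h]$ in the Weil--Petersson metric, as desired. The most delicate point is the last step: continuity of the Beltrami-to-Teichm\"uller projection in the WP metric requires the $\nu_n$ to lie eventually in a fixed ball $\{\|\cdot\|_\infty \le k < 1\}$. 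This is not explicit in the hypothesis of the lemma but is automatic in the application we have in mind (Corollary~\ref{cor:H_convergence_cor}), where the QC extensions produced by Theorem~\ref{thm:H_convergence} have uniformly bounded dilatations; if needed for the general statement, it can also be arranged by a truncation of $\mu_n$ on the small hyperbolic-area set $\{|\mu_n|>k\}$ whose boundary effect vanishes in WP.
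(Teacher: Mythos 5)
There is a genuine gap in your Step 2. The claim that a fixed quasiconformal self-map $F$ of $\m D$ ``distorts hyperbolic area by a factor bounded in terms of its dilatation alone'' is false, and the inequality
\begin{equation*}
\int_{\m D}\frac{|\mu_n(F(z))|^2}{(1-|z|^2)^2}\,\dd A(z)\;\le\;C(F)\int_{\m D}\frac{|\mu_n(w)|^2}{(1-|w|^2)^2}\,\dd A(w)
\end{equation*}
fails for a general quasiconformal $F$. After substituting $w=F(z)$ the left side equals $\int |\mu_n(w)|^2\,\bigl[(1-|z|^2)^2 J_F(z)\bigr]^{-1}\dd A(w)$ with $z=F^{-1}(w)$, so you need the hyperbolic Jacobian $(1-|w|^2)^2/\bigl((1-|z|^2)^2J_F(z)\bigr)$ to be bounded; quasiconformality gives no such pointwise Jacobian control (Astala's area distortion theorem only gives a power bound). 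Concretely, the $K$-quasiconformal map $F(z)=z|z|^{K-1}$ sends $\{|z|<r\}$ to $\{|w|<r^K\}$, shrinking hyperbolic area by the unbounded factor $r^{2K-2}$, so a sequence $\mu_n$ concentrated near $0$ violates your inequality with any fixed $C(F)$. The step can be repaired, but only by choosing $F$ more carefully --- e.g.\ the Douady--Earle extension, which is bi-Lipschitz for the hyperbolic metric (hence has two-sided hyperbolic Jacobian bounds) and, by Cui's theorem, has $L^2(\m D,\dd_{\mathrm{hyp}})$ Beltrami coefficient when $h\in\WP(\m T)$; as written, with an arbitrary extension $F$ supplied by Theorem~\ref{thm:WP_Beltrami_L2}, the argument does not go through.

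Beyond that, your Step 3 outsources the real content: continuity of the projection $\mu\mapsto[\mu]$ from $\{\norm{\mu}_\infty\le k\}\cap L^2(\m D,\dd_{\mathrm{hyp}})$ to $(T_0(\m D),d_{\WP})$ \emph{at a general point} $\nu$ is exactly what must be proved, and you also correctly note (but do not resolve) that the hypothesis gives no uniform bound $\norm{\nu_n}_\infty\le k<1$. The paper avoids both difficulties by a different route: since $\WP(\m T)$ is a topological group \cite[Thm.\,I.3.8]{TT06}, one may assume $h=\Id_{\m T}$, so no composition of extensions or change of variables is needed; then $\norm{B([\mu_n])}_{A_2(\m D^*)}\le C\norm{\mu_n}_2\to0$ by \cite[Lem.\,I.2.9]{TT06}, and since the Bers embedding restricted to $T_0(\m D)$ is a biholomorphism of Hilbert manifolds, $[\mu_n]\to[0]$ in the Weil--Petersson metric. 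I would recommend adopting that reduction rather than patching the composition argument.
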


This result must be well-known. 
For readers' convenience we sketch the proof using several lemmas from \cite{TT06}. The results in \cite{TT06} are stated using $\mu$ defined in the outer disk $\m D^*= \{z\in \m C \colon |z|>1\}$, by pre-composing quasiconformal maps by $z \mapsto 1/\ad z$ we can easily translate those results to $\m D$.
\begin{proof}
Theorem I.3.8 in \cite{TT06} shows that $\WP(\m T)$ is a topological group. Therefore, it suffices to show the claim when $h = \Id_{\m T}$.

We write 
$$\norm{\mu}_2^2 :=  \int_{\m D} \frac{|\mu(z)|^2}{(1-|z|^2)^2}\, \dd A(z) \text{ and } \norm{\mu}_\infty = \sup_{z\in \m D} |\mu (z)|. $$
We note that two measurable Beltrami differentials $\mu, \nu$ with $\norm{\mu}_\infty <1$ and $\norm{\nu}_\infty <1$ are said to be \emph{equivalent}, if they are the Beltrami coefficients of a quasiconformal extension of the same circle homeomorphism fixing $\pm 1, \ii$. 

If  $\norm{\mu}_\infty  <1$, the Bers embedding of the equivalence class of $\mu$, denoted as $B ([\mu])$, is a holomorphic function $\phi \in \mc Q (\m D^*)$.  See Section~\ref{sec:harmonic} for more details.
If furthermore $\norm\mu_2< \infty$, \cite{TT06} shows that
$$\norm{\phi}_{A_2(\m D^*)}^2 : = \int_{\m D^*} |\phi|^2 (1-|z|^2)^2 \, \dd A(z) <\infty.$$

Now let $\mu_n$ be a family of Beltrami differentials such that $\lim_{n \to \infty} \norm {\mu_n}_2 =  0$,
\cite[Lem.\,I.2.9]{TT06} implies that
there exists $C > 0$, such that 
$$\norm{B([\mu_n])}_{A_2(\m D^*)} \le C \norm{\mu_n}_2 \to 0.$$
Since $B|_{T_0(\m D)}$ is a biholomorphic mapping of Hilbert manifolds \cite[Thm.\,I.2.13]{TT06}, where 
$$T_0(\m D) = \{[\mu] \colon \norm{\mu}_2 < \infty, \norm{\mu}_\infty <1\} \simeq \WP(\m T)$$
and the identification $\WP(\m T) \to T_0(\m D)$ is the map from a circle homeomorphism $h$ to the equivalence class of Beltrami coefficients of any quasiconformal extension $[\mu]$,  
it shows $[\mu_n]$ converges to $[0]$ in $T_0(\m D)$ which is by definition equivalent to $h_n$ converges to $\Id_{\m T}$ for the Weil--Petersson metric.
\end{proof}

Lemma \ref{lem:beltrami_convergence_WP} and Theorem \ref{thm:H_convergence} applied to $g = h_n$ combine to give the following corollary.

\begin{cor}\label{cor:H_convergence_cor}
Suppose that $h,(h_n)_{n\geq 1}\in \mc H$ with diamond shear coordinates $\vartheta,\vartheta_n$ respectively. If 
$    \lim_{n\to \infty} \sum_{e\in E} (\vartheta_n(e) - \vartheta(e))^2 = 0, $
then $h_n$ converges to $h$ in the Weil--Petersson metric. 
\end{cor}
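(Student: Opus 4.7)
The plan is to directly chain together the two preceding results: Theorem \ref{thm:H_convergence} supplies an explicit quasiconformal extension of $h_n \circ h^{-1}$ whose Beltrami coefficient is controlled in $L^2(\m D, \dd_{\text{hyp}})$ by the $\ell^2$ difference of the diamond shear coordinates, and Lemma \ref{lem:beltrami_convergence_WP} converts such $L^2$ convergence of Beltrami coefficients into Weil--Petersson convergence of the boundary homeomorphisms. There is no substantive new work to do: both ingredients have been proved already, and the corollary is essentially their composition.

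More concretely, I would fix any small $\vare > 0$. By hypothesis, $\sum_{e \in E} (\vartheta_n(e) - \vartheta(e))^2 \to 0$, so for all $n$ sufficiently large this sum is at most $\vare$. Applying Theorem \ref{thm:H_convergence} with $g = h_n$, we obtain a quasiconformal extension $f_{q_n}$ of $q_n := h_n \circ h^{-1}$ whose Beltrami coefficient $\mu_n$ satisfies
\begin{equation*}
    \int_{\m D} \frac{|\mu_n(z)|^2}{(1-|z|^2)^2}\, \dd A(z) \leq C(\vare, h) \sum_{e\in E} (\vartheta_n(e) - \vartheta(e))^2,
\end{equation*}
with $C(\vare, h)$ depending only on $\vare$ and $h$. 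Letting $n \to \infty$ (with $\vare$ fixed) the right-hand side tends to $0$, so $\norm{\mu_n}_2 \to 0$.

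To conclude, I would apply Lemma \ref{lem:beltrami_convergence_WP} to the sequence $h_n$, the reference map $h$, and the Beltrami coefficients $\mu_n$ of the extensions above. Note that $h, h_n \in \mc H \subset \WP(\m T)$ by Theorem \ref{thm:H_Beltrami_bound}, so the lemma's hypotheses are met; it yields $h_n \to h$ in the Weil--Petersson metric, as desired. The only conceivable obstacle is bookkeeping around the normalization (Theorem \ref{thm:H_convergence} is stated for $q = g \circ h^{-1}$ rather than directly for convergence of $h_n$ to $h$), but Lemma \ref{lem:beltrami_convergence_WP} is formulated precisely to absorb this: it takes as input a sequence of Weil--Petersson maps $h_n$ and Beltrami coefficients of extensions of $h_n \circ h^{-1}$, which is exactly what Theorem \ref{thm:H_convergence} produces.
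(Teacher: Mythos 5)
Your proposal is correct and is exactly the paper's argument: the paper derives this corollary by the one-line observation that Theorem \ref{thm:H_convergence} applied with $g = h_n$ combines with Lemma \ref{lem:beltrami_convergence_WP}. Your handling of the threshold $\vare$ (fixing it so that the hypothesis of Theorem \ref{thm:H_convergence} holds for large $n$, with $C(\vare,h)$ then uniform) fills in the only bookkeeping the paper leaves implicit.
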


\subsection{Square summable shears}

In this section we prove three results about square summable shear functions $\mc S$. First we show that $\mc S$ is not contained in $\text{Homeo}(\m T)$, nor does it contain $\QS(\m T)$.
\begin{prop}\label{prop:S_not_homeo_qs}
    There exists $s\in \mc S$ such that $s$ does not induce a homeomorphism. Conversely, there exists $h\in \QS(\m T)$ such that its shear $s_h\not\in \mc S$. 
\end{prop}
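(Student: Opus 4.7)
My plan is to exhibit explicit single-fan shear functions witnessing both assertions. The key computational tool is the identification from the proof of Lemma~\ref{lem:P_derivative}: after choosing a M\"obius normalization that sends a vertex $v$ and its image $h(v)$ to $\infty$ in the upper half plane, the edges of $\fan(v) = (e_n)_{n\in\m Z}$ correspond to the verticals $\{(n,\infty) : n \in \m Z\}$, and \eqref{eq:ratio_shear_sum} determines the resulting $\varphi \colon \m R \to \m R$ on the integers by
\begin{equation*}
    \varphi(n+1) - \varphi(n) = (\varphi(1) - \varphi(0)) \exp\!\bigg(\sum_{k=1}^{n} s(e_k)\bigg), \qquad n \geq 1,
\end{equation*}
with a symmetric expression for $n \leq 0$. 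A homeomorphism of $\m T$ requires $\varphi(n) \to +\infty$ as $n \to +\infty$ and $\varphi(n) \to -\infty$ as $n \to -\infty$.

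For the first claim, I take a vertex $v \in V$ and define $s(e_n) = -n^{-3/4}$ for $n \geq 1$ in $\fan(v)$, with $s = 0$ elsewhere. Then $\sum_{e} s(e)^2 = \sum_{n\geq 1} n^{-3/2} < \infty$, so $s \in \mc S$. On the other hand, the partial sums $\sum_{k=1}^n s(e_k) \sim -4 n^{1/4}$ diverge to $-\infty$, so the increments $\varphi(n+1) - \varphi(n)$ decay like $\exp(-4 n^{1/4})$ and are summable in $n$. Hence $\varphi(n)$ converges to a finite limit $L < \infty$ rather than to $+\infty$, and the images of the fan vertices converging to $v$ accumulate at a point of $\m T$ distinct from $h(v)$. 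This rules out any continuous extension to a circle homeomorphism.

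For the second claim, I instead take $s(e_n) = (-1)^n / \sqrt{|n|}$ for $|n| \geq 2$ in $\fan(v)$ and $s = 0$ elsewhere. Square-summability fails trivially, as $\sum_{e} s(e)^2 = 2\sum_{n\geq 2} n^{-1} = +\infty$, so $s \notin \mc S$. To verify quasisymmetry via Saric's criterion (Theorem~\ref{thm:qs_condition}), I will check the uniform bound on $s(k,n;v')$ vertex by vertex. At $v$, the alternating series estimate gives $\abs{\sum_{j=a}^b s(e_j)} \leq \sqrt{2}$ uniformly in $a\le b$, so every exponential appearing in $s(k,n;v)$ lies in $[e^{-\sqrt 2}, e^{\sqrt 2}]$ and the ratio is uniformly bounded in $[e^{-2\sqrt 2}, e^{2\sqrt 2}]$. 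At any other vertex $v'$, the intersection $\fan(v) \cap \fan(v')$ consists of at most one edge (the edge joining $v$ and $v'$, if they are adjacent), so $\fan(v')$ carries at most one nonzero shear of absolute value $\leq 1/\sqrt 2$ and the same kind of uniform bound follows by direct estimation.

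The main obstacle I anticipate is the uniform verification of Saric's qs criterion across all vertices in the second construction, which reduces to the elementary observation that outside $\fan(v)$ each fan inherits at most one bounded shear. The first construction is conceptually easier: once one shows that $\varphi(n)$ cannot escape to $+\infty$, continuity of any homeomorphic extension at $v$ fails immediately.
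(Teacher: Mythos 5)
Your proposal is correct. For the first assertion you produce exactly the paper's example (the paper takes $s(e_n)=-n^{-\alpha}$ along a single fan for any $\alpha\in(1/2,1)$; you take $\alpha=3/4$), the only difference being that the paper simply invokes Theorem C of \cite{Saric_circle} ($\sum_n \exp(s(e_1)+\cdots+s(e_n))<\infty$ forces non-extendability), whereas you re-derive that criterion directly: the increments $\varphi(n+1)-\varphi(n)$ are summable, so $\varphi(n)$ converges to a finite $L$ while $\varphi(\infty)=\infty$, killing continuity at $v$. That is the same argument, just self-contained. For the second assertion your example is genuinely different from the paper's. The paper places a unit shear on one edge per pair of generations, chosen so that every fan contains at most one nonzero shear, and checks Theorem~\ref{thm:qs_condition} with $C=e$; you instead concentrate the entire shear function on a single fan with the alternating, non-$\ell^2$ sequence $(-1)^n/\sqrt{|n|}$, and control Sari\'c's ratios via the alternating-series bound on partial sums (uniformly $\le\sqrt2$ on $\fan(v)$) plus the observation that any other fan meets $\fan(v)$ in at most one edge. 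Both verifications are sound; the paper's example buys sparsity (each fan trivially sees one shear), while yours buys a borderline failure of square-summability (harmonic series) and illustrates that cancellation along a fan, not smallness of individual shears, is what the quasisymmetry criterion actually measures.
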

\begin{proof}
    To prove this result, we just need to exhibit two examples and apply the suitable conditions from \cite{Saric_circle,Saric_new}.

For $\mc S \not\subset \text{Homeo}(\m T)$: let $(e_n)_{n\in \m Z}$ denote the fan of edges incident to $1$ in counterclockwise order. By \cite[Theorem C]{Saric_circle}, if $s:E\to \m R$ has 
\begin{align*}
    \sum_{n=1}^{\infty} \exp(s(e_1) + \dots + s(e_n)) < \infty,
\end{align*}
then $s$ does \textit{not} induce a homeomorphism. Given this, we choose $1/2<\alpha<1$, and define the shear function $s:E\to \m R$ so that $s(e_n) = -\frac{1}{n^{\alpha}}$ for $n\geq 1$, and is $0$ on all other edges in $E$. On one hand, since $\alpha > 1/2$, 
$$\sum_{e\in E} s(e)^2 = \sum_{n\geq 1} \frac{1}{n^{2\alpha}} < \infty,$$
so $s\in \mc S$. On the other hand, $s(e_1)+\dots s(e_n) = -H(n,\alpha)$ is minus the generalized harmonic number with parameter $\alpha$, and since $\alpha < 1$,
\begin{align*}
    \sum_{n=1}^{\infty} \exp(s(e_1) + \dots + s(e_n)) = \sum_{n=1}^{\infty} \exp(-H(n,\alpha)) <\infty.
\end{align*}
Therefore $s$ does not induce a homeomorphism.

For $\QS(\m T) \not\subset \mc S$: if $s:E\to \m R$ has $s(e)=1$ for infinitely many edges $e\in E$, then $s\not \in \mc S$. In particular, consider $s:E\to \m R$ where for each $n$, there is one edge $e$ connecting vertices of generations $2n$ and $2n+1$ of $\Farey$ where $s(e) = 1$, and all other shears are $0$. Clearly this includes infinitely many edges $e$ where $s(e) = 1$. Note also that this has the property that every fan contains either zero or one edge with nonzero shear. One can check the condition for a shear to induce a quasisymmetric homeomorphism (from \cite{Saric_new,Saric_circle}, and included here as Theorem \ref{thm:qs_condition}) is satisfied with $C = e$. 
\end{proof}

Finally we show the following inclusion:
\begin{thm}\label{thm:WP_in_S}
    If $h\in \WP(\m T)$, then $s_h\in \mc S$. 
\end{thm}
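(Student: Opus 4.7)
The plan is to exploit the Beltrami coefficient characterization of the Weil--Petersson class. By Theorem~\ref{thm:WP_Beltrami_L2}, for any $h \in \WP(\m T)$ there exists a quasiconformal extension $f : \m D \to \m D$ whose Beltrami coefficient $\mu$ satisfies
\[
\norm{\mu}_2^2 := \int_{\m D} \frac{|\mu(z)|^2}{(1-|z|^2)^2} \,\dd A(z) < \infty.
\]
The goal is to establish the universal bound $\sum_{e \in E} s_h(e)^2 \le C \norm{\mu}_2^2$, which immediately gives $s_h \in \mc S$.

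The heart of the argument is a local estimate of the form $|s_h(e)|^2 \le C' \int_{\Omega_e} |\mu|^2/(1-|z|^2)^2 \, \dd A$, where $\Omega_e$ is a hyperbolically bounded neighborhood of the Farey quad $Q_e$. By the transitivity of the $\PSL(2,\m Z)$-action on Farey quads together with the M\"obius invariance of both the hyperbolic $L^2$ norm of $\mu$ and the shear $s_h(e)$, it suffices to prove the local bound for the standard Farey quad $Q_{e_0}$ with vertices $\pm 1, \pm \ii$. In this normalization, $s_h(e_0) = \log \Cr(h(1), h(\ii), h(-1), h(-\ii))$ vanishes at $\mu \equiv 0$. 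The holomorphic dependence of the normalized solution to the Beltrami equation on $\mu$ (through the Cauchy--Ahlfors transform) yields an integral representation $s_h(e_0) = \int_{\m D} \mu(z) K(z) \, \dd A(z)$ plus higher-order corrections, where $K$ is a kernel of quadratic-differential type concentrated (in an $L^2_{\text{hyp}}$ sense) near $Q_{e_0}$; an application of Cauchy--Schwarz then yields the local bound. Summing over $e$ and using that the translates $\Omega_e$ have bounded overlap, since $\PSL(2,\m Z)$ acts discretely on $\m D$ and each Farey triangle belongs to exactly three Farey quads, completes the proof.

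The main obstacle is controlling the non-local dependence of $h(\pm 1), h(\pm \ii)$ on $\mu$: these boundary values depend on the entirety of $\mu$, not only on $\mu|_{Q_{e_0}}$, so one must quantify the hyperbolic decay of the kernel $K$ away from the quad in order to localize the estimate. A potentially cleaner alternative route, bypassing pointwise estimates, would be approximation: by the corollary following Theorem~\ref{thm:intro_wp_H}, piecewise-M\"obius $C^{1,1}$ homeomorphisms (which lie in $\mc H$ and hence automatically satisfy $s_{h_n} \in \mc S$ via $\mc H \subset \mc S$) are dense in $\WP(\m T)$ in the Weil--Petersson metric. Choosing $h_n \to h$ in WP with Beltrami coefficients $\mu_n$ satisfying $\norm{\mu_n}_2 \to \norm{\mu}_2$, one would show $\sup_n \sum_e s_{h_n}(e)^2 < \infty$ via the uniform local bound applied to each $h_n$ and then conclude by Fatou's lemma using $s_{h_n}(e) \to s_h(e)$ for each fixed $e$.
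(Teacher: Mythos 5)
Your proposal does not follow the paper's route and, as written, has a genuine gap at its core. The paper proves this theorem by invoking Wu's theorem (Theorem~\ref{thm:Wu_necessary}): for $h\in\WP(\m T)$ the sum $\sum_i d^2(\Cr(Q_i),\Cr(h(Q_i)))\lambda(\Cr(Q_i))$ over any pairwise disjoint family of quads is bounded by a constant $C(h)$. Since Farey quads have $\Cr(Q_e)=1$ and the trivalent dual tree $\dualtree$ admits a proper $3$-coloring of its edges, the Farey quads split into three pairwise disjoint families, giving $\sum_{e\in E} d^2(1,\Cr(h(Q_e)))<3C$; combining this with $s_h(e)=\log\Cr(h(Q_e))$ and the local comparison $d(1,e^s)=a|s|+O(s^2)$ (plus the observation that only finitely many shears are large) yields $s_h\in\mc S$. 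Your plan replaces Wu's theorem by a claimed local estimate $|s_h(e)|^2\le C'\int_{\Omega_e}|\mu|^2/(1-|z|^2)^2\,\dd A$ with $\Omega_e$ a hyperbolically bounded neighborhood of $Q_e$, but this is precisely the step you do not prove, and as literally stated it is false: the cross-ratio of $h(\pm 1),h(\pm\ii)$ depends on all of $\mu$, not on $\mu|_{\Omega_{e_0}}$, so a Beltrami coefficient supported entirely outside $\Omega_{e_0}$ can still produce a nonzero shear $s_h(e_0)$. What one actually needs is a weighted estimate with a kernel decaying in the hyperbolic distance from $Q_e$, together with a bounded-overlap argument for the resulting tails; quantifying that decay \emph{and} controlling the "higher-order corrections" in your expansion (which is only a formal linearization at $\mu\equiv 0$ and is not obviously valid when $\norm{\mu}_\infty$ is not small) is the entire analytic content of the theorem. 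That content is exactly what Wu's theorem packages, and you have not supplied a substitute for it.

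The fallback you suggest via density does not repair this: to apply Fatou's lemma you need $\sup_n\sum_e s_{h_n}(e)^2<\infty$ uniformly along an approximating sequence, and you propose to obtain this "via the uniform local bound applied to each $h_n$" --- i.e.\ by the very estimate that is missing. (The pointwise convergence $s_{h_n}(e)\to s_h(e)$ is fine, since Weil--Petersson convergence implies uniform convergence of the homeomorphisms and the cross-ratio is continuous; and the inclusion $\mc H\subset\mc S$ you cite is indeed Lemma~\ref{lem:H_in_P0}/Corollary~\ref{cor:H_inclusions_summary}. But neither observation produces the uniform $\ell^2$ bound.) To make your approach work you would either have to prove the decaying-kernel local estimate from scratch, or cite a result that already encodes it --- which is what the paper does with Wu's theorem combined with the $3$-coloring of $\dualtree$.
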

Note that the reverse statement is not true: a circle homeomorphism $h$ with shear coordinate $s_h$ supported on a single edge is has $s_h\in\mc S$ but does not satisfied the finite balanced condition from Definition \ref{df:finite_balance}, so Lemma \ref{lem:finite_balance} shows that such a homeomorphism is not Weil--Petersson. To show the inclusion we use the following property of Weil--Petersson homeomorphisms due to Wu.

\begin{thm}[See \cite{Wu2011}]\label{thm:Wu_necessary}  Suppose $h\in \WP(\m T)$. Then there is a 
constant $C=C(h)>0$ such that for any pairwise disjoint collection of quads $Q_1,\dots, Q_n$ with vertices on $\m T$, 
\begin{align*}
    \sum_{i=1}^n d^2(\Cr(Q_i), \Cr(h(Q_i))) \lambda(\Cr(Q_i)) < C
\end{align*}
where $d(\cdot, \cdot)$ is the hyperbolic metric on $\m C\smallsetminus\{-1,0\}$ and $\lambda(x) = \exp(d(1,x)-|\log x|/2)$.
\end{thm}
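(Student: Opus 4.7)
The plan rests on the $L^2$-Beltrami characterization of the Weil--Petersson class (Theorem~\ref{thm:WP_Beltrami_L2}): given $h\in\WP(\m T)$, pick a quasiconformal extension $f \colon \m D \to \m D$ whose Beltrami coefficient $\mu$ satisfies $\int_\m D |\mu(z)|^2 (1-|z|^2)^{-2}\, \dd A(z) < \infty$. The strategy is to bound each summand $d^2(\Cr(Q_i),\Cr(h(Q_i)))\,\lambda(\Cr(Q_i))$ by an integral of $|\mu|^2/(1-|z|^2)^2$ over an ideal region $R_i \subset \m D$ attached to $Q_i$, and then use pairwise disjointness to collect the pieces.

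First, normalize. For each quad $Q_i = (a_i,b_i,c_i,d_i)$, choose $\gamma_i \in \PSU(1,1)$ sending $(a_i,b_i,c_i) \mapsto (1,\ii,-1)$, and $\delta_i \in \PSU(1,1)$ analogously normalizing $h(Q_i)$. The conjugated map $\tilde f_i := \delta_i \circ f \circ \gamma_i^{-1}$ is quasiconformal, fixes $\pm 1, \ii$, and sends $x_i := \gamma_i(d_i)$ to $y_i := \delta_i(h(d_i))$; the cross-ratios $\Cr(Q_i)$ and $\Cr(h(Q_i))$ are then determined by $x_i$ and $y_i$. Möbius invariance of the hyperbolic area form ensures the $L^2$-norm of $\mu$ on any region is preserved by this normalization, reducing matters to estimating a single ``moduli'' deformation.

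The core pointwise estimate is: for $R_i$ a bounded-geometry hyperbolic neighborhood of the ideal quadrilateral spanned by $Q_i$,
\begin{align*}
d^2(\Cr(Q_i),\Cr(h(Q_i)))\,\lambda(\Cr(Q_i)) \lesssim \int_{R_i} \frac{|\mu(z)|^2}{(1-|z|^2)^2}\, \dd A(z).
\end{align*}
One writes the cross-ratio distortion as an integral representation of the Beltrami coefficient against a bounded kernel (Ahlfors--Reimann-type), then applies Cauchy--Schwarz against the hyperbolic volume form on $R_i$. When $\Cr(Q_i) \asymp 1$ (non-degenerate quads), $R_i$ has bounded hyperbolic area and $\lambda(\Cr(Q_i)) \asymp 1$, so the estimate is transparent. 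The weight $\lambda(\Cr(Q_i)) = \exp(d(1,\Cr(Q_i)) - |\log \Cr(Q_i)|/2)$ is precisely the factor that keeps the estimate uniform as $\Cr(Q_i) \to 0$ or $\infty$: the hyperbolic distance $d(1,\Cr(Q_i))$ on $\m C \smallsetminus \{-1,0\}$ blows up, while Cauchy--Schwarz loses only a factor of order $|\log \Cr(Q_i)|/2$ coming from the hyperbolic geometry of $R_i$, and $\lambda$ encodes the exact balance. With this estimate in hand, the disjointness of the $Q_i$ (interpreted as ideal quadrilaterals on $\m T$) passes to disjointness of the $R_i$ in $\m D$, so summing gives
\begin{align*}
\sum_{i=1}^n d^2(\Cr(Q_i),\Cr(h(Q_i)))\,\lambda(\Cr(Q_i)) \lesssim \int_\m D \frac{|\mu(z)|^2}{(1-|z|^2)^2}\, \dd A(z) =: C(h) < \infty.
\end{align*}

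The main obstacle is precisely the pointwise estimate above, and identifying why $\lambda$ is the correct weight. This demands a careful analysis of the one-parameter family of standard ideal quadrilaterals parametrized by cross-ratio $x \in (0,\infty)$: one must compare the hyperbolic metric $d$ on $\m C \smallsetminus \{-1,0\}$ (measuring the ``intrinsic'' distortion of cross-ratios) against the pullback metric induced by quasiconformal stretching (captured by $\int_{R_i}|\mu|^2/(1-|z|^2)^2 \, \dd A$). The weight $\lambda$ is essentially the Jacobian comparing these two quantities in the degenerate limits, and isolating it cleanly requires explicit computation in the normalized setting and an appropriate choice of $R_i$ whose hyperbolic diameter transverse to the diagonal shrinks at the correct rate.
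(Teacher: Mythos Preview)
The paper does not supply a proof of this statement: Theorem~\ref{thm:Wu_necessary} is quoted from \cite{Wu2011} and used as a black box in the proof of Theorem~\ref{thm:WP_in_S}. So there is no ``paper's own proof'' to compare against.

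On its own merits, your proposal is a plausible outline of the strategy one expects in Wu's argument, but it is not a proof. You yourself flag the gap: the key pointwise estimate
\[
d^2(\Cr(Q_i),\Cr(h(Q_i)))\,\lambda(\Cr(Q_i)) \lesssim \int_{R_i} \frac{|\mu(z)|^2}{(1-|z|^2)^2}\, \dd A(z)
\]
is asserted, and you correctly identify that the weight $\lambda$ must precisely compensate for the degeneration of the quad geometry as $\Cr(Q_i)\to 0,\infty$, but you do not carry out the computation that shows this. The phrase ``an appropriate choice of $R_i$ whose hyperbolic diameter transverse to the diagonal shrinks at the correct rate'' is exactly where the content lies, and it is left open. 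A second point that needs care: the $Q_i$ are only assumed pairwise disjoint as ideal quadrilaterals (they may share boundary edges or vertices), so the passage from disjoint $Q_i$ to disjoint hyperbolic neighborhoods $R_i$ is not automatic and typically requires either a bounded-overlap argument or a more careful choice of $R_i$ tied to the quad's combinatorics. As written, this is a roadmap rather than a proof; to complete it you would need to supply the explicit kernel estimate and the overlap control, which is essentially the substance of \cite{Wu2011}.
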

We clarify that quads are considered as open sets bounded by hyperbolic geodesics. In other words, quads sharing only boundary edges or vertex are also considered as disjoint. 
Note that $\l (1)  = 1$ and since the hyperbolic metric has smooth conformal factor with respect to the Euclidean metric, there exists $a > 0$ such that 
\begin{equation}\label{eq:cr_dist_shear}
d(1,e^s) = a |s| + O(s^2), \quad s \to 0.
\end{equation}

\begin{proof}[Proof of Theorem \ref{thm:WP_in_S}]
Let  $h\in \WP(\m T)$.
If $Q$ is a Farey quad, then $\Cr(Q) = 1$. Therefore for an infinite sequence  $Q_1, Q_2, \ldots$ of pairwise disjoint Farey quads,  Theorem \ref{thm:Wu_necessary} implies that
\begin{equation}\label{eq:Wu_Farey}
    \sum_{i=1}^\infty d^2(1, \Cr(h(Q_i))) < C,
\end{equation}
as $C$ is independent of the number of quads.

Farey quads $Q_e$ are in one-to-one correspondence with dual edges $e^*\in E^*$. If two dual edges $e^*,f^*$ are disjoint and do not share a vertex, then the quads $Q_e$ and $Q_f$ are disjoint. Since $\dualtree$ is a trivalent tree, the dual edges $E^*$ can be colored red, blue, or green so that no two edges of the same color intersect. Let $R$, $B$, $G$ be the collections of dual edges colored red, blue, or green respectively. Each of $R,B,G$ corresponds to a collection of disjoint Farey quads, so \eqref{eq:Wu_Farey} applies. On the other hand, $E^* = R\cup B\cup G$, and hence 
\begin{align}\label{eq:3C}
    \sum_{e\in E} d^2(1, \Cr(h(Q_e))) < 3C.
\end{align}
Since this sum converges, for any $\vare>0$ there are only finitely many $e\in E$ such that $|\Cr(h(Q_e))-1| > \vare$. 
Recall that $s_h(e) = \log\Cr(h(Q_e))$.
Hence, there are only finitely many edges $e\in E$ such that $|s_h(e)| >  \vare$. We now choose $\vare$ such that $|s| < \vare$ implies $|s| < 2 a^{-1} d(1,e^s)$ 
by \eqref{eq:cr_dist_shear}. In particular, if $|s_h (e)| <\vare$, $s_h(e)^2 < 4 a^{-2} d(1, \Cr(h(Q_e)))^2$. We obtain $s_h\in \mc S$ from \eqref{eq:3C}.
\end{proof}

\section{Weil--Petersson metric tensor and symplectic form}\label{sec:infinitesimal}

\subsection{Finite shears and Zygmund functions}

The tangent space to the universal Teichm\"uller space
$$T(\mathbb{D}) : = \QS (\m T) / \mob(\m T) \simeq \{h : \m T \to \m T, \text{ quasisymmetric and fixing } -1, \ii, 1\}$$
at the origin $\Id_{\m T}$ consists of all Zygmund functions on the unit circle 
that vanish at $1$, $\ii$ and $-1$ 
(see \cite[Sec.\,16.6]{Gardiner-Lakic}). More precisely, consider a differentiable path (for the Banach manifold structure of $T(\mathbb{D})$) $t\mapsto h_t$ with $t\in (-\vare ,\vare)$ of quasisymmetric maps 
such that $h_0=\Id_{\m T}$. Then $\frac{\dd}{\dd t}h_t(x)|_{t=0}=u(x)$ is a Zygmund function on the unit circle and conversely, every Zygmund function on the unit circle is the tangent vector to a differentiable path of quasisymmetric maps at $\Id_{\m T} \in T(\mathbb{D})$. 

For any $h\in T(\m D)$ we can identify $T_h T(\m D)$ with the space of Zygmund functions on the circle by pullback, meaning if $t\mapsto h_t$, $t\in (-\vare,\vare)$ is a differentiable path of quasisymmetric maps fixing $1$, $\ii$, and $-1$ with $h_0 = h$, then we identify it with the Zygmund function $u(x)=\frac{\dd}{\dd t}h_t\circ h^{-1}(x)|_{t=0}$. 

The set of \textit{finitely supported shear functions} is
\begin{align*}
    \mc F := \{ \dot s: E\to \m R : \dot s(e) \neq 0 \text{ for finitely many } e\in E\}. 
\end{align*}
For any $h\in T(\m D)$ with shear coordinate $s_h$ and $\dot s\in \mc F$, the path of shear functions $ s_h + t\cdot \dot s$ for $t\in (-\vare, \vare)$ induces a path of homeomorphisms $(h_t)_{t\in (-\vare,\vare)}\subset T(\m D)$. Using the developing algorithm of Section \ref{sec:examples}, $h_t$ is of the form $h_t = H_t \circ h$, where $H_t$ is a piecewise-M\"obius homeomorphism with breakpoints in $h(V)$. Another way to view this is that the shear function of $H_t$ on $h(\Farey)$ (instead of $\Farey$) 
$$S_t (h(e)) : = s(H_t \circ h (Q_e), h(e)) - s(h (Q_e), h(e)) = s_h(e) + t\cdot \dot s(e) - s_h (e) = t  \cdot \dot s(e).$$
is finitely supported. The first author \cite{Saric2006} proved that $h_t\circ h^{-1}$ is a differentiable path in $t$ for the Banach manifold structure of $T(\mathbb{D})$.
We obtain
$$u=\frac{\dd (h_t\circ h^{-1})}{\dd t} \Big|_{t=0} = \frac{\dd H_t}{\dd t}\Big|_{t=0} \in T_{\Id} T(\m D)$$
is a piecewise $\mf {psu} (1,1)$ vector field with break points  in $h(V)$.

We now compute $u$ explicitly in terms of the shear and the computation is often simpler in the half plane model. By conjugating by the Cayley transform $\cayley$, the differentiable path of quasisymmetric maps of $\mathbb{T}$ that fixes $1$, $\ii$ and $-1$ is transformed to a differentiable path $t\mapsto \varphi_t$ with 
$t\in (-\vare ,\vare )$ of quasisymmetric maps of $\widehat{\mathbb{R}}=\mathbb{R}\cup\{\infty\}$ that fix  $-1$, $0$ and $\infty$, namely in
$$ T(\m H) := \{ \varphi : \widehat{\mathbb{R}} \to \widehat{\mathbb{R}}, \text{ quasisymmetric and fixing } -1, 0, \infty\}.$$
If $\varphi_0=\varphi$, then $\frac{\dd}{\dd t} \varphi_t\circ \varphi^{-1}(x)|_{t=0}=u(x)$ is a Zygmund function on ${\mathbb{R}}$ that vanishes at $-1$ and $0$ and satisfies $|u(x)|=O(|x|\log |x|)$ as $|x|\to\infty$.

Let $e \in E$ and $h \in T(\m D)$. Let $a, b \in \widehat{\m R}$ such that $(a,b) = \cayley (h (e))$.
Let $\varphi_t: \widehat{\mathbb{R}}\to\widehat{\mathbb{R}}$ be the path of normalized homeomorphisms conjugate to the circle homeomorphism $H_t$ with shear coordinates $t \cdot \dot s_{e}$ on $h(\Farey)$, where $\dot s_{e} (e) = 1$ and $\dot s_{e} (e') = 0 $ for all $e' \in E$, $e' \neq e$. We define
\begin{equation}\label{eq:u_ab}
u_{(a,b)} : = \frac{\dd (\cayley \circ H_t \circ \cayley^{-1})}{ \dd t}\Big|_{t  = 0}.
\end{equation}
Example~\ref{ex:single_shear} or \cite{Saric2006} gives the following explicit formulas for $u_{(a,b)}$.

When $a\ge 0$ and $b=\infty$
\begin{equation*}
u_{(a,\infty)}(x)=
\begin{cases}
x-a,\qquad &\text{for } x>a\\
0, \qquad & \text{for } x\leq a;
\end{cases}
\end{equation*}
for $a\le -1$ and $b=\infty$
\begin{equation*}
u_{(-\infty ,a)}(x)=
\begin{cases}
-(x-a), \qquad & \text{for } x<a\\
0, \qquad & \text{for } x\geq a;
\end{cases}
\end{equation*}
and for $a<b$, such that the open interval $(a,b) \subset \m R$ does not contain $-1$ or $0$, 
\begin{equation*}
u_{(a,b)}(x)=
\begin{cases}
\frac{(x-a)(x-b)}{a-b}, \qquad & \text{for } a<x<b\\
0, \qquad & \text{otherwise.}
\end{cases}
\end{equation*}
Since we assumed that $h(\Farey)$ contains the triangle $(-1,\ii, 1)$ and $(a,b) = \cayley ( h (e))$, the above cases cover all possible scenarios.

More generally, suppose that $\dot s\in \mc F$ is supported on $\{e_1,...,e_n\}\subset E$ and let $\varphi_t:\widehat{\mathbb{R}}\to\widehat{\mathbb{R}}$ be the homeomorphism conjugate to the circle homeomorphism of shear coordinate $s_h + t\cdot \dot s$. 
By developing and the chain rule
\begin{equation}
\label{eq:finite-shears-zygmund}
u=\frac{\dd (\varphi_t\circ \varphi^{-1})}{\dd t}\Big|_{t=0} = \sum_{j=1}^n \dot s(e_j) u_{\cayley (h(e_j))}.
\end{equation}
Note that the above formula which gives a Zygmund function in terms of a shear function does not extend to the case of a shear function with infinite support. The first author \cite{Saric2011} proved that a summation along each fan followed by the sum over all fans is a correct notion for extending the above formula. 

\begin{df}\label{df:Omega_shears_zygmund}
    For each $h\in T(\m D)$, we define the linear operator $\Omega_h: \mc F \to T_{\Id} T(\m H)$ by 
    \begin{align*}
        \Omega_h(\dot s) := \sum_{j=1}^n \dot s(e_j)u_{\cayley (h(e_j))}.
    \end{align*}
\end{df}

\subsection{Finite shears and harmonic differentials} \label{sec:harmonic}

A point $h \in T(\m D)$ (or its conjugate $\varphi \in T(\m H)$) can be represented by an equivalence class of Beltrami coefficients in $\mathbb{H}$, which consists of $\mu\in L^{\infty}_1(\mathbb{H})$ satisfying $\norm{\mu}_\infty < 1$ and $\mu = \bar \partial w/\partial w$ for some quasiconformal extension $w : \m H \to \m H$ of $\varphi$. A differentiable path in $T(\mathbb{H})$ is represented by a differentiable path of Beltrami coefficients with respect to the $L^{\infty}$-norm. By taking derivative of this path with respect to $L^{\infty}$-norm, we conclude that a tangent vector to $T(\mathbb{H})$ at the identity is represented by an equivalence class of Beltrami differentials of $\dot \mu\in L^{\infty}(\mathbb{H})$ (for example, see \cite{Gardiner-Lakic}). A special representative of the equivalence class is given by the Ahlfors-Weill section.

More precisely, let $\dot \mu\in L^{\infty}(\mathbb{H})$ and $|\vare| < 1/\norm{\dot \mu}_\infty$. We define $w_\vare$ to be the solution of the Beltrami equation 
$$\bar \partial w_\vare (z)= \begin{cases}
\vare \dot \mu (z) \partial w_\vare (z), \quad & z \in \m H\\
0, \quad & z \in \m H^*
\end{cases}$$
 normalized to fix $-1, 0, \infty$. Here $\m H^*$ denotes the lower half plane. 
Then,
\begin{equation}
\label{eqn:lower_half}
\widehat{u}(z): =  \frac{\dd w_\vare (z)}{ \dd \vare} \Big|_{\vare = 0} =-\frac{z(z+1)}{\pi}\iint_{\mathbb{H}}\frac{\dot \mu (\zeta )\dd\xi \dd\eta}{\zeta (\zeta +1)(\zeta -z)}
\end{equation}
for all $z\in\m C$ (see \cite[Sec.\,6.5]{Gardiner-Lakic}).
 Note that $\widehat{u}(z)$ is holomorphic in the lower half-plane.

The \emph{Bers embedding} 
$$B :T(\mathbb{H})\to \mc Q(\mathbb{H}^*) = \{\phi : \m H^* \to \m C \text{ holomorphic and } \sup_{z \in \m H^*} |\phi(z)| \Im(z)^2 < \infty\}$$ is given by $[\mu] \mapsto S[w|_{\m H^*}]$, where $w$ solves the Beltrami equation
$$\bar \partial w (z)= \begin{cases}
\mu (z) \partial w (z), \quad & z \in \m H\\
0, \quad & z \in \m H^*
\end{cases}$$
and $S[w] = \frac{w'''}{w'} - \frac{3}{2} (\frac{w''}{w'})^2$ is the Schwarzian derivative of $w$. The Nehari bound shows that $S[w] \in \mc Q(\m H^*)$. Moreover, $\mu$ and $\nu$ represent the same element in $T(\m H)$ if and only if they give the same $S[w|_{\m H^*}]$. Therefore the map $B$ is well-defined and is an embedding.

The derivative of $B$ at the origin of $T(\mathbb{H})$ evaluated at the tangent vector represented by an infinitesimal Beltrami differential $\dot \mu$ is given by
\begin{equation}
\label{eqn:der_bers}
\phi (z):=(\dd B)_{\Id}([\dot \mu])(z)= \frac{\dd S[w_\vare] (z) }{\dd \vare} \Big|_{\vare = 0}  = \widehat{u}'''(z),\qquad  z\in\mathbb{H}^{*}.
\end{equation}

The Ahlfors-Weill section 
is the \emph{harmonic Beltrami differential} $\dot \mu_u$ in the equivalence class $[\dot \mu]$ representing a tangent vector $u$ (a Zygmund vector field on $\m R$) at the origin of $T(\m H)$ and is 
given by
\begin{equation}
\label{eq:harmonic}
\dot \mu_u(z):=-2 y^2 \phi (\bar{z})
\end{equation}
where $z=x+ \ii y\in\m H$.

Our goal is to express $\phi (z)$ in terms of the infinitesimal shear function. Since $(\dd B)_{\Id}$ is linear and by equation (\ref{eq:finite-shears-zygmund}), it is enough to compute $(\dd B)_{\Id}([\dot \mu_{(a,b)}])$ where $\dot \mu_{(a,b)}$ is the harmonic Beltrami differential corresponding to $u_{(a,b)}$ defined in \eqref{eq:u_ab} and computed explicitly. 
Extend $\dot{\mu}_{(a,b)}$ to $\mathbb{C}$ such that $\dot{\mu}_{(a,b)}(z)=\overline{\dot{\mu}_{(a,b)}(\bar{z})}$. Then we have  for $x\in \m R$
$$
{u}_{(a,b)}(x)=-\frac{1}{\pi}\iint_{\mathbb{C}} \dot{\mu}_{(a,b)}(\zeta ) R(x,\zeta )\, \dd \xi  \dd\eta =-\frac{2}{\pi} \Re \iint_{\mathbb{H}} \dot{\mu}_{(a,b)}(\zeta ) R(x,\zeta )\, \dd \xi  \dd\eta,
$$
where $R(x,\zeta )=\frac{x(x+1)}{\zeta (\zeta +1)(\zeta -x)}$ and $\zeta =  \xi + \ii \eta$. 
The Hilbert transform for ${u}_{(a,b)}(x)$ on $\mathbb{R}$ is given by the formula
$$
H{u}_{(a,b)}(x)=\frac{1}{\pi} \, \mathrm{p.v.} \int_{-\infty}^{\infty} u_{(a,b)}(\xi )R(x,\xi) \dd \xi .
$$
An application of Stokes' theorem gives, for $x\in\mathbb{R}$, 
$$
H{u}_{(a,b)}(x)=\frac{2\ii}{\pi} \iint_{\mathbb{H}} \dot{\mu}_{(a,b)}(\zeta )R(x,\zeta )\, \dd \xi  \dd\eta +\ii u_{(a,b)}(x)
$$
and
$$
H{u}_{(a,b)}(x)=-\frac{2\ii}{\pi} \iint_{\mathbb{H}^*} \dot{\mu}_{(a,b)}(\zeta )R(x,\zeta )\, \dd \xi  \dd\eta -\ii u_{(a,b)}(x).
$$
By adding the above two equations we obtain
$$
H{u}_{(a,b)}(x)=\frac{\ii}{\pi} \iint_{\mathbb{H}} \dot{\mu}_{(a,b)}(\zeta )R(x,\zeta )\, \dd \xi  \dd\eta -\frac{\ii}{\pi} \iint_{\mathbb{H}^*} \dot{\mu}_{(a,b)}(\zeta )R(x,\zeta )\, \dd \xi  \dd\eta
$$
and together with the above formula for ${u}_{(a,b)}$ gives
$$
{u}_{(a,b)}(x)+\ii H{u}_{(a,b)}(x)= -\frac{2}{\pi} \iint_{\mathbb{H}} \dot{\mu}_{(a,b)}(\zeta )R(x,\zeta )\, \dd \xi  \dd\eta .
$$
By replacing $x$ with $z\in\mathbb{C}$ in the above integral, we obtain the function $2\widehat{u}_{(a,b)} (x)$ where $\widehat{u}_{(a,b)}$ is defined in \eqref{eqn:lower_half} with $\dot \mu = \dot \mu_{(a,b)}$,
that is holomorphic in $\mathbb{H}^*$ and whose $\bar{\partial}$ derivative in $\mathbb{H}$ is $2 \dot{\mu}_{(a,b)}$. 
A direct computation of the Hilbert transform (see \cite[Section 3]{Saric2011}) and extending ${u}_{(a,b)}(x)+\ii H{u}_{(a,b)}(x)$ to a holomorphic function in $\mathbb{H}^*$ 
gives (up to an addition of a linear polynomial) 
$$
\widehat{u}_{(a,b)}(z)=\frac{\ii}{2\pi}\frac{(z-a)(z-b)}{a-b}\log \frac{z-b}{z-a}
$$
for $(a,b)$ with $a < b \neq\infty$, and
$$
\widehat{u}_{(a,b)}(z)=-\frac{\ii}{2\pi}(z-a)\log (z-a)
$$
for $e_j=(a,\infty)$. 

In the formulas above, $\frac{z-b}{z-a}\in\mathbb{H}^*$ when $z\in\mathbb{H}^*$. The natural logarithm $\log z$ for $z\in\mathbb{H}^*$ has the imaginary part in $[-\pi ,0]$ with $-\pi$ corresponding to the negative axis. When $x\in [a,b]$, then $\frac{x-b}{x-a}$ is on the negative real axis and the imaginary part of the logarithm is $-\pi$.
Using (\ref{eqn:der_bers}) we obtained the following formula.

\begin{thm}\label{thm:infBers}
 Let $\dot s \in \mc F$ with support $\{e_1, \ldots e_n\} \subset E$ and $h \in T(\m D)$. Let $\dot \mu$ be any Beltrami differential representing the Zygmund vector field $\O_h (\dot s)$ \textnormal(Definition~\ref{df:Omega_shears_zygmund}\textnormal). The infinitesimal Bers embedding of $\dot \mu$ is given by
 \begin{equation}\label{eq:inf_bers}
  (\dd B )_{\Id}(\dot \mu)(z)=\frac{\ii}{2\pi}\ \sum_{j=1}^{n} \dot s(e_j)
\frac{(a_j-b_j)^2}{(z-a_j)^2(z-b_j)^2}  , \qquad z\in \m H^*
\end{equation}
where $(a_j, b_j) = \cayley (h (e_j))$.
\end{thm}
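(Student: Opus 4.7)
Most of the work for this theorem has already been done in the preceding derivation. Equation \eqref{eqn:der_bers} identifies $(\dd B)_{\Id}(\dot \mu)(z) = \widehat u'''(z)$ on $\m H^*$, and the text has produced explicit closed forms for $\widehat u_{(a,b)}$ in all relevant cases via the Hilbert transform and holomorphic extension. My plan is first to reduce to the single-edge case by linearity, and then to compute the third derivative of these closed forms directly.

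For the reduction, I will invoke two facts: the Cauchy integral \eqref{eqn:lower_half} is linear in $\dot \mu$, and $(\dd B)_{\Id}(\dot \mu)$ depends only on the equivalence class of $\dot \mu$, i.e., only on the underlying Zygmund vector field. Combined with the decomposition $\Omega_h(\dot s) = \sum_{j=1}^n \dot s(e_j)\, u_{(a_j, b_j)}$ from \eqref{eq:finite-shears-zygmund}, this gives
\begin{equation*}
(\dd B)_{\Id}(\dot \mu)(z) = \sum_{j=1}^n \dot s(e_j)\, \widehat u_{(a_j, b_j)}'''(z),
\end{equation*}
so it suffices to verify the claimed formula one edge at a time.

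For $a < b \neq \infty$, writing $\widehat u_{(a,b)}(z) = \frac{\ii}{2\pi(a-b)} F(z)$ with $F(z) = (z-a)(z-b)\log\frac{z-b}{z-a}$, I will differentiate three times. The logarithmic contribution will drop out by the second derivative, after which the partial fraction identity $\frac{2z-a-b}{(z-a)(z-b)} = \frac{1}{z-a}+\frac{1}{z-b}$ and the elementary identity $(z-a)^2 - 2(z-a)(z-b) + (z-b)^2 = (a-b)^2$ collapse the expression to $F'''(z) = (a-b)^3/\bigl((z-a)^2(z-b)^2\bigr)$, which yields the claimed formula after multiplying by $\ii/(2\pi(a-b))$. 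The edge case $b = \infty$ follows identically starting from $\widehat u_{(a, \infty)}(z) = -\frac{\ii}{2\pi}(z-a)\log(z-a)$, giving $\widehat u_{(a,\infty)}'''(z) = \frac{\ii}{2\pi (z-a)^2}$, which agrees with the general formula in the limit $b \to \infty$ (and similarly for $(-\infty, a)$). I do not expect any substantive obstacle; the only subtlety is careful sign bookkeeping combined with keeping track of the branch of the logarithm on $\m H^*$ specified in the preceding remark.
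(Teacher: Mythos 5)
Your proposal is correct and follows essentially the same route as the paper: reduce by linearity of $(\dd B)_{\Id}$ and the decomposition \eqref{eq:finite-shears-zygmund}, then differentiate the closed forms for $\widehat u_{(a,b)}$ three times via \eqref{eqn:der_bers}, which is exactly what the paper does (it merely leaves the differentiation implicit). One trivial correction: the logarithm does not drop out at the second derivative — $F''$ still contains $2\log\frac{z-b}{z-a}$ — it only disappears upon the third differentiation, but your stated final value $F'''(z)=(a-b)^3/\bigl((z-a)^2(z-b)^2\bigr)$ is correct, so the argument goes through.
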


Note that the right-hand side of \eqref{eq:inf_bers} is symmetric in $a_j$ and $b_j$, therefore we do not require $a_j < b_j$. When $a_j$ or $b_j = \infty$, the ratio is understood as the limit as $a_j$ or $b_j \to \infty$.

\subsection{Weil--Petersson metric on $\mc H$} 
In this section we compute the Weil--Petersson metric tensor on $\mc H$  (see Theorem~\ref{thm:WP_one_diamond}, Corollary~\ref{cor:full_metric}).
Recall that $\mc H$ is equipped with the topology induced by the $\ell^2$ norm, so for any $h\in \mc H$, the tangent space at $h$ is 
\begin{align*}
    \mf h := T_{h} \mc H = \{\dot\vartheta:\sum_{e\in E} \dot\vartheta(e)^2<\infty\}.
\end{align*}
For $\dot \vartheta \in \mf h$, the path $h_t$ defined by 
\begin{align*}
    \vartheta_{h_t}(e) = \vartheta_{h}(e) + t \cdot \dot\vartheta (e) \qquad \forall e\in E
\end{align*}
is contained in $\mc H$ for $t\in (-\vare, \vare)$, and has tangent vector $\dot \vartheta \in \mf h$. Since the coordinate-change map $\Phi$ from diamond shears to shears is linear (see Equation \eqref{eq:def_Phi}), $\mf h$ can also be written in terms of infinitesimal shears as $\{\dot s = \Phi(\dot \vartheta) : \sum_{e\in E}\dot\vartheta(e)^2 < \infty \}$. 

\begin{remark}
We have seen that the tangent space for quasisymmetric homeomorphisms consists of Zygmund functions, where the notion of \emph{a differentiable path} uses the Teichm\"uller metric on $T(\m D)$.  It is known that quasisymmetric homeomorphisms and Zygmund functions have \textit{different} characterizations in terms of shears \cite{Saric2011}.   In particular, when an infinitesimal shear $\dot s$ corresponding to a Zygmund vector field has infinite support, the one-parameter family $\{t \dot s(e) : e\in E\}$ is not necessarily contained in $T(\m D)$. (In fact, some might not even be induced by circle homeomorphisms.)
On the other hand, $\mc H$ is defined in terms of diamond shears, and we are using its $\ell^2$ topology in diamond shears, so the identification of $\mf h$ with $\mc H$ is automatic. 
\end{remark}

In the half-plane model $\mathbb{H}$, the Weil--Petersson Riemannian pairing of two Zygmund vector fields $u_1$ and $u_2$ is given by
\begin{equation} \label{eq_def_wp_metric}
\langle u_1,u_2\rangle_{\WP} =\Re\iint_{\m H}\dot \mu_{u_1}(z)\overline{\dot \mu_{u_2}(z)}\frac{1}{y^2} \,\dd x \dd y, \qquad z = x + \ii y.    
\end{equation}
We say that $u \in T_{\Id}\WP(\m T)$ if $\norm{u}_{\WP}^2 = \brac{u,u}_{\WP} <\infty$. In terms of Fourier coefficients  \cite{NagVerjovsky}
$$\brac{u_1, u_2}_{\WP} = 2 \pi \Re \sum_{n\ge 2} (n^3 - n) \tilde u_{1, n} \tilde u_{2, -n}, \quad \text { where } u_j= \sum_{n \in \m Z} \tilde u_{j, n} e^{\ii n \t} \frac{\partial}{\partial \t}, \quad \tilde u_{j, n} = \overline {\tilde u_{j, -n}} \in \m C. $$
This shows that $T_{\Id}\WP(\m T) = H^{3/2} (\m T)$, the $H^{3/2}$ Sobolev space of vector fields on $\m T$.

We show first that for $h\in \mc H$, $\dot \vartheta \in \mf h$ induces a vector in $T_h \WP (\m T) \simeq T_{\Id} \WP(\m T)$ (where the identification is the isometry given by the right-composition by $h^{-1}$).

\begin{lem}\label{lem:bounded_operator}
  Fix $h \in \mc H$. Let $\dot \vartheta$ be a function $E \to \m R$ with finite support and $h_t$ be the Weil--Petersson homeomorphism induced by the diamond shear function $\vartheta_h + t \cdot \dot \vartheta$. We write $u = \dd (h_t \circ h^{-1}) /\dd t |_{t = 0} \in T_{\Id } \WP(\m T)$. There exists $C(h) > 0$, such that
  $$ \norm{u}_{\WP} \le C(h) \norm{\dot \vartheta}_{\mf h}. $$
\end{lem}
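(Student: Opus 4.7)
The plan is to apply Theorem~\ref{thm:H_convergence} to $g = h_t$, extract the infinitesimal Beltrami differential at $t = 0$, and bound the Weil--Petersson norm of $u$ via its harmonic Beltrami representative. Fixing $\vare = 1$ in that theorem, for all $|t| \le 1/\norm{\dot\vartheta}_{\mf h}$ one obtains a quasiconformal extension $f_t : \m D \to \m D$ of $h_t \circ h^{-1}$ whose Beltrami coefficient $\mu_t$ has uniformly bounded sup norm (away from $1$) and satisfies
\begin{equation*}
\int_{\m D} \frac{|\mu_t(z)|^2}{(1-|z|^2)^2}\,\dd A(z) \le C(1, h)\, t^2 \, \norm{\dot\vartheta}_{\mf h}^2.
\end{equation*}

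Next I would show that $\mu_t$ is differentiable at $t = 0$, with derivative $\dot\mu \in L^\infty(\m D)$ of hyperbolically compact support representing $u$. Since $\dot\vartheta$ has finite support, the shear $\dot s = \Phi(\dot\vartheta)$ is finitely supported and satisfies the finite balanced condition on every fan (Remark~\ref{rem:kernel}). For each fan $\fan(v) = (e_n)_{n \in \m Z}$, this gives $p_{s_{h_t},v}(e_n \splus) - p_{s_h,v}(e_n\splus) = t \sum_{m > n} \dot s(e_m) = 0$ for all $n$ outside a finite range of indices. Consequently, in the cell-by-cell construction recalled in the proof of Theorem~\ref{thm:H_convergence}, the difference $\mu_t$ is supported on a finite union of strips $A_n$ of finite total hyperbolic area, and on each such strip $\mu_t$ is a real-analytic function of $t$ by Corollary~\ref{cor:alpha_beta_props}. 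Hence $\dot\mu := \dd \mu_t / \dd t|_{t=0}$ exists in $L^\infty(\m D)$ with compact hyperbolic support; standard infinitesimal Teichm\"uller theory (see, e.g., \cite[Chap.\,6]{Gardiner-Lakic}) identifies $\dot\mu$ as a Beltrami representative of the tangent vector $u \in T_{\Id} T(\m D)$.

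Passing to the $L^2(\m D, \dd_{\mathrm{hyp}})$ limit $t \to 0$ in the estimate of the first paragraph (using analyticity of $\mu_t/t$ on each strip and finite total hyperbolic area of the support) yields
\begin{equation*}
\int_\m D \frac{|\dot\mu(z)|^2}{(1-|z|^2)^2}\,\dd A(z) \le C(1, h)\,\norm{\dot\vartheta}_{\mf h}^2.
\end{equation*}
By \eqref{eq_def_wp_metric} and \eqref{eq:harmonic}, $\|u\|_{\WP}$ equals the $L^2(\m D, \dd_{\mathrm{hyp}})$-norm of the harmonic Beltrami representative of $u$, which is the $L^2$-minimizer in its equivalence class (a standard Ahlfors--Weill property of the Weil--Petersson K\"ahler metric, see \cite{TT06}). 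Therefore $\|u\|_{\WP} \le \norm{\dot\mu}_{L^2(\m D, \dd_{\mathrm{hyp}})} \le C(1, h)^{1/2}\,\norm{\dot\vartheta}_{\mf h}$, so the claim holds with $C(h) = C(1,h)^{1/2}$.

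The main obstacle is the second step: verifying that the pointwise derivative $\dot\mu$ genuinely represents $u$ in the infinitesimal sense. The finite balanced condition is what makes this feasible, by forcing $\mu_t$ to have hyperbolically compact support; this permits differentiating the explicit cell-by-cell construction in $t$ and invoking the smooth dependence of the Ahlfors--Bers normalized solutions on $L^\infty$-variations of the Beltrami coefficient.
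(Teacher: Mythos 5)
Your overall strategy coincides with the paper's for the first two thirds of the argument: apply Theorem~\ref{thm:H_convergence} with $\vare=1$ to $g=h_t$, use the analyticity of the cell-by-cell construction in $t$ to extract $\dot\mu = \dd\mu_t/\dd t|_{t=0}$, and pass to the limit to get $\int_{\m D}|\dot\mu|^2(1-|z|^2)^{-2}\,\dd A \le C(h)\norm{\dot\vartheta}_{\mf h}^2$. Your observation that the finite balanced condition forces $\mu_t$ to vanish outside finitely many strips (each of hyperbolic area at most $\pi$) is correct and in fact gives a cleaner justification of the differentiability of $t\mapsto\mu_t$ than the paper's one-line appeal to analyticity. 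Where you diverge is the final step. The paper does \emph{not} invoke any extremal property of the harmonic representative; instead it writes $\norm{u}_{\WP}^2 = 4\iint_{\m H^*}|\hat u'''(z)|^2y^2\,\dd x\dd y$ with $\hat u'''(z) = -\tfrac{6}{\pi}\iint_{\m H}\dot\mu(\zeta)(\zeta-z)^{-4}\,\dd\xi\dd\eta$, and then uses Cauchy--Schwarz together with the identities $\iint_{\m H}|\zeta-z|^{-4}\dd\xi\dd\eta = \pi/(4y^2)$ and $\iint_{\m H^*}|\zeta-z|^{-4}\dd x\dd y=\pi/(4\eta^2)$ to obtain $\norm{u}_{\WP}^2\le 9\int_{\m H}\eta^{-2}|\dot\mu|^2$. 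Your shortcut --- that the Ahlfors--Weill harmonic representative is the $L^2(\m D,\dd_{\mathrm{hyp}})$-minimizer in the infinitesimal equivalence class, so $\norm{u}_{\WP}\le\norm{\dot\mu}_{L^2(\dd_{\mathrm{hyp}})}$ --- is plausible but is the one point that needs real justification: it requires the orthogonality $\int\nu\,\phi\,\dd A=0$ for infinitesimally trivial $\nu$ with $\norm{\nu}_2<\infty$ against all $\phi\in A_2(\m D)$, and since $A_2\not\subset A_1$ on the disk (the hyperbolic area is infinite) the classical Ahlfors orthogonality for integrable quadratic differentials does not apply verbatim; one must cite the $L^2$-decomposition results of Takhtajan--Teo rather than wave at a ``standard Ahlfors--Weill property.'' If you prefer not to chase that down, the paper's reproducing-kernel computation is a self-contained substitute that costs only a factor of $3$ in the constant, which is irrelevant for the statement.
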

From Proposition~\ref{prop:add} it is clear that $u$ is a piecewise $\mf {psu} (1,1)$ vector field, $C^{1,1}$ regular, with finitely many break points all in $h(V)$. This implies that $\norm{u}_{\WP} < \infty$ as $C^{1,1} \subset H^{3/2}$.
The point of the lemma is the quantitative bound of in terms of $\dot \vartheta$ which implies the following: 
\begin{cor} \label{cor:h_is_in_H32}
The linear map 
    $\dot \vartheta \mapsto u$ in Lemma~\ref{lem:bounded_operator} extends by continuity to a bounded linear operator $\emb_h:  \mf h \simeq T_h \mc H \to T_{\Id} \WP(\m T) (\simeq T_h \WP(\m T))$. 
\end{cor}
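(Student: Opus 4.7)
The plan is to apply the standard bounded linear transformation (B.L.T.) extension theorem. The map $\dot\vartheta \mapsto u$ in Lemma~\ref{lem:bounded_operator} is defined on the subspace $\mc F^* \subset \mf h$ of finitely supported diamond shear functions; I want to promote it to a continuous linear map on all of $\mf h$.

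First I would check linearity on $\mc F^*$. Given $\dot\vartheta_1, \dot\vartheta_2$ with finite support and $\l_1, \l_2 \in \m R$, the homeomorphism $h_t$ corresponding to diamond shear $\vartheta_h + t(\l_1 \dot\vartheta_1 + \l_2 \dot\vartheta_2)$ is built via the developing algorithm of Section~\ref{sec:examples} as a finite composition of piecewise $\PSU(1,1)$ maps (Proposition~\ref{prop:add}). Differentiating at $t = 0$ is linear in the infinitesimal parameters, so $u = \l_1 u_1 + \l_2 u_2$ where $u_j$ corresponds to $\dot\vartheta_j$. Hence the assignment $\dot\vartheta \mapsto u$ is $\m R$-linear on $\mc F^*$.

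Next I would invoke Lemma~\ref{lem:bounded_operator}, which supplies the uniform bound $\|u\|_{\WP} \le C(h)\,\|\dot\vartheta\|_{\mf h}$ for every $\dot\vartheta \in \mc F^*$. Since $\mf h$ is (identified with) $\ell^2(E)$, the subspace $\mc F^*$ is dense in $\mf h$ in its $\ell^2$-norm. The target space $T_{\Id}\WP(\m T) \simeq H^{3/2}(\m T)$ is a Hilbert space and hence complete. By the B.L.T. theorem, a bounded linear map from a dense subspace of a normed space into a Banach space extends uniquely to a bounded linear map on the whole space, with the same operator norm. Calling this extension $\emb_h$, we obtain a bounded operator $\emb_h : \mf h \to T_{\Id}\WP(\m T)$ with $\|\emb_h\|_{\mathrm{op}} \le C(h)$, which is exactly the statement of the corollary.

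The only step requiring any care is the linearity on $\mc F^*$, since a priori one could worry that differentiating the nonlinear developing procedure loses linearity. However this is a purely infinitesimal statement: the tangent vector at $t = 0$ to the path $h_t = H_t \circ h$, with $H_t$ obtained by a finite iteration of the $H_{h(Q_e), h(e), t}$ of Proposition~\ref{prop:add}, is a finite sum of the single-diamond infinitesimal generators weighted by the $\dot\vartheta(e)$, so the map is manifestly linear. Beyond this observation, no further analytic work is needed.
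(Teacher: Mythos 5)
Your proposal is correct and matches the paper's (implicit) argument: the paper presents the corollary as an immediate consequence of the quantitative bound in Lemma~\ref{lem:bounded_operator}, i.e.\ a bounded linear map on the dense subspace of finitely supported $\dot\vartheta$ in $\ell^2(E)$ extends uniquely to all of $\mf h$ with values in the complete space $H^{3/2}(\m T)$. The linearity on finite supports that you verify by hand is exactly what the paper records via the formula $u = \sum_j \dot s(e_j)\, u_{\cayley(h(e_j))}$ \eqref{eq:finite-shears-zygmund} together with the linearity of $\Phi$ (cf.\ Remark~\ref{rem:xi_omega}), so no gap remains.
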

\begin{remark}\label{rem:xi_omega}
By linearity, if $\Phi(\dot \vartheta)$ is a finitely supported infinitesimal shear function, then $\emb_h(\dot\vartheta) = \cayley^* \Omega_h(\Phi(\dot \vartheta))$, where $\Omega_h$ is as in Definition \ref{df:Omega_shears_zygmund} and $\cayley^*$ is the pull-back map sending Zygmund vector fields on $\m R$ to Zygmund vector fields on $\m T$.
\end{remark}
\begin{proof}[{Proof of Lemma~\ref{lem:bounded_operator}}]
We use the quasiconformal extension of $h_t \circ h^{-1}$ as in Section~\ref{sec:relation_wp} and let $\mu_t$ be the associated Beltrami differential. By fixing $\vare = 1$ and $t$ small enough,
Theorem~\ref{thm:H_convergence} shows that there exists $C(h) >0$,
\begin{align*}
    \iint_{\m D} \frac{4|\mu_t(z)|^2}{(1-|z|^2)^2} \, \dd A(z) \le C(h) t^2 \sum_{e\in E} \dot \vartheta(e)^2.
\end{align*}
From the explicit expression of our quasiconformal extension we see that $\mu_t$ depends on $t$ analytically, and therefore we have $\mu_t (z) = t \cdot \dot \mu (z)+ O(t^2)$.  Letting $t \to 0$ we obtain the bound
$$\iint_{\m D} \frac{4 |\dot \mu (z)|^2}{(1-|z|^2)^2} \, \dd A(z) \le C(h) \sum_{e\in E} \dot \vartheta(e)^2 = C(h) \norm{\dot \vartheta}_{\mf h}^2.$$
However, $\dot \mu$ is not a harmonic Beltrami differential. 
Let $\dot \mu_u$ be the corresponding harmonic Beltrami differential  (in the half-plane model) as  defined in \eqref{eq:harmonic}, we have
 $\dot \mu_u (z) = -2y^2(\dd B)_{\Id}([\dot \mu])(\bar z) = : - 2 y^2 \phi (\bar z)$. 
 By \eqref{eqn:der_bers}, denoting the pushforward of $\dot \mu$ to $\m H$ also by $\dot \mu$,
\begin{align*}
    \norm{u}_{\WP}^2 = \iint_{\m H} |\dot \mu_u (z)|^2 y^{-2} \, \dd x \dd y = 4 \iint_{\m H} |\phi(\bar z)|^2 y^2\, \dd x \dd y 
    = 4\iint_{\m H^*} |\hat{u}'''( z)|^2  y^2 \, \dd x \dd y,
\end{align*}
where $\hat u$ is defined in Equation \eqref{eqn:lower_half} and we have
\begin{align*}
    \hat{u}'''(z) = - \frac{6}{\pi} \iint_{\m H} \frac{\dot \mu(\zeta)\, \dd \xi \dd \eta}{(\zeta-z)^4}, \qquad \zeta = \xi + \ii \eta.
\end{align*}
By Cauchy-Schwarz, 
\begin{align*}
    \norm{u}_{\WP}^2 \leq \frac{4 \cdot 6^2}{\pi} \iint_{\m H^*} y^2 \bigg( \iint_{\m H} \frac{\dd \xi_1 \dd \eta_1}{|\zeta_1-z|^4} \cdot \iint_{\m H} \frac{|\dot \mu(\zeta_2)|^2 \, \dd \xi_2 \dd \eta_2}{|\zeta_2-z|^4} \bigg) \, \dd x \dd y.
\end{align*}
Using the identities
\begin{align*}
    \iint_{\m H} \frac{\dd \xi \dd \eta}{|\zeta - z|^4} 
    = \frac{\pi}{4 y^2} , \qquad z = x + \ii y \in \m H^*,\\
    \iint_{\m H^*} \frac{\dd x \dd y}{|\zeta - z|^4} 
    = \frac{\pi}{4 \eta^2} \qquad \zeta = \xi + \ii \eta \in \m H,
\end{align*}
we get that 
\begin{align*}
    \norm{u}_{\WP}^2 \leq 
    9 \int_{\m H} \eta_2^{-2} |\dot \mu(\zeta_2)|^2 \, \dd \xi_2 \dd \eta_2 \le C'(h)  \norm{\dot \vartheta}_{\mf h}^2
\end{align*}
as claimed.

\end{proof}

Now we explicitly compute the Weil--Petersson metric tensor on $\mc H$. Let $\{\dot \vartheta_e\}_{e \in E}$ denote a basis of $\mf h$, where $\dot \vartheta_e (e) = 1$ and $0$ otherwise. 
It suffices to compute for all $h \in \mc H$, $e_1, e_2 \in E$, 
$$\brac{\Xi_h(\dot \vartheta_{e_1}), \Xi_h (\dot \vartheta_{e_2})}_{\WP},$$
namely, the inner product between two unit infinitesimal diamond shears on $h (\Farey)$.

More precisely, assume that a quad $Q$ has vertices $a_1,a_2,a_3,a_4\in \m T$ in counterclockwise order. Then \emph{the unit infinitesimal diamond shear on $Q$ with diagonal $(a_1,a_3)$} is the infinitesimal shear with coordinates $\dot s$ which is only non-zero at the edges of $Q$ with  
$$
\dot{s}(a_1,a_2)=\dot{s}(a_3,a_4)=1
$$
and
$$
\dot{s}(a_2,a_3)=\dot{s}(a_4,a_1)=-1.
$$
\begin{lem}\label{lem:diamond_shear_xi_explicitly}
 The unit infinitesimal diamond shear on $h(Q_e)$ with diagonal $h(e)$ is $\Xi_h( \dot \vartheta_e)$.
\end{lem}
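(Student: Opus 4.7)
The plan is short: this is essentially a definition-chase that combines Remark~\ref{rem:xi_omega} with the explicit formula for $\Phi$. Since $\dot\vartheta_e$ has singleton support, so does $\Phi(\dot\vartheta_e)$, and Remark~\ref{rem:xi_omega} then gives
$$\Xi_h(\dot\vartheta_e) = \cayley^* \Omega_h(\Phi(\dot\vartheta_e)).$$
So the task reduces to identifying the right-hand side with the unit infinitesimal diamond shear on $h(Q_e)$ with diagonal $h(e)$.

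My first step would be to read off $\Phi(\dot\vartheta_e)$ explicitly from~\eqref{eq:def_Phi}: it is supported on the four boundary edges of $Q_e = (a,b,c,d)$ (with $e = (a,c)$) and takes values $+1,-1,+1,-1$ on $(a,b), (b,c), (c,d), (d,a)$ respectively, matching the picture in Figure~\ref{fig:singlediamond}. The second step is to unravel Definition~\ref{df:Omega_shears_zygmund} and formula~\eqref{eq:finite-shears-zygmund} for $\Omega_h$, noting that the infinitesimal shears in its input are placed on the corresponding images $h(e_j)$ of the supporting edges via the Zygmund building blocks $u_{\cayley(h(e_j))}$. Therefore $\cayley^*\Omega_h(\Phi(\dot\vartheta_e))$ is by construction the Zygmund vector field attached (via the developing algorithm of Example~\ref{ex:single_shear} and Proposition~\ref{prop:add}) to the infinitesimal shear on $h(\Farey)$ taking values $+1,-1,+1,-1$ on the counterclockwise boundary edges of $h(Q_e) = (h(a), h(b), h(c), h(d))$. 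By the definition given just above the lemma statement, this is precisely the unit infinitesimal diamond shear on $h(Q_e)$ with diagonal $h(e) = (h(a), h(c))$.

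The only delicate point will be the bookkeeping: keeping track of the sign/indexing convention used in~\eqref{eq:def_Phi}, and of the $e \leftrightarrow h(e)$ identification implicit in $\Omega_h$. There are no substantive obstacles beyond these, since the lemma is really a compatibility check between the combinatorial definition of the unit diamond shear on a quad and the analytic definition of $\Xi_h$ built from the developing algorithm.
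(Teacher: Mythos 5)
Your argument is correct and is essentially the route the paper takes: the paper's entire proof is the one-liner ``This follows directly from Proposition~\ref{prop:add}'', which encodes exactly the compatibility between $\Phi$, the developing algorithm, and the definition of the unit infinitesimal diamond shear that you verify via Remark~\ref{rem:xi_omega}, \eqref{eq:def_Phi}, and \eqref{eq:finite-shears-zygmund}. Your sign/indexing bookkeeping ($+1,-1,+1,-1$ on $(a,b),(b,c),(c,d),(d,a)$ for $e=(a,c)$) matches the paper's conventions (Figure~\ref{fig:singlediamond} and the definition of $\vartheta_{e^*}$), so there is no gap.
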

\begin{proof}
This follows directly from Proposition~\ref{prop:add}.
\end{proof}

Theorem~\ref{thm:infBers} implies that in the half-plane model, the corresponding quadratic differential is
\begin{equation}
\label{eq:diamond-bers}
\begin{split}
\phi (z):=
\frac{\ii}{2\pi}\Big{[}\frac{(\cayley(a_1)-\cayley(a_2))^2}{(z-\cayley(a_1))^2(z-\cayley(a_2))^2}-\frac{(\cayley(a_2)-\cayley(a_3))^2}{(z-\cayley(a_2))^2(z-\cayley(a_3))^2}+\\ \frac{(\cayley(a_3)-\cayley(a_4))^2}{(z-\cayley(a_3))^2(z-\cayley(a_4))^2}-\frac{(\cayley(a_4)-\cayley(a_1))^2}{(z-\cayley(a_4))^2(z-\cayley(a_1))^2}\Big{]}
\end{split}
\end{equation}
for all $z\in \m  H^*$, and 
we have

\begin{equation}\label{eq:diamond-bers-simple}
    \phi (z)=
\frac{ \ii}{\pi}\sum_{j=1}^4 (-1)^{j-1}\left( \frac{1}{\cayley(a_{j+1})-\cayley(a_j)} +  \frac{1}{\cayley(a_{j})-\cayley(a_{j-1})} \right)\frac{1}{z-\cayley(a_j)} ,
\end{equation}
where the subscripts are considered modulo 4.

\begin{thm}\label{thm:WP_one_diamond}
If $u$ is the Zygmund vector field associated with the unit infinitesimal diamond shear 
on the quad with vertices $(a_1,a_2,a_3,a_4)$ in $\m T$ with diagonal $(a_1,a_3)$, then
$$
\|u\|^2 _{\WP}=\frac{2}{\pi} \sum_{j,k= 1}^4 \frac{(-1)^{j+k} a_j^2 \bar a_k^2 (a_{j+1}-a_{j-1})(\bar{a}_{k+1}-\bar{a}_{k-1})}{(a_{j+1}-a_j)(a_{j}-a_{j-1})(\bar{a}_{k+1}-\bar{a}_k)(\bar{a}_{k}-\bar{a}_{k-1})} \sigma (a_j,a_k).
$$
where for $a, b \in \m T$,
\begin{equation}\label{eq:sigma}
\sigma (a,b)=\sum_{p=0}^{\infty} \frac{(a\bar{b})^{p+1}}{(1+p)(2+p)(3+p)}.
\end{equation}

Further, let $u_1, u_2: E\to \m R$ be two unit infinitesimal diamond shears on quads with vertices $Q_1 = (a_1,a_2,a_3,a_4)$ and $Q_2 = (b_1,b_2,b_3,b_4)$ in $\m T$ and diagonals $e_1 = (a_1, a_3)$ and $e_2 = (b_1, b_3)$ respectively. Then 
\begin{align}\label{eq:two_quad_product}
    \langle u_1,u_2\rangle_{\WP} = \frac{2}{\pi} \Re \sum_{j,k=1}^4 \frac{(-1)^{j+k} a_j^2 \bar b_k^2 (a_{j+1}-a_{j-1})(\bar b_{k+1} -\bar b_{k-1})}{(a_{j+1}-a_{j})(a_{j}-a_{j-1})(\bar b_{k+1}-\bar b_{k})(\bar b_{k} - \bar b_{k-1})} \sigma(a_j,b_k).
\end{align}
\end{thm}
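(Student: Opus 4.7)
The plan is to evaluate the Weil--Petersson inner product via an explicit Laurent expansion of the Bers derivative in the outer disk model.

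First I would transfer \eqref{eq:diamond-bers-simple} from the half-plane to $\mathbb D^*$. Using the covariance $\phi^{\mathbb D}(w) = \phi^{\mathbb H}(\cayley(w))\,\cayley'(w)^2$ together with $\cayley(w) - \cayley(a) = 2\ii(w-a)/[(w-1)(a-1)]$, a direct simplification shows that all factors of $(w-1)$ cancel and that the Bers differential of the unit diamond shear on a quad $(a_1,a_2,a_3,a_4)$ with diagonal $(a_1,a_3)$ has the same simple-pole shape as in $\mathbb H^*$:
\begin{equation*}
\phi^{\mathbb D}(w) = \frac{\ii}{\pi} \sum_{j=1}^4 (-1)^{j-1} P_j\, \frac{1}{w-a_j}, \qquad P_j := \frac{a_{j+1}-a_{j-1}}{(a_{j+1}-a_j)(a_j-a_{j-1})}, \quad w\in\mathbb D^*.
\end{equation*}
(The double-pole contributions from the analogue of \eqref{eq:inf_bers} cancel thanks to the alternating signs on adjacent edges, as in the derivation of \eqref{eq:diamond-bers-simple}.) Pulling back \eqref{eq_def_wp_metric} and the Ahlfors--Weill formula \eqref{eq:harmonic} to the disk via $z \mapsto 1/\bar z$ recasts the WP inner product as
\begin{equation*}
\langle u_1, u_2\rangle_{\WP} = \Re \iint_{\mathbb D^*} \phi^{\mathbb D}_1(w)\,\overline{\phi^{\mathbb D}_2(w)}\,(|w|^2-1)^2\, \dd A(w).
\end{equation*}

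Next, for $|w|>1$ and $|a_j| = |b_k| = 1$ expand $\frac{1}{w-a_j}=\sum_{p\ge 0} a_j^p w^{-p-1}$ and similarly for $\frac{1}{\bar w - \bar b_k}$, insert into the integral, and pass to polar coordinates $w = re^{\ii\theta}$. Angular integration kills all cross terms $p\neq q$, leaving the radial integral $\int_1^\infty (r^2-1)^2 r^{-2p-1}\,\dd r = \frac{1}{p(p-1)(p-2)}$ for $p\ge 3$ (and divergent for $p=0,1,2$). A short telescoping computation over the four oriented edges of the quad shows the cancellation identities $\sum_{j=1}^4 (-1)^{j-1} P_j a_j^p = 0$ for $p=0,1,2$ (and the analogue for the $b_k$), so only $p\ge 3$ contributes. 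Reindexing $p \mapsto q+3$ factors the surviving series as $(a_j\bar b_k)^2\, \sigma(a_j,b_k)$, and inserting the explicit expression for $P_j$ (and its conjugate for $Q_k$) yields the formula \eqref{eq:two_quad_product}. The identity for $\|u\|^2_{\WP}$ follows by specializing to $Q_1=Q_2$, where the real part is automatically taken.

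The main technical subtlety is justifying the interchange of the Laurent sum with the integral, since the individual terms $a_j^p\bar b_k^q w^{-p-1}\bar w^{-q-1}(|w|^2-1)^2$ are not integrable over $\mathbb D^*$ for small $p,q$ (the obstruction is at $\infty$, where the weight $(|w|^2-1)^2$ grows like $|w|^4$), and the poles on $\mathbb T$ require care when $a_j = b_k$. I would handle this by restricting to an annulus $\{1+\eps < |w| < R\}$, where absolute convergence makes the interchange trivial, and then sending $\eps \to 0$ and $R\to\infty$. The three vanishing identities $A_0=A_1=A_2=0$ ensure that the would-be divergent contributions at infinity cancel exactly, while the $L^2$-integrability of $\phi^{\mathbb D}$ against $(|w|^2-1)^2$, which is guaranteed by Corollary~\ref{cor:h_is_in_H32}, controls the remaining limits at the boundary. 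The second formula then transfers verbatim from the disk to the circle since everything is written in terms of $a_j,b_k \in \mathbb T$.
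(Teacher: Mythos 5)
Your proposal is correct and takes essentially the same route as the paper: the paper reflects $\phi$ to the harmonic Beltrami differential $-\tfrac{\ii}{2\pi}(1-|\zeta|^2)^2\sum_j(-1)^{j-1}\bar\zeta^{-3}P_j(1-a_j\bar\zeta)^{-1}$ on the \emph{interior} disk, uses the same three moment identities $\sum_j(-1)^jP_ja_j^p=0$ for $p=0,1,2$ to discard the low-order terms of the geometric series, and evaluates the resulting absolutely convergent disk integral in polar coordinates to produce $\sigma(a_j,b_k)$ --- your Laurent expansion of $\phi^{\mathbb D}$ in $\m D^*$ is the mirror image of this under $w\mapsto 1/\bar\zeta$, with identical constants. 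The only substantive difference is bookkeeping: the paper applies the identities at the level of the integrand (so the integral is absolutely convergent from the start and no truncation is needed), whereas you cancel the divergent $p=0,1,2$ contributions at the level of the truncated integrals over an annulus; both are valid.
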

\begin{proof}
Let $\zeta : = \cayley^{-1} (z) = \xi + \ii \eta$. We have from change of variables, \eqref{eq:diamond-bers-simple}, and \eqref{eq:harmonic} that in the disk model
\begin{align*}
    \dot \mu_u (\zeta) = - \frac{\ii}{2\pi} (1- |\zeta|^2)^2 \sum_{j = 1}^4 \frac{(-1)^{j-1}}{\bar \zeta^3} \left(\frac{1}{a_{j+1} - a_j} + \frac{1}{a_{j} - a_{j-1}} \right) \frac{1}{ 1 - a_j \bar \zeta}.
\end{align*}
We notice that 
$$\sum_{j = 1}^4 (-1)^j\left(\frac{1}{a_{j+1} - a_j} + \frac{1}{a_{j} - a_{j-1}} \right) a_j^p = 0$$
for $p =0, 1, 2$. Therefore, 
\begin{align*}
    \dot \mu_u (\zeta)  &=  \frac{\ii}{2\pi} (1- |\zeta|^2)^2 \sum_{j = 1}^4 \frac{(-1)^{j}}{\bar \zeta^3} \left(\frac{1}{a_{j+1} - a_j} + \frac{1}{a_{j} - a_{j-1}} \right) \sum_{p \ge 0} (a_j \bar \zeta)^p \\
    & =  \frac{\ii}{2\pi} (1- |\zeta|^2)^2 \sum_{j = 1}^4 \frac{(-1)^{j}}{\bar \zeta^3} \left(\frac{1}{a_{j+1} - a_j} + \frac{1}{a_{j} - a_{j-1}} \right) \sum_{p \ge 3} (a_j \bar \zeta)^p \\
    & =  \frac{\ii }{2\pi} (1- |\zeta|^2)^2 \sum_{j = 1}^4 (-1)^{j}a_j^2\left(\frac{1}{a_{j+1} - a_j} + \frac{1}{a_{j} - a_{j-1}} \right) \frac{a_j}{1 - a_j \bar \zeta}.
    \end{align*}
    Define for $a, b \in \m T$,
\begin{equation}\label{eq:bluesigma}
    \sigma(a,b) = \frac{1}{2\pi} \iint_{\m D} \frac{a}{1-a\bar z} \frac{\bar b}{1-\bar b z}(1-|z|^2)^2 \, \dd x \dd y.
\end{equation}
Using polar coordinates we find
\begin{align*}
\sigma(a,b) & =\frac{1}{2\pi}\iint_{\m D}\frac{a}{1-a\bar z}\frac{\bar b}{1-\bar{b}z}(1-|z|^2)^2 \,\dd x\dd y \\
& =   \frac{a \bar b}{2\pi} \int_0^1 \int_{0}^{2\pi} \sum_{p,q \ge 0}  (a re^{-\ii \theta})^p ( \bar b re^{\ii \theta})^q (1-r^2)^2 r\, \dd \theta \dd r \\
& =  \sum_{p\ge 0}(a \bar b )^{p+1} \int_0^1 r^{2p + 1}(1-r^2)^2 \, \dd r\\
& =  \sum_{p\ge 0}\frac{(a \bar b )^{p+1}  }{(p+1)(p+2)(p+3)}.    
\end{align*}
    
    The square of the Weil--Petersson norm of $u$ equals
    \begin{align*}
        \norm{u}^2_{\WP} & =  \Re \iint_{\m D} |\dot \mu_u|^2 \frac{4}{(1-|\zeta|^2)^2} \, \dd \xi \dd \eta \\
        & =\frac{1}{\pi^2} \sum_{j,k= 1}^4 (-1)^{j+k} a_j^2 \bar a_k^2 \left(\frac{1}{a_{j+1} - a_j} + \frac{1}{a_{j} - a_{j-1}} \right)  \left(\frac{1}{\bar a_{k+1} - \bar a_k} + \frac{1}{\bar a_{k} - \bar a_{k-1}} \right) \\
        & \qquad \iint_{\m D}   \frac{a_j}{1 - a_j \bar \zeta}  \frac{\bar a_k}{1 - \bar a_k  \zeta} (1-|\zeta|^2)^2 \, \dd \xi \dd \eta \\
        & =  \frac{2}{\pi} \sum_{j,k= 1}^4 \frac{(-1)^{j+k} a_j^2 \bar a_k^2 (a_{j+1}-a_{j-1})(\bar{a}_{k+1}-\bar{a}_{k-1})}{(a_{j+1}-a_j)(a_{j}-a_{j-1})(\bar{a}_{k+1}-\bar{a}_k)(\bar{a}_{k}-\bar{a}_{k-1})} \sigma (a_j,a_k).
    \end{align*}
    Notice that the first integral is real so we omit $\Re$ in the second equality.
    The same computation gives the claimed formula for $\brac{u_1, u_2}_{\WP} = \Re \iint_{\m D} \dot \mu_{u_1} \overline {\dot \mu_{u_2}} \frac{4}{(1-|\zeta|^2)^2} \, \dd \xi \dd \eta $.
\end{proof}

\begin{remark}\label{rem:wp_norm_single_diamond}

Since the Weil--Petersson metric is invariant under the adjoint action of $\PSL(2,\m R)$, therefore also under $\PSL(2,\m Z)$, the Weil--Petersson norm of a unit infinitesimal diamond shear is constant for all Farey quads. To compute it, we consider the case where $a_1 = 1$, $a_2 = \ii$, $a_3 = -1$, and $a_4 = -\ii$, with diagonal $(-1,1)$. We obtain 
\begin{align*}
   & \s (1,1) = \s (\ii, \ii) = \s (-1, -1) = \s (-\ii, -\ii) = \sum_{p=1}^\infty \frac{1}{p(1+p)(2+p)}=\frac{1}{4} \\
   &   \s (1,-\ii) = \s (\ii, 1) = \s (-1, \ii) = \s (-\ii, -1) = \sum_{p=1}^\infty \frac{\ii^p}{p(1+p)(2+p)}= \frac{3-\pi}{4} - \ii \frac{\log 2 - 1}{2}.\\
    &   \s (-\ii, 1) = \s (1,\ii) = \s (\ii, -1) = \s (-1, -\ii) = \overline{\s (1,-\ii)}  = \frac{3-\pi}{4} + \ii \frac{\log 2 - 1}{2} \\
    &  \s (1,-1) = \s (\ii, -\ii) = \s (-1, 1) = \s (-\ii, \ii) = \sum_{p=1}^\infty \frac{(-1)^p}{p(1+p)(2+p)}= \frac{5}{4} - 2\log 2.
\end{align*}
Therefore Theorem~\ref{thm:WP_one_diamond} implies that 
\begin{align*}
    \norm{u}_{\WP}^2 = \frac{8}{\pi} \log 2
\end{align*}
for $u$ the Zygmund vector field corresponding the unit single infinitesimal diamond shear supported on $e\in E$.
\end{remark}

\begin{cor}\label{cor:full_metric}
We define  $g (Q_1, e_1, Q_2, e_2)$ to be the right-hand side of \eqref{eq:two_quad_product}.
    For $j = 1,2$, let $\dot \vartheta_j\in \mf h \simeq T_h \mc H$ and $u_j : = \emb_h (\dot \vartheta_j) \in T_{\Id} \WP(\m T) \simeq T_h \WP(\m T)$ be the corresponding vector field. 
    Then 
    $$\brac{u_1, u_2}_{\WP} = \sum_{e_1 \in E} \sum_{e_2 \in E} \dot \vartheta_1(e_1) \dot \vartheta_2 (e_2) \, g (h(Q_{e_1}), h(e_1), h(Q_{e_2}), h(e_2)).$$
\end{cor}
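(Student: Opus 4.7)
The plan is to first establish the formula for finitely supported diamond shears by direct expansion, then extend to arbitrary $\dot\vartheta_j\in\mf h$ by density and the boundedness of $\Xi_h$ from Corollary~\ref{cor:h_is_in_H32}. The key ingredient is that both sides define continuous bilinear forms on $\mf h\times\mf h$ that agree on a dense subspace.

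\emph{Step 1: the finite support case.} Suppose $\dot\vartheta_1,\dot\vartheta_2\in\mf h$ have finite support in $E$. Then by Lemma~\ref{lem:bounded_operator} and Remark~\ref{rem:xi_omega}, the linearity of $\Xi_h$ gives
\[
u_j \;=\; \Xi_h(\dot\vartheta_j) \;=\; \sum_{e\in E} \dot\vartheta_j(e)\, \Xi_h(\dot\vartheta_e),
\]
where the sum has finitely many nonzero terms. By Lemma~\ref{lem:diamond_shear_xi_explicitly}, $\Xi_h(\dot\vartheta_e)$ is precisely the Zygmund vector field associated with the unit infinitesimal diamond shear on the quad $h(Q_e)$ with diagonal $h(e)$. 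Bilinearity of $\brac{\cdot,\cdot}_{\WP}$ and equation \eqref{eq:two_quad_product} of Theorem~\ref{thm:WP_one_diamond} then give exactly the claimed formula, with the double sum reducing to a finite sum.

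\emph{Step 2: extension by density and continuity.} For general $\dot\vartheta_1,\dot\vartheta_2\in\mf h$, choose a nested exhaustion $E_1\subset E_2\subset\cdots$ of $E$ by finite sets with $\bigcup_N E_N=E$, and set $\dot\vartheta_j^{(N)}:= \dot\vartheta_j\,\mathbf{1}_{E_N}$. By construction $\dot\vartheta_j^{(N)}\to\dot\vartheta_j$ in $\mf h$. Corollary~\ref{cor:h_is_in_H32} states that $\Xi_h\colon\mf h\to T_{\Id}\WP(\m T)$ is a bounded linear operator, so $u_j^{(N)}:=\Xi_h(\dot\vartheta_j^{(N)})\to u_j$ in $H^{3/2}$, and continuity of the Weil--Petersson pairing gives
\[
\brac{u_1^{(N)}, u_2^{(N)}}_{\WP} \;\xrightarrow[N\to\infty]{}\; \brac{u_1, u_2}_{\WP}.
\]
By Step 1, the left-hand side above equals the finite double sum $\sum_{e_1,e_2\in E_N} \dot\vartheta_1(e_1)\dot\vartheta_2(e_2)\,g(h(Q_{e_1}),h(e_1),h(Q_{e_2}),h(e_2))$, which therefore converges to the Weil--Petersson pairing.

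\emph{Main obstacle and remarks.} The only nontrivial point is that the double sum on the right-hand side must be interpreted correctly, since a priori we do not know it converges absolutely. The argument above interprets it as the limit of the symmetric truncations over $E_N\times E_N$, and this limit exists and equals $\brac{u_1,u_2}_{\WP}$. Equivalently, the formula identifies the Gram matrix $G_{e_1,e_2}=g(h(Q_{e_1}),h(e_1),h(Q_{e_2}),h(e_2))$ of the images of the canonical basis $\{\dot\vartheta_e\}_{e\in E}$ under $\Xi_h$; Corollary~\ref{cor:h_is_in_H32} guarantees that the associated bilinear form $\brac{G\dot\vartheta_1,\dot\vartheta_2}_{\ell^2}$ is bounded on $\mf h\times\mf h$, with operator norm at most $\|\Xi_h\|^2$, so Cauchy--Schwarz in $\ell^2(E)$ yields absolute convergence of the iterated sum $\sum_{e_1}\sum_{e_2}$ as well. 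No new estimates beyond those already collected in Theorem~\ref{thm:WP_one_diamond} and Corollary~\ref{cor:h_is_in_H32} are needed.
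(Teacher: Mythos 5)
Your proposal is correct and follows essentially the same route as the paper: the finite-support case is exactly Lemma~\ref{lem:diamond_shear_xi_explicitly} combined with Theorem~\ref{thm:WP_one_diamond}, and the passage to general $\dot\vartheta_j\in\mf h$ uses the boundedness of $\emb_h$ from Lemma~\ref{lem:bounded_operator}/Corollary~\ref{cor:h_is_in_H32} together with continuity of $\brac{\cdot,\cdot}_{\WP}$, which is precisely the paper's (much terser) argument. Your truncation interpretation of the double sum and the remark on its convergence simply make explicit what the paper leaves implicit.
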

\begin{proof}
This follows from Lemma~\ref{lem:diamond_shear_xi_explicitly} and Theorem~\ref{thm:WP_one_diamond} if $\dot \vartheta_1$ and $\dot \vartheta_2$ are finitely supported. Lemma~\ref{lem:bounded_operator} extends the result to all $\dot \vartheta_j \in \mf h$.
\end{proof}

\subsection{Weil--Petersson symplectic form on $\mc H$}\label{sec:symplectic}

We give an expression for the Weil--Petersson symplectic form $\o$ restricted to $\mc H$ in terms of a mixture of shears and diamond shears. Similar to the computation of the metric, the Weil--Petersson symplectic form is also right-invariant on $\WP(\m T) \simeq T_0(\m D)$, so we simply use $\o$ to denote its alternating bilinear form on $T_{\Id} \WP(\m T)$ (and compute only for pull-backs to $T_{\Id} \WP(\m T)$). 

\begin{thm}\label{thm:symplectic_form}
Let $h \in \mc H\subset \WP(\m T)$.  For $j = 1,2$, let  $\dot \vartheta_j \in \mf h$, $\dot s_j := \Phi (\dot \vartheta_j)$ be the corresponding infinitesimal shears, and let $u_j := \emb_h (\dot \vartheta_j) \in T_{\Id} \WP(\m T) = H^{3/2}(\m T)$ be the pull-back by $h^{-1}$ of the tangent vector   in $T_{h} \WP(\m T)$  represented by 
the differentiable path $t \mapsto \vartheta_h + t \cdot \dot\theta_j$ 
as in Corollary~\ref{cor:h_is_in_H32}.  Then
\begin{align*}
    \omega(u_1,u_2) = \sum_{e\in E} \dot \vartheta_1(e) \dot s_2(e) = - \sum_{e\in E} \dot s_1(e) \dot \vartheta_2(e).
\end{align*}
\end{thm}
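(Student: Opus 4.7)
The strategy is to reduce to the case of finitely supported diamond shears via continuity and density, then invoke Penner's symplectic formula on decorated Teichmüller space (\cite[Thm.~5.5]{Penner1993UniversalCI}) in combination with the identification $\vartheta_h = -\log \Lambda_h$ from Lemma~\ref{lem:theta_logL}, and finally rearrange the resulting triangle sum into a sum over edges so that the alternating-sign pattern of $\Phi$ appears naturally.

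\emph{Step 1 (Continuity and density).} Both sides are bilinear in $(\dot\vartheta_1, \dot\vartheta_2)$. The left-hand side is continuous on $\mf h\times \mf h$ because $\omega$ is continuous on $T_{\Id}\WP(\m T) = H^{3/2}(\m T)$ and the operator $\Xi_h$ is bounded by Corollary~\ref{cor:h_is_in_H32}. For the right-hand side, note that each value $\dot\vartheta(e')$ appears in the formula~\eqref{eq:def_Phi} for $\dot s(e) = \Phi(\dot\vartheta)(e)$ for at most four edges $e$, so $\|\dot s_j\|_{\ell^2(E)} \le 4 \|\dot\vartheta_j\|_{\ell^2(E)}$ and Cauchy--Schwarz gives
$$\Big|\sum_{e\in E} \dot\vartheta_1(e)\dot s_2(e)\Big| \le 4\|\dot\vartheta_1\|_{\ell^2}\|\dot\vartheta_2\|_{\ell^2}.$$
Since finitely supported diamond shears are dense in $\mf h$, it is enough to prove the identity when $\dot\vartheta_1, \dot\vartheta_2$ have finite support.

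\emph{Step 2 (Penner's formula).} When $\dot\vartheta_1, \dot\vartheta_2$ have finite support, so do $\dot s_j = \Phi(\dot\vartheta_j)$, and Proposition~\ref{prop:add} shows that the corresponding $H^{3/2}$ vector fields are piecewise-$\mathfrak{psu}(1,1)$ and supported on finitely many cells. The decoration $\sigma(h)$ of Definition~\ref{df:section} gives a point of decorated Teichmüller space, and Penner's Theorem~5.5 expresses the Weil--Petersson symplectic form as
$$\omega = \sum_{T} \bigl(d\log\Lambda_{e_{ab}}\wedge d\log\Lambda_{e_{bc}} + d\log\Lambda_{e_{bc}}\wedge d\log\Lambda_{e_{ca}} + d\log\Lambda_{e_{ca}}\wedge d\log\Lambda_{e_{ab}}\bigr)$$
(up to normalization), where $T = (a,b,c)$ ranges over ideal triangles of $h(\Farey)$ in some fixed orientation. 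By Lemma~\ref{lem:theta_logL}, $d\log\Lambda_e(u_j) = -\dot\vartheta_j(e)$, so each triangle contributes
$$\omega_T(u_1,u_2) = \sum_{\mathrm{cyc}} \bigl[\dot\vartheta_1(e_{ab})\dot\vartheta_2(e_{bc}) - \dot\vartheta_2(e_{ab})\dot\vartheta_1(e_{bc})\bigr].$$

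\emph{Step 3 (Grouping by edges).} Each edge $e \in E$ with Farey quad $Q_e$ and boundary edges $(e_1,e_2,e_3,e_4)$ is shared by exactly two triangles, namely $(e_1,e_2,e)$ and $(e_3,e_4,e)$. Extracting the contributions in which $\dot\vartheta_1(e)$ appears, summing the cyclic terms of both triangles, and using that the counterclockwise orientation around $Q_e$ gives alternating signs, the coefficient of $\dot\vartheta_1(e)\dot\vartheta_2(e_i)$ reduces to $(-1)^i$, i.e.\ precisely the pattern of $\Phi$:
$$\omega(u_1,u_2) = \sum_{e\in E} \dot\vartheta_1(e)\bigl[-\dot\vartheta_2(e_1) + \dot\vartheta_2(e_2) - \dot\vartheta_2(e_3) + \dot\vartheta_2(e_4)\bigr] = \sum_{e \in E} \dot\vartheta_1(e)\dot s_2(e).$$
The equivalent expression $-\sum_{e\in E} \dot s_1(e)\dot\vartheta_2(e)$ then follows immediately from the antisymmetry $\omega(u_1,u_2) = -\omega(u_2,u_1)$.

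\emph{Main obstacle.} The delicate part is the bookkeeping in Step~3: verifying that when the cyclic contributions of the two triangles bordering $e$ are added, the signs combine to produce exactly the alternating $(-1,+1,-1,+1)$ pattern on $(e_1,e_2,e_3,e_4)$ appearing in the definition of $\Phi$, independently of which vertex of $Q_e$ has smaller generation. An alternative that avoids reliance on \cite[Thm.~5.5]{Penner1993UniversalCI} would be a direct computation of $\omega(u_1,u_2) = \Im \iint_{\m H} \dot\mu_{u_1}\overline{\dot\mu_{u_2}}\, y^{-2}\dd x \dd y$ using the harmonic Beltrami representatives of Theorem~\ref{thm:infBers}; in the finite-support case this becomes a residue computation over the poles at the vertices of the supporting edges that should collapse to the same bilinear pairing.
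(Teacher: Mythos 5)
Your overall strategy is the same as the paper's: reduce to finitely supported $\dot\vartheta_j$ by bilinearity, continuity of $\omega$ on $H^{3/2}$, boundedness of $\Xi_h$, and the Cauchy--Schwarz bound on the right-hand side; then apply Penner's Theorem~5.5 together with $\dot\vartheta_j(e) = -\,\dd\log\Lambda(e)(u_j)$; then regroup the triangle sum. (The paper regroups by fans rather than by edges, writing $\omega(u_1,u_2)=\sum_{v}\sum_{e_n\in\fan(v)}\dot\vartheta_1(e_n)\dot\vartheta_2(e_{n+1})-\dot\vartheta_1(e_{n+1})\dot\vartheta_2(e_n)$ and matching this against $\sum_e\dot\vartheta_1(e)\dot s_2(e)$ via the labeling $e_1,e,e_4$ counterclockwise in $\fan(a)$ and $e_3,e,e_2$ in $\fan(b)$; this is the same computation as your edge grouping and it does produce the alternating $(-1,+1,-1,+1)$ pattern you flag as the delicate point, so that part of your plan is sound. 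For the continuity of $\omega$ the paper uses the K\"ahler identity $\omega(u_1,u_2)=\langle u_1, J(u_2)\rangle_{\WP}$ with $J$ an isometry.)

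There is one genuine gap in Step~2. Penner's Theorem~5.5, as quoted in the paper (Theorem~\ref{thm:Penner_symplectic}), asserts that $\widetilde\omega$ projects to the Weil--Petersson symplectic form only on $\Diff(\m T)/\mob(\m T)$, whereas your base point $h$ is a general element of $\mc H$, which need not be a diffeomorphism; you cannot simply evaluate Penner's formula at the decorated point $\sigma(h)$. The paper closes this by observing that, for finitely supported $\dot s_1,\dot s_2$, the vector fields $u_j=\emb_h(\dot\vartheta_j)$ depend only on the images $h(a_1),\dots,h(a_n)$ of the finitely many vertices involved (Remark~\ref{rem:xi_omega} and Definition~\ref{df:Omega_shears_zygmund}), so one may replace $h$ by any $g\in\Diff(\m T)/\mob(\m T)$ agreeing with $h$ at those points without changing either side of the identity, and then apply Penner's theorem at $g$. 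Your argument needs this (or some substitute) to be complete; with it inserted, the rest of your plan goes through.
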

\begin{remark}
It is remarkable that unlike the expression of the metric tensor (Corollary~\ref{cor:full_metric}), the expression of the symplectic form in shear and diamond shear coordinates is very simple and  independent of $h$.
\end{remark}

Concretely, the above theorem shows if $u_1 = \Xi_h (\dot \vartheta_{e_1}), u_2 =  \Xi_h (\dot \vartheta_{e_2})$ are the vector fields given by the unit infinitesimal diamond shears on quads $Q_1 = h(Q_{e_1}), Q_2 = h(Q_{e_2})\in h(\Farey)$ of diagonals $h(e_1), h(e_2)$ respectively, then 
\begin{itemize}[itemsep=-2pt]
    \item $\omega(u_1,u_2) = 1$ if $Q_1,Q_2$ overlap in one triangle and $e_1, e_2$ are adjacent in a fan and the index of $e_2$ is the index of $e_1$ plus $1$;
    \item $\omega(u_1,u_2) = -1$ if $Q_1,Q_2$ overlap in one triangle and $e_1, e_2$ are adjacent in a fan and the index of $e_2$ is the index of $e_1$ minus $1$;
    \item $\omega(u_1,u_2) = 0$ if $Q_1 = Q_2$ or if $Q_1, Q_2$ are disjoint. Note that two quads are considered disjoint if they overlap in only a vertex or edge. See Figure~\ref{fig:quad_cases}.
\end{itemize}
\begin{figure}[ht]
    \centering
    \includegraphics[scale=0.8]{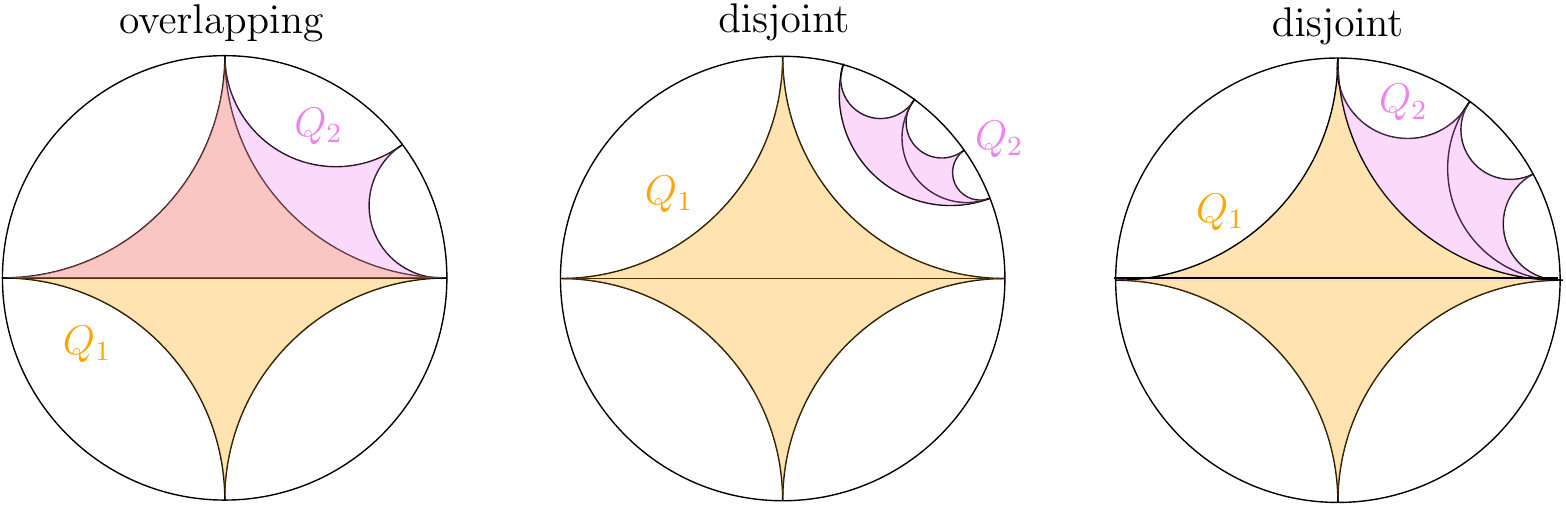}
    \caption{An example of quads overlapping in a triangle and gives value $\o(u_1,u_2) = -1$ (left) and two examples of disjoint quads (middle and right).}
    \label{fig:quad_cases}
\end{figure}

The most direct way to prove Theorem \ref{thm:symplectic_form} would be to use the same computations as in Theorem \ref{thm:WP_one_diamond}, as in the half-plane model $\mathbb{H}$, 
\begin{equation}\label{eq:symplectic}
\o( u_1,u_2) = - \Im\iint_{\m H}\dot \mu_{u_1}(z)\overline{\dot \mu_{u_2}(z)}\frac{1}{y^2} \,\dd x \dd y, \qquad z = x + \ii y.    
\end{equation}
However, we have not been able to prove in general that $\omega$ has such a simple formula directly from Equation~\eqref{eq:symplectic} (see  Remark~\ref{rem:direct_symplectic?} and Lemma~\ref{lem:special_case} below for discussion). Instead, we use an expression for a symplectic form $\tilde{\omega}$ on decorated Teichm\"uller space from \cite{Penner1993UniversalCI}, and the relationship between diamond shears and $\log \Lambda$-lengths described in Section~\ref{sec:diamond_logL}. 

Recall from Definition~\ref{df:section} the section $\sigma:\mc P_0 \to \widetilde{T(\m D)}$ which gives a canonical way to choose the decoration for each $h\in \mc H\subset \mc P_0$. This allows us to compare the diamond shear coordinate of $h$ with the $\log \Lambda$-coordinate of $\sigma(h)$. For $e=(a,b)\in E$, if $\rho_a,\rho_b$ are the horocycles chosen by $\sigma$ at $h(a),h(b)$, recall the notation
\begin{align*}
    \log \Lambda_{h}(e) := \log \Lambda(\rho_a,\rho_b).
\end{align*}
Lemma~\ref{lem:theta_logL} shows that 
$$\vartheta_h(e) = -\log \Lambda_h(e).$$ 
As a corollary, the same relationship passes to $\mf h$. 
\begin{cor}\label{cor:tangent_theta_logL}
    Choose $h\in \mc H$ with diamond shear coordinate $\vartheta_{h}$and $\dot \vartheta \in \mf h$ and let $h_t$ be the Weil--Petersson homeomorphism induced by the diamond shear function $\vartheta_h + t \cdot \dot \vartheta$. Then we have
    \begin{align*}
        \dot \vartheta(e) = \frac{\dd}{\dd t}\bigg\lvert_{t=0} -\log \Lambda_{h_t}(e).
    \end{align*}
\end{cor}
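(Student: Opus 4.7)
The plan is to reduce this corollary to Lemma~\ref{lem:theta_logL} after checking the hypotheses, noting that the relation between $\vartheta_{h_t}$ and $\log \Lambda_{h_t}$ actually makes the derivative an \emph{explicit} linear quantity in $t$.

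First, I would verify that $h_t$ belongs to $\mc H$ for all $t\in \m R$, not only for small $t$. Since $\vartheta_h\in \ell^2(E)$ and $\dot\vartheta \in \mf h = \ell^2(E)$, the path $\vartheta_h + t\cdot \dot\vartheta$ lies in $\ell^2(E)$ for all $t$, so by Definition~\ref{df:H_class} the induced circle homeomorphism $h_t$ belongs to $\mc H$. By Corollary~\ref{cor:H_inclusions_summary}, this gives $s_{h_t} \in \mc P_0$, so $h_t$ is differentiable at every rational point by Lemma~\ref{lem:P_derivative}~\ref{it:P0_differentiable}. Hence the section $\sigma$ of Definition~\ref{df:section} is well-defined at $h_t$, and so is $\log \Lambda_{h_t}(e)$ for every $e \in E$.

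Second, for each $e \in E$ and each $t \in \m R$, Lemma~\ref{lem:theta_logL} applied to $h_t \in \mc P_0$ gives
\begin{equation*}
\vartheta_{h_t}(e) \;=\; -\log \Lambda_{h_t}(e).
\end{equation*}
On the other hand, by construction of $h_t$, we have the pointwise identity $\vartheta_{h_t}(e) = \vartheta_h(e) + t\cdot \dot\vartheta(e)$, which is an affine function of $t$. Combining the two identities,
\begin{equation*}
-\log \Lambda_{h_t}(e) \;=\; \vartheta_h(e) + t\cdot \dot\vartheta(e),
\end{equation*}
so $t \mapsto -\log \Lambda_{h_t}(e)$ is itself affine in $t$. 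Differentiating at $t=0$ yields
\begin{equation*}
\frac{\dd}{\dd t}\Big|_{t=0} \bigl(-\log \Lambda_{h_t}(e)\bigr) \;=\; \dot\vartheta(e),
\end{equation*}
which is the desired conclusion. No genuine difficulty arises: the only mild point to articulate carefully is the very first step (that $h_t\in \mc H\subset \mc P_0$ so that $\sigma(h_t)$ and hence $\log \Lambda_{h_t}$ make sense for all $t$), after which the statement is immediate from Lemma~\ref{lem:theta_logL} and the definition of $h_t$.
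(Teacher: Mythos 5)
Your proof is correct and follows essentially the same route the paper intends: the paper states this corollary without proof as an immediate consequence of Lemma~\ref{lem:theta_logL}, since $\vartheta_{h_t}(e)=\vartheta_h(e)+t\,\dot\vartheta(e)$ is affine in $t$ and equals $-\log\Lambda_{h_t}(e)$ once $h_t\in\mc H\subset\mc P_0$ is noted. Your extra care in checking that $h_t\in\mc H$ for all $t$ and that $\sigma(h_t)$ is well-defined is exactly the (implicit) content of the paper's argument.
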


The following result from \cite[Sec.\,5.1]{Penner1993UniversalCI} shows that the symplectic form $\widetilde{\omega}$ defined below on $\widetilde{T(\m D)}$ 
in terms of $\log \L$-lengths projects to the Weil--Petersson symplectic form restricted to $\Diff(\m T)/\mob$. 

\begin{thm}[See {\cite[Thm.\,5.5]{Penner1993UniversalCI}}]\label{thm:Penner_symplectic}
Let $\tau$ be a triangle in $\Farey$, and let $\{e_1,e_2,e_3\}$ be its edges in counterclockwise order. For any $h\in \Diff(\m T)/\mob(\m T)$, define 
\begin{align*}
    (\widetilde{\omega}_{\tau})_{h} := & \dd \log \Lambda_{h}(e_1)\wedge  \dd \log \Lambda_{h}(e_2) +  \dd \log \Lambda_{h}(e_2) \wedge  \dd \log \Lambda_{h}(e_3)  \\
     & +  \dd \log \Lambda_{h}(e_3) \wedge  \dd \log \Lambda_{h}(e_1).
\end{align*}
Then 
$$\widetilde{\omega}:= - \sum_{\tau\in \Farey}\; \widetilde{\omega}_\tau$$
projects to the Weil--Petersson symplectic form $\omega$ under forgetting the decoration.
\end{thm}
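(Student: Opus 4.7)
The plan is to derive the identity from Penner's formula (Theorem~\ref{thm:Penner_symplectic}), combined with the dictionary $\vartheta_h = -\log \Lambda_h$ supplied by Lemma~\ref{lem:theta_logL} and Corollary~\ref{cor:tangent_theta_logL}. The second equality $\sum_e \dot\vartheta_1(e)\dot s_2(e) = -\sum_e \dot s_1(e)\dot\vartheta_2(e)$ will then follow automatically from the antisymmetry of $\omega$, so the essential task is to establish the first.

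The first step is a density/continuity reduction. The right-hand side is bilinear in $(\dot\vartheta_1, \dot\vartheta_2) \in \mf h \times \mf h$ and continuous in the $\ell^2 \times \ell^2$ topology (absolutely convergent since $\|\dot s_j\|_{\ell^2} \le 4 \|\dot\vartheta_j\|_{\ell^2}$ from the four-term definition of $\Phi$), and it is manifestly independent of $h$. The left-hand side is continuous in $\dot\vartheta_j$ by Lemma~\ref{lem:bounded_operator} and continuous in $h$ for the Weil--Petersson topology on $\mc H$. Combined with the density of smooth (or $C^1$ piecewise M\"obius) maps in $\mc H$, it suffices to prove the identity in the case where $h \in \Diff(\m T)/\mob(\m T)$ and $\dot\vartheta_1, \dot\vartheta_2$ have finite support.

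For such $h$ and $\dot\vartheta_j$, applying Theorem~\ref{thm:Penner_symplectic} together with Corollary~\ref{cor:tangent_theta_logL} (the two sign flips from $d\log \Lambda = -d\vartheta$ cancel inside each wedge) gives $\omega(u_1, u_2) = -\sum_\tau \widetilde\omega_\tau(\dot\vartheta_1, \dot\vartheta_2)$, now a finite sum, and for each triangle $\tau$ of $\Farey$ with counterclockwise-ordered edges $(f_1, f_2, f_3)$,
\begin{equation*}
-\widetilde\omega_\tau(\dot\vartheta_1, \dot\vartheta_2) = \sum_{i \bmod 3} \bigl(\dot\vartheta_1(f_{i+1})\dot\vartheta_2(f_i) - \dot\vartheta_1(f_i)\dot\vartheta_2(f_{i+1})\bigr).
\end{equation*}
Equivalently, each adjacent pair $(e, e')$ of edges lies in a unique triangle, and the coefficient of $\dot\vartheta_1(e)\dot\vartheta_2(e')$ in $-\sum_\tau \widetilde\omega_\tau$ is $-1$ if $e \to e'$ is a single counterclockwise step in that triangle, and $+1$ if a single clockwise step.

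The final step is a combinatorial matching with the right-hand side. Expanding $\dot s_2(e) = -\dot\vartheta_2(e_1^*) + \dot\vartheta_2(e_2^*) - \dot\vartheta_2(e_3^*) + \dot\vartheta_2(e_4^*)$, where $(e_1, e_2, e_3, e_4)$ are the counterclockwise-ordered boundary edges of the Farey quad $Q_e = (a,b,c,d)$ with $e = (a,c)$ and $e_1 = (a,b)$, one inspects the two triangles $(a,b,c)$ and $(a,c,d)$ of $Q_e$: within the first, the counterclockwise step from $e$ lands on $e_1$ (contributing coefficient $-1$, matching $-\dot\vartheta_2(e_1^*)$) and the clockwise step lands on $e_2$ (contributing $+1$, matching $+\dot\vartheta_2(e_2^*)$), and analogously for $e_3, e_4$ in the second triangle. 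This reproduces the alternating pattern $(-,+,-,+)$ and identifies the two double sums term by term. I expect this orientation bookkeeping to be the main obstacle: one must verify carefully that the cyclic convention in Penner's triangle sum aligns with the convention used in the definition of $\Phi$ for indexing $(e_1, \ldots, e_4)$ around $Q_e$. Once checked, the identity on the dense subset extends to all of $\mc H$ by continuity.
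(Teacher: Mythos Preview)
Your proposal does not prove the stated theorem. The statement you were given is Theorem~\ref{thm:Penner_symplectic}, which is a result quoted from Penner's work \cite[Thm.\,5.5]{Penner1993UniversalCI} and carries no proof in this paper. Your argument instead proves Theorem~\ref{thm:symplectic_form} (the expression $\omega(u_1,u_2)=\sum_e\dot\vartheta_1(e)\dot s_2(e)$), and it does so by \emph{invoking} Theorem~\ref{thm:Penner_symplectic} as an input. As a proof of the statement actually given, this is circular.

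If one reads your proposal as a proof of Theorem~\ref{thm:symplectic_form}, then it follows essentially the same route as the paper: reduce to finitely supported diamond shears, apply Penner's formula together with Corollary~\ref{cor:tangent_theta_logL}, and match the triangle sum against the expansion $\dot s_2=\Phi(\dot\vartheta_2)$. One difference is your reduction on $h$: you propose to approximate $h\in\mc H$ by diffeomorphisms using density and continuity of $\omega(\emb_h\cdot,\emb_h\cdot)$ in $h$, which you do not justify. The paper avoids this by observing that for finitely supported $\dot s_j$, the vector fields $\emb_h(\dot\vartheta_j)$ depend only on the values $h(a_1),\ldots,h(a_n)$ at finitely many vertices, so $h$ may be replaced outright by any $g\in\Diff(\m T)/\mob(\m T)$ agreeing with $h$ at those points; this sidesteps any continuity-in-$h$ argument.
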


\begin{remark}
The expression of the Weil--Petersson symplectic form differs from the one in \cite{Penner1993UniversalCI} by a factor $2$ due to a different choice of scalar factor. Indeed, a direct computation shows that our symplectic form \eqref{eq:symplectic} can be expressed in terms of the Fourier coefficients of the vector fields on the circle as 
$$\o(u_1,u_2) = \ii \pi \sum_{n\in \m Z} (n^3 - n) \tilde u_{1,n} \tilde u_{2,-n},$$
first proved in \cite{NagVerjovsky}, where $$u_j= \sum_{n \in \m Z} \tilde u_{j,n} e^{\ii n \t} \frac{\partial}{\partial \t}, \qquad \tilde u_{j,n} = \overline {\tilde u_{j,-n}} \in \m C. $$
Whereas Penner uses the symplectic form $2 \ii \pi \sum_{n\in \m Z} (n^3 - n) \tilde u_{1,n} \tilde u_{2,-n}$. We also verify Theorem~\ref{thm:Penner_symplectic} in a special case by direct computation in terms of the shears in Lemma~\ref{lem:special_case}.
\end{remark}

\begin{proof}[Proof of Theorem \ref{thm:symplectic_form}]
First suppose that $\dot s_1, \dot s_2$ are finitely supported shear functions with the end points of the support edges in $\{a_1,...,a_n\} \subset V$. By Remark \ref{rem:xi_omega} and Definition \ref{df:Omega_shears_zygmund}, $u_1=\emb_h(\dot \vartheta_1)$ and $u_2=\emb_h(\dot \vartheta_2)$ depend only on the points $h(a_1),...,h(a_n)$. In particular, if we replace $h$ by any function $g$ which agrees with $h$ at $a_1,...,a_n$, then we still have $\emb_{g}(\dot \vartheta_j) = \emb_{{h}}(\dot \vartheta_j)$ for $j=1,2$. Therefore we can replace $h$ with $g\in \Diff(\m T)/\mob(\m T)$. We use Corollary~\ref{cor:tangent_theta_logL} to identify infinitesimal diamond shears with infinitesimal $\log \Lambda$-lengths and then apply Theorem \ref{thm:Penner_symplectic}.  

Indeed, for each $v\in V$, let $(e_n)_{n\in \m Z}$ be a labelling of $\fan(v)$ in counterclockwise order. We rewrite the expression for $\omega$ from Theorem \ref{thm:Penner_symplectic} as a sum over fans instead of triangles. 
\begin{equation}
    \omega(u_1,u_2) = \sum_{v\in V}\sum_{e_n\in \fan(v)} \dot\vartheta_1(e_n) \dot\vartheta_2(e_{n+1}) - \dot\vartheta_1(e_{n+1})\dot\vartheta_2(e_n).
\end{equation}
On the other hand, if $e=(a,b)$, we can label the edges around $Q_e$ in counterclockwise order so that the counterclockwise order in $\fan(a)$ is $e_1, e, e_4$ and the counterclockwise order in $\fan(b)$ is $e_3, e, e_2$. Using Equation \eqref{eq:def_Phi},
\begin{align*}
    \sum_{e\in E} \dot \vartheta_1(e) \dot s_2(e) &= \sum_{e\in E}  \dot\vartheta_1(e) (-\dot\vartheta_2(e_1) + \dot\vartheta_2(e_2) -\dot\vartheta_2(e_3) + \dot\vartheta_2(e_4))\\
    &=\sum_{v\in V}\sum_{e_n\in \fan(v)} \dot \vartheta_1(e_n) \dot\vartheta_2(e_{n+1}) - \dot\vartheta_1(e_{n+1}) \dot \vartheta_2(e_n).
\end{align*}
Switching $u_1,u_2$, it is clear that $\sum_{e\in E}  \dot s_1(e)  \dot \vartheta_2(e)= - \sum_{e\in E} \dot \vartheta_1(e) \dot s_2(e)$. This completes the proof in the case that $\dot s_1 , \dot s_2$ are finitely supported. 

In general, given $\dot \vartheta_1,\dot \vartheta_2\in \mf h$ for which $\dot s_1, \dot s_2$ are not finitely supported, we can find sequences $(\dot \vartheta_j^{n})_{n\geq 1}$, $j=1,2$ such that 
\begin{itemize}[itemsep=-2pt]
    \item $\dot s_j^{n} = \Phi(\dot \vartheta_j^{n})$ is finitely supported for $j=1,2$ and all $n\geq 1$. 
    \item $\dot \vartheta_j^{n}$ converges to $\dot \vartheta_j$ in $\ell^2$ in diamond shears for $j=1,2$.
\end{itemize}
(For example, the sequences $(\dot \vartheta_j^{n})_{n\geq 1}$ where $\dot \vartheta_j^{n}$ is $\dot \vartheta_j$ restricted to edges with generation less than $n$ for $j=1,2$ satisfy these properties.)

For each finite $n$, we have 
\begin{align*}
    \sum_{e\in E} \dot \vartheta_1^{n}(e) \dot s_2^{n}(e) = \omega(u_1^n,u_2^n),
\end{align*}
where $u_j^n:=\emb_h(\dot\vartheta_j^{n})$ for $j=1,2$. It remains to compute the limits of both sides as $n\to \infty$. 

By Lemma \ref{lem:bounded_operator},
$u_j^n\xrightarrow{n\to \infty} u_j$ in $H^{3/2}$ for $j=1,2$. By Remark \ref{rem:kahler},
\begin{align*}
    \omega(u_1^n, u_2^n) = \langle u_1^n, J(u_2^n)\rangle_{\WP},
\end{align*}
where $J: T_{\Id} \WP(\m T) \to T_{\Id} \WP(\m T)$ is the (almost) complex structure and an isometry, hence
\begin{align*}
     \lim_{n\to \infty} \omega(u_1^n,u_2^n) =\omega(u_1,u_2).
\end{align*}
On the other hand, since $\ell^2$ convergence in diamond shears implies $\ell^2$ convergence in shears by the expression $\Phi(\dot \vartheta) = \dot s$, \eqref{eq:def_Phi}, and Cauchy-Schwarz inequality, we have 
\begin{align*}
    \lim_{n\to \infty} \sum_{e\in E} \dot \vartheta_1^{n}(e) \dot s_2^{n}(e) =  \sum_{e\in E} \dot \vartheta_1(e) \dot s_2(e).
\end{align*}
This completes the proof for general $\dot\vartheta_1, \dot\vartheta_2\in \mf h$.
\end{proof}

\begin{remark}\label{rem:kahler}
The Weil--Petersson metric $\langle \cdot, \cdot\rangle_{\WP}$ (computed in Theorem  \ref{thm:WP_one_diamond}, Corollary~\ref{cor:full_metric}), Weil--Petersson symplectic form $\omega$ (computed in Theorem \ref{thm:symplectic_form}), and a complex structure $J$ form a K\"ahler structure on Weil--Petersson Teichm\"uller space, meaning that
\begin{align*}
    \langle u_1,u_2\rangle_{\WP} = \omega(u_1, J(u_2)).
\end{align*}
The complex structure $J$ is the Hilbert transform \cite{NagVerjovsky} on the space of Zygmund vector fields, and was computed explicitly in terms of infinitesimal $\log \Lambda$-lengths in \cite[Thm.\,6.8]{Penner2002OnHF}. Combining the symplectic form $\omega$ and complex structure $J$ from \cite{Penner2002OnHF} gives us another way to compute the metric explicitly. Using this, we independently verify that 
$\norm{u}_{\WP}^2 =8\log 2/\pi$ when $u$ is the Zygmund vector field associated to a single infinitesimal diamond shear which coincides with our result in Remark~\ref{rem:wp_norm_single_diamond}.  
\end{remark}

\begin{remark}\label{rem:direct_symplectic?}
Starting from the definition in Equation~\eqref{eq:symplectic}, the same computation (but taking the imaginary part) and the same notation as in Theorem~\ref{thm:WP_one_diamond} gives that
\begin{equation}\label{eq:symplectic_direct_general}
\omega(u_1,u_2) = - \frac{2}{\pi} \Im \sum_{j,k=1}^4 \frac{(-1)^{j+k} a_j^2 \bar b_k^2 (a_{j+1}-a_{j-1})(\bar b_{k+1} -\bar b_{k-1})}{(a_{j+1}-a_{j})(a_{j}-a_{j-1})(\bar b_{k+1}-\bar b_{k})(\bar b_{k} - \bar b_{k-1})} \sigma(a_j,b_k),
\end{equation}
where $u_1$ and $u_2$ are the vector fields representing two unit infinitesimal diamond shears. 

Can one recover Theorem \ref{thm:symplectic_form} directly from \eqref{eq:symplectic_direct_general}? We are unable to prove this in general, but check it in a special case (Lemma~\ref{lem:special_case}). The general case to check, after normalization, would be when $a_1 = 1, a_2 = \ii, a_3 = -1$, but $a_4 \neq -\ii$ and $u_2$ is an arbitrary diamond shear of vertices $(b_1, b_2, b_3, b_4)$, such that $(a_1,a_2,a_3,a_4)$ and $(b_1,b_2,b_3,b_4)$ are 
both quads in the same tessellation $h(\Farey)$ of the disk. 
\end{remark}

\begin{lem}\label{lem:special_case}
Let $u_1 \in T_{\Id} \WP(\m T)$ be the vector field associated with the unit infinitesimal diamond shear on the quad $(a_1, a_2, a_3, a_4) = (1, \ii, -1, \ii)$ of diagonal $(1, -1)$. Let $u_2$ be the unit infinitesimal diamond shear associated with $(b_1, b_2, b_3, b_4) = (1, e^{\ii \t}, \ii, -1)$ of diagonal $(1, \ii)$. Then $\o (u_1, u_2) = -1$ for all $\t \in (0,\pi/2)$.
\end{lem}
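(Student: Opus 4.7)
The plan is to verify the claim by direct substitution into Equation \eqref{eq:symplectic_direct_general}. Introduce
$$A'_j := \frac{(-1)^{j-1} a_j^2 (a_{j+1}-a_{j-1})}{(a_{j+1}-a_j)(a_j-a_{j-1})}, \qquad B'_k := \frac{(-1)^{k-1} \bar b_k^2 (\bar b_{k+1}-\bar b_{k-1})}{(\bar b_{k+1}-\bar b_k)(\bar b_k-\bar b_{k-1})},$$
so the claim amounts to $\Im \sum_{j,k=1}^4 A'_j B'_k \sigma(a_j, b_k) = \pi/2$. A short calculation with $a_j = \ii^{j-1}$ gives $(A'_1, A'_2, A'_3, A'_4) = (-\ii, -1, \ii, 1)$, while the $B'_k$ are explicit rational functions of $w := e^{-\ii\theta}$; in particular $B'_4 = \ii/2$ is $\theta$-independent, $B'_1 = (w+1)/[2(w-1)]$, $B'_2 = (1+\ii)w^2/[(-\ii-w)(w-1)]$, and $B'_3 = (1+w)/[(-1+\ii)(-\ii-w)]$.

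The key reduction is to substitute the power series $\sigma(a,b) = \sum_{p\ge 0}(a\bar b)^{p+1}/[(p+1)(p+2)(p+3)]$ and exploit that $(a_j\bar b_k)^{p+1}$ factorizes:
$$\sum_{j,k} A'_j B'_k \sigma(a_j, b_k) = \sum_{p=0}^\infty \frac{\alpha_p\,\beta_p(\theta)}{(p+1)(p+2)(p+3)}, \quad \alpha_p := \sum_j A'_j a_j^{p+1}, \; \beta_p(\theta) := \sum_k B'_k \bar b_k^{p+1}.$$
A direct computation of $\alpha_p$ using $a_j = \ii^{j-1}$ yields a pleasant collapse: $\alpha_p = -4\ii$ when $p \equiv 0 \pmod 4$ and $\alpha_p = 0$ otherwise (this reflects the dihedral symmetry of the standard quad under $z \mapsto \ii z$). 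Hence only the subseries over $p = 4n$ survives, and since $\bar b_k^{4n+1} = 1, w^{4n+1}, -\ii, -1$ for $k = 1,2,3,4$, one has $\beta_{4n}(\theta) = (B'_1 - \ii B'_3 - B'_4) + B'_2 \, w^{4n+1}$, so that only the $k = 2$ contribution depends on $n$.

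The statement thus reduces to the scalar identity
$$\Im\Big(\!-4\ii\Big[(B'_1 - \ii B'_3 - B'_4)\, S_0 + B'_2\, S_1(\theta)\Big]\!\Big) = \pi/2,$$
where $S_0 := \sum_{n\ge 0} [(4n+1)(4n+2)(4n+3)]^{-1}$ is an elementary constant and $S_1(\theta) := \sum_{n\ge 0} w^{4n+1}/[(4n+1)(4n+2)(4n+3)]$ admits a closed form in $\log(1-\ii^s w)$ for $s\in\{0,1,2,3\}$ via triple partial fractions in $n$ and the factorization $1 - w^4 = \prod_s (1 - \ii^s w)$. I would then expand $S_1(\theta)$ in logarithms, plug in the rational expressions for $B'_1, B'_2, B'_3$, and simplify.

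The main obstacle is showing that the logarithmic $\theta$-dependence cancels: the four log singularities of $S_1$ at $w \in \{1, \ii, -1, -\ii\}$ must combine with the poles of $B'_2$ (at $w = 1$ and $w = -\ii$) and of $(B'_1 - \ii B'_3 - B'_4)$ to leave only a $\theta$-independent constant. If the brute-force simplification is unwieldy, a cleaner alternative is to prove $\theta$-constancy by differentiating in $\theta$ termwise (justified by the $O(p^{-3})$ weight), reducing the obstruction to a rational identity in $w$ on $\m T$, and then pin the constant down at a convenient limit such as $\theta \to \pi/2$ using the tabulated values of $\sigma(\ii^a, \ii^b)$ from Remark~\ref{rem:wp_norm_single_diamond}.
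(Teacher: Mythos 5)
Your setup is correct and, up to the point where it stops, coincides with the paper's: the values $(A'_1,A'_2,A'_3,A'_4)=(-\ii,-1,\ii,1)$ and the rational expressions for $B'_1,\dots,B'_4$ all check out, the factorization of $\sigma$ through its power series is exactly the paper's first move, and the collapse $\alpha_p=-4\ii$ for $p\equiv 0\ (\mathrm{mod}\ 4)$ and $\alpha_p=0$ otherwise is right. The gap is that everything after the reduction to $\Im\bigl(-4\ii[(B'_1-\ii B'_3-B'_4)S_0+B'_2S_1(\t)]\bigr)=\pi/2$ --- which is the actual content of the lemma --- is left as a plan, and both proposed completions have concrete defects. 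Differentiating once in $\t$ does not reduce the obstruction to a rational identity in $w$: $S_1'$ is $-\ii\sum_n w^{4n+1}/[(4n+2)(4n+3)]$, still a transcendental (logarithmic) function of $w$. What the paper actually uses is the second-order relation $f''+f=\pi/8$ for $f(\t):=\Im\sum_{n\ge 0} e^{-\ii(4n+2)\t}/[(4n+1)(4n+2)(4n+3)]$, which together with $f(0\splus)=f((\pi/2)\sminus)=0$ gives the closed form $f(\t)=-\frac{\pi}{8}(\sin\t+\cos\t-1)$. Likewise, "pinning the constant down at $\t\to\pi/2$" is an indeterminate $0\cdot\infty$ form: $B'_2$ has a simple pole at $w=-\ii$ while the relevant imaginary part has a simple zero there, so the tabulated values of $\sigma(\ii^a,\ii^b)$ from Remark~\ref{rem:wp_norm_single_diamond} cannot be substituted directly --- the quad $(1,e^{\ii\t},\ii,-1)$ degenerates at both endpoints of $(0,\pi/2)$.

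Two observations from the paper's proof would rescue and considerably shorten your computation. First, for $k=1,3,4$ one has $B'_k\bar b_k^{4n+1}=(-1)^{k-1}\bar b_k^{4n+2}\cdot\bigl[\bar b_k(\bar b_{k+1}-\bar b_{k-1})/((\bar b_{k+1}-\bar b_k)(\bar b_k-\bar b_{k-1}))\bigr]$, where the bracket is purely imaginary for points on $\m T$ (it equals $-\frac{\ii}{2}$ times a ratio of sines) and $\bar b_k^{4n+2}=\pm 1$; hence $-4\ii(B'_1-\ii B'_3-B'_4)S_0$ is real and contributes nothing to the imaginary part, so only the $k=2$ term survives. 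Second, the surviving term is an explicit multiple of $f(\t)$ divided by $\sin(\pi/4-\t/2)\sin(\t/2)=\frac{\sqrt2}{4}(\sin\t+\cos\t-1)$, and the closed form of $f$ cancels this factor exactly, yielding $-1$. Your brute-force route via $\log(1-\ii^s w)$ would also work in principle (the cancellation you worry about does occur), but without the purely-imaginary observation and the ODE trick you are committing to verifying the cancellation of several logarithmic branches by hand, and the lemma is not proved until that is done.
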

\begin{proof}
Rearranging \eqref{eq:symplectic_direct_general} gives 
\small
\begin{align*}
    -\frac{2}{\pi} \Im \sum_{p \ge 1} \frac{1}{p(p+1)(p+2)} \left(\sum_{j = 1}^{4} (-1)^j \frac{a_j^{p+2}  (a_{j+1}-a_{j-1})}{(a_{j+1}-a_{j})(a_{j}-a_{j-1})} \right) \left(\sum_{k = 1}^4 (-1)^k  \frac{\bar b_k^{p+2}(\bar b_{k+1} -\bar b_{k-1})}{(\bar b_{k+1}-\bar b_{k})(\bar b_{k} - \bar b_{k-1})}\right).
\end{align*}
\normalsize
Since $(a_1, a_2, a_3, a_4) = (1, \ii, -1, \ii)$, 
\begin{equation}\label{eq:a_4i}
    \sum_{j = 1}^{4} (-1)^j \frac{a_j^{p+2}  (a_{j+1}-a_{j-1})}{(a_{j+1}-a_{j})(a_{j}-a_{j-1})} = 4\ii
\end{equation}
if $ p = 1$ mod $4$, and $0$ otherwise.

Using the identity 
$$e^{\ii a} - e^{\ii b} = e^{\ii \frac{a+b}{2}} (2\ii \sin (\frac{a- b}{2})),$$
and writing $\a_j = e^{\ii \t_j}$, we find that 
\begin{align*}
\frac{\a_j (\a_{j+1} - \a_{j-1})}{(\a_{j+1}-\a_{j})(\a_{j}-\a_{j-1})} & = e^{\ii (\t_j + \frac{\t_{j+1}+ \t_{j-1}}{2} - \frac{\t_{j+1} + \t_{j}}{2} - \frac{\t_{j} + \t_{j-1}}{2})} \frac{\sin (\frac{\t_{j+1} -\t_{j-1}}{2})}{2 \ii  \sin (\frac{\t_{j+1} -\t_{j}}{2}) \sin (\frac{\t_{j} -\t_{j-1}}{2})}\\
&= - \frac{\ii}{2}  \frac{\sin (\frac{\t_{j+1} -\t_{j-1}}{2})}{ \sin (\frac{\t_{j+1} -\t_{j}}{2}) \sin (\frac{\t_{j} -\t_{j-1}}{2})}.
\end{align*}

Therefore,  for $k = 1,3,4$ and $p = 1$ mod $4$,
$$(-1)^k \frac{\bar b_k^{p+2}(\bar b_{k+1}-\bar b_{k-1})}{(\bar b_{k+1}-\bar b_{k})(\bar b_{k}-\bar b_{k-1})} = (-1)^k \frac{\bar b_k^{3}(\bar b_{k+1}-\bar b_{k-1})}{(\bar b_{k+1}-\bar b_{k})(\bar b_{k}-\bar b_{k-1})} $$
is purely imaginary and does not contribute to $\o (u_1, u_2)$ given \eqref{eq:a_4i}. The remaining term is
\begin{align*}
\frac{\bar b_2^{p+2}(\bar b_{3}-\bar b_{1})}{(\bar b_{3}-\bar b_{2})(\bar b_{2}-\bar b_{1})} =  \frac{\ii}{2}    \frac{e^{-\ii (p+1) \t}\sin (\pi/4)}{ \sin (\pi/4 -\t/2) \sin (\t/2)}=  \frac{\ii}{2 \sqrt 2}    \frac{e^{-\ii (p+1) \t}}{ \sin (\pi/4 -\t/2) \sin (\t/2)} .
\end{align*}
This gives that 
\begin{align*}
    \o(u_1, u_2) & = - \frac{2}{\pi} \Im \left(4 \ii \sum_{n = 0}^\infty  \frac{\ii}{2 \sqrt 2}    \frac{e^{-\ii (4n+2) \t} }{\sin (\pi/4 -\t/2) \sin (\t /2)(4n+1) (4n+2) (4n+3)}  \right) \\
    & = \frac{2 \sqrt 2}{\pi  \sin (\pi/4 -\t/2) \sin (\t /2)} \Im \left( \sum_{n = 0}^\infty   \frac{e^{-\ii (4n+2) \t}}{(4n+1) (4n+2) (4n+3)} \right)
\end{align*}

A simple trigonometry gives 
$$ \sin (\pi/4 -\t/2) \sin (\t /2) = \frac{\sqrt 2}{4} (\sin \t + \cos \t -1).$$

Simplify the imaginary part of the series for $\theta \in (0,\pi/2)$ gives
$$f(\t) :  = \Im \left( \sum_{n = 0}^\infty   \frac{e^{-\ii (4n+2) \t}}{(4n+1) (4n+2) (4n+3)} \right) = - \frac{\pi}{8} (\sin (\t) + \cos (\t) -1).$$
Indeed, it suffices to check that $f''(\t) + f(\t) = \pi /8$, and 
$\lim_{\t \to 0\splus}f(\t) = 0$ and $\lim_{\t \to (\pi/2)\sminus  }f(\t) = 0$.
This gives
$\o(u_1, u_2) = - 1$ as claimed.
\end{proof}

\bibliographystyle{abbrv}
\bibliography{ref}

\end{document}